\theoremstyle{plain}
	\newtheorem{Thm}{Theorem}[section]
	\newtheorem*{Thm2}{Theorem}
	\newtheorem{Prop}[Thm]{Proposition}        
	\newtheorem{Lem}[Thm]{Lemma}
	\newtheorem{Coro}[Thm]{Corollary}
\theoremstyle{definition}
	\newtheorem{Def}[Thm]{Definition}
        \newtheorem{Exem}[Thm]{Example}
	\newtheorem{Nota}[Thm]{Notation}
\theoremstyle{remark}
	\newtheorem{Rem}[Thm]{Remark}
\def\emptyset{\varnothing}
\def\NN{{\mathbb N}}    
\def\ZZ{{\mathbb Z}}    
\def\RR{{\mathbb R}}	
\def\QQ{{\mathbb Q}}  
\newcommand{\mfA}{\mathfrak{A}}
\newcommand{\mfS}{\mathfrak{S}}
\newcommand{\cE}{\mathcal{E}}
\newcommand{\cL}{\mathcal{L}}
\newcommand{\cP}{\mathcal{P}}
\newcommand{\cJ}{\mathcal{J}}
\newcommand{\cQ}{\mathcal{Q}}
\newcommand{\cM}{\mathcal{M}}
\newcommand{\eps}{\varepsilon}
\newcommand{\IET}{\operatorname{IET}}
\newcommand{\SAF}{\operatorname{SAF}}
\newcommand{\Itv}{\operatorname{Itv}}
\newcommand{\Homeo}{\operatorname{Homeo}}
\newcommand{\support}{\operatorname{Supp}}
\newcommand{\Vect}{\operatorname{Vect}}
\newcommand{\setm}{\smallsetminus}
\newcommand{\Id}{\operatorname{Id}}
\newcommand{\Ker}{\operatorname{Ker}}
\newcommand{\interfo}[2]{\mathopen{[} #1,#2 \mathclose{[}}
\newcommand{\Card}{\operatorname{Card}}
\newcommand{\modulo}[1]{~[\textup{mod}~#1]}
\newcommand{\SGIET}{\Gamma}
\newcommand{\Wedgealt}{{}^\circleddash\!\!\bigwedge^2_{\ZZ}}
\begin{document}

\title{Abelianization of  some groups of interval exchanges}
\author{Octave Lacourte}
\date{\today}
\subjclass[2010]{37E05, 20F65, 20J06 }

\begin{abstract}
Let IET be the group of bijections from $\mathopen{[}0,1 \mathclose{[}$ to itself that are continuous outside a finite set, right-continuous and piecewise translations. The abelianization homomorphism $f: \textup{IET} \to A$, called SAF-homomorphism, was described by Arnoux-Fathi and Sah. The abelian group $A$ is the second exterior power of the reals over the rationals.

For every subgroup $\Gamma$ of $\mathbb{R/Z}$ we define $\textup{IET}(\Gamma)$ as the subgroup of IET consisting of all elements $f$ such that $f$ is continuous outside $\Gamma$. Let $\tilde{\Gamma}$ be the preimage of $\Gamma$ in $\mathbb{R}$. We establish an isomorphism between the abelianization of $\textup{IET}(\Gamma)$ and the second skew-symmetric power of $\tilde{\Gamma}$ over $\mathbb{Z}$ denoted by ${}^\circleddash\!\!\bigwedge^2_{\mathbb{Z}} \tilde{\Gamma}$. This group often has non-trivial $2$-torsion, which is not detected by the $\textup{SAF}$-homomorphism.

We then define $\textup{IET}^{\bowtie}$ the group of all interval exchange transformations with flips. Arnoux proved that this group is simple thus perfect. However for every subgroup $\textup{IET}^{\bowtie}(\Gamma)$ we establish an isomorphism between its abelianization and $\langle \lbrace a \otimes a ~ [\textup{mod}~2] \mid a \in \tilde{\Gamma} \rbrace \rangle \times \langle \lbrace \ell \wedge \ell \modulo{2} \mid \ell \in \tilde{\Gamma} \rbrace \rangle$ which is a $2$-elementary abelian subgroup of $\bigotimes^2_{\mathbb{Z}} \tilde{\Gamma} / (2\bigotimes^2_{\mathbb{Z}} \tilde{\Gamma}) \times {}^\circleddash\!\!\bigwedge^2_{\mathbb{Z}} \tilde{\Gamma} / (2 {}^\circleddash\!\!\bigwedge^2_{\mathbb{Z}} \tilde{\Gamma})$.

\end{abstract}

\maketitle

\section{Introduction}

Let $X$ be the right-open and left-closed interval $\interfo{0}{1}$. We denote by $\mathfrak{S}(X)$ the group of all permutations of $X$ and let $\mfS_{\mathrm{fin}}$ be its subgroup consisting of all finitely supported elements. For every group $G$ we denote by $D(G)$ its derived subgroup and by $G_{\mathrm{ab}} = G/D(G)$ its abelianization.

We define $\IET$ (Interval Exchange Transformations) as the subgroup of $\mathfrak{S}(X)$ consisting of all elements that are continuous outside a finite set, right-continuous and that are piecewise a translation. For every subgroup $\SGIET$ of $\mathbb{R/Z}$ we define $\IET(\SGIET)$ as the subgroup of $\IET$ consisting of all elements $f$ that are continuous outside $\SGIET$.

We also consider an overgroup of $\IET$ and $\IET(\SGIET)$, where we allow flips, that is, we replace ``translation'' with ``isometry''. There is a difficulty in defining this group, namely that, properly speaking, it cannot be defined as a group acting on $\interfo{0}{1}$. For this we define $\widehat{\IET^{\bowtie}}$ as the subgroup of $\mfS(X)$ consisting of all elements that are continuous outside a finite set and piecewise isometric. Let $\widehat{\IET^{\bowtie}(\SGIET)}$ be its subgroup consisting of all elements $f$ continuous outside $\SGIET$. We define the group of interval exchanges with flips, denoted by $\IET^{\bowtie}$, as the image of $\widehat{\IET^{\bowtie}}$ in $\mfS(X) / \mfS_{\mathrm{fin}}(X)$. Similarly let $\IET^{\bowtie}(\SGIET)$ be the image of $\widehat{\IET^{\bowtie}(\SGIET)}$ in $\mfS(X) / \mfS_{\mathrm{fin}}(X)$.

The group $\IET$ is a subgroup of $\widehat{\IET^{\bowtie}}$. Also it is naturally isomorphic to its image in $\IET^{\bowtie}$ thus we freely see $\IET$ as a subgroup of $\IET^{\bowtie}$.

\medskip

Most of the existing research on interval exchanges since the 70s pertains to classical dynamics, mostly studying the dynamical and ergodic properties of a single element of $\IET$ (see the survey of Viana \cite{Vianasurvey}). The study of IET as a group started with the identification of its abelianization by Sah \cite{sah1981scissors} and Arnoux-Fathi \cite{Arnoux} (see below), and was much later revived by work of Dahmani-Fujiwara-Guirardel \cite{DFG2013,DFG2017} and C. Novak \cite{Novak09} in the late 2000s.

In comparison the group $\IET^{\bowtie}$ is considerably less known. P. Arnoux proved in \cite{ArnouxThese} that it is a simple group. Recently I. Liousse and N. Guelman \cite{LiousseGuelman2019} proved that $\IET^{\bowtie}$ is uniformly perfect.

\medskip

For every $\ZZ$-module $A$, we denote by $\Wedgealt A$ the second skew-symmetric power. It is the quotient of the second tensor power $\bigotimes^2_{\ZZ} A$ by the $\ZZ$-submodule generated by the set $\lbrace x \otimes y + y \otimes x \mid x,y \in A \rbrace$. We denote by $a \wedge b$ the projection of $a \otimes b$ in $\Wedgealt A$; the set of all these elements generates $\Wedgealt A$. There exists a canonical surjective group homomorphism from $\Wedgealt A$ into the second exterior power $\bigwedge^2_{\ZZ} A$, which is injective if $A=2A$. In general, its kernel is a 2-elementary abelian group, which is not trivial in general. For instance, if $A \simeq \ZZ^d$, then $\bigwedge^2_{\ZZ} A$ is isomorphic to $\ZZ^{d(d-1)/2}$, while this kernel is isomorphic to $(\ZZ/2\ZZ)^d$.
In the case where $A=\RR$ we recall that every map is $\ZZ$-bilinear if and only if it is $\QQ$-bilinear, thus $\bigwedge^2_{\QQ} \RR = \bigwedge^2_{\ZZ} \RR$.

We recall the result of Arnoux-Fathi and Sah \cite{Arnoux,sah1981scissors} about the identification of the abelianization of $\IET$:

\begin{Thm2}[Arnoux-Fathi-Sah \cite{Arnoux,sah1981scissors}]

The map:
\[
\begin{array}{cccc}
\varphi: & \IET & \longrightarrow & \bigwedge^2_{\ZZ} \RR \\
 & f & \longmapsto & [\sum\limits_{a \in \RR} a \otimes \lambda((f-\Id)^{-1}(\lbrace a \rbrace)) ]
\end{array}
\]
is a surjective group homomorphism whose kernel is the derived subgroup $D(\IET)$. It is called the $\SAF$-homomorphism. It induces an isomorphism between $\IET_{\mathrm{ab}}$ and the second exterior power of the reals over the rationals.
\end{Thm2}

In this article we identify $\IET(\SGIET)_{\mathrm{ab}}$ and $\IET^{\bowtie}(\SGIET)_{\mathrm{ab}}$ for every subgroup $\SGIET$ of $\mathbb{R/Z}$. We denote by $\tilde{\SGIET}$ the preimage of $\SGIET$ in $\RR$.

When $\SGIET$ if finite, both $\IET(\SGIET)$ and $\IET^{\bowtie}(\SGIET)$ are finite Coxeter groups and their abelianization can be described easily (see Section \ref{Subsection When SGIET is finite}). So from now on, we always assume that $\SGIET$ is a dense subgroup of $\mathbb{R/Z}$.

In this case, we show that the derived subgroups $D(\IET(\SGIET))$ and $D(\IET^{\bowtie}(\SGIET))$ are simple (see Section \ref{Subsection About the derived subgroup}). They are respectively the smallest normal subgroup of $\IET(\SGIET)$ and $\IET^{\bowtie}(\SGIET)$. Then we see $\IET(\SGIET)$ and $\IET^{\bowtie}(\SGIET)$ as topological full group of the groupoids of germs of their action on a Stone space. Then by the work of Nekrashevych \cite{nekrashevych_2019}, we obtain the simplicity of their smallest normal subgroup.

\medskip

We denote by $\varphi_{\SGIET}$ the restriction of the $\SAF$-homomorphism $\varphi$ to $\IET(\SGIET)$ for every subgroup $\SGIET$ of $\mathbb{R/Z}$. It turns out that $\varphi_{\SGIET}$ may fail to give the abelianization of $\IET(\SGIET)$ because not every element of order $2$ of $\IET(\SGIET)$ is necessarily in $D(\IET(\SGIET))$. In fact the restriction of $\varphi_{\SGIET}$ is a surjective group homomorphism onto $2\Wedgealt \tilde{\SGIET}$ and here the main theorem is:

\begin{Thm}
There exists a surjective group homomorphism $\eps_{\SGIET}: \IET(\SGIET) \rightarrow \Wedgealt \tilde{\SGIET}$ whose kernel is the derived subgroup $D(\IET(\SGIET))$. It induces an isomorphism between $\IET(\SGIET)_{\mathrm{ab}}$ and the second skew-symmetric power $\Wedgealt \tilde{\SGIET}$.
\end{Thm}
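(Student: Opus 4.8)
The plan is to construct $\eps_\SGIET$ as a refinement of the $\SAF$-homomorphism taking values in the larger group $\Wedgealt\tilde\SGIET$, and then to pin down its kernel by a dévissage along the canonical surjection $\pi\colon\Wedgealt\tilde\SGIET\to\bigwedge^2_{\ZZ}\tilde\SGIET$, whose kernel I denote by $T$ (a $2$-elementary abelian group, cf.\ the Introduction). For the construction: given $f\in\IET(\SGIET)$, write $\interfo{0}{1}$ as the union of the maximal intervals of continuity $I_1,\dots,I_n$ of $f$, the piece $I_k$ being translated by $t_k$; as $f$ and $f^{-1}$ are continuous outside $\SGIET$, the endpoints of all the $I_k$ and $f(I_k)$, and hence all the $t_k$, lie in $\tilde\SGIET$. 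One then defines $\eps_\SGIET(f)$ by a cocycle formula on this interval-exchange data, refining the one defining $\varphi$, which is valued in $\Wedgealt\tilde\SGIET$ and normalised so that the transposition $\iota_\ell$ of two adjacent intervals of common length $\ell\in\tilde\SGIET$ is sent to $\ell\wedge\ell$. The verifications are that $\eps_\SGIET(f)$ does not depend on how $f$ is cut into intervals and that $\eps_\SGIET$ is a homomorphism --- the latter being exactly the cocycle computation of Arnoux--Fathi--Sah, since for $g$ with data $(J_m,s_m)$ the common refinement for $gf$ translates $I_k\cap f^{-1}(J_m)$ by $t_k+s_m$ and the contributions separate. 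By construction, $\pi\circ\eps_\SGIET$ recovers (a normalisation of) $\varphi_\SGIET$, so the two have the same kernel.

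Surjectivity of $\eps_\SGIET$ is obtained by inspection: its image contains $\eps_\SGIET(\iota_\ell)=\ell\wedge\ell$ for all $\ell\in\tilde\SGIET$, hence contains $T$; and modulo $\pi$ it surjects onto $\bigwedge^2_{\ZZ}\tilde\SGIET$ already via the rotations with breakpoints in $\tilde\SGIET$, exactly as in Arnoux--Fathi--Sah (here density of $\SGIET$ is used, to have enough such rotations available in $\IET(\SGIET)$); a subgroup of $\Wedgealt\tilde\SGIET$ that contains $T$ and maps onto $\bigwedge^2_{\ZZ}\tilde\SGIET$ is the whole group. Next, $D(\IET(\SGIET))\subseteq\ker\eps_\SGIET$ because the target is abelian. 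For the reverse inclusion, set $N:=\eps_\SGIET^{-1}(T)$; since $\pi\circ\eps_\SGIET$ and $\varphi_\SGIET$ have the same kernel, $N=\ker\varphi_\SGIET$, which equals $D(\IET)\cap\IET(\SGIET)$ by Arnoux--Fathi--Sah, and $\eps_\SGIET$ induces $\IET(\SGIET)/N\simeq\Wedgealt\tilde\SGIET/T=\bigwedge^2_{\ZZ}\tilde\SGIET$; moreover $D(\IET(\SGIET))\subseteq N\subseteq\ker\eps_\SGIET$. The whole question is now reduced to two points: \emph{(A)} every element of $N$ is, modulo $D(\IET(\SGIET))$, a product of transpositions $\iota_\ell$; and \emph{(B)} in $\IET(\SGIET)_{\mathrm{ab}}$ one has $[\iota_{\ell+\ell'}]=[\iota_\ell]+[\iota_{\ell'}]$. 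Point (B) follows by cutting each of the two length-$(\ell+\ell')$ intervals of $\iota_{\ell+\ell'}$ into a length-$\ell$ part and a length-$\ell'$ part (which exhibits $\iota_{\ell+\ell'}$ as a product of a length-$\ell$ and a length-$\ell'$ transposition) together with the fact that a transposition of two length-$\ell$ intervals placed anywhere is conjugate, inside $\IET(\SGIET)$, to $\iota_\ell$ --- move the swapped pair into standard position by an element of $\IET(\SGIET)$, which exists since the complements have equal measure and $\tilde\SGIET$ is dense.

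Granting (A) and (B), the map $\bar\ell\mapsto[\iota_\ell]$ defines a surjection $\alpha\colon\tilde\SGIET/2\tilde\SGIET\twoheadrightarrow N/D(\IET(\SGIET))$ --- onto by (A), well defined by (B) and $2[\iota_\ell]=[\iota_\ell^{2}]=[\Id]=0$. On the other side, the canonical exact sequence $0\to\tilde\SGIET/2\tilde\SGIET\to\Wedgealt\tilde\SGIET\xrightarrow{\pi}\bigwedge^2_{\ZZ}\tilde\SGIET\to 0$, with first map $\bar\ell\mapsto\ell\wedge\ell$, identifies $T$ with $\tilde\SGIET/2\tilde\SGIET$ via an isomorphism $\gamma$ (it is an isomorphism because $\bigwedge^2_{\ZZ}A$ is torsion-free whenever $A$ is). Now $\eps_\SGIET$ maps $N/D(\IET(\SGIET))$ into $T$, and $\eps_\SGIET\circ\alpha=\gamma$ since both send $\bar\ell$ to $\ell\wedge\ell$; as $\gamma$ is an isomorphism, $\alpha$ is injective, hence an isomorphism, and so $\eps_\SGIET$ carries $N/D(\IET(\SGIET))$ isomorphically onto $T$. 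Finally $\eps_\SGIET$ carries the short exact sequence $0\to N/D(\IET(\SGIET))\to\IET(\SGIET)_{\mathrm{ab}}\to\bigwedge^2_{\ZZ}\tilde\SGIET\to 0$ onto $0\to T\to\Wedgealt\tilde\SGIET\to\bigwedge^2_{\ZZ}\tilde\SGIET\to 0$, isomorphically on both outer terms; hence the middle map is an isomorphism, i.e.\ $\ker\eps_\SGIET=D(\IET(\SGIET))$, which is the assertion.

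The hard part is point (A): that all of $N=\ker\varphi_\SGIET$ is, modulo the derived subgroup, generated by interval transpositions. In the ambient group $\IET$ one reduces an element of $\ker\varphi$ to a product of rotations and then collapses it with commutators, conjugating elementary pieces freely to any position on the line (this is essentially what Arnoux--Fathi--Sah do); inside $\IET(\SGIET)$ every breakpoint must remain in $\tilde\SGIET$, so each such step has to be carried out with transformations supported on intervals with endpoints in $\tilde\SGIET$, while keeping exact track of the parity information that the naive $\SAF$-invariant forgets. This is where the density of $\SGIET$ in $\RR/\ZZ$ is indispensable, and it is also where the accompanying analysis of $D(\IET(\SGIET))$ --- its simplicity, and its being the smallest normal subgroup of $\IET(\SGIET)$ --- comes back into play.
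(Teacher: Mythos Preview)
Your outline has the right shape, and the d\'evissage via $\alpha$ and $\gamma$ at the end is in fact cleaner than what the paper does. The paper, having shown that $\Ker(\varphi_\Gamma)$ is generated by transpositions, then proves that any product of transpositions equals a \emph{single} transposition times an element of $D(\IET(\Gamma))$; this requires a separate analysis (Section~3) showing that the product of two $\IET$-transpositions has finite order, followed by an inductive reduction. Your five-lemma argument bypasses all of that: once (A) and (B) hold, the comparison of the two short exact sequences finishes the proof immediately. This is a genuine simplification of the endgame.

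However, there are two substantive gaps. First, point (A) is the crux and you do not prove it. The paper establishes that every element of $\Ker(\varphi_\Gamma)$ is a product of $\IET(\Gamma)$-transpositions by first showing it is a \emph{balanced} product of $\Gamma$-restricted rotations (Lemma~4.4), and this step uses in an essential way that $\tilde\Gamma$ is ultrasimplicially ordered (Elliott's theorem, Corollary~2.15): one refines any $\Gamma$-partition associated with $f$ into intervals whose lengths lie in a finite $\ZZ$-independent set, so that the vanishing of the $\SAF$-invariant forces the rotation-type multiplicities to match in pairs. Only then can one invoke Vorobets' lemmas to convert a balanced product of restricted rotations into a product of transpositions. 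Your last paragraph gestures at ``simplicity of $D(\IET(\Gamma))$'' and ``density of $\Gamma$'', but neither of these replaces the ultrasimplicial input, and without it (A) is not established.

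Second, your construction of $\eps_\Gamma$ is too vague, and the assertion that the homomorphism verification is ``exactly the cocycle computation of Arnoux--Fathi--Sah'' is misleading: their computation takes place in $\bigwedge^2_\ZZ\RR$, where $a\wedge a=0$ kills precisely the terms that distinguish $\Wedgealt\tilde\Gamma$ from $\bigwedge^2_\ZZ\tilde\Gamma$. The paper handles this by defining $\eps_\Gamma(f)$ as the Boolean-algebra measure (valued in $\bigotimes^2_\ZZ\tilde\Gamma$, then projected to $\Wedgealt\tilde\Gamma$) of the set of inversions $\{(x,y):x<y,\ f(x)>f(y)\}$, and proving additivity by a direct set-theoretic manipulation of inversion sets. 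You should either carry out that construction or give an explicit formula and check independently that it is well-defined on refinements and additive under composition in $\Wedgealt\tilde\Gamma$, not just in $\bigwedge^2_\ZZ\tilde\Gamma$. Finally, your reason for $\gamma$ being injective (``because $\bigwedge^2_\ZZ A$ is torsion-free'') is not the right one; what is needed is that $\ell\wedge\ell=0$ in $\Wedgealt\tilde\Gamma$ forces $\ell\in 2\tilde\Gamma$, which the paper proves by passing to $S^2(\tilde\Gamma/2\tilde\Gamma)$ (Proposition~5.13).
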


\begin{Rem}
One can notice that if $2 \tilde{\SGIET}= \tilde{\SGIET}$ then $\eps_{\SGIET}=\varphi_{\SGIET}$. Actually, in this case, the proof of Arnoux-Fathi-Sah works with immediate changes.
\end{Rem}

The novelty occurs when $\tilde{\SGIET} \neq 2 \tilde{\SGIET}$. In this case $2$-torsion appears in $\Wedgealt \tilde{\SGIET}$ with in particular the subset $\lbrace a \wedge a \mid a \in \tilde{\SGIET} \rbrace$.

Before constructing $\eps_{\SGIET}$ it is important to understand the kernel of $\varphi_{\SGIET}$. For this we introduce some elements of $\IET$ and $\IET(\SGIET)$.

A \textit{restricted rotation of type $(a,b)$} is an element $r$ of $\IET$ such that there exist consecutive intervals $I$ and $J$ of length $a$ and $b$ respectively with $\sup(I) = \inf(J)$, where $r$ is the translation by $b$ on $I$ and the translation by $-a$ on $J$ (and the identity outside $I \cup J$). The two intervals $I$ and $J$ are called the intervals associated with $r$. A \textit{$\SGIET$-restricted rotation} is a restricted rotation in $\IET(\SGIET)$. A tuple of restricted rotations is \textit{balanced} if the number of factors of type $(a,b)$ is equal to the one of type $(b,a)$. A product of restricted rotations is balanced if it can be written as the product of a balanced tuple of restricted rotations.

\begin{Rem}
Let $r$ be a restricted rotation of type $(a,b)$ and $I$ and $J$ be the two consecutive intervals associated with $r$. Then if we only look at the interval $I \cup J$ and identify its endpoints, we obtain that $r$ is the actual rotation of angle the length of $J$. This is why we called these elements ``restricted rotation''.
\end{Rem}

Y. Vorobets proves in \cite{Vorobets2011} the following:

\begin{Thm2}[Y. Vorobets \cite{Vorobets2011}]
The kernel $\Ker(\varphi)$ of the $\SAF$-homomorphism is generated by the set of all balanced products of restricted rotations and it is also generated by the set of all elements of order $2$. 
\end{Thm2}

Here we prove that this result also holds in restriction to $\IET(\SGIET)$:

\begin{Lem}\label{Lemma Ker restriction SAF is generated by balanced product of IET Gamma restricted rotations}
The set of all balanced products of $\SGIET$-restricted rotations is a generating subset of $\Ker(\varphi_{\SGIET})$.
\end{Lem}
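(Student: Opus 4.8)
Write $K$ for the subgroup of $\IET(\SGIET)$ generated by the balanced products of $\SGIET$-restricted rotations; the assertion is that $K=\Ker(\varphi_{\SGIET})$. The inclusion $K\subseteq\Ker(\varphi_{\SGIET})$ is a direct computation: for a restricted rotation $r$ of type $(a,b)$ with associated intervals $I,J$ one has $(r-\Id)^{-1}(\{b\})=I$ and $(r-\Id)^{-1}(\{-a\})=J$, so $\varphi_{\SGIET}(r)=b\wedge a-a\wedge b=-2\,(a\wedge b)$ in $\bigwedge^2_{\ZZ}\RR$. Hence in a balanced tuple the contributions of the factors of type $(a,b)$ and of type $(b,a)$ cancel in equal number, while a factor of type $(a,a)$ contributes $-2(a\wedge a)=0$, and $\varphi_{\SGIET}$ kills every balanced product.

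For the reverse inclusion I would first use that $\IET(\SGIET)$ is generated by $\SGIET$-restricted rotations (proved as in the classical case, or taken from the preliminary material). Writing any $g,h\in\IET(\SGIET)$ as products of $\SGIET$-restricted rotations, the commutator $[g,h]$ is then displayed as a product of $\SGIET$-restricted rotations whose multiset of types is stable under reversing each type, hence is a balanced product; therefore $D(\IET(\SGIET))\subseteq K$. In particular $K$ is normal and $\IET(\SGIET)/K$ is abelian, generated by the classes $[a,b]$ of the $\SGIET$-restricted rotations of type $(a,b)$ (for $a,b\in\tilde{\SGIET}$, $a,b>0$, $a+b\le 1$); the class $[a,b]$ is well defined because two restricted rotations of the same type differ by a balanced pair of types $(a,b)$ and $(b,a)$. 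One then checks in $\IET(\SGIET)/K$ the relations $[a,b]=[b,a]^{-1}$, $[a,a]=1$ (a single restricted rotation of type $(a,a)$ is itself a balanced tuple), and the local bi-additivity $[a,b+b']=[a,b][a,b']$, coming from the exact identity $r_{a,b+b'}=r_{a,b'}\circ r_{a,b}$, together with its mirror in the first slot, valid whenever the pairs involved are admissible.

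The heart of the proof is to promote these local relations to a surjective homomorphism $\theta\colon\bigwedge^2_{\ZZ}\tilde{\SGIET}\twoheadrightarrow\IET(\SGIET)/K$ with $\theta(a\wedge b)=[b,a]$ on admissible pairs. Since $\SGIET$ is dense, any positive element of $\tilde{\SGIET}$ is a sum of small positive elements of $\tilde{\SGIET}$, and for $a=\sum_i a_i$, $b=\sum_j b_j$ chosen so that all pairs $(b_j,a_i)$ are admissible one sets $\theta(a\wedge b):=\prod_{i,j}[b_j,a_i]$; passing to a common refinement of the decompositions and using the relations above shows this is independent of the decompositions and is $\ZZ$-bilinear and alternating, so it descends to $\bigwedge^2_{\ZZ}\tilde{\SGIET}$, and surjectivity is clear. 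Finally the map $\bar{\varphi}_{\SGIET}\colon\IET(\SGIET)/K\to\bigwedge^2_{\ZZ}\RR$ induced by $\varphi_{\SGIET}$ satisfies that $\bar{\varphi}_{\SGIET}\circ\theta$ is twice the canonical map $\bigwedge^2_{\ZZ}\tilde{\SGIET}\to\bigwedge^2_{\ZZ}\RR$; this canonical map is injective because $\bigwedge^2_{\ZZ}$ of a torsion-free abelian group is torsion-free (reduce to finitely generated subgroups), and multiplication by $2$ is injective on $\bigwedge^2_{\ZZ}\RR=\bigwedge^2_{\QQ}\RR$. Hence $\bar{\varphi}_{\SGIET}\circ\theta$ is injective, so $\theta$ is an isomorphism and $\bar{\varphi}_{\SGIET}$ is injective on $\IET(\SGIET)/K$; thus $\Ker(\varphi_{\SGIET})\subseteq K$, which together with the first inclusion proves the lemma.

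The step I expect to be the main obstacle is the construction of $\theta$: the splitting identity $r_{a,b+b'}=r_{a,b'}\circ r_{a,b}$ is only available when $a+b+b'\le 1$, so the local bi-additivity relations must be glued — over all ways of cutting elements of $\tilde{\SGIET}$ into small pieces — into an honest homomorphism out of the exterior square, and making this amalgamation rigorous is the delicate point. Establishing generation of $\IET(\SGIET)$ by $\SGIET$-restricted rotations, if it is not already available, is a secondary issue, and the algebraic input on torsion-freeness of $\bigwedge^2_{\ZZ}$ is routine.
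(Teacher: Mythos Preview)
Your argument is correct and genuinely different from the paper's. The paper never builds a section $\theta$; instead it invokes Elliott's theorem that every totally ordered abelian group is ultrasimplicially ordered. Given $f\in\Ker(\varphi_{\SGIET})$, it passes to a finitely generated subgroup $\tilde A\subset\tilde{\SGIET}$ containing all relevant lengths, finds a $\ZZ$-basis $\{\ell_1,\dots,\ell_d\}\subset\tilde A_+$ such that every interval length of a chosen partition lies in $\Vect_{\NN}(\ell_1,\dots,\ell_d)$, and then decomposes $f$ into $\SGIET$-restricted rotations whose types lie in $\{\ell_i\}\times\{\ell_j\}$. Since the $\ell_i\wedge\ell_j$ ($i<j$) are $\ZZ$-independent in $\bigwedge^2_{\ZZ}\RR$, the vanishing of $\varphi_{\SGIET}(f)=\sum 2(s_{ij}-s_{ji})\,\ell_j\wedge\ell_i$ forces $s_{ij}=s_{ji}$, i.e.\ the tuple is balanced. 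Your route trades this external structural input (Elliott's theorem) for the gluing construction of $\theta$, which is more elementary but, as you rightly flag, is where all the work sits: you must check carefully that the local identity $r_{a,b+b'}=r_{a,b'}\circ r_{a,b}$ (valid only for $a+b+b'\le 1$) really propagates through common refinements to give a well-defined biadditive alternating map on all of $\tilde{\SGIET}\times\tilde{\SGIET}$. The paper's approach has the advantage that the same ultrasimplicial machinery is reused later (for $\IET^{\bowtie}(\SGIET)$), so it is not an isolated cost; your approach has the advantage of being self-contained and of exhibiting directly that $\IET(\SGIET)/K\simeq\bigwedge^2_{\ZZ}\tilde{\SGIET}$, which is slightly more than the lemma asks. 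One small wording issue: injectivity of $\bigwedge^2_{\ZZ}\tilde{\SGIET}\to\bigwedge^2_{\ZZ}\RR$ does not follow merely from torsion-freeness of the source; you need the extra step through $\bigwedge^2_{\QQ}$ of the rationalizations, which your parenthetical hints at but does not quite state.
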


A corollary can be proved by following the ideas of Y.Vorobets. We introduce some elements of order $2$. An \textit{$\IET$-transposition} of type $a$ is an element of $\IET$ that swaps two nonoverlapping intervals of length $a$ while fixing the rest of $\interfo{0}{1}$ and an \textit{$\IET(\SGIET)$-transposition} is an $\IET$-transposition in $\IET(\Gamma)$.

\begin{Coro}
The set of all $\IET(\SGIET)$-transpositions is a generating subset of the kernel $\Ker(\varphi_{\SGIET})$.
\end{Coro}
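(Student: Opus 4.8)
The plan is to show that the subgroup $H$ of $\IET(\SGIET)$ generated by all $\IET(\SGIET)$-transpositions equals $\Ker(\varphi_{\SGIET})$, by two inclusions, following Vorobets' argument for $\IET$ but keeping every auxiliary interval exchange inside $\IET(\SGIET)$.

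The inclusion $H\subseteq\Ker(\varphi_{\SGIET})$ is immediate: an $\IET(\SGIET)$-transposition $\tau$ exchanging two nonoverlapping intervals of length $a$, shifting one by $+d$ and the other by $-d$, has $\varphi_{\SGIET}(\tau)=d\wedge a+(-d)\wedge a=0$ in $\bigwedge^2_{\ZZ}\RR$ (or: $\tau^2=\Id$ forces $2\varphi_{\SGIET}(\tau)=0$, and $\bigwedge^2_{\ZZ}\RR=\bigwedge^2_{\QQ}\RR$ is torsion-free).

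For the reverse inclusion, by Lemma~\ref{Lemma Ker restriction SAF is generated by balanced product of IET Gamma restricted rotations} it suffices to prove that every balanced product of $\SGIET$-restricted rotations lies in $H$. First I would check that $H$ is normal in $\IET(\SGIET)$: for $g\in\IET(\SGIET)$ and a transposition $\tau$ exchanging nonoverlapping intervals $I_1,I_2$ of length $a$ with endpoints in $\tilde\SGIET$, the images $g(I_1),g(I_2)$ are finite disjoint unions of intervals with endpoints in $\tilde\SGIET$ (the breakpoints of $g$, the points $g(\partial I_k)$, and the translation lengths of $g$ all lie in $\tilde\SGIET$); passing to a common refinement, $g\tau g^{-1}$ is the product of the $\IET(\SGIET)$-transpositions exchanging the matching pieces, so $g\tau g^{-1}\in H$. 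The core geometric input, which is Vorobets' and which I would re-examine for $\SGIET$-equivariance, is this: if $r$ is a $\SGIET$-restricted rotation of type $(a,b)$ carried by an interval-pair $P$ and $r'$ one of type $(a,b)$ carried by a nonoverlapping interval-pair $P'$ (so $|P|=|P'|=a+b$), then the $\IET(\SGIET)$-transposition $\tau_{P,P'}$ that exchanges the blocks $P$ and $P'$ by a single translation conjugates $r$ to $r'$, whence $r'r^{-1}=\tau_{P,P'}\,r\,\tau_{P,P'}^{-1}r^{-1}\in H$ by normality of $H$. Together with the observation that a $\SGIET$-restricted rotation of type $(b,a)$ is exactly the inverse of the one of type $(a,b)$ carried by the same interval-pair, this yields that in $\IET(\SGIET)/H$ all $\SGIET$-restricted rotations of a fixed type $(a,b)$ represent a single class, the class of type $(b,a)$ is its inverse, and the class of type $(a,a)$ is trivial. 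Choosing representatives carried by pairwise disjoint interval-pairs, which commute, a balanced tuple is then sent to $\prod_{\{a,b\}}c_{\{a,b\}}^{\,\#(a,b)-\#(b,a)}=1$ by the very definition of ``balanced''. Hence every balanced product of $\SGIET$-restricted rotations lies in $H$, so $\Ker(\varphi_{\SGIET})\subseteq H$, and equality follows.

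The step I expect to cause the most friction is the bookkeeping that makes this applicable uniformly inside $\interfo{0}{1}$: for a given balanced product one must relocate all the interval-pairs involved onto a pairwise disjoint family of small interval-pairs with endpoints in $\tilde\SGIET$, which can fail a priori when the supports are large or numerous. The remedy is the exact identity $r_{(a,b)}=r_{(a,b_2)}\circ r_{(a,b_1)}$, obtained by writing $b=b_1+b_2$ with $b_1,b_2\in\tilde\SGIET$ (available since $\SGIET$ is dense) and splitting the length-$b$ subinterval; iterating reduces everything to $\SGIET$-restricted rotations of arbitrarily small support, after which there is room for the auxiliary intervals, whose breakpoints and translation lengths all stay in $\tilde\SGIET$. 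Specializing to $\SGIET=\RR/\ZZ$ recovers Vorobets' statement that the $\IET$-transpositions generate $\Ker(\varphi)$.
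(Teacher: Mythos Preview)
Your approach is correct in outline and takes a genuinely different route from the paper's. Both start from Lemma~\ref{Lemma Ker restriction SAF is generated by balanced product of IET Gamma restricted rotations}, so the task is to show that any balanced product of $\SGIET$-restricted rotations lies in the subgroup $H$ generated by $\IET(\SGIET)$-transpositions. The paper does this via two lemmas of Vorobets adapted to $\IET(\SGIET)$: (i) for two $\SGIET$-restricted rotations $f,g$ of the same type one has $f^{-1}g\in H$ (Lemma~\ref{Lemma product of same type restricted rotation}), and (ii) for $f$ a $\SGIET$-restricted rotation and $g\in\IET(\SGIET)$ arbitrary, the commutator $[f,g]\in H$ (Lemma~\ref{Lemma commutator with a restricted rotation is product of transposition}); an induction on the length of the tuple using the identity $f_1\cdots f_n=(f_1f_k)\,[f_k^{-1},g_1]\,(g_1g_2)$ then finishes (Theorem~\ref{Theorem balanced product of IET Gamma restricted rotation is a product of IET Gamma transpositions}). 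You bypass the commutator lemma~(ii) entirely: from normality of $H$ and the block transposition $\tau_{P,P'}$ you obtain that restricted rotations of a fixed \emph{small} type have a well-defined class in $\IET(\SGIET)/H$, that these classes commute pairwise (disjoint representatives), and hence that the image of any balanced product is trivial. This is more elementary in that it only uses the disjoint-support special case of~(i), at the cost of the splitting/smallness bookkeeping.

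Two points of your write-up need tightening. First, the sentence ``in $\IET(\SGIET)/H$ all $\SGIET$-restricted rotations of a fixed type $(a,b)$ represent a single class'' is only justified by your argument once there is room for a third disjoint copy, say $3(a+b)\le 1$; for larger types you must first invoke the splitting identity, so the reduction to small supports should logically precede that claim rather than appear as an afterthought. Second, you should record that the splitting respects balancedness: replacing every factor of type $(a,b)$ by $r_{(a,b_2)}r_{(a,b_1)}$ and simultaneously every factor of type $(b,a)$ by $r_{(b_1,a)}r_{(b_2,a)}$ (the inverse decomposition) yields a new balanced tuple; this is true but deserves one line. With these two adjustments your argument is complete.
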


After this, in Section \ref{Section Description of the abelianization of IET(Gamma)}, we define the group homomorphism $\eps_{\SGIET}$ in the same spirit as the signature on finite permutation groups. For every $n \in \NN$ and $\sigma$ in the finite permutation group $\mfS_n$, a pair $(x,y) \in \lbrace 1,2,\ldots , n \rbrace$ is an inversion of $\sigma$ if $x<y$ and $\sigma(x)>\sigma(y)$. The signature of $\sigma$ is equal to the counting measure of the set of all inversions of $\sigma$.

We keep the same definition for an inversion of an element of $\IET(\SGIET)$ and explicit how to measure this set. We define $A_{\SGIET}$ as the Boolean algebra of subsets of $\interfo{0}{1}$ generated by the set of intervals $\lbrace \interfo{a}{b} \mid a,b \in \tilde{\SGIET} \rbrace$. Thanks to the Lebesgue measure $\lambda$ on $\RR$ we obtain a Boolean algebra measure $w_{\SGIET}$ for $A_{\SGIET} \otimes A_{\SGIET}$ in the second tensor power $\bigotimes^2_{\ZZ} \tilde{\SGIET}$. We show that the set of inversions of an element of $\IET(\SGIET)$ is an union of rectangles inside $A_{\SGIET} \otimes A_{\SGIET}$. We define $\eps_{\SGIET}$ as the projection on $\Wedgealt \tilde{\SGIET}$ of the measure of the set of all inversions. We show that $\eps_{\SGIET}$ is a surjective group homomorphism. We establish the equality $-2\eps_{\SGIET}=\varphi_{\SGIET}$ and manage to prove that $\Ker(\eps_{\SGIET})$ is equal to the derived subgroup $D(\IET(\SGIET))$.

\medskip

In Section \ref{Section Abelianization of IET ^ bowtie (Gamma)} we treat the case of $\IET^{\bowtie}(\SGIET)$. \textit{A reflection map of type $a$} is an element of $\widehat{\IET^{\bowtie}}$ that reverses a right-open and left-closed subinterval of $\interfo{0}{1}$ of length $a$. A \textit{reflection of type $a$} is the image of a reflection map of type $a$ in $\IET^{\bowtie}$. A $\SGIET$-reflection is a reflection in $\IET^{\bowtie}(\SGIET)$. We know that the group $\IET^{\bowtie}(\SGIET)$ is generated by the set of reflections (see Proposition \ref{Proposition Generating set for IET bowtie SGIET}) so its abelianization is a $2$-group. We are no longer able to measure the set of inversions but we can measure the union of this set with its symmetric. By projecting on $\bigotimes^2_{\ZZ} \tilde{\SGIET} / 2(\bigotimes^2_{\ZZ} \tilde{\SGIET})$ we obtain a group homomorphism $\eps_{\SGIET}^{\bowtie}$ whose image is $\langle \lbrace a \otimes a ~ [\textup{mod}~2] \mid a \in \tilde{\SGIET} \rbrace \rangle$. We also prove that its kernel is generated by all reflections of type inside $\tilde{\SGIET} \setm (2 \tilde{\SGIET})$. 
Thus for $a \in \tilde{\SGIET} \smallsetminus 2 \tilde{\SGIET}$ every restricted rotation of type $(a,a)$ is in the kernel and by the work on $\IET(\SGIET)$ we do not expect it to be in $D(\IET^{\bowtie}(\SGIET))$. We notice that such an element is conjugate to a reflections of type $2a$. In fact we prove :

\begin{Prop}
Let $\Omega_{\SGIET}$ be the conjugate closure of the group generated by the set of all $\SGIET$-reflections of type $2\ell$ with $\ell \in \tilde{\SGIET} \setm 2 \tilde{\SGIET}$. Then :
$$
\Ker(\eps_{\SGIET}^{\bowtie}) = D(\IET^{\bowtie}(\SGIET)) \Omega_{\SGIET}
$$
\end{Prop}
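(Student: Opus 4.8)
The plan is to prove the nontrivial inclusion by passing to a quotient and recognising a quadratic form over $\ZZ/2$.

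\emph{The easy inclusion.} Since $\eps_{\SGIET}^{\bowtie}$ takes values in an abelian group, $D(\IET^{\bowtie}(\SGIET))\subseteq\Ker(\eps_{\SGIET}^{\bowtie})$; and since $\Ker(\eps_{\SGIET}^{\bowtie})$ is normal while $\Omega_{\SGIET}$ is a conjugate closure, it suffices to see that each generating $\SGIET$-reflection of type $2\ell$ (with $\ell\in\tilde{\SGIET}\setm 2\tilde{\SGIET}$) lies in the kernel. But $\eps_{\SGIET}^{\bowtie}$ sends a $\SGIET$-reflection of type $c$ to the class of $c\otimes c$ in $\bigotimes^2_{\ZZ}\tilde{\SGIET}/2\bigotimes^2_{\ZZ}\tilde{\SGIET}$, and $2\ell\otimes 2\ell=2\,(2\ell\otimes\ell)$ vanishes there; hence $\Omega_{\SGIET}\subseteq\Ker(\eps_{\SGIET}^{\bowtie})$ and $D(\IET^{\bowtie}(\SGIET))\,\Omega_{\SGIET}\subseteq\Ker(\eps_{\SGIET}^{\bowtie})$.

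\emph{Setup for the converse.} Write $N:=D(\IET^{\bowtie}(\SGIET))\,\Omega_{\SGIET}$, a normal subgroup containing the derived subgroup, so $Q:=\IET^{\bowtie}(\SGIET)/N$ is abelian, and in fact $2$-elementary because $\IET^{\bowtie}(\SGIET)$ is generated by $\SGIET$-reflections, which are involutions. From the easy inclusion, $\eps_{\SGIET}^{\bowtie}$ factors through a surjection $\bar{\eps}\colon Q\twoheadrightarrow W$ onto its image $W:=\langle\{a\otimes a\modulo{2}\mid a\in\tilde{\SGIET}\}\rangle$; I must show $\bar{\eps}$ is injective, which will give $\Ker(\eps_{\SGIET}^{\bowtie})=N$. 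For $a\in\tilde{\SGIET}\cap\interfo{0}{1}$ all $\SGIET$-reflections of type $a$ are conjugate in $\IET^{\bowtie}(\SGIET)$ (translate the reversed interval), so they share an image $\rho(a)\in Q$, and these $\rho(a)$ generate $Q$.

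\emph{A quadratic law for $\rho$.} First I would record an identity, proved by a direct computation on $\interfo{0}{1}$ modulo $\mfS_{\mathrm{fin}}$: for $a,b\in\tilde{\SGIET}$ with $a,b>0$ and $a+b\le1$,
\[
R_{a+b}=r_{a,b}\,R_a\,R_b,
\]
where $R_{a+b},R_a,R_b$ are suitable $\SGIET$-reflections of types $a+b,a,b$ and $r_{a,b}\in\IET(\SGIET)$ is the $\SGIET$-restricted rotation of type $(a,b)$ with associated intervals $\interfo{0}{a}$ and $\interfo{a}{a+b}$. Because $Q$ is abelian, the composite $\IET(\SGIET)\to Q$ kills $D(\IET(\SGIET))$ and hence, via the isomorphism $\IET(\SGIET)_{\mathrm{ab}}\simeq\Wedgealt\tilde{\SGIET}$ together with $\eps_{\SGIET}(r_{a,b})=a\wedge b$, factors through a homomorphism $\psi\colon\Wedgealt\tilde{\SGIET}\to Q$ with $[r_{a,b}]_Q=\psi(a\wedge b)$. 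Passing to $Q$ in the identity gives $\rho(a+b)=\psi(a\wedge b)\,\rho(a)\,\rho(b)$. Taking $b=a$: the element $r_{a,a}$ is an $\IET(\SGIET)$-transposition of type $a$, which for $a\in\tilde{\SGIET}\setm 2\tilde{\SGIET}$ is conjugate to a $\SGIET$-reflection of type $2a$, hence lies in $\Omega_{\SGIET}\subseteq N$, so $\psi(a\wedge a)=1$; and $\psi(a\wedge a)=1$ also for $a\in2\tilde{\SGIET}$ since then $a\wedge a=0$ in $\Wedgealt\tilde{\SGIET}$. Since $Q$ is $2$-elementary, $\psi(2\xi)=1$ for all $\xi$ (and, the elements $x\wedge x$ generating the kernel of $\Wedgealt\tilde{\SGIET}\twoheadrightarrow\bigwedge^2_{\ZZ}\tilde{\SGIET}$, $\psi$ even factors through $\bigwedge^2_{\ZZ}\tilde{\SGIET}$). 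It follows that $\rho(2a)=\psi(a\wedge a)\,\rho(a)^2=1$, and then $\rho(a+2c)=\psi(a\wedge 2c)\,\rho(a)\,\rho(2c)=\rho(a)$ for $c\in\tilde{\SGIET}$; so $\rho$ factors through a map $\bar{\rho}\colon V\to Q$ on the $\ZZ/2$-vector space $V:=\tilde{\SGIET}/2\tilde{\SGIET}$. Similarly $\psi(a\wedge b)$ depends only on $\bar a,\bar b\in V$; writing $\beta(\bar a,\bar b):=\psi(a\wedge b)$ one obtains an alternating $\ZZ/2$-bilinear form $\beta\colon V\times V\to Q$ with $\bar{\rho}(v+w)=\beta(v,w)\,\bar{\rho}(v)\,\bar{\rho}(w)$, i.e. $\bar{\rho}$ is a quadratic map with companion form $\beta$. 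Under the canonical identification $\bigotimes^2_{\ZZ}\tilde{\SGIET}/2\bigotimes^2_{\ZZ}\tilde{\SGIET}\simeq V\otimes_{\ZZ/2}V$, the composite $\gamma:=\bar{\eps}\circ\bar{\rho}$ is the quadratic map $v\mapsto v\otimes v$, with companion form $\beta_0(v,w)=v\otimes w+w\otimes v$, and $\bar{\eps}\circ\beta=\beta_0$ because $\eps_{\SGIET}^{\bowtie}(r_{a,b})=[a\otimes b+b\otimes a]$.

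\emph{Conclusion.} Finally I would build a section of $\bar{\eps}$. The image $W=\langle v\otimes v\mid v\in V\rangle$ has, for any $\ZZ/2$-basis $(e_j)_{j\in J}$ of $V$ together with a total order on $J$, the basis $\{e_j\otimes e_j\}\cup\{e_i\otimes e_j+e_j\otimes e_i\mid i<j\}$, so there is a unique $\ZZ/2$-linear map $s\colon W\to Q$ with $s(e_j\otimes e_j)=\bar{\rho}(e_j)$ and $s(e_i\otimes e_j+e_j\otimes e_i)=\beta(e_i,e_j)$. For $v=\sum_{j\in S}e_j$ the standard expansion of a quadratic map yields $\bar{\rho}(v)=\bigl(\prod_{j\in S}\bar{\rho}(e_j)\bigr)\bigl(\prod_{i<j\ \text{in}\ S}\beta(e_i,e_j)\bigr)$, which is exactly $s(v\otimes v)=s(\gamma(v))=s(\bar{\eps}(\bar{\rho}(v)))$; since the $\bar{\rho}(v)$ generate $Q$, this gives $s\circ\bar{\eps}=\mathrm{id}_Q$. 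Hence $\bar{\eps}$ is injective, so $\Ker(\eps_{\SGIET}^{\bowtie})=N=D(\IET^{\bowtie}(\SGIET))\,\Omega_{\SGIET}$.

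The main obstacle is the ``quadratic law'' step: producing the factorisation $R_{a+b}=r_{a,b}R_aR_b$ and, above all, seeing that the family $(\rho(a))_a$ descends modulo $2\tilde{\SGIET}$ to a quadratic map whose companion bilinear form is pulled back from the tensor-square form $\beta_0$ — this is precisely what lets the comparison with the divided square force $\bar{\eps}$ to be an isomorphism. Beyond that one must be careful with routine points: working modulo $\mfS_{\mathrm{fin}}$, using only reflections and restricted rotations small enough to fit inside $\interfo{0}{1}$ (legitimate since $\tilde{\SGIET}$ is dense), and checking that $\psi$, $\bar{\rho}$ and $\beta$ are genuinely well defined.
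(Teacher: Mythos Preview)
Your argument is correct and takes a genuinely different route from the paper's.

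The paper proves the hard inclusion by first treating the finitely generated case: given $f\in\Ker(\eps_{\SGIET}^{\bowtie})$, it uses Elliott's theorem on ultrasimplicial orderings to choose a $\ZZ$-basis $\{\ell_1,\dots,\ell_d\}$ of $\tilde{\SGIET}$ lying in $\tilde{\SGIET}_+$, decomposes $f$ as a product of $\SGIET$-reflections and $\SGIET$-restricted rotations whose types lie in this basis, separates out the rotations of type $(\ell_p,\ell_p)$ (which land in $\Omega_{\SGIET}$), and then reads off from $\eps_{\SGIET}^{\bowtie}(f)=0$ and the $\ZZ$-independence of the $\ell_p\otimes\ell_q$ that the remaining pieces form balanced products of reflections, hence lie in $D(\IET^{\bowtie}(\SGIET))$. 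A separate transfer lemma then reduces the general case to this one. By contrast, you work globally and structurally: you pass to the $2$-elementary quotient $Q=\IET^{\bowtie}(\SGIET)/N$, use the identification $\IET(\SGIET)_{\mathrm{ab}}\simeq\Wedgealt\tilde{\SGIET}$ to interpret the class of $r_{a,b}$ as a value of a bilinear form, and recognise $\rho$ as a quadratic map on $V=\tilde{\SGIET}/2\tilde{\SGIET}$ whose companion form is pulled back from $v\otimes w+w\otimes v$; a section built on a basis of $V$ then forces $\bar{\eps}$ to be injective. Your approach avoids the ultrasimplicial machinery and the reduction to finite rank entirely, at the cost of invoking the earlier computation of $\IET(\SGIET)_{\mathrm{ab}}$ as a black box; the paper's approach is more hands-on and self-contained relative to Section~5, but leans essentially on Elliott's theorem. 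Both are clean; yours is arguably more conceptual and explains \emph{why} the image of $\eps_{\SGIET}^{\bowtie}$ is exactly the divided square of $V$.
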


Then we construct a group homomorphism $\psi_{\SGIET} : \IET^{\bowtie}(\SGIET) \to \Wedgealt \tilde{\SGIET} / 2(\Wedgealt \tilde{\SGIET})$ which uses $\eps_{\SGIET}$, called the positive contribution of $\IET^{\bowtie}(\SGIET)$. Morally we approximate every element $f \in \IET^{\bowtie}(\SGIET)$ by elements in $\IET(\SGIET)$ and we use the group homomorphism $\eps_{\SGIET}$ on them. This can also be seen as approximate triangles by rectangles in the set of inversions. The construction is strongly dependent on a totally ordered set given by the fact that $\tilde{\SGIET}$ is ultrasimplicially ordered (see Section \ref{Section a fact about ultrasimplicial group}). For every $a \in \tilde{\SGIET} \smallsetminus 2 \tilde{\SGIET}$, this group homomorphism sends restricted rotation of type $(a,a)$ on a nontrivial element. We also prove that the image of its restriction to $\Ker(\eps_{\SGIET}^{\bowtie})$ is  $\langle \lbrace \ell \wedge \ell \modulo{2} \mid \ell \in \tilde{\SGIET} \rbrace \rangle$.

In order to obtain the following theorem we will need to notice that groups of exponent $2$ are also $\mathbb{F}_2$-vector spaces.

\begin{Thm}
For every subgroup $\SGIET$ of $\mathbb{R/Z}$ we have the following isomorphism:
\[
 \IET^{\bowtie}(\SGIET)_{\mathrm{ab}} \simeq \langle \lbrace a \otimes a ~ [\textup{mod}~2] \mid a \in \tilde{\SGIET} \rbrace \rangle \times \langle \lbrace \ell \wedge \ell \modulo{2} \mid \ell \in \tilde{\SGIET} \rbrace \rangle,
\]
where the left term of the product is in $\bigotimes^2_{\ZZ} \tilde{\SGIET} / (2\bigotimes^2_{\ZZ} \tilde{\SGIET})$ and the right one is in 
$\Wedgealt \tilde{\SGIET} / (2\Wedgealt \tilde{\SGIET})$.
\end{Thm}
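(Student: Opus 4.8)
The plan is to glue together the two homomorphisms $\eps_{\SGIET}^{\bowtie}$ and $\psi_{\SGIET}$ already produced, using throughout that $\IET^{\bowtie}(\SGIET)$ is generated by involutions (the $\SGIET$-reflections), so that $\IET^{\bowtie}(\SGIET)_{\mathrm{ab}}$, the ambient groups $\bigotimes^2_{\ZZ}\tilde{\SGIET}/2\bigotimes^2_{\ZZ}\tilde{\SGIET}$ and $\Wedgealt\tilde{\SGIET}/2\Wedgealt\tilde{\SGIET}$, and every subquotient below are $\mathbb{F}_2$-vector spaces, with all maps $\mathbb{F}_2$-linear. Write $D=D(\IET^{\bowtie}(\SGIET))$, $\Omega=\Omega_{\SGIET}$, $G_1=\langle\{a\otimes a\modulo{2}\mid a\in\tilde{\SGIET}\}\rangle$ and $G_2=\langle\{\ell\wedge\ell\modulo{2}\mid\ell\in\tilde{\SGIET}\}\rangle$. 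By the Proposition above, $\eps_{\SGIET}^{\bowtie}$ is onto $G_1$ with kernel $D\Omega$; since $D\subseteq D\Omega$ this yields a short exact sequence of $\mathbb{F}_2$-vector spaces
\[
0\longrightarrow \Omega D/D\longrightarrow \IET^{\bowtie}(\SGIET)_{\mathrm{ab}}\xrightarrow{\ \overline{\eps_{\SGIET}^{\bowtie}}\ }G_1\longrightarrow 0,
\]
which splits; so the whole problem reduces to showing $\Omega D/D\cong G_2$, after which the theorem follows by picking a splitting.

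To get a grip on $\Omega D/D$ I would first check that $\ell\mapsto[r_\ell]$ — where $r_\ell$ is a $\SGIET$-restricted rotation of type $(\ell,\ell)$ and $[\,\cdot\,]$ denotes the class in $\IET^{\bowtie}(\SGIET)_{\mathrm{ab}}$ — is a homomorphism $\tilde{\SGIET}\to\IET^{\bowtie}(\SGIET)_{\mathrm{ab}}$. This is where the already-proved identification $\IET(\SGIET)_{\mathrm{ab}}\cong\Wedgealt\tilde{\SGIET}$ enters: $r_\ell$ lies in $\IET(\SGIET)$, its set of inversions is a single rectangle whose two sides have length $\ell$, so $\eps_{\SGIET}(r_\ell)=\ell\wedge\ell$; the map $\ell\mapsto\ell\wedge\ell$ is additive in $\Wedgealt\tilde{\SGIET}$ because $(\ell+\ell')\wedge(\ell+\ell')=\ell\wedge\ell+\ell'\wedge\ell'$; and, the inclusion $\IET(\SGIET)\hookrightarrow\IET^{\bowtie}(\SGIET)$ inducing a map $\IET(\SGIET)_{\mathrm{ab}}\to\IET^{\bowtie}(\SGIET)_{\mathrm{ab}}$, we get $[r_{\ell+\ell'}]=[r_\ell]+[r_{\ell'}]$. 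Since $\IET^{\bowtie}(\SGIET)_{\mathrm{ab}}$ has exponent $2$, this homomorphism kills $2\tilde{\SGIET}$, hence factors through an $\mathbb{F}_2$-linear map $\bar\theta:\tilde{\SGIET}/2\tilde{\SGIET}\to\IET^{\bowtie}(\SGIET)_{\mathrm{ab}}$. As a $\SGIET$-restricted rotation of type $(\ell,\ell)$ is conjugate to a $\SGIET$-reflection of type $2\ell$, and $\Omega$ is by definition the conjugate closure of the group these reflections generate, the image of $\bar\theta$ is exactly $\Omega D/D$.

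It then remains to prove $\bar\theta$ injective and $\tilde{\SGIET}/2\tilde{\SGIET}\cong G_2$. For injectivity: if $\bar\theta([\ell])=0$ with $[\ell]\neq 0$, i.e. $\ell\in\tilde{\SGIET}\setm 2\tilde{\SGIET}$ yet $r_\ell\in D$, then $\psi_{\SGIET}(r_\ell)=0$ since $\psi_{\SGIET}$ annihilates $D$, contradicting the property established when $\psi_{\SGIET}$ is built, namely that $\psi_{\SGIET}$ sends a restricted rotation of type $(\ell,\ell)$ with $\ell\notin 2\tilde{\SGIET}$ to a nonzero element; so $\bar\theta$ is an isomorphism onto $\Omega D/D$. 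For $\tilde{\SGIET}/2\tilde{\SGIET}\cong G_2$ one argues inside the $\ZZ$-module $\tilde{\SGIET}$ alone: $\ell\mapsto\ell\wedge\ell$ maps $\tilde{\SGIET}$ onto the $2$-elementary kernel $T$ of the canonical surjection $\Wedgealt\tilde{\SGIET}\to\bigwedge^2_{\ZZ}\tilde{\SGIET}$ and is injective modulo $2\tilde{\SGIET}$ — $\Wedgealt(-)$ commutes with direct limits, reducing this to a finitely generated, hence free, subgroup of $\tilde{\SGIET}$, where it is the computation recalled in the introduction; moreover $\bigwedge^2_{\ZZ}\tilde{\SGIET}$ is torsion-free, so $T$ meets $2\Wedgealt\tilde{\SGIET}$ only in $0$ and therefore embeds into $\Wedgealt\tilde{\SGIET}/2\Wedgealt\tilde{\SGIET}$ with image precisely $G_2$. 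Hence $\Omega D/D\cong\tilde{\SGIET}/2\tilde{\SGIET}\cong G_2$, and the split exact sequence gives $\IET^{\bowtie}(\SGIET)_{\mathrm{ab}}\cong G_1\times G_2$. (One also checks directly that $\Ker\overline{\eps_{\SGIET}^{\bowtie}}\cap\Ker\overline{\psi_{\SGIET}}=0$, so the isomorphism can be taken to be induced by the pair $(\eps_{\SGIET}^{\bowtie},\psi_{\SGIET})$.)

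The step I expect to be the real work is the identification of $\Omega D/D$ with $G_2$: it is the only place the genuinely new $2$-torsion coming from flips is pinned down, and it rests on the non-formal combination of the conjugacy $r_\ell\sim(\SGIET\text{-reflection of type }2\ell)$, the previously computed abelianization of $\IET(\SGIET)$ (to obtain additivity of $\ell\mapsto[r_\ell]$), and the detection afforded by $\psi_{\SGIET}$ (to obtain injectivity of $\bar\theta$) — whereas $\eps_{\SGIET}^{\bowtie}$ by itself only sees the $G_1$-summand.
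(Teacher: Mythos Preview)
Your argument is correct and follows the same architecture as the paper: the short exact sequence
\[
0 \longrightarrow \Ker(\eps_\SGIET^\bowtie)/D \longrightarrow \IET^\bowtie(\SGIET)_{\mathrm{ab}} \longrightarrow \textup{Im}(\eps_\SGIET^\bowtie) \longrightarrow 0
\]
of $\mathbb{F}_2$-vector spaces splits, the right-hand term is $G_1$, and the left-hand term is identified with $G_2$ via $\Ker(\eps_\SGIET^\bowtie)=D\Omega_\SGIET$. The one genuine difference is in how you pin down $\Omega_\SGIET D/D \cong G_2$. The paper first proves the lemma $\Omega_\SGIET \cap \Ker(\psi_\SGIET) \subset D$, deduces $D = \Ker(\eps_\SGIET^\bowtie) \cap \Ker(\psi_\SGIET)$, and then reads off $\Ker(\eps_\SGIET^\bowtie)/D \cong \psi_\SGIET(\Omega_\SGIET)$, which it computes to be $G_2$. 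You instead build the surjection $\bar\theta:\tilde{\SGIET}/2\tilde{\SGIET}\to \Omega_\SGIET D/D$ (additivity of $\ell\mapsto\ell\wedge\ell$ plus the known isomorphism $\IET(\SGIET)_{\mathrm{ab}}\cong\Wedgealt\tilde{\SGIET}$), and get injectivity from the single fact $\psi_\SGIET(r_\ell)\neq 0$ for $\ell\notin 2\tilde{\SGIET}$; the identification $\tilde{\SGIET}/2\tilde{\SGIET}\cong G_2$ is then purely module-theoretic (and is essentially the paper's Proposition on ``$a\wedge a\ [\mathrm{mod}\ 2]=0 \Rightarrow a\in 2\tilde{\SGIET}$''). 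Your route is slightly more economical, since it avoids the full lemma $\Omega_\SGIET\cap\Ker(\psi_\SGIET)\subset D$, at the price of making the pair $(\eps_\SGIET^\bowtie,\psi_\SGIET)$ less visibly the abelianization map. One small point of care: $r_\ell$ is only literally defined for $0<\ell\le\tfrac12$, so $\bar\theta$ is best defined as the composite $\tilde{\SGIET}\xrightarrow{\ell\mapsto\ell\wedge\ell}\Wedgealt\tilde{\SGIET}\cong\IET(\SGIET)_{\mathrm{ab}}\to\IET^\bowtie(\SGIET)_{\mathrm{ab}}$, which your argument clearly intends.
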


The inclusion of $\IET(\SGIET)$ in $\IET^{\bowtie}(\SGIET)$ induces a natural group homomorphism $\iota$ from $\IET(\SGIET)_{\mathrm{ab}}/ (2\IET(\SGIET)_{\mathrm{ab}})$ to $\IET^{\bowtie}(\SGIET)_{\mathrm{ab}}$. We show that $\iota$ is injective for every dense subgroup $\SGIET$ of $\RR / \ZZ$. The image of $\iota$ is $\langle \lbrace p \otimes q + q \otimes p \modulo{2} \mid p,q \in \tilde{\SGIET} \rbrace \rangle \times \langle \lbrace l \wedge l \modulo{2} \mid l \in \tilde{\SGIET} \rbrace \rangle$; it is also isomorphic to $\Wedgealt \tilde{\SGIET} /(2\Wedgealt \tilde{\SGIET})$. We deduce that $\iota$ is surjective if and only if $\SGIET=2\SGIET$. In the case where $\SGIET$ if finitely generated we can precise the dimension of $\IET^{\bowtie}(\SGIET)_{\mathrm{ab}}$ and $\textup{Im}(\iota)$ as $\mathbb{F}_2$-vector spaces. The group $\tilde{\SGIET}$ is also finitely generated and we denote by $d$ its rank. Then $\IET^{\bowtie}(\SGIET)_{\mathrm{ab}}$ has dimension $\frac{d(d+3)}{2}$ and $\textup{Im}(\iota)$ has dimension $\frac{d(d+1)}{2}$.

\medskip

\noindent \textbf{Acknowledgments.} I would like to thank Friedrich Wehrung for communicating me references about ultrasimplicial groups and in particular the work of G.A. Elliott in \cite{Elliott}. I thank also Laurent Bartholdi, Mikael de la Salle and Nicolás Matte Bon for pertinent comments. I wish to express my gratitude to my advisor Yves Cornulier for his careful rereading and support.

\section{Preliminaries}

The aim of this section is to introduce some notation and gather known results used in the next sections. Let $\SGIET$ be a subgroup of $\mathbb{R/Z}$ and let $\tilde{\SGIET}$ be its preimage in $\RR$.

\bigskip

We recall that $\mfS_n$ is the permutation group of $n$ elements. For every group $G$ we denote by $G_{\mathrm{ab}}$ the abelianization of $G$, which is the quotient of $G$ by its derived subgroup.

For every function $f$ we define the \textit{support of $f$} by $\support(f):=\lbrace x \in \interfo{0}{1} : f(x) \neq x \rbrace$.

For any real interval $I$ let $I^{\circ}$ be its interior in $\RR$. If $I$ is equal to $\mathopen{[}0,t \mathclose{[}$ we agree that its interior $I^{\circ}$ is equal to $\mathopen{]}0,t \mathclose{[}$. We denote by $\Itv(\SGIET)$ the set of all intervals $\interfo{a}{b}$ with $a,b \in \tilde{\SGIET}$ and $0 \leq a <b \leq 1$. We denote by $\lambda$ the Lebesgue measure on $\RR$.

\subsection{\texorpdfstring{When $\Gamma$ is finite}{When Gamma is finite}}\label{Subsection When SGIET is finite}~

\smallskip

We assume that $\SGIET$ is a finite subgroup of $\mathbb{R/Z}$. Then there exists $n \in \NN_{\geq 1}$ such that $\tilde{\SGIET}$ is equal to $\frac{1}{n} \ZZ$. We deduce that for every $1 \leq i \leq n$, every element of $\widehat{\IET^{\bowtie}(\SGIET)}$ is continuous on the interval $\interfo{\frac{i-1}{n}}{ \frac{i}{n}}$ up to a finitely supported permutation.

Then the group $\IET(\SGIET)$ is naturally isomorphic to the finite permutation group $\mfS_n$. It is a Coxeter group of type $A_{n-1}$ so its abelianization is $\lbrace 1 \rbrace$ if $n=1$ and it is $\ZZ / 2 \ZZ$ if $n>1$. 

The group $\IET^{\bowtie}(\SGIET)$ is isomorphic to the signed symmetric group $ \mathbb{Z}/2\mathbb{Z} \wr \mfS_n$. It is a Coxeter group of type $B_n$ so its abelianization is $\ZZ / 2 \ZZ$ for $n=1$ and it is $(\ZZ / 2 \ZZ)^2$ if $n>1$.

\subsection{Descriptions of an element of IET}~

\smallskip

In order to describe an element $f$ of $\IET$ we use partitions into right-open and left-closed intervals of $\interfo{0}{1}$. Let $\cP$ be such a partition.

\begin{Def}\label{Definition partition associated}
If $f$ is continuous on the interior of every interval of $\cP$ then $\cP$ is called \textit{a partition into intervals associated with $f$}. The set of all partitions into intervals associated with $f$ is denoted by $\Pi_f$.
If every interval $I$ of $\cP$ is in $\Itv(\SGIET)$ then $\cP$ is said to be a \textit{$\SGIET$-partition}.
We denote by $f(\cP)$ the partition into intervals of $\interfo{0}{1}$ composed of all right-open and left-closed intervals whose interior is the image by $f$ of the interior of an interval in $\cP$. It is called \textit{the arrival partition of $f$ associated with $\cP$}.
\end{Def}

\begin{Rem}
For every $f \in \IET$, the set $\Pi_f$ has a minimal element for the refinement. This element is also the unique partition that has a minimal number of interval.
\end{Rem}

One can notice that for every $f \in \IET$ there is an equivalence between $f$ belongs to $\IET(\SGIET)$ and the existence of a $\SGIET$-partition inside $\Pi_f$. From now on every partition is assumed to be finite.

There are descriptions of an element of $\IET$ that are more combinatorial. They are useful to understand the $\SAF$-homomorphism and further $\eps_{\SGIET}$:

\begin{Def}
Let $f$ be an element of $\IET$ and let $\cP \in \Pi_f$; let $n$ be the number of intervals of $\cP$. For $1 \leq i \leq n$ let $I_i$ (resp.\ $J_i$) be the consecutive intervals composing $\cP$ (resp.\ $f(\cP)$). Let $\alpha$ be the $n$-tuple such that $\alpha_i$ is the length of $I_i$ for every $1 \leq i \leq n$. There exists $\sigma \in \mfS_n$ such that $f(I_i^{\circ})=J_{\sigma(i)}^{\circ}$. Then we say that $(\alpha,\tau)$ is a \textit{combinatorial description of $f$}. If each component of $\alpha$ is in $\tilde{\SGIET}$ we say that $(\alpha,\tau)$ is a \textit{$\SGIET$-combinatorial description of $f$}. The partition $\cP$ can also be called the \textit{partition associated with $(\alpha,\tau)$}.
\end{Def}

\begin{Prop}\label{Proposition SAF invariant with combinatorial description}
Let $f \in \IET$ and let $(\alpha,\tau)$ be a combinatorial description of $f$; let $n$ be the length of $\alpha$. Then we have:
$$
\varphi(f)=\sum\limits_{j=1}^{n} \big( \sum\limits_{\substack{i: \\ \tau(i) < \tau(j)}} \alpha_i - \sum\limits_{i<j} \alpha_i\big) \wedge \alpha_j
$$
\end{Prop}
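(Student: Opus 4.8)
The plan is to prove Proposition~\ref{Proposition SAF invariant with combinatorial description} in two steps: first establish the formula for one fixed combinatorial description via a direct computation of the sum $\sum_{a \in \RR} a \otimes \lambda((f-\Id)^{-1}(\{a\}))$, and second observe that the right-hand side does not depend on the choice of $(\alpha,\tau)$ (this is automatic once the first step is done, since $\varphi(f)$ is intrinsic, but it is worth remarking).

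For the first step, fix a combinatorial description $(\alpha,\tau)$ with associated partition $\cP$ consisting of consecutive intervals $I_1,\dots,I_n$ of lengths $\alpha_1,\dots,\alpha_n$, and arrival partition $f(\cP)$ with consecutive intervals $J_1,\dots,J_n$, where $f(I_i^\circ) = J_{\tau(i)}^\circ$. On the interior of $I_j$ the map $f$ is a translation by a constant $c_j$, namely $f$ sends $I_j$ onto $J_{\tau(j)}$. The left endpoint of $I_j$ is $\sum_{i<j}\alpha_i$ and the left endpoint of $J_{\tau(j)}$ is $\sum_{i:\,\tau(i)<\tau(j)}\alpha_i$ (the $J$'s are ordered by their index, and the $J$-intervals have the same lengths as the corresponding $I$-intervals under $\tau^{-1}$, so the length of $J_{\tau(i)}$ is $\alpha_i$). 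Hence
\[
c_j \;=\; \sum_{i:\,\tau(i)<\tau(j)} \alpha_i \;-\; \sum_{i<j}\alpha_i .
\]
Now $(f-\Id)$ takes the value $c_j$ precisely on $I_j^\circ$ (up to the finite set of endpoints, which has measure zero), so $\lambda\big((f-\Id)^{-1}(\{a\})\big) = \sum_{j:\,c_j = a}\alpha_j$ for each $a \in \RR$. Plugging this into the defining formula for $\varphi(f)$ and regrouping by $j$ rather than by $a$, one gets
\[
\varphi(f) \;=\; \Big[\sum_{a\in\RR} a\otimes \textstyle\sum_{j:\,c_j=a}\alpha_j\Big] \;=\; \Big[\sum_{j=1}^n c_j \otimes \alpha_j\Big] \;=\; \sum_{j=1}^n c_j \wedge \alpha_j,
\]
which is exactly the claimed formula after substituting the expression for $c_j$. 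One should check that the rearrangement is legitimate: the sum over $a$ is finite (only finitely many values $c_j$ occur) and $a\otimes(x+y) = a\otimes x + a\otimes y$ in $\bigotimes^2_\ZZ \RR$, so collecting the $\alpha_j$ with a common $c_j$ and then splitting back apart is valid in the tensor power, hence in the quotient $\bigwedge^2_\ZZ \RR$.

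The main (minor) obstacle is bookkeeping with the endpoints and the ordering of the arrival intervals: one must be careful that $J_1,\dots,J_n$ really are indexed so that $f(I_i^\circ)=J_{\tau(i)}^\circ$ and that they are listed left-to-right in increasing index, so that the left endpoint of $J_{\tau(j)}$ is the sum of the lengths of all $J_k$ with $k<\tau(j)$, i.e.\ of all $\alpha_i$ with $\tau(i)<\tau(j)$. Everything else is a routine finite manipulation of sums, and the well-definedness of the right-hand side across different combinatorial descriptions follows from the intrinsic nature of $\varphi(f)$ together with the verification just performed; alternatively it can be checked directly by seeing that refining $\cP$ (the only move needed to pass between two descriptions, up to common refinement) leaves the sum unchanged, since subdividing an interval $I_j$ into two pieces with the same translation constant $c_j$ replaces $c_j\wedge\alpha_j$ by $c_j\wedge\alpha_j' + c_j\wedge\alpha_j''$ with $\alpha_j'+\alpha_j'' = \alpha_j$.
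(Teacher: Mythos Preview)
Your proof is correct and follows essentially the same approach as the paper: compute the translation constant $c_j$ on each interval $I_j$ by comparing left endpoints, obtain $c_j=\sum_{i:\tau(i)<\tau(j)}\alpha_i-\sum_{i<j}\alpha_i$, and plug into the definition of $\varphi$. The paper's version is simply terser, omitting the regrouping argument and the remark on independence of the combinatorial description (which, as you note, is automatic since $\varphi(f)$ is intrinsic).
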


\begin{proof}
Let $f \in \IET$ and $(\alpha,\tau)$ be a combinatorial description of $f$. Let $n$ be the length of $\alpha$ and let $\lbrace I_1,I_2, \ldots ,I_n \rbrace$ be the partition associated with $(\alpha,\tau)$. For each $j$ we denote by $v(j)$ the value of $f-\Id$ on $I_j$. Thus we deduce that $\varphi(f)=\sum\limits_{j=1}^{n} v(j) \wedge \lambda(I_j)=\sum\limits_{j=1}^{n} v(j) \wedge \alpha_j$. Also we know that $v(j)=\sum\limits_{\substack{i \\ \tau(i) < \tau(j)}} \alpha_i - \sum\limits_{i<j} \alpha_i$ (see Figure \ref{figure valeur de translation} below), and this gives the conclusion.
\end{proof}

\begin{figure}[ht]
\includegraphics[width=0.4\linewidth]{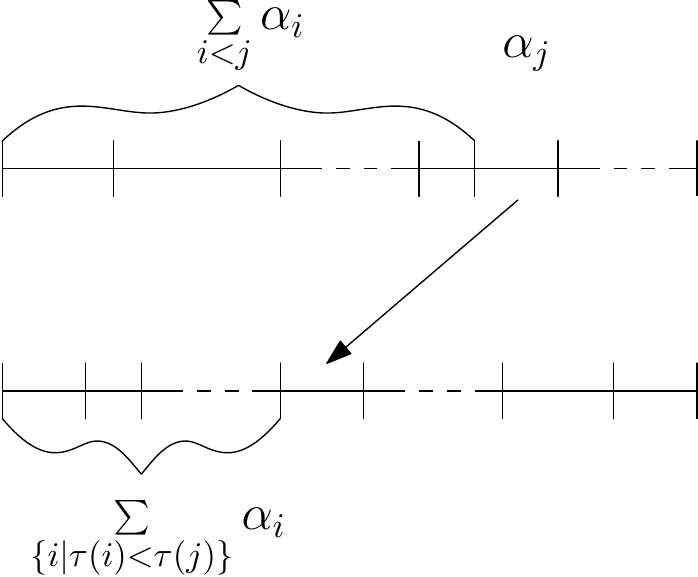}
\caption{\footnotesize{Illustration for the value of the $\SAF$-homomorphism in Proposition \ref{Proposition SAF invariant with combinatorial description}}}
\label{figure valeur de translation}
\end{figure}

\subsection{Three important families}~

\smallskip

There are two families of elements of $\IET$ appearing in this work and one more for $\IET^{\bowtie}$.

\begin{Def}\label{Definition IET transposition}
Let $a \in \interfo{0}{\frac{1}{2}}$ and $f$ be in $\IET$. We call $f$ an \textit{$\IET$-transposition of type $a$} if it swaps two subintervals of $\interfo{0}{1}$ of length $a$ with non-overlapping support while fixing the rest of $\interfo{0}{1}$. If in addition $f$ belongs to $\IET(\SGIET)$ then $f$ is called an \textit{$\IET(\SGIET)$-transposition of type $a$}.
\end{Def}

Also it is natural to consider the identity as an $\IET(\SGIET)$-transposition. We precise it whenever it is needed.

One can notice that, for $f$ an $\IET$-transposition of type $a$, asking $f$ to be in $\IET(\SGIET)$ implies that $a$ is in $\tilde{\SGIET} \cap \interfo{0}{1}$ but the converse is false.

These elements are called \textit{interval swap maps} by Y. Vorobets in \cite{Vorobets2011}. They are elements of order $2$ and generate $D(\IET)$. More precisely, we describe below (Figure \ref{Figure IET transposition is inside D(IET)}) how such an element can be seen in $D(\IET)$. For $f$ an $\IET(\SGIET)$-transposition of type $a$, this suggests to ask if $a$ belongs to $2 \tilde{\SGIET}$ or not, before concluding that $f$ is in $D(\IET(\SGIET))$ or not.

\begin{figure}[ht]
\includegraphics[width=\linewidth]{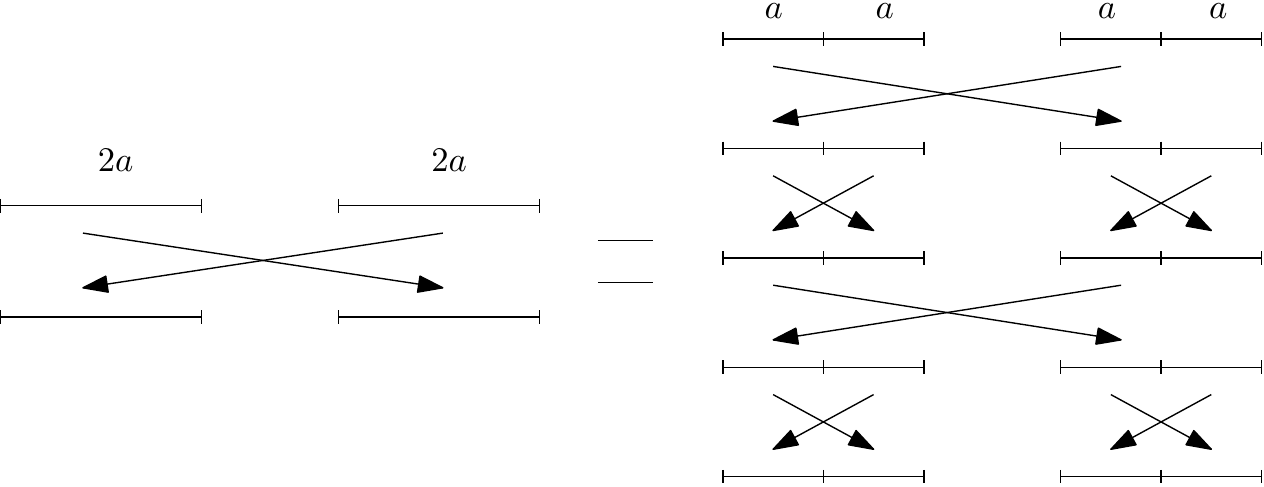}
\caption{\footnotesize{Illustration of the property that an $\IET$-transposition of type $2a$ is a commutator of two $\IET$-transpositions of type $a$.}}
\label{Figure IET transposition is inside D(IET)}
\end{figure}

A \textit{restricted rotation of type $(a,b)$} is an element $r$ of $\IET$ such that there exist consecutive intervals $I$ and $J$ of length $a$ and $b$ respectively with $\sup(I) = \inf(J)$, where $r$ is the translation by $b$ on $I$ and the translation by $-a$ on $J$ (and the identity outside $I \cup J$). The two intervals $I$ and $J$ are called the intervals associated with $r$.

\begin{Def}
Let $a,b \in \interfo{0}{1}$ with $0 \leq a+b \leq 1$ and $f$ be in $\IET$. We call $f$ an \textit{$\IET$-restricted rotation of type $(a,b)$} if there exist consecutive intervals $I$ and $J$ of length $a$ and $b$ respectively with $\sup(I) = \inf(J)$, where $f$ is the translation by $b$ on $I$ and the translation by $-a$ on $J$ (and the identity outside $I \cup J$).The two intervals $I$ and $J$ are called the intervals associated with $f$. If in addition $f$ belongs to $\IET(\SGIET)$ then $f$ is called an \textit{$\SGIET$-restricted rotation of type $(a,b)$}.
\end{Def}

Again one can notice that for $f$ an $\IET$-restricted rotation of type $(a,b)$, if $f$ is in $\IET(\SGIET)$ this implies that $a$ and $b$ are in $\tilde{\SGIET} \cap \interfo{0}{1}$, but the converse is false.

The case $a=b$ coincide with an $\IET$-transposition of type $a$.

\medskip

The family of restricted rotations generates $\IET$ and we have more precision on the type of restricted rotations in the decomposition, by the following fact due to Vorobets in \cite{Vorobets2011}:

\begin{Lem}[Vorobets]\label{Lemma Decomposition in IET(Gamma)-restricted rotations}
Any transformation $f$ in $\IET$ is a product of $\IET$-restricted rotations. Furthermore for every $\cP \in \Pi_f$ there exists a decomposition of $f$ into $\IET$-restricted rotations with type inside the set of length of the intervals in $\cP$.
\end{Lem}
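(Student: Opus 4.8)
The plan is to reduce the whole statement to the combinatorics of $\mfS_n$, using a single geometric observation: a restricted rotation is nothing but the swap of two adjacent blocks. First I would fix a partition $\cP=\{I_1,\dots,I_n\}\in\Pi_f$ with $I_1,\dots,I_n$ the consecutive intervals, set $\alpha_i=\lambda(I_i)$, and take the associated combinatorial description $(\alpha,\tau)$, so that $f(I_i^{\circ})=J_{\tau(i)}^{\circ}$ where $J_1,\dots,J_n$ are the consecutive intervals of the arrival partition $f(\cP)$. Since $f$ is a piecewise translation we have $\lambda(J_k)=\alpha_{\tau^{-1}(k)}$, so as an arrangement of blocks the map $f$ sends the source order $I_1,\dots,I_n$ to the target order $I_{\tau^{-1}(1)},\dots,I_{\tau^{-1}(n)}$. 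The first, unqualified assertion of the lemma follows from the refined one by choosing any such $\cP$ (for instance the minimal partition in $\Pi_f$), so it suffices to prove the refined statement.

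The key observation is that if $A,B$ are consecutive intervals of $\interfo{0}{1}$ with $\sup A=\inf B$ and lengths $a,b$, then the restricted rotation of type $(a,b)$ with associated intervals $A,B$ is exactly the element of $\IET$ fixing $\interfo{0}{1}\setminus(A\cup B)$ pointwise and rigidly interchanging the two blocks $A$ and $B$. Thus, relative to any partition of $\interfo{0}{1}$ into consecutive blocks two of which are the adjacent blocks $A$ and $B$, a restricted rotation realizes precisely the transposition of these two blocks, and its type is the pair of their lengths.

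Next I would prove, by induction on $n$, the following sorting claim: given any two arrangements of $n$ fixed intervals as consecutive blocks filling $\interfo{0}{1}$, there is a composition of restricted rotations, each swapping two \emph{currently adjacent} blocks, carrying the first arrangement to the second. By a selection/bubble sort one brings the block destined for the leftmost free target position into place through a sequence of adjacent swaps with its current neighbours, then recurses on the remaining $n-1$ blocks. At every stage the blocks still partition $\interfo{0}{1}$ into consecutive intervals, so by the key observation each adjacent swap is a genuine restricted rotation of $\IET$; moreover the two blocks it exchanges are always among the original $n$ intervals, whence its type is $(\alpha_p,\alpha_q)$ for suitable $p,q$, and in particular both coordinates lie in the set of lengths of the intervals of $\cP$. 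Applying this with source arrangement $I_1,\dots,I_n$ and target arrangement $I_{\tau^{-1}(1)},\dots,I_{\tau^{-1}(n)}$ produces a composition $g$ of restricted rotations with all types in $\{\alpha_1,\dots,\alpha_n\}$. To conclude $g=f$, I note that by construction $g$ sends each $I_i^{\circ}$ onto $J_{\tau(i)}^{\circ}$ by a translation, so $g$ admits $(\alpha,\tau)$ as a combinatorial description relative to $\cP$; since a right-continuous piecewise translation is determined by its restriction to the interiors of the intervals of a partition in $\Pi_g$, this forces $g=f$.

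The only genuinely delicate point is the bookkeeping relating the composition order of the $\IET$-maps to the word in $\mfS_n$ and the distinction between $\tau$ and $\tau^{-1}$: each restricted rotation in the composite must be read as an element of $\IET$ acting on $\interfo{0}{1}$ whose associated intervals are the positions the relevant blocks occupy \emph{after} the previously applied rotations, and I must check that at every intermediate stage those two blocks really are consecutive in $\interfo{0}{1}$. This is automatic in the selection/bubble sort, but tracking the intermediate arrangements carefully is what is needed to make the final identification $g=f$ rigorous; I expect this to be the main (and essentially only) obstacle.
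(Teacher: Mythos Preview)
The paper does not give a proof of this lemma; it merely attributes the result to Vorobets \cite{Vorobets2011} and cites it as a known fact. Your argument is correct and is essentially the standard one: reduce to the combinatorial description $(\alpha,\tau)$, observe that a restricted rotation is precisely the swap of two adjacent blocks, and then realize the permutation $\tau$ by a sequence of adjacent transpositions via a bubble or selection sort, each step being a restricted rotation whose type is a pair of lengths from $\{\alpha_1,\dots,\alpha_n\}$. The bookkeeping you flag (that after each swap the partition is still into consecutive intervals, so the next swap is again a genuine restricted rotation with the right type) is exactly what makes the argument work, and your final identification $g=f$ via the shared combinatorial description is fine since an element of $\IET$ is determined by any combinatorial description. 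There is no gap.
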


In particular if we start with an element of $\IET(\SGIET)$ we can find a decomposition into $\SGIET$-restricted rotations that satisfies the condition about the type of restricted rotations.

The group $\IET$ can be seen as a subgroup of $\IET^{\bowtie}$; thus the notion of restricted rotation still makes sense in $\IET^{\bowtie}$. We now add a family that occurs only in $\IET^{\bowtie}$.

\begin{Def}\label{Definition reflection map and reflection}
For every right-open and left-closed subinterval $I$ of $\interfo{0}{1}$ we define the \textit{$I$-reflection map} as the element of $\widehat{\IET^{\bowtie}}$ that reverses $I^{\circ}$ while fixing the rest of $\interfo{0}{1}$. The type of an $I$-reflection map is the length of $I$. We define the \textit{$I$-reflection} as the image of the $I$-reflection map in $\IET^{\bowtie}$. The type of an $I$-reflection is the length of $I$. A \textit{$\SGIET$-reflection} is an $I$-reflection for some $I \in \Itv(\SGIET)$.
\end{Def}

The set of all reflections is a generating subset of $\IET^{\bowtie}$. This follows from Proposition \ref{Proposition decomposition in widehat IET bowtie} and the fact that the restricted rotation, whose intervals associated are $I$ and $J$, is the product of the $I$-reflection, the $J$-reflection and the $(I \sqcup J)$-reflection. More precisely, with this proposition and Lemma \ref{Lemma Decomposition in IET(Gamma)-restricted rotations} we obtain the following:

\begin{Prop}\label{Proposition Generating set for IET bowtie SGIET}
The set of all $\SGIET$-reflections is a generating subset of $\IET^{\bowtie}(\SGIET)$.
\end{Prop}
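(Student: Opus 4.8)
The statement to prove is the final Proposition \ref{Proposition Generating set for IET bowtie SGIET}: that the set of all $\SGIET$-reflections generates $\IET^{\bowtie}(\SGIET)$. The plan is to combine three ingredients already in place: (i) the (to-be-cited) Proposition \ref{Proposition decomposition in widehat IET bowtie}, which should say that any element of $\widehat{\IET^{\bowtie}}$ (or of $\IET^{\bowtie}$) admitting a $\SGIET$-partition decomposes, up to a finitely supported permutation, as a product of a genuine interval exchange in $\IET(\SGIET)$ with some reflections carried by intervals of the partition; (ii) Lemma \ref{Lemma Decomposition in IET(Gamma)-restricted rotations}, which decomposes an element of $\IET(\SGIET)$ into $\SGIET$-restricted rotations whose types are lengths of intervals of a chosen $\SGIET$-partition; and (iii) the elementary identity, stated just before the proposition, that a restricted rotation with associated intervals $I$ and $J$ equals the product of the $I$-reflection, the $J$-reflection, and the $(I\sqcup J)$-reflection.

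The steps, in order, would be as follows. First, fix $f \in \IET^{\bowtie}(\SGIET)$; by definition of $\IET^{\bowtie}(\SGIET)$ as the image of $\widehat{\IET^{\bowtie}(\SGIET)}$ in $\mfS(X)/\mfS_{\mathrm{fin}}(X)$, lift $f$ to some $\hat f \in \widehat{\IET^{\bowtie}(\SGIET)}$, which is continuous outside $\SGIET$ and hence admits a $\SGIET$-partition $\cP$ into intervals of $\Itv(\SGIET)$. Second, apply Proposition \ref{Proposition decomposition in widehat IET bowtie} to write $\hat f$, modulo $\mfS_{\mathrm{fin}}(X)$, as $g \cdot \rho$ where $g \in \IET(\SGIET)$ has $\cP$ (or a $\SGIET$-refinement of it) as an associated partition and $\rho$ is a product of $\SGIET$-reflections carried by the intervals of that partition — each such interval being in $\Itv(\SGIET)$, so these are genuinely $\SGIET$-reflections. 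Third, apply Lemma \ref{Lemma Decomposition in IET(Gamma)-restricted rotations} to $g$ to write it as a product of $\SGIET$-restricted rotations whose associated intervals $I,J$ are unions of consecutive intervals of the $\SGIET$-partition, hence themselves lie in $\Itv(\SGIET)$. Fourth, expand each such restricted rotation via identity (iii) into a product of three $\SGIET$-reflections. Concatenating, $f$ is a product of $\SGIET$-reflections; since each $\SGIET$-reflection is by definition an element of $\IET^{\bowtie}(\SGIET)$, the reverse inclusion is trivial, and we conclude equality.

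I expect the only real subtlety to lie in step two: making sure that the auxiliary decomposition Proposition \ref{Proposition decomposition in widehat IET bowtie} is stated in a $\SGIET$-equivariant form, i.e.\ that when $\hat f$ respects a $\SGIET$-partition, the interval-exchange factor $g$ and all the reflection factors can be chosen to respect the \emph{same} $\SGIET$-partition (so that all intervals involved have endpoints in $\tilde\SGIET$). This is essentially bookkeeping — on each interval of $\cP$, the map $\hat f$ is either an orientation-preserving translation or an orientation-reversing isometry; pulling out the reflection on that interval where it reverses orientation produces an orientation-preserving map, which is then an element of $\IET$ respecting $\cP$, and a refinement argument handles the case where the arrival partition must also be adapted. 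Everything else is a formal chaining of the three cited results, so there is no computation to grind through; the proof should be a short paragraph.

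\begin{proof}
Since every $\SGIET$-reflection lies in $\IET^{\bowtie}(\SGIET)$, only one inclusion needs proof. Let $f \in \IET^{\bowtie}(\SGIET)$ and lift it to $\hat f \in \widehat{\IET^{\bowtie}(\SGIET)}$; then $\hat f$ is continuous outside $\SGIET$, so it admits a $\SGIET$-partition $\cP$. By Proposition \ref{Proposition decomposition in widehat IET bowtie}, modulo $\mfS_{\mathrm{fin}}(X)$ we may write $\hat f = g\,\rho$, where $g \in \IET(\SGIET)$ admits a $\SGIET$-partition and $\rho$ is a product of $\SGIET$-reflections (the reflections on those intervals of the relevant $\SGIET$-partition on which $\hat f$ reverses orientation; each such interval lies in $\Itv(\SGIET)$). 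By Lemma \ref{Lemma Decomposition in IET(Gamma)-restricted rotations}, $g$ is a product of $\SGIET$-restricted rotations whose associated intervals $I,J$ are unions of consecutive intervals of a $\SGIET$-partition, hence lie in $\Itv(\SGIET)$. Each such restricted rotation is the product of the $I$-reflection, the $J$-reflection and the $(I \sqcup J)$-reflection, all of which are $\SGIET$-reflections. Concatenating these factorizations expresses $f$ as a product of $\SGIET$-reflections in $\IET^{\bowtie}(\SGIET)$, which proves the claim.
\end{proof}
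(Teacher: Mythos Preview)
Your proof is correct and follows exactly the approach the paper indicates: the paper does not give a formal proof of this proposition but simply remarks (just before the statement) that it follows from Proposition \ref{Proposition decomposition in widehat IET bowtie}, Lemma \ref{Lemma Decomposition in IET(Gamma)-restricted rotations}, and the identity expressing a restricted rotation as a product of three reflections. Your write-up faithfully fleshes out this sketch, including the necessary check that all intervals occurring along the way lie in $\Itv(\SGIET)$.
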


We make explicit some elements of the conjugacy class of a reflection.

\begin{Prop}\label{Proposition reflections of same type are conjugated}
Two reflections $r,s$ which have the same type are conjugate in $\IET^{\bowtie}(\SGIET)$.
\end{Prop}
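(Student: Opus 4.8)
The plan is to produce the conjugating element inside $\IET(\SGIET)$ itself, by moving the interval reversed by $r$ onto the one reversed by $s$ via an honest translation. Write $r$ as the $I$-reflection and $s$ as the $I'$-reflection, with $I=\interfo{c}{c+a}$ and $I'=\interfo{c'}{c'+a}$ where $c,c+a,c',c'+a\in\tilde{\SGIET}$ (so in particular $a\in\tilde{\SGIET}$), and set $\delta=c'-c\in\tilde{\SGIET}$. If $\delta=0$ then $r=s$ and we are done, so assume $\delta\neq0$.

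First I would construct $h\in\IET(\SGIET)$ that restricts to the translation $x\mapsto x+\delta$ on $I$ and satisfies $h(I^{\circ})=(I')^{\circ}$. If $\delta>0$, take for $h$ the $\SGIET$-restricted rotation of type $(a,\delta)$ whose associated intervals are $I$ and $\interfo{c+a}{c'+a}$; these are consecutive intervals of $\Itv(\SGIET)$ and $c'+a\leq1$, so $h$ is a legitimate element of $\IET(\SGIET)$ which, by definition, translates $I$ by $+\delta$. If $\delta<0$, take instead the $\SGIET$-restricted rotation of type $(-\delta,a)$ whose associated intervals are $\interfo{c'}{c}$ and $I$; by definition this translates $I$ by $\delta$. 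In both cases $h\in\IET(\SGIET)\subseteq\IET^{\bowtie}(\SGIET)$ and $h$ restricts to an orientation-preserving map on $I$.

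It then remains to check that $hrh^{-1}=s$. Lifting everything to $\widehat{\IET^{\bowtie}}$, where $h$ is an honest piecewise translation and the reflections are represented by the given reflection maps, a direct computation shows that $h\circ(\text{$I$-reflection map})\circ h^{-1}$ fixes every point outside $(I')^{\circ}$ and sends $y\in(I')^{\circ}$ to $2c+a-(y-\delta)+\delta=2c'+a-y$, which is precisely the reversal of $(I')^{\circ}$ about its midpoint $c'+\tfrac{a}{2}$, i.e.\ the $I'$-reflection map. Taking images in $\IET^{\bowtie}(\SGIET)$ yields $hrh^{-1}=s$, so $r$ and $s$ are conjugate, as claimed. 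The computation is routine once conventions are fixed; the only delicate point is the behaviour at the two endpoints of the intervals, which is exactly why the statement is phrased in terms of reflections (elements of $\IET^{\bowtie}$) rather than reflection maps. Note also that the argument uses no density assumption on $\SGIET$, and that $h$ can always be taken inside $\IET(\SGIET)$.
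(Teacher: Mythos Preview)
Your proof is correct. The paper actually states this proposition without proof, so there is nothing to compare against; your argument---conjugating by a single $\SGIET$-restricted rotation that carries $I$ onto $I'$ by translation---is the natural one, and the verification that $hrh^{-1}$ agrees with the $I'$-reflection on $(I')^{\circ}$ and is the identity elsewhere is accurate. Your closing remark that endpoint discrepancies are absorbed when passing from $\widehat{\IET^{\bowtie}}$ to $\IET^{\bowtie}$ is exactly the right observation.
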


\begin{Prop}\label{Proposition reflections of type 2a are conjugated to restricted rotation of type (a,a)}
Let $a \in \interfo{0}{1}$. A reflection of type $2a$ is conjugate to a restricted rotation of type $(a,a)$.
\end{Prop}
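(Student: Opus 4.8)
The plan is to prove a slightly sharper statement: for any restricted rotation $r$ of type $(a,a)$, with associated consecutive intervals $I$ (on the left) and $J$ (on the right), $r$ is conjugate in $\IET^{\bowtie}(\SGIET)$ to the $(I\sqcup J)$-reflection, which is a reflection of type $2a$ since $\lambda(I\sqcup J)=2a$. Combined with Proposition~\ref{Proposition reflections of same type are conjugated}, which makes all reflections of type $2a$ conjugate to one another, this yields the proposition in both readings: every restricted rotation of type $(a,a)$ is conjugate to a reflection of type $2a$, and every reflection of type $2a$ is conjugate, through such an $r$, to a restricted rotation of type $(a,a)$.

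The key point is that the conjugator is a \emph{half-reflection}. With $r$, $I$, $J$ as above, write $R$ for the $(I\sqcup J)$-reflection and let $S$ be the $I$-reflection; then $S$ is an involution, its breakpoints lie among those of $r$, so $S\in\IET^{\bowtie}(\SGIET)$ (and likewise $R$) as soon as $r$ does. The crux is the identity $SRS^{-1}=r$. I would check it by a short case analysis after the affine normalisation $I^{\circ}=\mathopen{]}0,a\mathclose{[}$, $J^{\circ}=\mathopen{]}a,2a\mathclose{[}$, for which $R$ is $x\mapsto 2a-x$ on $\mathopen{]}0,2a\mathclose{[}$ and $S$ is $x\mapsto a-x$ on $\mathopen{]}0,a\mathclose{[}$, both the identity elsewhere: on $I^{\circ}$ one computes $x\mapsto a-x\mapsto a+x\mapsto a+x$, the translation by $+a$ onto $J^{\circ}$; on $J^{\circ}$ one computes $x\mapsto x\mapsto 2a-x\mapsto x-a$, the translation by $-a$ onto $I^{\circ}$; off $\mathopen{]}0,2a\mathclose{[}$ every factor fixes the point. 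This is exactly the restricted rotation $r$.

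I do not expect a real obstacle here. The only genuine decision is to conjugate by the half-reflection $S$, supported on just one of the two sub-intervals, rather than by something more elaborate; once that is found, the verification is routine. The one subtlety worth recording is that the whole computation must be read in $\IET^{\bowtie}$, i.e.\ modulo $\mfS_{\mathrm{fin}}(X)$, so that the difference between a reflection and a reflection map, and the behaviour at the finitely many endpoints, are immaterial; and one must check, as above, that $S$ and $R$ stay inside $\IET^{\bowtie}(\SGIET)$ together with $r$, so that the conjugacy indeed takes place in that group.
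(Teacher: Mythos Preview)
Your proof is correct and follows essentially the same approach as the paper: the paper also conjugates the $(I\sqcup J)$-reflection by the $I$-reflection (the ``half-reflection'') to obtain the restricted rotation of type $(a,a)$. Your explicit coordinate check and the remark about remaining inside $\IET^{\bowtie}(\SGIET)$ are a bit more detailed than the paper's one-line argument, but the idea and the conjugator are identical.
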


\begin{proof}
Let $r$ be such a reflection and let $I$ be the interval reversed by $r$. By cutting $I$ into two intervals of same size $I_1$ and $I_2$ we obtain that $r$ is conjugate to the restricted rotation that permutes $I_1$ and $I_2$, by the $I_1$-reflection.	
\end{proof}

\subsection{Ultrasimplicial groups}\label{Section a fact about ultrasimplicial group}~

\smallskip

Let us introduce some classical terminology from the theory of ordered abelian groups. An ordered abelian group is an abelian group endowed with an invariant partial ordering.

For any subgroup $H$ of $\RR$ we denote by $H_{+}:= \lbrace x \in H \mid x \geq 0 \rbrace$ the positive cone of $H$. A difficulty is that $H_+$ is not, in general, finitely generated as a subsemigroup. For every subset $B$ of $\RR$ we denote by $\Vect_{\NN}(B)$ the subsemigroup generated by $B$.

A subsemigroup of an abelian group is \textit{simplicial} if it is generated, as a subsemigroup, by a finite $\ZZ$-independent subset, and it is \textit{ultrasimplicial} if it is the filtered union of simplicial subsemigroups. An ordered abelian group is \textit{simplicially ordered} if its positive cone is simplicial and is \textit{ultrasimplicially ordered} if its positive cone is ultrasimplicial.
The next theorem is proved by G.A. Elliott in \cite{Elliott}:

\begin{Thm}\label{Theorem ultrasimplicially ordered group}
Every totally ordered abelian group is ultrasimplicially ordered.
\end{Thm}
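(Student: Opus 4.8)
The plan is to reduce the statement to a concrete lattice-combinatorial lemma about a finite family of positive elements, and then to prove that lemma by a positivity-preserving Euclidean reduction of bases.

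\textbf{Reduction to a finitary statement.} First I would observe that a positive cone $G_+$ is ultrasimplicial precisely when every finitely generated subsemigroup of $G_+$ is contained in a simplicial one. Indeed, a single nonzero element is $\ZZ$-independent (a total order forces torsion-freeness), so $\Vect_{\NN}(\{g\})$ is simplicial and $G_+=\bigcup_g \Vect_{\NN}(\{g\})$; and once the finitary statement is established the family of all simplicial subsemigroups is directed (apply it to the union of two finite independent generating sets) and covers $G_+$, which is ultrasimpliciality. So it suffices to prove: for every finite $\{x_1,\dots,x_k\}\subseteq G_+$ there are $\ZZ$-independent positive $u_1,\dots,u_m$ with each $x_i\in\Vect_{\NN}(\{u_1,\dots,u_m\})$. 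Replacing $G$ by the subgroup $\langle x_1,\dots,x_k\rangle$ — finitely generated and torsion-free, hence isomorphic to $\ZZ^n$ — and noting that simpliciality is intrinsic to the elements, I may assume $G\cong\ZZ^n$ with the induced total order and that $x_1,\dots,x_k$ generate it.

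\textbf{The core lemma.} In this setting the goal becomes: find a basis $u_1,\dots,u_n$ of $\ZZ^n$ with every $u_i>0$ and every $x_j$ in the nonnegative integer cone $\Vect_{\NN}(\{u_1,\dots,u_n\})$. Equivalently, I seek a unimodular simplicial cone lying in the open half-space of positive elements and containing all the $x_j$. A positive basis exists to start with (flip the sign of any negative basis vector, a unimodular operation, using that no basis vector is zero). Given such a basis, I would record each $x_j$'s coordinate vector in it and define a potential equal to the total size of the negative coordinates; the lemma holds once the potential is $0$. To decrease it, suppose some $x_j$ has a negative coordinate on $u_\ell$; since $x_j>0$ and all $u_i>0$, some coordinate on $u_p$ is positive. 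The total order lets me compare $u_p$ with multiples of $u_\ell$ and perform a Euclidean step $u_p\mapsto u_p-u_\ell$ (or $u_\ell\mapsto u_\ell-u_p$, whichever stays positive), an elementary unimodular move that keeps the basis positive and rewrites the coordinates of all $x_j$. The crucial point, which is exactly where \emph{total} rather than partial ordering is used, is that each comparison and subtraction has a definite sign, so an admissible move is always available.

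\textbf{Organizing the induction and the main obstacle.} To control termination I would induct on the rank $n$ via the chain of convex subgroups of $\ZZ^n$: if $G$ is non-archimedean, a largest proper convex subgroup $C$ satisfies $0<C<G$, and $C$ and $G/C$ have smaller rank and inherit total orders, so the reduction is carried out first modulo $C$ and then inside $C$; the archimedean case embeds order-isomorphically into $\RR$, where the basis vectors are genuine positive reals and the subtractions are ordinary. The base case $n=1$ is immediate, since $G_+=\Vect_{\NN}(\{g\})$ for the positive generator $g$ already contains every $x_j$. \textbf{The main obstacle is the termination of this multidimensional Euclidean reduction}: a single subtraction can enlarge one basis vector while shrinking another, and the positivity constraint may forbid the subtraction that most directly removes a negative coordinate. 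One must therefore exhibit a well-founded potential — for instance a lexicographic combination of the real magnitudes of the basis vectors (in the archimedean case) with the total negativity — that strictly decreases at every admissible step and cannot stall while a negative coordinate remains. Producing such a potential, i.e.\ showing the total order always leaves an admissible decreasing move available, is the technical heart of Elliott's theorem; everything else is the bookkeeping above.
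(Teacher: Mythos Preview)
The paper does not prove this theorem: it is quoted as a result of Elliott (\cite{Elliott}) and immediately followed by corollaries, with no argument given in the text. So there is no ``paper's own proof'' to compare against; the author treats it as a black box imported from the literature.

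As for your proposal itself: the reduction to the finitely generated case and hence to $\ZZ^n$ with a total order is correct and standard, and the idea of performing unimodular Euclidean-type moves on a positive basis to absorb the given $x_j$ into the nonnegative cone is indeed the shape of Elliott's argument. However, you have not actually proved the theorem --- you have identified where the difficulty lies and then stopped. You write that producing a well-founded potential guaranteeing termination ``is the technical heart of Elliott's theorem; everything else is the bookkeeping above.'' That is an accurate self-assessment: your write-up is an outline that isolates the crux but leaves it unresolved. The termination argument is genuinely nontrivial (a naive potential does not obviously decrease, and the convex-subgroup induction you sketch still requires the archimedean base case to terminate), and without it the proposal is a plan rather than a proof. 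If you want to complete it, you should either supply the explicit potential and verify it decreases, or follow Elliott's original argument directly.
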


We deduce the following corollary for the totally ordered abelian group $\tilde{\SGIET}$:

\begin{Coro}\label{Corollary the sequence of finite Z indepndent subset for tilde SGIET}
There exists a sequence $(S_n)_n$ of finite $\ZZ$-independent subset of $\tilde{\SGIET}$ such that for each $n$ we have $\Vect_{\NN}(S_n) \subset \Vect_{\NN}(S_{n+1})$ and $\tilde{\SGIET}_+$ is equal to the increasing union $\bigcup_n \Vect_{\NN}(S_n)$.
\end{Coro}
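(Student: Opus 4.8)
The plan is to deduce this directly from Theorem~\ref{Theorem ultrasimplicially ordered group}, the only genuine work being the passage from a directed union to an honest sequence.

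First I would note that $\tilde{\SGIET}$, being a subgroup of $\RR$, is a totally ordered abelian group for the order induced from $\RR$, and that its positive cone in the sense of Section~\ref{Section a fact about ultrasimplicial group} is exactly $\tilde{\SGIET}_+$. By Theorem~\ref{Theorem ultrasimplicially ordered group} it is ultrasimplicially ordered, so there is a family $(T_\alpha)_{\alpha \in I}$ of simplicial subsemigroups of $\tilde{\SGIET}$, directed by inclusion, with $\bigcup_{\alpha \in I} T_\alpha = \tilde{\SGIET}_+$. By definition of ``simplicial'' each $T_\alpha$ equals $\Vect_{\NN}(B_\alpha)$ for some finite $\ZZ$-independent subset $B_\alpha \subseteq \tilde{\SGIET}$, and by the displayed equality each $T_\alpha$ is contained in $\tilde{\SGIET}_+$.

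Next I would extract a cofinal sequence. Since $\tilde{\SGIET}$ is countable, enumerate $\tilde{\SGIET}_+ = \lbrace x_n \mid n \geq 1 \rbrace$. I then build indices $\alpha_1, \alpha_2, \ldots$ in $I$ recursively so that $T_{\alpha_n}$ contains $T_{\alpha_{n-1}}$ and $x_n \in T_{\alpha_n}$: choose $\alpha_1$ with $x_1 \in T_{\alpha_1}$; given $\alpha_n$, choose $\beta \in I$ with $x_{n+1} \in T_\beta$ and then, using directedness of the family, an index $\alpha_{n+1} \in I$ with $T_{\alpha_n} \cup T_\beta \subseteq T_{\alpha_{n+1}}$. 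Finally set $S_n := B_{\alpha_n}$.

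It remains to check the three required properties, all immediate: each $S_n$ is a finite $\ZZ$-independent subset of $\tilde{\SGIET}$ by construction; $\Vect_{\NN}(S_n) = T_{\alpha_n} \subseteq T_{\alpha_{n+1}} = \Vect_{\NN}(S_{n+1})$; and $\bigcup_n \Vect_{\NN}(S_n) = \bigcup_n T_{\alpha_n}$ contains $x_n$ for every $n \geq 1$, hence equals $\tilde{\SGIET}_+$, the reverse inclusion $\bigcup_n T_{\alpha_n} \subseteq \tilde{\SGIET}_+$ being part of the conclusion of Theorem~\ref{Theorem ultrasimplicially ordered group}. There is no real obstacle here: the argument is pure bookkeeping on a directed system, and the only input beyond Theorem~\ref{Theorem ultrasimplicially ordered group} is the countability of $\tilde{\SGIET}$, which is precisely what allows the directed union to be replaced by a nested sequence (note that one should not, and need not, ask $S_n \subseteq S_{n+1}$, only $\Vect_{\NN}(S_n) \subseteq \Vect_{\NN}(S_{n+1})$).
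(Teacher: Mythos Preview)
Your argument is correct and is exactly the natural elaboration of what the paper leaves implicit: the paper states the corollary without proof, treating it as an immediate consequence of Theorem~\ref{Theorem ultrasimplicially ordered group}, and your passage from the filtered union to a nested sequence via an enumeration is the standard way to make that step precise.

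The one point worth flagging is your appeal to the countability of $\tilde{\SGIET}$, which is not among the paper's standing hypotheses on $\SGIET$. This is not a defect in your reasoning but rather a necessary hypothesis for the statement itself: if $\tilde{\SGIET}$ is uncountable (for instance $\SGIET=\RR/\ZZ$, $\tilde{\SGIET}=\RR$) then $\tilde{\SGIET}_+$ cannot equal a countable union of the countable sets $\Vect_{\NN}(S_n)$, so the corollary is simply false as written. You have therefore identified an implicit assumption that the paper should make explicit; the sequence $(S_n)_n$ used later to build $\psi_{\SGIET}$ only exists when $\tilde{\SGIET}$ is countable.
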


We will use the Theorem \ref{Theorem ultrasimplicially ordered group} in the form of the following corollary which specifies the finite rank case: 

\begin{Coro}\label{Corollary ultrasimplicially ordered group}
Let $H$ be a subgroup of $\RR$ which is free abelian of finite rank $d$. Then there exists a sequence $(B_n)_{n \in \NN}$ of $\ZZ$-basis of $H$ such that for each $n$ we have $\Vect_{\NN}(B_n) \subset \Vect_{\NN}(B_{n+1})$ and $H_+$ is equal to the increasing union $\bigcup_n \Vect_{\NN}(B_n)$.
Furthermore for every $k \in \NN$ and $L_1,L_2, \ldots ,L_k \in H_{+}$ there exists a basis $\lbrace \ell_1,\ell_2,\ldots, \ell_d \rbrace \subset H_{+}$ of $H$ such that for every $1 \leq i \leq k$ the element $L_i$ is a linear combination of $\ell_1,\ell_2,\ldots ,\ell_d$ with coefficients in $\NN$.
\end{Coro}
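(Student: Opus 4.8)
The plan is to apply Theorem \ref{Theorem ultrasimplicially ordered group} to $H$ directly and then use finiteness of the rank to upgrade the $\ZZ$-independent generating sets supplied by ultrasimpliciality into honest $\ZZ$-bases. First I would note that $H$, as a subgroup of $\RR$, inherits the usual total order and is therefore a totally ordered abelian group, while being isomorphic to $\ZZ^{d}$ it is countable. By Theorem \ref{Theorem ultrasimplicially ordered group} the positive cone $H_{+}$ is a filtered union of simplicial subsemigroups; by the same extraction argument that yields Corollary \ref{Corollary the sequence of finite Z indepndent subset for tilde SGIET} (whose statement is the present one with $\tilde{\SGIET}$ replaced by $H$), countability lets me produce a chain of finite $\ZZ$-independent subsets $S_{1},S_{2},\ldots$ of $H_{+}$ with $\Vect_{\NN}(S_{n}) \subseteq \Vect_{\NN}(S_{n+1})$ for all $n$ and $\bigcup_{n}\Vect_{\NN}(S_{n}) = H_{+}$.

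The main point is the following stabilization. Write $G_{n} := \langle S_{n}\rangle$ for the subgroup of $H$ generated by $S_{n}$; since $S_{n}$ is $\ZZ$-independent, $G_{n}$ is free of rank $|S_{n}| \leq d$, and the chain $(G_{n})_{n}$ is increasing because $(\Vect_{\NN}(S_{n}))_{n}$ is. Every element of $H$ is a difference of two elements of $H_{+}$, so, using that the chain is increasing, $H = \bigcup_{n}\big(\Vect_{\NN}(S_{n}) - \Vect_{\NN}(S_{n})\big) = \bigcup_{n} G_{n}$. As $H$ is finitely generated, a finite generating set of $H$ is contained in a single $G_{N}$, whence $G_{N} = H$ and therefore $G_{n} = H$ for every $n \geq N$. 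For such $n$ the set $S_{n}$ is a $\ZZ$-independent generating set of the free abelian group $H$ of rank $d$, so $|S_{n}| = d$ and $S_{n}$ is a $\ZZ$-basis of $H$ contained in $H_{+}$. Setting $B_{n} := S_{N+n}$ then gives a sequence of $\ZZ$-bases of $H$ with $\Vect_{\NN}(B_{n}) \subseteq \Vect_{\NN}(B_{n+1})$ and $\bigcup_{n}\Vect_{\NN}(B_{n}) = \bigcup_{m \geq N}\Vect_{\NN}(S_{m}) = H_{+}$, which is the first assertion.

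For the ``furthermore'', given $L_{1},\ldots,L_{k} \in H_{+} = \bigcup_{n}\Vect_{\NN}(B_{n})$, each $L_{i}$ lies in $\Vect_{\NN}(B_{n_{i}})$ for some $n_{i}$, and taking $n := \max_{i} n_{i}$ the increasingness of the subsemigroups puts all the $L_{i}$ into $\Vect_{\NN}(B_{n})$; the $\ZZ$-basis $\lbrace \ell_{1},\ldots,\ell_{d}\rbrace := B_{n} \subseteq H_{+}$ then does the job. I do not expect a genuine obstacle here: the only substantive step is the stabilization $G_{N} = H$, where finite generation of $H$ (equivalently, finiteness of its rank) is used essentially; the rest is bookkeeping with the increasing chain, together with the routine passage from the filtered union of Theorem \ref{Theorem ultrasimplicially ordered group} to a countable cofinal subchain.
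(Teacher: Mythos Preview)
Your argument is correct and is precisely the natural way to deduce the corollary from Elliott's theorem: extract a cofinal chain, then use finite generation of $H$ to force the groups $G_n=\langle S_n\rangle$ to stabilize at $H$, so that the $\ZZ$-independent sets $S_n$ become bases. The paper itself states this corollary without proof, treating it as an immediate consequence of Theorem~\ref{Theorem ultrasimplicially ordered group} and Corollary~\ref{Corollary the sequence of finite Z indepndent subset for tilde SGIET}; your write-up simply makes explicit the stabilization step that the paper leaves to the reader.
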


\subsection{About the derived subgroup}\label{Subsection About the derived subgroup}~

We prove here that the derived subgroups $D(\IET(\SGIET))$ and $D(\IET^{\bowtie}(\SGIET))$ are simple. There are two steps in the proof. The first one is to give a sufficient condition for a group $G$ to obtain that its derived subgroup $D(G)$ is contained in every nontrivial normal subgroup of $G$; this is used by Arnoux in \cite{ArnouxThese} to show that $\IET^{\bowtie}$ is simple. 

The second step is to remark that $\IET(\SGIET)$ and $\IET^{\bowtie}(\SGIET)$ are subgroups of the group of homeomorphism of a Stone space $Y$; that is a compact totally disconnected space. We show that they satsify some conditions that allows us to use the work of Nekrashevych \cite{nekrashevych_2019} which gives us a normal subgroup of $\IET(\SGIET)$ and one of $\IET^{\bowtie}(\SGIET)$ which are simple and contained in all normal subgroups.

\begin{Def}
For every $\eps>0$ we denote by $\mathcal{F}_{\eps}$ the union of the set of all $\SGIET$-restricted rotations of type $(a,b)$ with $a+b \leq \eps$, with the set of all $\IET(\SGIET)$-transpositions of type $a$ with $2a \leq \eps$.
We denote by $\mathcal{F}_{\eps}^{\bowtie}$ the union of the set $\mathcal{F}_{\eps}$ with the set of all $\SGIET$-reflections of type $a$ with $a \leq \eps$.
\end{Def}

\begin{Lem}\label{Lemma for every epsilon any element is a product of elements of support of measure smaller than epsilon}
For every $\eps>0$, any element $f$ in $\IET(\SGIET)$ (resp.\ $\IET^{\bowtie}(\SGIET)$) can be written as a finite product of element in $\mathcal{F}_{\eps}$ (resp.\ $\mathcal{F}^{\bowtie}_{\eps}$).
\end{Lem}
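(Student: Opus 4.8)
The plan is to prove both statements simultaneously by induction on the number of intervals in a partition associated with $f$, after first handling the two types of generators. The key reduction is that it suffices to treat the generating families: for $\IET(\SGIET)$ these are the $\SGIET$-restricted rotations of arbitrary type (by Lemma \ref{Lemma Decomposition in IET(Gamma)-restricted rotations}, which further lets us assume the types lie among the lengths of intervals in a given $\cP \in \Pi_f$), and for $\IET^{\bowtie}(\SGIET)$ these are the $\SGIET$-reflections (by Proposition \ref{Proposition Generating set for IET bowtie SGIET}), since a restricted rotation with associated intervals $I,J$ is the product of the $I$-reflection, the $J$-reflection and the $(I\sqcup J)$-reflection, and this last decomposition has the property that if the original type is small then all three reflections have small type. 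Thus, granting the statement for single generators, an arbitrary $f$ is a finite product of generators, and replacing each generator by its $\mathcal{F}_\eps$-decomposition (resp.\ $\mathcal{F}^\bowtie_\eps$) finishes the proof.

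First I would dispose of the restricted-rotation case. Let $r$ be a $\SGIET$-restricted rotation of type $(a,b)$ with associated consecutive intervals $I$ (length $a$) and $J$ (length $b$). Pick $N$ large enough that $(a+b)/N \leq \eps$, subdivide $I\cup J$ into $N$ consecutive intervals $I_1,\dots,I_p$ (forming $I$) and $J_1,\dots,J_q$ (forming $J$) each of length $(a+b)/N$ — note $p$ and $q$ need not be integers in general, so instead I would choose a common refinement: subdivide $I$ into $\lceil a/\eps\rceil$ equal pieces and $J$ into $\lceil b/\eps\rceil$ equal pieces, and then further subdivide so that \emph{all} pieces have a common length $\delta = (a+b)/M \leq \eps$ with $a=p\delta$, $b=q\delta$ for integers $p,q$ (possible because $a,b$ are commensurable is \emph{not} automatic — here I would instead keep $I$ with $\lceil a/\eps\rceil$ equal pieces of length $a/\lceil a/\eps\rceil$ and $J$ with $\lceil b/\eps\rceil$ equal pieces, accepting two different small lengths, both $\leq\eps$). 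The restricted rotation $r$ then factors as a product of ``adjacent block swaps'': moving the block of length $a$ past the block of length $b$ can be realized as a composition of restricted rotations each of which exchanges two adjacent small intervals (of total length $\leq 2\eps$, which can be arranged $\leq\eps$ by choosing the pieces of length $\leq\eps/2$), i.e.\ elements of $\mathcal{F}_\eps$. Concretely, sliding one small interval of length $\delta_1$ across the adjacent block of total length $\leq\eps$ is itself a restricted rotation with support of measure $\leq \eps$; iterating gives $r$ as a finite product of elements of $\mathcal{F}_\eps$. For the reflection case, a $\SGIET$-reflection of type $a$ restricted to a subinterval $I$ of length $a$: subdivide $I$ into $k = \lceil a/\eps\rceil$ consecutive equal intervals $I_1,\dots,I_k$; the $I$-reflection equals the product of the $I_j$-reflections (each of type $a/k \leq \eps$, hence in $\mathcal{F}^\bowtie_\eps$) composed with the permutation of $\{I_1,\dots,I_k\}$ reversing their order, and that order-reversing permutation is a product of adjacent transpositions of equal small intervals, each an $\IET(\SGIET)$-transposition of type $a/k \leq \eps$, hence in $\mathcal{F}_\eps \subset \mathcal{F}^\bowtie_\eps$.

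The main obstacle I anticipate is the bookkeeping around non-commensurable lengths: when $a$ and $b$ (or $a$ and $\eps$) are rationally independent, one cannot subdivide into pieces of a common length, so the ``block swap'' argument must be phrased carefully so that every elementary move genuinely has support of measure $\leq\eps$ and genuinely lies in $\IET(\SGIET)$ (i.e.\ its breakpoints lie in $\tilde\SGIET$ — this holds because all subdivision points are obtained from $\tilde\SGIET$-points by adding rational multiples of the original lengths, which stay in $\tilde\SGIET$ since $\tilde\SGIET$ is a $\QQ$-vector space... no: $\tilde\SGIET$ need only be a subgroup, so I must instead choose subdivision points that are integer combinations, e.g.\ use $\lfloor a/\eps\rfloor$ pieces of length obtained by repeated halving within $\tilde\SGIET$, or simply invoke density of $\SGIET$ to choose breakpoints in $\tilde\SGIET$ yielding pieces of measure $\leq\eps$). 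This is exactly where density of $\SGIET$ enters: it guarantees enough $\tilde\SGIET$-points inside any interval to chop it into arbitrarily small $\tilde\SGIET$-intervals, so every elementary restricted rotation / transposition / reflection used in the decomposition indeed lies in $\IET(\SGIET)$ resp.\ $\IET^\bowtie(\SGIET)$. Once this is set up, the rest is a routine finite composition, and the two cases (with and without flips) differ only in that the reflection generators are additionally available and are themselves reduced to small reflections plus small transpositions as above.
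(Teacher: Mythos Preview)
Your overall strategy is sound --- reduce to the generators (restricted rotations for $\IET(\SGIET)$, reflections for $\IET^{\bowtie}(\SGIET)$) and then factor each generator into small-support pieces --- and this matches the paper's approach. But there is a genuine gap in your execution.

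\medskip

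\textbf{The gap.} In both the restricted-rotation case and the reflection case you propose subdividing an interval of length $a$ into $k=\lceil a/\eps\rceil$ \emph{equal} pieces, each of length $a/k$. But $\tilde\SGIET$ is only a subgroup of $\RR$; it need not be divisible, so $a/k$ will in general \emph{not} lie in $\tilde\SGIET$. The small transpositions and reflections you build from these pieces then have breakpoints outside $\tilde\SGIET$ and are not elements of $\IET(\SGIET)$ or $\IET^{\bowtie}(\SGIET)$ at all, let alone of $\mathcal F_\eps$ or $\mathcal F_\eps^{\bowtie}$. You do notice the issue at the end and mention ``invoke density of $\SGIET$ to choose breakpoints in $\tilde\SGIET$'', which is exactly the right remedy; but you never actually implement it --- all the concrete decompositions you write down use equal (hence inadmissible) pieces, and the alternative you float (``repeated halving within $\tilde\SGIET$'') is wrong for the same reason.

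\medskip

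\textbf{How the paper fixes this, and a cleaner rotation argument.} The paper begins by choosing, once and for all, some $w\in\tilde\SGIET_+$ with $w<\eps$ (this is where density is used). Any length $a\in\tilde\SGIET_+$ is then written as $a=kw+u$ with $k\in\NN$ and $0\le u<w$; note $u=a-kw\in\tilde\SGIET$. A transposition of type $a$ swapping $I$ and $J$ factors as the commuting product of $k{+}1$ transpositions, each swapping a length-$w$ (or length-$u$) subinterval of $I$ with the corresponding subinterval of $J$: all pieces lie in $\mathcal F_\eps$. A reflection of type $a$ is handled by the same $kw+u$ subdivision, writing it as a product of $k{+}1$ small reflections followed by $k{+}1$ small transpositions that reverse the block order --- essentially your idea, but with admissible pieces. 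For a restricted rotation of type $(a,b)$ (say $a\ge b$), rather than your ``sliding blocks'' picture, the paper uses a one-line Euclidean step: such a rotation equals an $\IET(\SGIET)$-transposition of type $b$ composed with a restricted rotation of type $(a-b,b)$ (see Figure~\ref{figure restricted rotation decomposition}). Iterating reduces to the transposition case already handled. This avoids all the commensurability bookkeeping you were wrestling with.
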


\begin{proof}
Let $\eps>0$ and by density of $\tilde{\SGIET}$ in $\RR$, let $w \in \tilde{\SGIET}_+$ such that $w < \eps$.

If $f$ is an $\IET(\SGIET)$-transposition of type $a$ let $I$ and $J$ be the two non-overlapping intervals swapped by $f$. There exist $k \in \NN$ and $u \in \tilde{\SGIET}_+$ such that $u<w$ and $a=kw+u$. Then $I$ and $J$ can be cut into $k+1$ consecutive intervals $I_1,I_2, \ldots ,I_{k+1}$ and $J_1,J_2, \ldots ,J_{k+1}$ respectively; the first $k$ intervals with length $w$ and the last one with length $u$. Then $f$ is equal to the product $s_1s_2 \ldots s_{k+1}$ where $s_i$ is the $\IET(\SGIET)$-transposition that swaps $I_i$ with $J_i$.

If $f$ is an $I$-reflection of type $a$ with $a=kw+u$ as in the previous case. Then we consider $\lbrace I_1,I_2,\ldots,I_{k+1} \rbrace $ and $\lbrace J_1,J_2,\ldots,J_{k+1} \rbrace$ two partitions into consecutive intervals of $I$ such that the length of $I_i$ and $J_j$ is $w$ for every $1 \leq i \leq k$ and $2 \leq j \leq k+1$ and the length of $I_{k+1}$ and $J_1$ is $u$. Then $f$ is equal to the product $r_1r_2\ldots r_{k+1} s_1s_2 \ldots s_{k+1}$ where $r_i$ is the $I_i$-reflection and $s_i$ is the $\IET(\SGIET)$-transposition that swaps $I_i$ with $J_{k+1-i}$.  

If $f$ is a $\SGIET$-restricted rotation of type $(a,b)$. We can assume $a\geq b$ as the other case is similar. Then $f$ is the product of an $\IET(\SGIET)$-transposition of type $b$ with a restricted rotation of type $(a-b,b)$ (see Figure \ref{figure restricted rotation decomposition}). Then by iterating this operation and thanks to the first case we deduce the result.

\begin{figure}[!h]
\includegraphics[width=\linewidth]{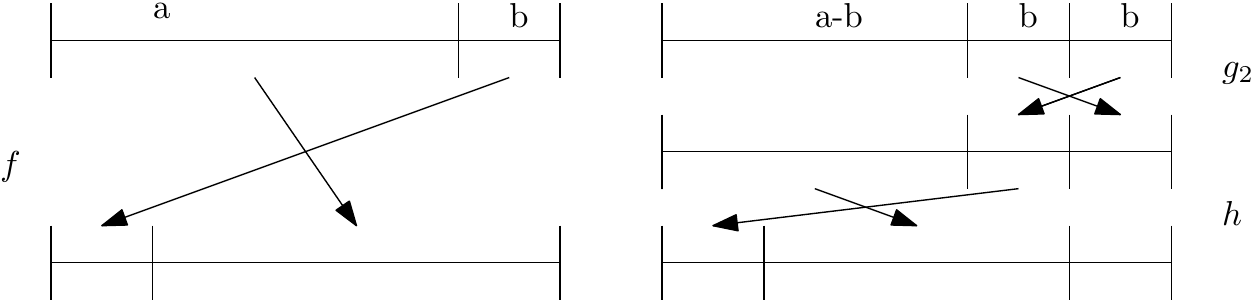}
\caption{Illustration of the decomposition of a restricted rotation in Lemma \ref{Lemma for every epsilon any element is a product of elements of support of measure smaller than epsilon}.}\label{figure restricted rotation decomposition}
\end{figure}

By Lemma \ref{Lemma Decomposition in IET(Gamma)-restricted rotations} and Proposition \ref{Proposition Generating set for IET bowtie SGIET} we obtain the result for any $f$.
\end{proof}

\begin{Prop}\label{Proposition the derived subgroup is included in any normal subgroups}
The derived subgroup $D(\IET(\SGIET))$ (resp.\ $D(\IET^{\bowtie}(\SGIET))$) is contained in every nontrivial normal subgroup of $\IET(\SGIET)$ (resp.\ $\IET^{\bowtie}(\SGIET)$).
\end{Prop}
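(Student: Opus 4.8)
The plan is to run the classical Higman--Epstein commutator argument, in the form Arnoux uses to prove $\IET^{\bowtie}$ simple (\cite{ArnouxThese}). Lemma~\ref{Lemma for every epsilon any element is a product of elements of support of measure smaller than epsilon} says that $G$ (which I write for either $\IET(\SGIET)$ or $\IET^{\bowtie}(\SGIET)$) is generated by elements of arbitrarily small support, and any nontrivial element of $G$ pushes some small interval off itself; I will combine these to force a given nontrivial normal subgroup $N \trianglelefteq G$ to contain $D(G)$. I will use freely that elements of $G$ are piecewise isometries, hence preserve Lebesgue measure, and that $G$ can carry any finite family of pairwise disjoint intervals of $\Itv(\SGIET)$ onto any family with matching lengths (a routine cut-and-rearrange argument using density of $\tilde{\SGIET}$).

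\textbf{A displaced interval, and swallowing $D(H)$.} First I would fix $h \in N \setminus \{\Id\}$ and, taking a representative that is genuinely a piecewise translation (resp.\ a piecewise isometry with breakpoints in $\tilde{\SGIET}$), locate a piece on which $h$ acts as a nonzero translation $x \mapsto x+c$ or as a reflection $x \mapsto -x+d$. Inside the interior of that piece, density of $\tilde{\SGIET}$ lets me pick an interval $I \in \Itv(\SGIET)$ short enough (and, in the reflection case, away from the fixed point $d/2$) that $h(I^{\circ}) \cap I^{\circ} = \emptyset$. I set $\eps := \lambda(I)/3$ and let $H \le G$ be the subgroup of elements supported in $I^{\circ}$. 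For $g \in H$ put $\mu(g) := [h,g] = hgh^{-1}g^{-1}$, which lies in $N$ since $h \in N \trianglelefteq G$. For $g_1,g_2 \in H$, the elements $g_1^{-1}$ (supported in $I^{\circ}$) and $hg_2h^{-1}$ (supported in $h(I^{\circ})$) have disjoint supports, hence commute; expanding then gives the identity
\[
[g_1,g_2] = \mu(g_1 g_2)^{-1}\,\mu(g_1)\,\mu(g_2),
\]
whose right-hand side lies in $N$. Thus $D(H) \subseteq N$.

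\textbf{Bootstrap to $D(G)$.} By Lemma~\ref{Lemma for every epsilon any element is a product of elements of support of measure smaller than epsilon}, $S := \mathcal{F}_{\eps}$ (resp.\ $\mathcal{F}^{\bowtie}_{\eps}$) generates $G$, and inspecting the three families, each $s \in S$ has support contained in at most two intervals of $\Itv(\SGIET)$ of total length $\le \eps$. Hence for $s,t \in S$ the set $\support(s) \cup \support(t)$ lies in at most four such intervals of total length $\le 2\eps < \lambda(I)$, and the rearrangement fact provides $\gamma \in G$ with $\gamma(\support(s) \cup \support(t)) \subseteq I^{\circ}$. Then $\gamma s \gamma^{-1},\gamma t \gamma^{-1} \in H$, so $\gamma [s,t]\gamma^{-1} = [\gamma s\gamma^{-1},\gamma t\gamma^{-1}] \in D(H) \subseteq N$, whence $[s,t] \in N$ as $N$ is normal. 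Since $G = \langle S \rangle$ and $G/D(G)$ is abelian, $D(G)$ is the normal closure in $G$ of $\{[s,t] \mid s,t \in S\}$, which I have just shown lies in $N$; therefore $D(G) \subseteq N$. The flip case goes through identically, with reflections adjoined to the generating set.

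\textbf{Where the work is.} The commutator identity is the engine, but it is a standard manipulation once the two supports are disjoint, so I do not expect it to be the obstacle. The delicate points are rather the first step---extracting, from the bare hypothesis $h \neq \Id$, an interval with $\tilde{\SGIET}$-endpoints that $h$ genuinely moves off itself, which for $\IET^{\bowtie}(\SGIET)$ also requires dodging the fixed point of a reflection piece and keeping careful track of the quotient by $\mfS_{\mathrm{fin}}$ (modifications on finite sets being immaterial there)---and making the cut-and-rearrange claim of the last step precise; both are routine but use density of $\tilde{\SGIET}$ in an essential way, which is exactly where the standing assumption that $\SGIET$ is dense enters.
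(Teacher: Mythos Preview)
Your argument is correct and follows essentially the same Higman--Epstein scheme as the paper: find an interval $I\in\Itv(\SGIET)$ displaced by a nontrivial $h\in N$, use Lemma~\ref{Lemma for every epsilon any element is a product of elements of support of measure smaller than epsilon} to generate $G$ by elements of support smaller than $\lambda(I)/3$, and exploit disjoint-support commutation to force commutators of generators into $N$. The only organisational difference is that you go in two steps---first proving $D(H)\subseteq N$ for $H=\{g\in G:\support(g)\subseteq I^{\circ}\}$ via the identity $[g_1,g_2]=\mu(g_1g_2)^{-1}\mu(g_1)\mu(g_2)$, then conjugating pairs of small-support generators into $H$---whereas the paper conjugates $h$ itself (to some $f'=k^{-1}hk$ with $f'(\support(g_1)\cup\support(g_2))$ disjoint from $\support(g_1)\cup\support(g_2)$) and uses the single identity $[g_1,g_2]=[g_1,[g_2,f']]$ directly. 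Both identities rest on the same commutation $[g_1, f' g_2 f'^{-1}]=1$ (equivalently $[g_1^{-1}, hg_2h^{-1}]=1$), so the difference is cosmetic.
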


\begin{proof}
We only do the case of $\IET(\SGIET)$. In $\IET^{\bowtie}(\SGIET)$ one should be careful about having equalities up to finitely supported permutations, but the proof is similar. For every $\eps >0$, we recall that $\mathcal{F}_{\eps}$ is a generating subset of $\IET(\SGIET)$. Then the set of all commutators of two elements in $\mathcal{F}_{\eps}$ is a generating subset of $D(\IET(\SGIET))$.

Let $N$ be a normal subgroup of $\IET(\SGIET)$ and $f \in N \smallsetminus \lbrace \Id \rbrace$. Then there exists a subinterval $I$ of $\interfo{0}{1}$ such that $f$ is continuous on $I$ and $f(I) \cap I= \emptyset$. Let $\delta$ be the length of $I$ and let $\eps = \frac{\delta}{3}$. Let $g_1,g_2 \in \mathcal{F}_{\eps}$ and let $A:= \support(g_1) \cup \support(g_2)$. There exists $h \in \IET(\SGIET)$ such that $h(A) \subset I$ because the measure of $A$ is at most $2\eps$ which is less than $\delta$. Then the element $f':=h^{-1} \circ f \circ h$ is an element of $N$ and $f'(A) \cap A= \emptyset$. This implies that the support of $g_1$ and the one of $f'g_2f'^{-1}$ do not overlap; thus these elements commute and we obtain that $[g_1,g_2]=[g_1,[g_2,f']]$. Also $N$ is normal in $\IET(\SGIET)$, hence as $f' \in N$ we deduce that $[g_2,f'] \in N$, thus $[g_1,g_2] \in N$. We deduce that every commutator of elements in $\mathcal{F}_{\eps}$ is in $N$ so $D(\IET(\SGIET))$ is a subgroup of $N$.
\end{proof}

\begin{Def}
Let $X$ be a Stone space and $G$ be a subgroup of $\Homeo(X)$. The group $G$ is a topological full group if for every $n \in \NN$ and for all $(Y_1,Y_2, \ldots ,Y_n)$ and $(Z_1,Z_2,\ldots , Z_n)$ finite partitions into clopen subsets of $X$ and every $g_i \in G$ such that $g_i(Y_i)=Z_i$, the element $g$ that satisfies $g=g_i$ on $X_i$ is an element of $G$.
\end{Def}

One can notice that $\IET(\SGIET)$ and $\IET^{\bowtie}(\SGIET)$ act minimally on the space $Y$ obtained from $\mathbb{R/Z}$ by replacing every point $x \in \SGIET$ by two points $x_-$ and $x_+$. The space $Y$ is a Stone space without isolated points.
The fact that $\IET(\SGIET)$ and $\IET^{\bowtie}(\SGIET)$ are both topological full group is immediate. We can use the following theorem due to V. Nekrashevych in \cite{nekrashevych_2019}:

\begin{Thm}[Nekrashevych]
Let $X$ be a infinite Stone space and $G$ be a subgroup of $\Homeo(X)$ such that $G$ is a topological full group and acts minimally on $X$. Then the subgroup $\mathcal{A}(G)$ of $G$ generated  by the set of all elements of order $3$, is simple and contained in every nontrivial normal subgroup of $G$.
\end{Thm}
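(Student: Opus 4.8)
The plan is to follow the now-standard argument (due to Nekrashevych), which splits the statement into two essentially independent assertions: that $\mathcal{A}(G)$ is contained in every nontrivial normal subgroup of $G$, and that $\mathcal{A}(G)$ is simple. Throughout, by a \emph{$3$-cycle} I mean an order-$3$ element of $G$ whose support is a disjoint union $Y_1\sqcup Y_2\sqcup Y_3$ of nonempty clopen sets and which cyclically permutes them through homeomorphisms in $G$; such an element belongs to $G$ by the full group axiom. Two easy preliminary facts I would establish first: (1) $X$ has no isolated point, since an isolated point would have clopen, hence open, orbit, whose closure is a nonempty closed invariant set, hence all of $X$, making the compact space $X$ discrete and therefore finite, a contradiction; (2) $\mathcal{A}(G)$ is generated by $3$-cycles of arbitrarily small support — indeed an order-$3$ element acts freely on its support (an orbit has size $1$ or $3$, and size $1$ means a fixed point), so that support splits as $T\sqcup g(T)\sqcup g^2(T)$ for a clopen transversal $T$, and subdividing $T$ finely writes $g$ as a product of small $3$-cycles.

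The geometric engine is a \emph{transitivity lemma}: for every nonempty clopen $U$, every sufficiently small $3$-cycle is conjugate in $G$ to one supported inside $U$. I would deduce this from minimality — the $G$-saturation of $U$ is open, invariant and nonempty, hence, its complement being closed and invariant, equal to $X$, so finitely many $G$-translates of $U$ cover $X$ by compactness — together with the full group axiom, by refining the cover and patching the pieces of the small configuration into $U$ with pairwise disjoint images. I expect this patching, together with its internal version with $G$ replaced by $\mathcal{A}(G)$, to be the main obstacle: one must be careful with the bookkeeping of clopen partitions, and the absence of isolated points is used crucially here. This is precisely the point at which Nekrashevych's groupoid-of-germs formalism earns its keep; everything else reduces to finite-permutation-group identities.

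For the first assertion I would take a nontrivial normal $N\trianglelefteq G$ and $f\in N\setminus\{\Id\}$; by continuity and zero-dimensionality there is a nonempty clopen $U$ with $f(U)\cap U=\emptyset$. Given a small $3$-cycle $c$, the transitivity lemma lets me assume $\support(c)\subseteq U$ after conjugating. Inside a single clopen set a $3$-cycle is a commutator, copying $(1\,2\,3)=[(1\,2),(1\,3)]$ in $\mfS_3$: so $c=[a,b]$ with $a,b$ supported in $U$. Then $fbf^{-1}$ has support disjoint from that of $a$, so $a$ and $fbf^{-1}$ commute and $[a,b]=[a,[b,f]]$; since $[b,f]\in N$ by normality, $c=[a,[b,f]]\in N$. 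As small $3$-cycles generate $\mathcal{A}(G)$, this gives $\mathcal{A}(G)\le N$.

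For simplicity I would first check that $\mathcal{A}(G)$ is perfect: using no isolated points and minimality I can enlarge a subcycle of any small $3$-cycle $c$ to a configuration of five disjoint clopen sets with connecting homeomorphisms in $G$, inside which the $3$-cycles generate a copy of $A_5$; since $A_5$ is perfect, $c\in D(\mathcal{A}(G))$, hence $\mathcal{A}(G)$ is perfect. Then, observing that $\mathcal{A}(G)$ itself acts minimally on $X$ (it contains $3$-cycles moving any small clopen set essentially anywhere, so a nonempty proper closed invariant set is impossible), I would rerun the previous paragraph inside $\mathcal{A}(G)$: for a nontrivial normal $H\trianglelefteq\mathcal{A}(G)$, pick $f\in H\setminus\{\Id\}$ with $f(U)\cap U=\emptyset$, and — after slightly enlarging the relevant clopen set so that the identity $c=[a,[b,f]]$ is realised with $a,b$ of order $3$ — conclude that every small $3$-cycle lies in $H$, so $H=\mathcal{A}(G)$. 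This proves $\mathcal{A}(G)$ simple, completing the plan.
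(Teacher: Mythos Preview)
The paper does not prove this theorem: it is quoted as an external result of Nekrashevych \cite{nekrashevych_2019} and is used as a black box (together with Proposition~\ref{Proposition the derived subgroup is included in any normal subgroups}) to conclude that $D(\IET(\SGIET))$ and $D(\IET^{\bowtie}(\SGIET))$ are simple. So there is nothing in the paper to compare your argument against.

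That said, your outline is a faithful sketch of Nekrashevych's actual proof: the reduction to small $3$-cycles, the commutator identity $[a,b]=[a,[b,f]]$ when $a$ commutes with $fbf^{-1}$ (which you have correctly identified as the way to absorb an arbitrary nontrivial normal element $f$), the $A_5$-trick for perfectness, and the rerun inside $\mathcal{A}(G)$ for simplicity are exactly the skeleton of the argument in \cite{nekrashevych_2019}. You are also right that the delicate point is the ``transitivity/patching'' step and its $\mathcal{A}(G)$-internal version, and that this is where the groupoid-of-germs formalism (multisections, étaleness) is genuinely used. One caution: your claim that an arbitrary order-$3$ element has support of the form $T\sqcup g(T)\sqcup g^2(T)$ for a \emph{clopen} $T$ is not automatic for a general homeomorphism of a Stone space --- fixed points of $g$ need not have a neighbourhood on which $g$ is the identity --- and this is precisely one of the places where Nekrashevych's framework (or, in the present paper's setting, the very specific structure of $\IET(\SGIET)$ acting on the doubled-points space) does real work. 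If you want a self-contained proof, that step needs an explicit justification rather than a one-line dismissal.
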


Thanks to this theorem and Proposition \ref{Proposition the derived subgroup is included in any normal subgroups} we deduce that $\mathcal{A}(\IET(\SGIET))=D(\IET(\SGIET))$ and $\mathcal{A}(\IET^{\bowtie}(\SGIET))=D(\IET^{\bowtie}(\SGIET))$ and both of them are simple.

\bigskip

\section{\texorpdfstring{Behaviour of the composition of two $\IET$-transpositions}{Behaviour of the composition of two IET-transpositions}}\label{Section Behaviour of the composition of two IET-transpositions}

Understanding compositions of $\IET(\SGIET)$-transpositions is important to describe the abelianization of $\IET(\SGIET)$. The next lemma relates finite order elements with $\IET$-transpositions (see Definition \ref{Definition IET transposition}). In \cite{Vorobets2011}, Y. Vorobets proves it in the case $\SGIET=\mathbb{R/Z}$. The general case follows from this case:

\begin{Lem}\label{Lemma finite order implies being a product of IET transpositions}
Let $f$ be a finite order element of $\IET(\SGIET)$. Let $n \in \NN$ and $\cP:=\lbrace I_1, \ldots , I_n \rbrace \in \Pi_f$. Then there exists $\sigma \in \mfS_n$ such that $f(I_i)=I_{\sigma(i)}$ thus $f$ is a product of $\IET(\SGIET)$-transpositions.
\end{Lem}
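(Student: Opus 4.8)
The plan is to reduce the general statement to the case $\SGIET = \RR/\ZZ$, which is Vorobets's theorem, by exhibiting the $\IET(\SGIET)$-transpositions produced there as genuine elements of $\IET(\SGIET)$. So suppose $f \in \IET(\SGIET)$ has finite order, say $f^m = \Id$, and fix $\cP = \lbrace I_1, \ldots, I_n \rbrace \in \Pi_f$. First I would upgrade $\cP$ to a $\SGIET$-partition: since $f \in \IET(\SGIET)$, there is some $\SGIET$-partition $\cQ \in \Pi_f$ (by the equivalence recalled just after Definition \ref{Definition partition associated}), and replacing $\cP$ by the common refinement $\cP \vee \cQ$ we may assume from the start that every $I_i$ lies in $\Itv(\SGIET)$, i.e.\ all endpoints of the $I_i$ are in $\tilde{\SGIET}$. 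This refinement step does not change the conclusion we are after, because if $f$ permutes the blocks of a finer $\SGIET$-partition it certainly permutes the blocks of $\cP$ — actually it is cleanest to just prove the statement directly for the refined $\SGIET$-partition and then observe it descends.

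Next I would invoke Vorobets's result (the case $\SGIET = \RR/\ZZ$ of Lemma \ref{Lemma finite order implies being a product of IET transpositions}): since $f$ is a finite order element of $\IET = \IET(\RR/\ZZ)$ and $\cP \in \Pi_f$, there exists $\sigma \in \mfS_n$ with $f(I_i) = I_{\sigma(i)}$ for all $i$. This is the key structural input; it already gives the first assertion of the lemma. The only thing left is to realize $f$ as a product of $\IET(\SGIET)$-transpositions, and here the point is bookkeeping: because $\cP$ is a $\SGIET$-partition, whenever $f(I_i) = I_j$ with $i \neq j$ the two intervals $I_i$ and $I_j$ have endpoints in $\tilde{\SGIET}$ and equal length, that length lying in $\tilde{\SGIET} \cap \interfo{0}{1}$, so the map swapping $I_i$ with $I_j$ by translation (and fixing everything else) is an $\IET(\SGIET)$-transposition in the sense of Definition \ref{Definition IET transposition} — it is continuous outside $\SGIET$. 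Since $f$ is a finite-order permutation of the blocks, $\sigma$ is a product of transpositions supported on $f$-orbits; more to the point, any finite-order $g \in \IET$ that permutes the blocks of a partition as $\sigma$ does and acts as a translation on each block is itself forced (by finite order, so $g^2$ fixes each pair it swaps) to be a product over the $2$-cycles and fixed points of a suitable decomposition of $\sigma$ into disjoint transpositions, each factor being the block-swap described above; I would follow Vorobets's argument verbatim here, noting only that every block-swap it uses is a $\SGIET$-transposition because the blocks are $\SGIET$-intervals.

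The main obstacle — really the only subtlety — is making sure that passing from $\cP$ to a $\SGIET$-partition is harmless for the final claim, and that each transposition appearing in Vorobets's factorization genuinely lands in $\IET(\SGIET)$ rather than merely in $\IET$; both reduce to the observation that a translation-swap of two equal-length intervals with endpoints in $\tilde{\SGIET}$ is discontinuous only at points of $\tilde{\SGIET}$, hence at points projecting into $\SGIET$. Everything else is a direct citation of the already-established $\SGIET = \RR/\ZZ$ case together with the equivalence "$f \in \IET(\SGIET) \iff \Pi_f$ contains a $\SGIET$-partition."
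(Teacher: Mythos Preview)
Your approach matches the paper's, which simply says ``Vorobets proves it in the case $\SGIET=\RR/\ZZ$; the general case follows from this case.'' The substance --- invoke Vorobets for $\IET$, then note that the block-swaps produced are $\IET(\SGIET)$-transpositions because the intervals involved have endpoints in $\tilde\SGIET$ --- is exactly right and is all the paper intends.

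Two slips in your refinement maneuver, though. First, $\cP \vee \cQ$ inherits the endpoints of \emph{both} $\cP$ and $\cQ$, so if $\cP$ has non-$\SGIET$ endpoints then so does $\cP \vee \cQ$; the common refinement is not a $\SGIET$-partition. Second, the descent claim ``if $f$ permutes the blocks of a finer partition it certainly permutes the blocks of $\cP$'' is false: take $f$ the rotation by $\tfrac13$, $\cP' = \{\interfo{0}{\tfrac13},\interfo{\tfrac13}{\tfrac23},\interfo{\tfrac23}{1}\}$ (permuted cyclically by $f$), and $\cP = \{\interfo{0}{\tfrac23},\interfo{\tfrac23}{1}\} \in \Pi_f$; then $f(\interfo{0}{\tfrac23}) = \interfo{\tfrac13}{1}$ is not a block of $\cP$. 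The clean fix is to bypass refinement entirely: the partition Vorobets's argument actually produces has as its endpoints the discontinuities of $f, f^2, \ldots, f^{m-1}$, and since $\IET(\SGIET)$ is a group all of these lie in $\SGIET$, so that partition is already a $\SGIET$-partition and its block-swaps are automatically $\IET(\SGIET)$-transpositions. No descent step is needed.
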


The aim is to show that the product of two $\IET$-transpositions has finite order. It is not a trivial question because we can build such examples of product of order $n$ for every $n \in \NN$ (see Proposition \ref{Proposition example of product of two IET transposition can has arbitrarily large order.}). The result will be used in Section \ref{Section Description of the abelianization of IET(Gamma)}.

\begin{Lem}\label{Lemma product of two transpositions has orbit that leave the intersection of their support}
Let $f$ and $g$ be two $\IET$-transpositions. Let $I$ and $J$ be the non-overlapping intervals swapped by $f$ with $\sup(I) = \inf(J)$. Let $A$ and $B$ be the non-overlapping intervals swapped by $g$ with $\sup(A) = \inf(B)$. Let $\alpha$ and $\beta$ be the elements of $\mathopen{[} 0,\dfrac{1}{2} \mathclose{[}$ such that $J= I+\alpha$ and $B=A + \beta$. If $\alpha > \beta$ then for all $x$ in $\mathopen{[} 0,1 \mathclose{[}$ there exists $n$ in $\NN_{\geq 1}$ such that $(gf)^n (x) \notin \support(f) \cap \support(g)$.
\end{Lem}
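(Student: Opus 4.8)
The plan is to track how the map $gf$ moves a point and to show that any orbit that stays inside $\support(f)\cap\support(g)$ would have to drift monotonically in a way that the boundedness of the interval forbids. Write $I,J$ for the two intervals of $f$ with $J=I+\alpha$ and $A,B$ for those of $g$ with $B=A+\beta$, and set $K:=\support(f)\cap\support(g)=(I\cup J)\cap(A\cup B)$. Since $f$ acts as translation by $\pm\alpha$ on $I\cup J$ and $g$ as translation by $\pm\beta$ on $A\cup B$, the composition $gf$ acts on each piece of the common refinement of $\{I,J\}$ and $\{A,B\}$ as a translation by one of $\alpha+\beta$, $\alpha-\beta$, $-\alpha+\beta$ or $-\alpha-\beta$. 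First I would record exactly which translation value occurs on each subinterval of $K$, using that $f$ moves $I$ to the right by $\alpha$ and $J$ to the left by $\alpha$, and similarly for $g$.

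The key point I would extract is a \emph{monotonicity-with-return} phenomenon: because $\alpha>\beta$, none of the admissible displacements is zero, and I would argue that the displacement is governed by which of the two "halves" ($I$-side versus $J$-side, $A$-side versus $B$-side) the point currently sits in. Concretely, I expect that for $x\in K$ on the $I$-part the point first gets pushed right by $f$, and the subsequent action of $g$ cannot fully cancel this since $\beta<\alpha$; iterating, the orbit's position within the $J\setminus B$ or $I\setminus A$ region strictly increases (or decreases) by at least $\alpha-\beta$ each time it remains in $K$. So I would set up a potential function — essentially the coordinate of the point, possibly restricted to the relevant half — that strictly increases by a fixed positive amount $\alpha-\beta$ on each application of $gf$ as long as the orbit stays in $K$. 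Since $K\subseteq\interfo{0}{1}$ is bounded, after finitely many steps the orbit must leave $K$, which is the assertion.

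The main obstacle I anticipate is the bookkeeping of cases: $K$ is a union of at most a few subintervals determined by the relative positions of $I,J,A,B$ (there are several configurations, e.g. $I\cup J$ and $A\cup B$ can be nested, disjoint, or overlapping in various ways), and in each configuration one must verify that the displacement on every subinterval of $K$ indeed points "the same way" or at least feeds into a genuine Lyapunov function rather than a cycle. The hypothesis $\alpha>\beta$ is exactly what rules out the periodic behaviour of Proposition \ref{Proposition example of product of two IET transposition can has arbitrarily large order.}, so the heart of the argument is checking that strict inequality upgrades "no cancellation" into "uniform drift." I would handle this by reducing to the generic overlapping configuration (the others being degenerations where $K$ is smaller and the argument only easier), choosing coordinates so that $\inf I=0$, and then exhibiting the drift explicitly on each of the finitely many pieces; once the uniform lower bound $\alpha-\beta$ on the per-step increment of the potential is in hand, the conclusion is immediate from $\operatorname{diam}\interfo{0}{1}=1$.
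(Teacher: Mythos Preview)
Your core intuition---that under the hypothesis $\alpha>\beta$ an orbit confined to $K=\support(f)\cap\support(g)$ must drift by a definite amount $\alpha-\beta$ at each step, contradicting boundedness---is exactly what drives the paper's proof in its base case, and your anticipation that the work lies in a case split on the relative position of $I,J,A,B$ is also right.

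Where your plan has a genuine gap is the sentence ``reducing to the generic overlapping configuration (the others being degenerations where $K$ is smaller and the argument only easier).'' The paper's experience is the opposite: the clean drift argument applies precisely when $I\cap B=\emptyset$ and $J\cap A=\emptyset$, so that $K=(I\cap A)\sqcup(J\cap B)$; there one checks that an orbit staying in $K$ and starting in $I\cap A$ is forced to return to $I\cap A$ after each $gf$, moving by exactly $+(\alpha-\beta)$, while from $J\cap B$ it moves by $-(\alpha-\beta)$, and either way escapes $[0,1)$ in finitely many steps. A single Lyapunov function on all of $K$ does not appear; one argues on each component separately.

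When $I\cap B\neq\emptyset$ (so $K$ has a third piece), the uniform-drift picture breaks down: a point in $I\cap B$ is sent by $f$ into $J$ and then by $g$ back into $A$, and there is no reason the resulting displacement feeds into the same monotone scheme. The paper handles this not by extending the potential but by two further moves: if the forward orbit never visits $I\cap B$, one replaces $f$ by the smaller transposition swapping $I\setminus B$ with $f(I\setminus B)$, which lands back in the non-overlapping case; if the orbit does enter $I\cap B$, a one-step geometric inequality ($gf(y)=y+(\alpha-\beta)$ would have to lie in $A$ while $y\in B$, forcing $\alpha-\beta\le 0$) gives an immediate contradiction. The case $J\cap A\neq\emptyset$ is symmetric. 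So your proposal is missing these two ingredients---the reduction to a smaller transposition, and the direct order contradiction---and without them the potential-function strategy does not close in the overlapping configurations.
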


\begin{proof}
We have the conclusion for all $x$ outside $\support(f) \cup \support(g)$. By contradiction we assume that there exists $x$ in $\support(f) \cup \support(g)$ such that for every $n$ in $\NN_{\geq 1}$, the element $(gf)^n(x)$ belongs to $\support(f) \cap \support(g)$. Up to consider $(gf)(x)$ instead of $x$ we can assume that $x$ is in $\support(f) \cap \support(g)$. We distinguish three cases:
\begin{enumerate}
\item If $I \cap B = \emptyset$ and $ J \cap A = \emptyset$. Then $\support(f) \cap \support(g)= (I \cap A) \sqcup (J \cap B)$. Moreover we check that for every $n \in \NN_{\geq 1}$ if $y=(gf)^n(x)$ is in $I \cap A$ then $f(y)$ is in $ J \cap B$. Indeed $f(y)$ is clearly in $J$ thus $f(y) \notin A$. If it is not in $B$ then $f(y)$ is fixed by $g$. Thus $gf(y)=f(y)$ is neither in $A$ nor in $B$ which is in contradiction with the assumption on $x$. Similarly we get $gf(y)$ in $I \cap A$. Thus in one iteration we oscillate between the two intervals. The same is true if $y$ is in $J \cap B$ at first. Hence for $n > \frac{1}{\alpha-\beta}$ if $x$ is in $I \cap A$ then $(gf)^n(x)=x + n \times (\alpha -\beta) > x+1 \geq 1$ which is a contradiction. Also if $x$ is in $J \cap B$ then $(gf)^n(x)=x - n \times (\alpha -\beta) < x-1 < 0$ which is a contradiction.

\item Now let us assume that $I \cap B \neq \emptyset$ (this gives the strict inequality $\sup(I) > \inf(B)$), then $J \cap A = \emptyset$ because $\inf(J)= \sup(I) > \inf(B) = \sup(A)$. We have two cases:
\begin{enumerate}
\item If the positive $(gf)$-orbit of $x$ does not intersect $I \cap B$. Let $f'$ be the $\IET(\SGIET)$-transposition which swaps the interval $I\setm B$ with $f(I \setm B)$. Then for every $n$ in $\NN_{\geq 1}$ we have $(gf')^n(x)$ in $\support(f') \cap \support(g)$. Moreover $(I \setm B) \cap B = \emptyset$ and $f(I \setm B) \cap J= \emptyset$. Hence we return in the first case which lead to a contradiction.
\item If the positive $(gf)$-orbit of $x$ intersect $I \cap B$. Let $k \in \ZZ$ such that $y:= (gf)^k(x)$ belongs to $I \cap B$. Then for every $n$ in $\NN_{\geq 1}$ we always have $(gf)^n(y)$ in $\support(f) \cap \support(g)$. Hence as $y$ is in $I \cap B$ we get $f(y)$ in $J \cap B$. Indeed it is clearly in $J$ and it is in $B$ otherwise $f(y)$ will not be in $A \cup B$ which is equal to $\support(g)$ and lead to a contradiction. So $gf(y)$ is in $A$ and $ff(y)=y$ is in $B$. We deduce that $gf(y)= y + \alpha-\beta \leq \sup(A)  = \inf(B) \leq ff(y) =y$ which is a contradiction because $\alpha > \beta$.
\end{enumerate}

\item The last case is where $J \cap A \neq \emptyset$ but it is a similar argument as the previous one and it also lead to a contradiction.
\end{enumerate}
In conclusion the lemma holds for every $x \in \support(f) \cup \support(g)$.
\end{proof}

\begin{Rem}\label{calcul de puissance}
Let $x$ be an element of $\mathopen{[}0,1 \mathclose{[}$. If there exists $n$ in $\NN_{\geq 1}$ such that $(gf)^n(x)$ is not in $\support(f)$ then $gf(gf)^n(x)=g(gf)^n(x)=f(gf)^{n-1}(x)$ so $(gf)^n(gf)^n(x)=f(x)$. If there exists $n$ in $\NN_{\geq 1}$ such that $(gf)^n(x)$ is not in $\support(g)$ then $(gf)^n(x)=g(gf)^n(x)=f(gf)^{n-1}(x)$ so $(gf)^{n-1}(gf)^n(x)=f(x)$.\\
We deduce that if there exist $n$ and $k$ in $\NN_{\geq 1}$  such that $(gf)^n(x)$ and $(gf)^k (f(x))$ are not in $\support(f) \cup \support(g)$ then $x$ has finite order at most $2n+2k$.
\end{Rem}

\begin{Lem}\label{Lemma product of two transpositions has finite order}
Let $f$ and $g$ be two $\IET$-transpositions. Then $gf$ has finite order.
\end{Lem}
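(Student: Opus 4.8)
The goal is to show that the product $gf$ of two $\IET$-transpositions has finite order. The plan is to reduce to the situation analysed in Lemma~\ref{Lemma product of two transpositions has orbit that leave the intersection of their support} and then invoke Remark~\ref{calcul de puissance}. First I would dispose of trivial cases: if $f=\Id$ or $g=\Id$ the claim is obvious, and if the supports of $f$ and $g$ are disjoint then $gf$ has order at most $2$. So assume both are genuine transpositions with overlapping supports. Write $I,J$ for the intervals swapped by $f$ and $A,B$ for those swapped by $g$, with $\sup(I)=\inf(J)$, $\sup(A)=\inf(B)$, and let $\alpha,\beta\in\interfo{0}{\frac12}$ be the shifts with $J=I+\alpha$, $B=A+\beta$.

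Next I would normalise by a symmetry argument so that $\alpha>\beta$. If $\alpha=\beta$, then $f$ and $g$ are translations by the same vector on the relevant pieces, and a short direct computation shows $gf$ is itself an $\IET$-transposition (or the identity), hence of order at most $2$; alternatively one observes $gf$ is conjugate into a group of translations of a single interval modulo its endpoints. If $\alpha<\beta$, swap the roles of $f$ and $g$: since $\operatorname{ord}(gf)=\operatorname{ord}((gf)^{-1})=\operatorname{ord}(f^{-1}g^{-1})=\operatorname{ord}(fg)$ (using $f^2=g^2=\Id$), it suffices to bound the order of $fg$, which is the case $\beta>\alpha$ with $f,g$ interchanged. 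So without loss of generality $\alpha>\beta$.

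Now apply Lemma~\ref{Lemma product of two transpositions has orbit that leave the intersection of their support}: for every $x\in\interfo{0}{1}$ there is $n=n(x)\in\NN_{\geq 1}$ with $(gf)^n(x)\notin\support(f)\cap\support(g)$. A point outside $\support(f)\cap\support(g)$ lies outside $\support(f)$ or outside $\support(g)$, so Remark~\ref{calcul de puissance} applies in either sub-case. Thus for each $x$ there exist $n(x),k(x)\in\NN_{\geq 1}$ such that $(gf)^{n(x)}(x)$ and $(gf)^{k(f(x))}(f(x))$ both escape $\support(f)\cup\support(g)$ after finitely many further steps (once you are outside $\support(f)\cap\support(g)$, one more application of $gf$ using the escape-and-return identities of Remark~\ref{calcul de puissance} lands you outside $\support(f)\cup\support(g)$, or one simply re-applies the lemma to the image point), so the $gf$-orbit of $x$ is finite, of length bounded by some explicit linear expression in $n(x)$ and $k(f(x))$.

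The remaining point — and the one I expect to be the main obstacle — is passing from ``every orbit is finite'' to ``$gf$ has a \emph{single} finite order'', i.e.\ a uniform bound on orbit lengths. Here I would use that $gf\in\IET$, so it has only finitely many intervals of continuity; on each such interval $gf$ is a translation, and the partition into intervals of continuity is finite. Since the escape mechanism in the lemma is governed by the single quantity $\alpha-\beta>0$ (points in $I\cap A$ drift right by $\alpha-\beta$ each double step, points in $J\cap B$ drift left), the number $n(x)$ needed to leave $\support(f)\cap\support(g)$ is at most $\lceil 1/(\alpha-\beta)\rceil$, independent of $x$; combined with Remark~\ref{calcul de puissance} this yields a bound of the form $2\lceil 1/(\alpha-\beta)\rceil + 2\lceil 1/(\alpha-\beta)\rceil$ on the order of every point, hence a common period for $gf$. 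Therefore $gf$ has finite order, completing the proof.
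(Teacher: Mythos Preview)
Your proposal is correct and follows essentially the same route as the paper: dispose of the $\alpha=\beta$ case directly, reduce $\alpha<\beta$ to $\alpha>\beta$ by symmetry (the paper uses that $fg$ is conjugate to $gf$, you use that it is the inverse---both fine), then apply Lemma~\ref{Lemma product of two transpositions has orbit that leave the intersection of their support} together with Remark~\ref{calcul de puissance} to get finiteness of every orbit, and finally pass to a global finite order.

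Two small points. First, in the $\alpha=\beta$ case your claim that $gf$ is itself an $\IET$-transposition is not quite right: it is the identity on $\support(f)\cap\support(g)$ and agrees with $f$ or $g$ on the symmetric difference, so it is in general a product of several commuting transpositions; the paper checks directly that $(gf)^2=\Id$ on each piece. Second, your uniformity argument works, but the paper's is cleaner: it simply observes at the outset that any element of $\IET$ has only finitely many distinct periods among its periodic points, so once every point is shown periodic the element has finite order---no need to extract an explicit bound from the drift $\alpha-\beta$.
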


\begin{proof}
As $gf$ is in $\IET$ we know that if $gf$ has periodic points, the set of periods is finite. Hence it is sufficient to prove that each point of $\interfo{0}{1}$ has a periodic $(gf)$-orbit to conclude. For every $x$ which is neither in $\support(f)$ nor $\support(g)$ then $gf(x)=x$. Hence we only need to check for $x$ is in $\support(f) \cup \support(g)$.\\
Let $I$ and $J$ be the non-overlapping intervals swapped by $f$ with $\sup(I) = \inf(J)$ and let $A$ and $B$ be the non-overlapping intervals swapped by $g$ with $\sup(A) = \inf(B)$. Let $\alpha$ and $\beta$ be the elements of $\mathopen{[} 0,\dfrac{1}{2} \mathclose{[}$ such that $J= I+\alpha$ and $B=A + \beta$.\\
If $\alpha=\beta$ then for every $x$ in $\support(f) \cap \support(g) $ then $gf(x)=x$. Moreover for $x$ in $\support(f) \setm \support(g)$ we have $f(x)$ not in $\support(g)$ thus $gf(x)=f(x)$ and $gfgf(x)=gf (fx)=g(x)=x$. The same stands for $x$ in $\support(g) \setm \support(f)$. In conclusion $gf$ has finite order at most $2$.\\
If $\alpha \neq \beta$, we can assume that $\alpha > \beta$ up to change the role of $f$ and $g$ and because $fg$ is conjugate to $gf$. Let $x$ in $\support(f) \cup \support(g)$. Then by Lemma \ref{Lemma product of two transpositions has orbit that leave the intersection of their support} there exist $n$ and $k$ in $\NN_{\geq 1}$ such that $(gf)^n(x)$ is not in $\support(f) \cap \support(g)$ and $(gf)^k (f(x))$ is not in $\support(f) \cap \support(g)$. Thanks to Remark \ref{calcul de puissance} we know that $x$ has finite order at most $2n+2k$.
\end{proof}

\begin{Prop}\label{Proposition example of product of two IET transposition can has arbitrarily large order.}
For every $n$ in $\NN_{\geq 1}$ there exist two $\IET$-transpositions $f$ and $g$ such that the product $gf$ has order $n$.
\end{Prop}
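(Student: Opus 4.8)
For every $n \in \NN_{\geq 1}$, build two $\IET$-transpositions $f,g$ with $gf$ of order exactly $n$.

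The plan is to exploit the analysis of Lemma \ref{Lemma product of two transpositions has orbit that leave the intersection of their support} and Remark \ref{calcul de puissance}, which show that the order of $gf$ is governed by how long a point takes to drift out of the overlap $\support(f) \cap \support(g)$ under the ``translation by $\alpha - \beta$'' mechanism. Concretely, I would take the configuration of case (1) of that lemma: choose intervals so that $I \cap B = \emptyset$ and $J \cap A = \emptyset$, i.e. the two transpositions overlap ``in the middle''. Then a point starting in $I \cap A$ oscillates $I \cap A \to J \cap B \to I \cap A \to \cdots$ under $gf$, shifting by $+(\alpha-\beta)$ on each full application, while a point in $J \cap B$ shifts by $-(\alpha - \beta)$. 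The escape time is thus controlled by $\lceil \ell / (\alpha - \beta)\rceil$ where $\ell$ is the length of the overlapping block; once a point leaves the overlap, Remark \ref{calcul de puissance} gives a short bounded tail before it becomes periodic. So to force period exactly $n$ I would set $\alpha - \beta$ and the overlap length to be commensurable with ratio about $n$, and then compute the exact orbit length by hand.

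The cleanest concrete choice: fix a small $\eps > 0$, let $a = n\eps$ be the common type of the two transpositions, and place $I = \interfo{0}{a}$, $J = \interfo{a}{2a}$ (so $f$ swaps $I$ and $J$, $\alpha = a$), and $A = \interfo{(n-1)\eps}{(2n-1)\eps}$, $B = \interfo{(2n-1)\eps}{(3n-1)\eps}$ (so $g$ swaps $A$ and $B$, $\beta = a$ as well — wait, that gives $\alpha = \beta$). Instead I would keep the \emph{lengths} equal to $a$ but make the \emph{shifts} differ: let $f$ swap $\interfo{0}{a}$ with $\interfo{a}{2a}$ and let $g$ swap $\interfo{\eps}{a+\eps}$ with $\interfo{a+\eps}{2a+\eps}$, so $\alpha = \beta = a$ again. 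The real mechanism in case (1) needs $J = I + \alpha$ and $B = A + \beta$ with $\alpha \ne \beta$, so I must instead take transpositions of \emph{different} types, say $f$ of type $p$ and $g$ of type $q$, arranged so that the overlap block has length roughly $n$ times $|\alpha - \beta|$. The right move is: pick integers and set $\alpha - \beta = 1/N$ for suitable $N$, arrange the overlap interval $I \cap A$ (or $J\cap B$) to have length strictly between $(n-1)/N$ and $n/N$ — ensuring the number of steps before escape is exactly $n$ — and check via the oscillation that this is the actual period, not a proper divisor. I would then verify directly that $gf$ restricted to the non-overlap parts of $\support(f) \cup \support(g)$ has order dividing $2$ (as in the $\alpha = \beta$ subcase of Lemma \ref{Lemma product of two transpositions has finite order}), so the global order is the lcm, which by the construction is exactly $n$.

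The main obstacle is bookkeeping: one must pin down the precise orbit of a point in the overlap, confirm that the first time it leaves $\support(f) \cap \support(g)$ it then returns to $x$ after a \emph{controlled} number of further steps (Remark \ref{calcul de puissance} only gives an upper bound $2n + 2k$), and rule out that the genuine period is a proper divisor of $n$ — this requires choosing the overlap length in an open interval of the form $(\,(n-1)\delta,\, n\delta\,)$ rather than at an endpoint, and checking that all other points of $\interfo{0}{1}$ have $(gf)$-orbit of length dividing the period of the distinguished orbit. Drawing the picture (analogous to Figure \ref{Figure IET transposition is inside D(IET)}) and reading off the six or so sub-intervals of $\support(f)\cup\support(g)$ with their itineraries is the honest way to close this; it is routine but fiddly, so I would present it with an explicit numerical family, e.g. $p = \tfrac{1}{2} - \tfrac{1}{2n}$, $q = \tfrac12$, with $I,J$ and $A,B$ positioned to make $J \cap B$ have length $\tfrac{1}{2n}\cdot(\text{something in } (n-1,n))$, and simply compute.
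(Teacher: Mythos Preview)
Your proposal is a plan, not a proof: the construction is never pinned down and the decisive computations are deferred to ``routine but fiddly'' and then omitted. Two concrete gaps:

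\begin{itemize}
\item You assert that $gf$ on the non-overlap part of $\support(f)\cup\support(g)$ has order dividing $2$, then take the lcm with the overlap period to get $n$. But Lemma~\ref{Lemma product of two transpositions has finite order} proves the ``period $\le 2$ outside the overlap'' statement only when $\alpha=\beta$; you offer no argument for $\alpha\ne\beta$, and indeed in the construction that actually works (below) the points outside $\support(f)\cap\support(g)$ lie on the \emph{same} long cycle as everything else. Worse, if the overlap period were an odd $n$ and the non-overlap period were $2$, the lcm would be $2n$, not $n$. You later restate the requirement correctly (``all other orbits have length dividing the distinguished one'') but never verify it for any specific $f,g$.
\item Remark~\ref{calcul de puissance} gives only an \emph{upper bound} on the period in terms of escape times; turning ``escapes after $n$ steps'' into ``period exactly $n$'' requires following the orbit after it leaves the overlap, which you do not do.
\end{itemize}

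The paper's argument avoids all parameter-tuning by working combinatorially. Partition an interval into $2n$ consecutive equal pieces, labelled $A_1,\dots,A_{n-1},C,D,B_1,\dots,B_{n-1}$. Take $g$ to be the $\IET$-transposition swapping the left half $A_1\cup\cdots\cup A_{n-1}\cup C$ with the right half $D\cup B_1\cup\cdots\cup B_{n-1}$, and $f$ the $\IET$-transposition swapping the block $A_1\cup\cdots\cup A_{n-1}$ with the block $B_1\cup\cdots\cup B_{n-1}$ (each a single interval, since the pieces are consecutive). One then reads off directly that $gf$ permutes these $2n$ pieces as the single cycle
\[
(A_1\ A_2\ \cdots\ A_{n-1}\ C\ B_{n-1}\ B_{n-2}\ \cdots\ B_1\ D),
\]
so $gf$ has order exactly $2n$. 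A slight variant with an extra interval yields a $(2n-1)$-cycle, covering odd orders. The point is that since $f$ and $g$ both permute the \emph{same} finite set of equal-length pieces, the order of $gf$ is just the order of a permutation in $\mathfrak{S}_{2n}$, and writing down that permutation is immediate.

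Your drift intuition is in fact what is happening here ($\alpha-\beta$ equals one piece-length, which is why $gf(A_i)=A_{i+1}$), but the discrete formulation makes the exact order visible without any estimates.
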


\begin{proof}
We distinguish the case where $n$ is even or odd. In both cases we illustrate the proof with Figure \ref{figure_produit_transposition}. The case $n=1$ is given by the equality $f^2=\Id$ for any $\IET$-transposition $f$. Let $n \in \NN_{\geq 1}$. 

Let $I$ and $J$ be two consecutive intervals of the same length $\ell \in \mathopen{[}0,\frac{1}{2} \mathclose{]}$ and let $g$ be the $\IET$-transposition that swaps $I$ and $J$. Let $A_1,A_2, \ldots , A_{n-1}$ and $C$ be consecutive intervals of length $\frac{\ell}{n}$ such that the left endpoint of $A_1$ is the left endpoint of $I$ (hence the right endpoint of $C$ is the right endpoint of $I$). Let $D$ and $B_1,B_2, \ldots , B_{n-1}$ be consecutive intervals of length $\frac{\ell}{n}$ such that the right endpoint of $B_{n-1}$ is the right endpoint of $J$ (hence the left endpoint of $D$ is the left endpoint of $J$). Let $f$ be the $\IET$-transposition that swaps $A_i$ and $B_i$ for every $1 \leq i \leq n-1$. Hence by definition we get $g(A_1)=D$, $g(A_i)=B_{i-1}$ for every $2 \leq i \leq n-1$ and $g(C)=B_{n-1}$. So the composition $gf$ is equal to the permutation $(A_1 ~ A_2 ~ \ldots ~ A_{n-1} ~ C ~ B_{n-1} ~ B_{n-2} ~ \ldots ~ B_1 ~ D)$. Thus $gf$ has order $2n$.

It remains the case of order $2n-1$. Let $I,J$ and $K$ be three consecutive intervals with $I$ and $J$ of length $\ell \in \interfo{0}{\frac{1}{3}}$ and $K$ of length $\ell' \in \mathopen{]} \frac{\ell}{n},\ell \mathclose{[}$. Let $g$ be the $\IET$-transposition that swaps $I$ and $J$. We define $A_1,A_2, \ldots, A_{n-1}$ consecutive intervals of length $\frac{\ell}{n}$ such that the right endpoint of $A_{n-1}$ is the right endpoint of $I$. We define also $D$ and $B_1,B_2, \ldots , B_{n-1}$ consecutive intervals of length $\frac{\ell}{n}$ such that the left endpoint of $B_{n-1}$ is the left endpoint of $K$. Let $f$ be the $\IET$-transposition that swaps $A_i$ and $B_i$ for every $1 \leq i \leq n-1$. One can check that the product $gf$ in this case is the permutation $(A_1 ~ A_2 ~ \ldots ~ A_{n-1} ~ B_{n-1} ~ B_{n-2} ~ \ldots ~ B_1 ~ D)$ so $gf$ has order $2n-1$.
\end{proof}

\begin{figure}[!h]
\includegraphics[width=0.4\linewidth]{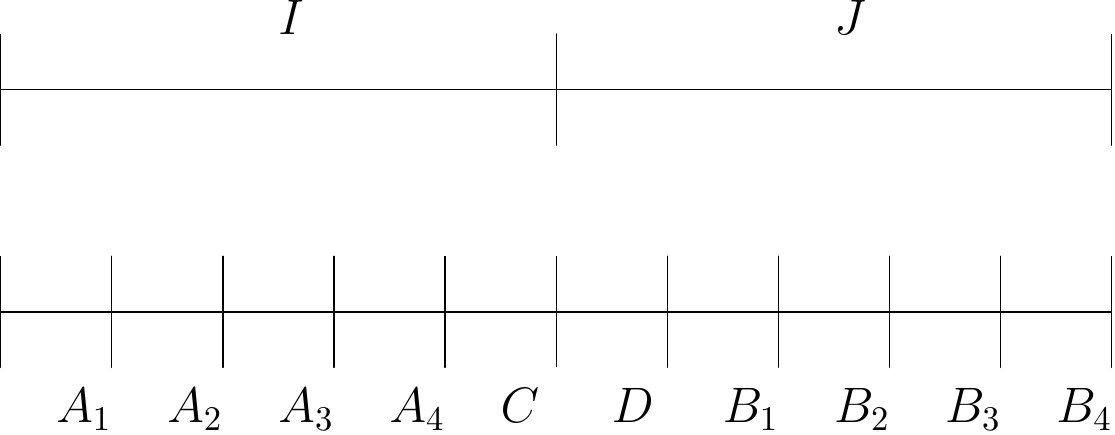}
\hspace{20pt}
\includegraphics[width=0.48\linewidth]{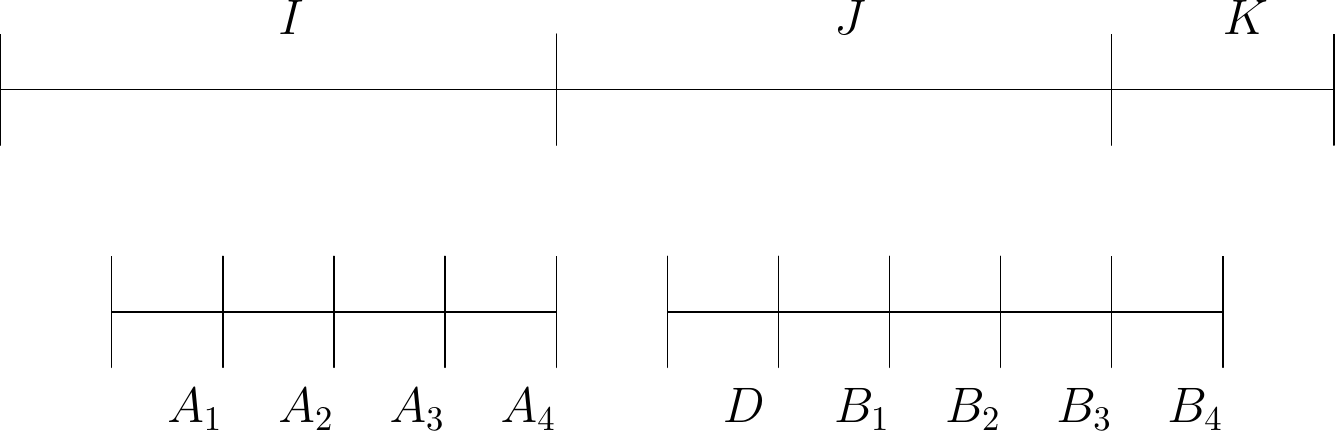}
\caption{\footnotesize{Illustration of Proposition \ref{Proposition example of product of two IET transposition can has arbitrarily large order.} with $n=5$.
\textbf{Left}: For the case ``$fg$ has order $2n$''.
\textbf{Right}: For the case ``$fg$ has order $2n-1$''.}}\label{figure_produit_transposition}
\end{figure}

\bigskip

\section{\texorpdfstring{A generating subset for $\Ker(\varphi_{\Gamma})$}{A generating subset for Ker(phi index Gamma)}}\label{Section a generating subset for Ker(restriction SAF)}

Let $\SGIET$ be a dense subgroup of $\mathbb{R/Z}$. We follow the idea of Y. Vorobets in \cite{Vorobets2011} and introduce the notion of balanced product of restricted rotations. The aim is to show Lemma \ref{Lemma Ker restriction SAF is generated by balanced product of IET Gamma restricted rotations}. We recall that $\varphi_{\SGIET}$ is the restriction of the $\SAF$-homomorphism to $\IET(\SGIET)$; and that $\tilde{\SGIET}_{+}$ is the positive cone of $\tilde{\SGIET}$.

\subsection{\texorpdfstring{Balanced product of $\Gamma$-restricted rotations}{Balanced product of Gamma-restricted rotations}}\label{subsection Ker restriction SAF and balanced product}~

\medskip

\begin{Def}
Let $n \in \NN$ and let $r_i$ be a restricted rotation for every $1 \leq i \leq n$. For every $a,b \in \tilde{\Gamma}_{+}$ let $n_{a,b}$ be the number of restricted rotation $r_i$ of type $(a,b)$. The tuple $(r_1,r_2,\ldots, r_n)$ is said to be a \textit{balanced tuple of restricted rotations} if $n_{a,b}=n_{b,a}$ for every $a,b \in \tilde{\SGIET}_{+}$. We say that a product of restricted rotations is a \textit{balanced product of restricted rotations} if it can be written as a product of a balanced tuple of restricted rotations.
\end{Def}

\begin{Exem}
Let $a$ be an element in $\tilde{\SGIET}_+$ with $a \leq \frac{1}{2}$.
Any $\SGIET$-restricted rotation of type $(a,a)$ is a balanced product of restricted rotations. These elements are also $\IET(\SGIET)$-transposition which swapped two consecutive intervals of same length $a$.
\end{Exem}

\begin{Exem}
Let $a,b \in \tilde{\SGIET}_+$ with $a+b \leq 1$. If $h$ is a $\SGIET$-restricted rotation of type $(a,b)$ then $h^{-1}$ is a $\SGIET$-restricted rotation of type $(b,a)$. Thus every element of $D(\IET(\SGIET))$ is a balanced product of $\SGIET$-restricted rotations.
\end{Exem}

In order to get the decomposition of the elements of $\Ker(\varphi_{\SGIET})$, we need to know the freeness of some family of $\Wedgealt V$.

\begin{Lem}\label{Lemma freeness of the image of a basis in the exterior algebra}
Suppose $V$ is a subgroup of $\RR$ (then it is a $\ZZ$-module). Let $k$ in $\NN_{\geq 1}$ and $v_1,v_2,\ldots v_k$ elements of $V$ which are $\ZZ$-linearly independent. Then the wedge products $v_i \wedge v_j$ for $1 \leq i < j \leq k$ are $\ZZ$-linearly independent in $\Wedgealt V$.
\end{Lem}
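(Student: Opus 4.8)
The plan is to compare $\Wedgealt V$ with the second exterior power of a $\QQ$-vector space, where the asserted independence is classical linear algebra. Since $V \subseteq \RR$ it is torsion-free, so the natural map $V \to \QQ \otimes_{\ZZ} V$ is injective; concretely we may take $\QQ \otimes_{\ZZ} V$ to be the $\QQ$-linear span $\QQ V$ of $V$ inside $\RR$. The first step is to produce a $\ZZ$-linear map
\[
\Phi \colon \Wedgealt V \longrightarrow \bigwedge\nolimits^2_{\QQ}(\QQ V), \qquad a \wedge b \longmapsto a \wedge b .
\]
This is well defined because the composite $\bigotimes^2_{\ZZ} V \to \bigotimes^2_{\ZZ}(\QQ V) \to \bigotimes^2_{\QQ}(\QQ V) \to \bigwedge^2_{\QQ}(\QQ V)$ sends $x \otimes y + y \otimes x$ to $x \wedge y + y \wedge x = 0$, hence factors through the quotient of $\bigotimes^2_{\ZZ} V$ defining $\Wedgealt V$.

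The second step records the two linear-algebra facts needed. Because $V$ is torsion-free, a $\QQ$-linear relation among $v_1, \dots, v_k$ in $\QQ V$ becomes, after clearing denominators, a $\ZZ$-linear relation in $V$; hence $v_1, \dots, v_k$ remain $\QQ$-linearly independent in $\QQ V$. Completing them to a $\QQ$-basis of $\QQ V$ and invoking the standard description of a basis of the second exterior power, the elements $v_i \wedge v_j$ with $1 \le i < j \le k$ are $\QQ$-linearly independent in $\bigwedge^2_{\QQ}(\QQ V)$.

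Finally, given a relation $\sum_{1 \le i < j \le k} c_{ij}\, v_i \wedge v_j = 0$ in $\Wedgealt V$ with $c_{ij} \in \ZZ$, applying $\Phi$ yields $\sum_{i<j} c_{ij}\, v_i \wedge v_j = 0$ in $\bigwedge^2_{\QQ}(\QQ V)$, so $c_{ij} = 0$ for all $i < j$, which is the lemma. I do not anticipate a genuine obstacle; the one point to keep in mind is that $\Wedgealt V$ is the \emph{skew-symmetric} (not the exterior) power, so $\Phi$ is \emph{not} injective in general — its kernel contains the $2$-torsion classes $a \wedge a$ — but this is harmless, since $\Phi$ is used only to detect the off-diagonal classes $v_i \wedge v_j$ with $i < j$. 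If one prefers to avoid $\Phi$, there is a self-contained variant: for each fixed pair $p < q$, extend the coordinate homomorphisms of the free group $\ZZ v_1 \oplus \cdots \oplus \ZZ v_k$ to $\tilde\pi_1, \dots, \tilde\pi_k \colon V \to \QQ$ (possible since $\QQ$ is an injective $\ZZ$-module), form the skew-symmetric $\ZZ$-bilinear map $b(x,y) = \tilde\pi_p(x)\tilde\pi_q(y) - \tilde\pi_q(x)\tilde\pi_p(y)$, and use the universal property of $\Wedgealt V$ to get $\bar b \colon \Wedgealt V \to \QQ$ with $\bar b(v_i \wedge v_j) = \delta_{(i,j),(p,q)}$ for $i < j$; applying $\bar b$ to the relation forces $c_{pq} = 0$.
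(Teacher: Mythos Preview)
Your proof is correct and follows essentially the same strategy as the paper: both arguments pass from $\Wedgealt V$ to a $\QQ$-exterior power where the claim is classical linear algebra, the only cosmetic difference being that the paper maps into $\Wedgealt \RR \cong \bigwedge^2_{\QQ}\RR$ (using $2$-divisibility of $\RR$) whereas you map into $\bigwedge^2_{\QQ}(\QQ V)$. Your remark that $\Phi$ kills the $2$-torsion classes $a\wedge a$ but that this is harmless for detecting the off-diagonal $v_i\wedge v_j$ is exactly the point, and your alternative via coordinate functionals extended along $\QQ$ is a nice self-contained variant.
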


\begin{proof}
Let $v_1,v_2,\ldots v_k$ in $V$ which are $\ZZ$-linearly independent. It is sufficient to prove the lemma for $V=\RR$ because being $\ZZ$-linearly independent in $\Wedgealt \RR$ implies being $\ZZ$-linearly independent in $\Wedgealt V$. We know that $\Wedgealt \RR$ is isomorphic to $\bigwedge^2_{\ZZ} \RR$. Let us assume that $v_1,v_2, \ldots , v_k$ are in $\RR$. Then being $\ZZ$-linearly independent is the same that being $\QQ$-linearly independent. Indeed if there exist $p_1,p_2,\ldots ,p_k$ in $\ZZ$ and $q_1,q_2,\ldots , q_k$ in $\NN_{\geq 1}$ such that $\sum\limits_{i=1}^k \dfrac{p_i}{q_i}v_i=0$ then $\sum\limits_{i=1}^k (\prod\limits_{\substack{j=1 \\ j \neq i}} q_j p_i) v_i ~{=0}$ is an equality in $\ZZ$. Thus for each $i \in \lbrace 1,2,\ldots , k \rbrace$ we have $\prod\limits_{\substack{j=1 \\ j \neq i}} q_j p_i =0$. Or $q_j \neq 0$ for every $j$ then $p_i =0 $ for every $i$. \\
The $\QQ$-vector space generated by all the $v_i$ for $1 \leq i \leq k$ is isomorphic to $\QQ^k$. We can complete the $\QQ$-linearly independent set $\lbrace v_1,v_2,\ldots ,v_k \rbrace$ in a basis $S$ of $\RR$ seen as a $\QQ$-vector space. Thus in $\bigwedge^2_{\ZZ} \RR$ the elements $v_i \wedge v_j$ for $1 \leq i < j \leq k$ are $\QQ$-linearly independent so they are $\ZZ$-linearly independent in $\Wedgealt \RR$ and this gives the conclusion.
\end{proof}

\begin{Lem}\label{Lemma ker(phi) is generated by balanced product}
Any transformation $f$ in $\Ker(\varphi_{\SGIET})$ can be represented as a balanced product of $\SGIET$-restricted  rotations. 
\end{Lem}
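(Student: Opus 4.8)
The plan is to start from an arbitrary $f \in \Ker(\varphi_{\SGIET})$ and use Lemma \ref{Lemma Decomposition in IET(Gamma)-restricted rotations} (Vorobets) to write $f$ as a product $r_1 r_2 \cdots r_n$ of $\SGIET$-restricted rotations, moreover with all types lying in the finite set $T$ of lengths of intervals of some chosen $\cP \in \Pi_f$ that is a $\SGIET$-partition. For each ordered pair $(a,b)$ with $a,b \in T \cap \tilde\SGIET_+$, let $n_{a,b}$ be the number of factors $r_i$ of type $(a,b)$. Since changing the product of restricted rotations only changes the element of $\IET(\SGIET)$, not its image under $\varphi_\SGIET$, and since $\varphi_\SGIET(f)=0$, I want to show that the ``defect from being balanced'', measured by the numbers $n_{a,b}-n_{b,a}$, can be corrected without leaving $\Ker(\varphi_\SGIET)$.

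The key computation is the value of $\varphi_\SGIET$ on a single restricted rotation: a restricted rotation $r$ of type $(a,b)$ has combinatorial description given by two intervals, translation by $b$ on the first and by $-a$ on the second, so by Proposition \ref{Proposition SAF invariant with combinatorial description} (or directly from the definition of $\varphi$) one gets $\varphi(r) = b\wedge a + (-a)\wedge b = b \wedge a - a \wedge b = 2\,(b\wedge a)$ in $\Wedgealt\tilde\SGIET$ — equivalently $-2\,(a\wedge b)$. Hence
\[
0 = \varphi_\SGIET(f) = \sum_i \varphi(r_i) = \sum_{a,b} n_{a,b}\cdot 2(b\wedge a) = 2\sum_{a<b}(n_{a,b}-n_{b,a})\,(b\wedge a),
\]
where the last sum runs over the unordered pairs from the finite set $T\cap\tilde\SGIET_+$ (pairs $(a,a)$ contribute $a\wedge a$, but $2(a\wedge a)=a\wedge a+a\wedge a = 0$ anyway, so they drop out; in any case restricted rotations of type $(a,a)$ are already balanced on their own). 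Now choose, via Corollary \ref{Corollary ultrasimplicially ordered group} applied to a finitely generated subgroup of $\tilde\SGIET$ containing $T$, a $\ZZ$-basis $\{\ell_1,\dots,\ell_d\}\subset\tilde\SGIET_+$ of that subgroup such that every element of $T$ is an $\NN$-combination of the $\ell_m$. By $\ZZ$-bilinearity the quantities $(n_{a,b}-n_{b,a})(b\wedge a)$ expand into a $\ZZ$-combination of the basis wedges $\ell_m\wedge\ell_{m'}$, and by Lemma \ref{Lemma freeness of the image of a basis in the exterior algebra} those basis wedges with $m<m'$ are $\ZZ$-linearly independent and each has infinite order. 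Therefore the displayed equation forces every coefficient to vanish: concretely, the integer $\sum_{a<b}(n_{a,b}-n_{b,a})\cdot(\text{coefficient of }\ell_m\wedge\ell_{m'}\text{ in }b\wedge a)$ is zero for all $m<m'$.

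It remains to convert this ``numerical'' balancedness into an honest balanced tuple. For this I would first record the trivial moves that do not change $f$ and keep us inside $\Ker(\varphi_\SGIET)$: inserting an adjacent pair $r\,r^{-1}$, where $r$ has type $(a,b)$ and $r^{-1}$ has type $(b,a)$ (same associated intervals), adds one to both $n_{a,b}$ and $n_{b,a}$; and for a fixed pair of consecutive intervals $I,J$, a restricted rotation of type $(|I|,|J|)$ is the inverse of one of type $(|J|,|I|)$, so such a pair can be freely created on any sub-intervals we like using the density of $\tilde\SGIET$ and Lemma \ref{Lemma Decomposition in IET(Gamma)-restricted rotations} to reshuffle. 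Using these insertions we may assume the multiset of types is symmetric pairwise — i.e.\ we make $n_{a,b}=n_{b,a}$ for every $a,b$ directly — at the cost of possibly enlarging $T$; the point is that each insertion of a canceling pair changes $\varphi_\SGIET$ by $\varphi(r)+\varphi(r^{-1})=0$, so we never leave the kernel, and after finitely many insertions the resulting tuple is balanced while still representing $f$. The main obstacle I expect is precisely this last bookkeeping step: one must be careful that the reshuffling needed to realize a prescribed canceling pair $r\,r^{-1}$ on a concrete pair of adjacent intervals, inside an existing product of restricted rotations, can always be carried out within $\IET(\SGIET)$ — this is where the repeated appeals to Vorobets' decomposition lemma and to the density of $\tilde\SGIET$ do the real work, and where one has to phrase the induction (say, on $\sum_{a,b}|n_{a,b}-n_{b,a}|$) so that each step strictly decreases the defect. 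Once the defect is zero, the tuple is balanced by definition and the lemma follows.
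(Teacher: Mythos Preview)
There is a genuine gap in the final step. Your insertion move --- appending a cancelling pair $r\,r^{-1}$ with $r$ of type $(a,b)$ --- adds $1$ to \emph{both} $n_{a,b}$ and $n_{b,a}$, hence leaves the difference $n_{a,b}-n_{b,a}$ (and therefore your defect $\sum_{a,b}|n_{a,b}-n_{b,a}|$) unchanged. So this move cannot make the tuple balanced, and the proposed induction on the defect never gets started. The linear-algebra step you carry out only shows that certain integer linear combinations of the $n_{a,b}-n_{b,a}$ vanish (the coefficients being the expansion of $b\wedge a$ in the basis $\ell_m\wedge\ell_{m'}$); since the elements of $T$ are in general $\ZZ$-dependent, this does not force the individual differences $n_{a,b}-n_{b,a}$ to vanish.

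The paper avoids this difficulty by using the ultrasimplicial basis \emph{before} applying Vorobets' decomposition rather than after. Concretely: having chosen $\{\ell_1,\dots,\ell_d\}\subset\tilde\SGIET_+$ with every length in $T$ an $\NN$-combination of the $\ell_m$, one refines the partition $\cP$ into intervals whose lengths all lie in $\{\ell_1,\dots,\ell_d\}$, and only then invokes Lemma \ref{Lemma Decomposition in IET(Gamma)-restricted rotations} to write $f$ as a product of restricted rotations with types in $\{\ell_1,\dots,\ell_d\}^2$. Now the wedges $\ell_j\wedge\ell_i$ for $i<j$ are $\ZZ$-free by Lemma \ref{Lemma freeness of the image of a basis in the exterior algebra}, so $\varphi_\SGIET(f)=2\sum_{i<j}(s_{ij}-s_{ji})\,\ell_j\wedge\ell_i=0$ forces $s_{ij}=s_{ji}$ directly, and the decomposition is balanced as it stands --- no insertions or reshuffling are needed. (The reduction from general $\SGIET$ to the finitely generated case is a separate short argument.) Your proof is easily repaired by moving the use of Corollary \ref{Corollary ultrasimplicially ordered group} to this earlier stage.
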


\begin{proof}
Let $f \in \Ker(\varphi_{\SGIET})$. This is trivial if $f=\mathrm{id}$; assume otherwise. Let $(\mu,\sigma)$ be a $\SGIET$-combinatorial description of $f$, let $k \in \NN$ and $\lbrace I_1,I_2, \ldots,I_k \rbrace$ be the partition into intervals associated to $(\mu, \sigma)$ ( we have $k \geq 2$ as $f$ is not the identity). We recall that $\mu_i$ is the length of $I_i$ for every $1 \leq i \leq k$.

We treat the case where $\SGIET$ is finitely generated. Then $\tilde{\SGIET}$ is also finitely generated and we denote by $d$ its rank. By Corollary \ref{Corollary ultrasimplicially ordered group} there exist $\ell_1,\ell_2,\ldots,\ell_d$ in $\tilde{\SGIET}_{+}$ such that $\cL:=\lbrace \ell_1,\ell_2, \ldots,\ell_d \rbrace$ is a basis of $\tilde{\SGIET}$ and such that $L_i$ is a linear combination of $\ell_1,\ell_2, \ldots, \ell_d$ with non-negative integer coefficients for every $1 \leq i \leq k$. Then $I_i$ can be partitioned into smaller intervals with length in $\cL$ for every $1 \leq i \leq k$. We obtain a partition associated with $f$ whose intervals have length in $\cL$. By Lemma \ref{Lemma Decomposition in IET(Gamma)-restricted rotations} there exist $n\in \NN$ and a restricted rotation $f_i$ of type $(a_i,b_i)$ with $a_i,b_i \in \cL$ for $1 \leq i \leq n$ such that $f=f_1f_2\ldots f_n$. For any $i,j \in \lbrace 1,2,\ldots ,d \rbrace$ let $s_{ij}$ be the number of $\SGIET$-restricted rotation of type $(\ell_i,\ell_j)$ in the tuple $(f_1,f_2,\ldots,f_n)$. As $\varphi_{\SGIET}(f_i)=\ell_j \wedge \ell_i -\ell _i \wedge \ell_j=2 \ell_j \wedge \ell_i$, we obtain that: 
$$\varphi_{\SGIET}(f)=\sum\limits_{i=1}^d \sum\limits_{j=1}^d 2 s_{ij}(\ell_j \wedge \ell_i)=\sum\limits_{1\leq i < j \leq d}  2 (s_{ij}-s_{ji})(\ell_j \wedge \ell_i)$$
We know that $\lbrace \ell_1,\ell_2,\ldots , \ell_d \rbrace$ is a basis of $\tilde{\SGIET}$ thus by Lemma \ref{Lemma freeness of the image of a basis in the exterior algebra} we obtain that $\lbrace \ell_j \wedge \ell_i \rbrace_{1 \leq i<j \leq d}$ is a free family of $\Wedgealt \tilde{\SGIET}$. Then the assumption $\varphi_{\SGIET}(f)=0$ implies $s_{ij}=s_{ji}$ for every $1 \leq i <j \leq d$. This means that the product of $\SGIET$-restricted rotations $f_1f_2\ldots f_n$ is balanced.

We do not assume $\SGIET$ finitely generated any more. Hence we only know that $\varphi_{\SGIET}(f)=\sum\limits_{j=1}^{k} \big( \sum\limits_{\substack{i: \\ \sigma(i) < \sigma(j)}} \mu_i - \sum\limits_{i<j} \mu_i\big) \wedge \mu_j=0$ in $\Wedgealt \tilde{\SGIET}$ (see Proposition \ref{Proposition SAF invariant with combinatorial description}). We denote by $\overline{\varphi_{\SGIET}(f)}:=\sum\limits_{j=1}^{k} \big( \sum\limits_{\substack{i: \\ \sigma(i) < \sigma(j)}} \mu_i - \sum\limits_{i<j} \mu_i\big) \otimes \mu_j$. It is a representative of $\varphi_{\SGIET}(f)$ in $\bigotimes^2_{\ZZ} \tilde{\SGIET}$. Then there exist a finite set $J$ and $x_j,y_j \in \tilde{\SGIET}$ for every $j \in J$, such that $\overline{\varphi_{\SGIET}(f)}= \sum\limits_{j \in J} x_j \otimes y_j + y_j \otimes x_j$. We denote by $\tilde{A}$ the subgroup of $\tilde{\SGIET}$ generated by $\lbrace \mu_i \rbrace_{1 \leq i \leq k} \cup \lbrace x_j,y_j \rbrace_{j \in J}$. Then $\tilde{A}$ is a finitely generated subgroup of $\RR$ which contains $\ZZ$. Its image $A$ in $\mathbb{R/Z}$ is a finitely generated subgroup of $\mathbb{R/Z}$. Also we know that $f$ is in $\IET(A)$ and $(\mu,\sigma)$ is also a $A$-combinatorial description of $f$ and $\overline{\varphi_{\SGIET}(f)}$ is an element of $\bigotimes^2_{\ZZ} \tilde{A}$. Thus in $\Wedgealt \tilde{A}$ we have:
$$\varphi_{A}(f)=\sum\limits_{j=1}^{k} \big( \sum\limits_{\substack{i: \\ \sigma(i) < \sigma(j)}} \mu_i - \sum\limits_{i<j} \mu_i\big) \wedge \mu_j=[\overline{\varphi_{\SGIET}(f)}]_{\Wedgealt \tilde{A}}=[\sum\limits_{j \in J} x_j \otimes y_j + y_j \otimes x_j]_{\Wedgealt \tilde{A}}=0$$
Then we can applied the previous case and conclude that $f$ is a balanced product of $A$-restricted rotations, thus a balanced product of $\SGIET$-restricted rotations.
\end{proof}

\subsection{\texorpdfstring{$\Ker(\varphi_{\Gamma})$ is generated by $\IET(\Gamma)$-transpositions}{Ker(phi index Gamma) is generated by IET(Gamma)-transpositions}}~

\smallskip

The work of Y.Vorobets \cite{Vorobets2011} done for $\IET$ can be adapted to show the next two lemmas:

\begin{Lem}\label{Lemma product of same type restricted rotation}
Let $f$ and $g$ be two $\SGIET$-restricted rotations. If they have the same type then $f^{-1}g$ is a product of $\IET(\SGIET)$-transpositions.
\end{Lem}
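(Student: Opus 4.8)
The statement to prove is: if $f$ and $g$ are two $\SGIET$-restricted rotations of the same type $(a,b)$, then $f^{-1}g$ is a product of $\IET(\SGIET)$-transpositions. The guiding idea, following Vorobets, is that a restricted rotation of type $(a,b)$ is a "rigid block move" of two consecutive intervals, and two such moves differ only by where the block sits inside $\interfo{0}{1}$; moving a rigidly translated pair of intervals from one location to another can be realized by conjugating with transpositions, and each conjugation contributes a product of $\IET(\SGIET)$-transpositions to $f^{-1}g$. The key structural observation is that the set of all products of $\IET(\SGIET)$-transpositions is exactly $\Ker(\varphi_{\SGIET})$ by Lemma \ref{Lemma ker(phi) is generated by balanced product} together with the (forthcoming) analysis reducing balanced products to transpositions; but to keep things self-contained I would instead argue directly, since $f^{-1}g$ visibly lies in $\Ker(\varphi_{\SGIET})$ — indeed $\varphi_{\SGIET}(f) = \varphi_{\SGIET}(g) = b \wedge a - a \wedge b$ depends only on the type $(a,b)$ — and then show that this particular kernel element is a product of transpositions by an explicit geometric decomposition rather than appealing to the general generation result.

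First I would set up notation: let $I, J$ be the consecutive intervals associated with $f$ (lengths $a$, $b$, $\sup I = \inf J$), and $I', J'$ those associated with $g$. All four endpoints lie in $\tilde{\SGIET}$. The element $f$ acts as: translate $I$ to the right by $b$ (so $I$ lands on $J$'s position shifted appropriately — more precisely $f(I^\circ)$ is the subinterval of $I\cup J$ of length $a$ at the right end) and translate $J$ to the left by $a$. Next I would observe that if $I' = I$ and $J' = J$ then $f = g$ and $f^{-1}g = \Id$, which is (by convention in Definition \ref{Definition IET transposition}) an $\IET(\SGIET)$-transposition; so assume the two blocks $I \cup J$ and $I' \cup J'$ are placed differently. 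The main step is: produce an element $h \in \IET(\SGIET)$ that is a product of $\IET(\SGIET)$-transpositions and conjugates $f$ to $g$, i.e. $h f h^{-1} = g$; then $f^{-1}g = f^{-1}(hfh^{-1}) = [f^{-1},h]\cdot(\text{stuff})$ — more usefully, $f^{-1} g = f^{-1} h f h^{-1}$, and since conjugation preserves the property of being an $\IET(\SGIET)$-transposition (the image under $h$ of the two swapped intervals are again intervals in $\Itv(\SGIET)$ when $h \in \IET(\SGIET)$), and $\IET(\SGIET)$-transpositions are their own inverses, a product-of-transpositions $h$ gives $f^{-1}hf$ a product of transpositions, hence $f^{-1}hfh^{-1}$ a product of transpositions. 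So everything reduces to: \emph{find $h$, a product of $\IET(\SGIET)$-transpositions, with $hfh^{-1} = g$.}

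To build $h$, I would split into cases according to whether the blocks $I\cup J$ and $I'\cup J'$ are disjoint or overlap. In the disjoint case: I claim one can slide the block $I \cup J$ onto $I' \cup J'$ using transpositions. Concretely, pick an $\IET(\SGIET)$-transposition $t_1$ that swaps $I$ with $I'$ and, separately, one that swaps $J$ with $J'$ — but one must be careful that $I, J$ are \emph{consecutive} and $I', J'$ are \emph{consecutive}, so a single transposition swapping the two-interval block $I\cup J$ with $I'\cup J'$ (as two intervals of total length $a+b$) does the job provided $a+b \le \tfrac12$, and when $a+b > \tfrac12$ one first chops into pieces of length $< \tfrac12$ using Lemma \ref{Lemma for every epsilon any element is a product of elements of support of measure smaller than epsilon}-style cutting and uses several transpositions. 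After conjugating $f$ by this block-swap $h$, the conjugate $hfh^{-1}$ is a restricted rotation of type $(a,b)$ whose associated intervals are $I', J'$ — but one must check the \emph{orientation} is right, i.e. that $I'$ plays the role of the left interval; since conjugation by the interval-swap carries the combinatorial pattern verbatim, this holds. Hence $hfh^{-1} = g$. The overlapping case is the genuinely delicate one: when $I\cup J$ and $I'\cup J'$ overlap, a naive transposition is not available because its two swapped intervals would overlap; here I would follow Vorobets and pass through an intermediate restricted rotation whose block is disjoint from both — possible by density of $\tilde{\SGIET}$, since $a+b < 1$ leaves room — reducing the overlapping case to two applications of the disjoint case. \textbf{The main obstacle} I anticipate is precisely this bookkeeping in the overlapping/same-location-but-different-decomposition cases: verifying that every auxiliary interval introduced has endpoints in $\tilde{\SGIET}$ (so that all transpositions genuinely lie in $\IET(\SGIET)$, not merely in $\IET$), and that the conjugation tracks the \emph{ordered} type $(a,b)$ rather than possibly flipping it to $(b,a)$ — a flip would change $\varphi_{\SGIET}$ by a sign and break the argument, so one must confirm the geometric moves are orientation-preserving on the block.
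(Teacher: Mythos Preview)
Your overall strategy --- conjugate $f$ to $g$ by some $h$, then write $f^{-1}g = (f^{-1}hf)h^{-1}$ and argue each factor is a product of $\IET(\SGIET)$-transpositions --- is sound and in the spirit of Vorobets' argument (which is all the paper invokes here; it gives no proof of its own). However, two of your supporting claims are incorrect as stated.

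\medskip

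\textbf{First gap: conjugation does not preserve being a transposition.} You assert that ``the image under $h$ of the two swapped intervals are again intervals in $\Itv(\SGIET)$''. This is false: an element of $\IET(\SGIET)$ need not send an interval to an interval. Concretely, with $t$ your block-swap of $I\cup J$ and $I'\cup J'$, the conjugate $f^{-1}tf$ is \emph{not} a single transposition --- it swaps $I$ with $f(I')$ and $J$ with $f(J')$, a product of two transpositions (types $a$ and $b$). The clean repair is to stop claiming conjugates are transpositions and instead use that conjugation preserves order: each $f^{-1}t_if$ has order~$2$, hence by Lemma~\ref{Lemma finite order implies being a product of IET transpositions} is a product of $\IET(\SGIET)$-transpositions. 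With this correction your disjoint-support case goes through.

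\medskip

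\textbf{Second gap: the intermediate rotation need not exist.} In the overlapping case you propose passing through a third restricted rotation of type $(a,b)$ whose block is disjoint from both $I\cup J$ and $I'\cup J'$, justified by ``$a+b<1$ leaves room''. But the room needed is an interval of length $a+b$ in the complement of the union of the two supports. When $a+b>\tfrac12$ the two supports necessarily overlap and their union already has length at least $a+b$, so the complement has length at most $1-(a+b)<a+b$ and no such interval exists. Even for smaller $a+b$ the complement may be split into pieces each too short. One fix: handle the \emph{small-shift} case directly. If $g$ is the translate of $f$ by $c\in\tilde{\SGIET}_+$ with $0<c<\min(a,b)$, a direct computation shows $f^{-1}g$ cyclically permutes the three intervals $\interfo{u}{u+c}$, $\interfo{u+a}{u+a+c}$, $\interfo{u+a+b}{u+a+b+c}$ (each of length $c$) and is the identity elsewhere --- hence a $3$-cycle of $\SGIET$-intervals, so a product of two $\IET(\SGIET)$-transpositions. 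For a general shift, use density of $\tilde{\SGIET}$ to interpolate $u=v_0<v_1<\cdots<v_n=v$ in $\tilde{\SGIET}$ with each step $<\min(a,b)$ and telescope $f^{-1}g=\prod_i r_{i-1}^{-1}r_i$. Once the overlapping case is handled this way, the disjoint case need not be treated separately at all.
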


\begin{Lem}\label{Lemma commutator with a restricted rotation is product of transposition}
Let $f$ be a $\SGIET$-restricted rotation and $g$ be any transformation in $\IET(\SGIET)$. Then the commutator $[f,g]$ is a product of $\IET(\SGIET)$-transpositions.
\end{Lem}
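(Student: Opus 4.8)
The plan is to reproduce, inside $\IET(\SGIET)$, the strategy used by Vorobets for $\IET$. Write $T$ for the subgroup of $\IET(\SGIET)$ generated by all $\IET(\SGIET)$-transpositions; proving the lemma is the same as showing $[f,g]\in T$ for every $\SGIET$-restricted rotation $f$ and every $g\in\IET(\SGIET)$. I would first record two facts. (a) $T$ is normal: a conjugate $g\tau g^{-1}$ of an $\IET(\SGIET)$-transposition $\tau$ has order $2$, hence by Lemma~\ref{Lemma finite order implies being a product of IET transpositions} is a product of $\IET(\SGIET)$-transpositions, so it lies in $T$; as the $\tau$'s generate $T$, this gives $gTg^{-1}\subseteq T$. (b) For a restricted rotation $r$ and an $\IET(\SGIET)$-transposition $\tau$ one has $[r,\tau]=(r\tau r^{-1})\tau\in T$, by the same order-$2$ argument. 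Next, fix $f$; the commutator identities $[f,g_1g_2]=[f,g_1]\,g_1[f,g_2]g_1^{-1}$ and $[f,g^{-1}]=g^{-1}[f,g]^{-1}g$ together with (a) show that $\{g\in\IET(\SGIET):[f,g]\in T\}$ is a subgroup of $\IET(\SGIET)$. Since $\SGIET$-restricted rotations generate $\IET(\SGIET)$ (Lemma~\ref{Lemma Decomposition in IET(Gamma)-restricted rotations}), it is enough to prove $[f,r]\in T$ when both $f$ and $r$ are $\SGIET$-restricted rotations.

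So let $f,r$ be $\SGIET$-restricted rotations with support intervals $U,V$. I would isolate the case where $r$ restricts to a single translation on $U$: this holds in particular when $U\cap V=\emptyset$ and when $U$ lies in the interior of one of the two intervals of $r$. If that translation is trivial, $[f,r]=\Id$. Otherwise, writing $t\in\tilde\SGIET$ for the translation vector, $rfr^{-1}$ is the $\SGIET$-restricted rotation of the same type as $f$ with support $U+t$; thus $[f,r]=f\cdot(rfr^{-1})^{-1}$ is a product $pq$ of two $\SGIET$-restricted rotations whose types are $(a,b)$ and $(b,a)$ (with $f$ of type $(a,b)$). Then $q^{-1}$ and $p$ have the same type, so $qp=(q^{-1})^{-1}p\in T$ by Lemma~\ref{Lemma product of same type restricted rotation}, and hence $[f,r]=pq=p(qp)p^{-1}\in T$ using (a).

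To pass to an arbitrary pair $f,r$, I would break $U$ up along the discontinuities of $r$. The transformation $r$ has at most three discontinuity points, all in $\tilde\SGIET$; let $u_1<\dots<u_m$ be those lying in the interior of $U$. Each piece of the resulting subdivision of $U$ has interior disjoint from $V$, or contained in the interior of one of the two intervals of $r$. The geometric heart of the argument is then the decomposition lemma: \emph{for any subdivision of its support $U$ into $\SGIET$-subintervals, a $\SGIET$-restricted rotation $f$ equals a product of $\IET(\SGIET)$-transpositions supported in $U$ times a single $\SGIET$-restricted rotation supported inside one piece of the subdivision.} Granting this, write $f=\tau_1\cdots\tau_k\,\rho$ with the $\tau_i$ being $\IET(\SGIET)$-transpositions and $\rho$ a $\SGIET$-restricted rotation supported inside one piece; by the identities and (a) above, $[f,r]$ is a product of conjugates of the $[\tau_i,r]$ and of $[\rho,r]$, so it suffices that each of these lie in $T$. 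Now $[\tau_i,r]\in T$ by (b), and $[\rho,r]\in T$ by the previous paragraph, since $r$ restricts to a translation on $\support(\rho)$. Hence $[f,r]\in T$.

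Finally, the decomposition lemma: by iterating, it is enough to treat a subdivision of $U$ at a single point $p\in U^\circ\cap\tilde\SGIET$. This is carried out by the subtractive Euclidean descent already used in the proof of Lemma~\ref{Lemma for every epsilon any element is a product of elements of support of measure smaller than epsilon}: one repeatedly peels off an $\IET(\SGIET)$-transposition from $f$, at each step replacing $f$ by a $\SGIET$-restricted rotation supported on a strictly shorter $\SGIET$-subinterval of $U$, and one steers the peeling so that eventually the remaining restricted rotation is supported inside $\interfo{\inf U}{p}$ or inside $\interfo{p}{\sup U}$. I expect the bookkeeping that makes this work — arranging the descent so that the cut point $p$ is "resolved", through all possible positions of $p$ relative to the intervals produced at each step (including the mirror-image steps of the algorithm) — to be the main obstacle; it is precisely the point at which one imports Vorobets's analysis from \cite{Vorobets2011}.
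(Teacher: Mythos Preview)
Your argument is correct and is exactly what the paper asks for: the paper gives no proof of its own beyond pointing to Vorobets, and your outline is a faithful $\IET(\SGIET)$-version of that strategy. The decomposition lemma does go through by the descent you sketch---the ``steering'' works because at each Euclidean step the transposition can be peeled on \emph{either} side of the current restricted rotation (pre- or post-composition), so one may always shrink the support from, say, the left; the cumulative left shrinkage after $k$ steps is $(a+b)-(a_k+b_k)$, which tends to $a+b$ (or the algorithm terminates, in the rational case), hence eventually exceeds $p-\inf U$, and the remaining $\rho$ falls to the right of $p$.

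That said, once you have (a) and Lemma~\ref{Lemma product of same type restricted rotation} the decomposition lemma can be bypassed entirely. In $\IET(\SGIET)/T$ the class of a restricted rotation depends only on its type $(a,b)$; a single Euclidean step $f=\tau f'$ shows this class is invariant under $(a,b)\mapsto(a-b,b)$ (and symmetrically), so every class has representatives with arbitrarily small support. Hence any two such classes admit representatives with disjoint supports, which commute; since restricted rotations generate, $\IET(\SGIET)/T$ is abelian, and that is exactly the lemma. This avoids having to steer the descent relative to a fixed cut point.
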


These lemmas with Lemma \ref{Lemma for every epsilon any element is a product of elements of support of measure smaller than epsilon} give us the next theorem. It is proved by Y.Vorobets in the case $\SGIET= \mathbb{R / Z}$ in \cite{Vorobets2011}, and as the proof is the same we only provide here a sketch.

\begin{Thm}\label{Theorem balanced product of IET Gamma restricted rotation is a product of IET Gamma transpositions}
Every balanced product of $\SGIET$-restricted rotations can be written as a product of $\IET(\SGIET)$-transpositions.
\end{Thm}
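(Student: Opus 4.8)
**Proof proposal for Theorem \ref{Theorem balanced product of IET Gamma restricted rotation is a product of IET Gamma transpositions}.**

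The plan is to reduce, step by step, a balanced tuple $(r_1,\ldots,r_n)$ of $\SGIET$-restricted rotations to a product of $\IET(\SGIET)$-transpositions, using the three lemmas just stated as the toolbox. The overall strategy mirrors Vorobets's argument: pair up the restricted rotations of type $(a,b)$ with those of type $(b,a)$ (which is possible precisely because the tuple is balanced), and show that each such pair, together with the "conjugating junk" needed to bring the pair into a favorable position, contributes only a product of $\IET(\SGIET)$-transpositions. First I would record the base observation that an $\SGIET$-restricted rotation of type $(a,a)$ \emph{is} an $\IET(\SGIET)$-transposition of type $a$, so the only types that need work are the genuinely asymmetric ones $(a,b)$ with $a\ne b$.

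The key reduction step is the following: given a restricted rotation $r$ of type $(a,b)$ appearing in the product, I want to move it (via the group operation) next to some restricted rotation $s$ of type $(b,a)$, and then apply Lemma \ref{Lemma product of same type restricted rotation} after first replacing $s$ by a restricted rotation of the same type $(b,a)$ that is $r^{-1}$'s inverse-type partner. Concretely: $r^{-1}$ has type $(b,a)$, so $r^{-1}$ and $s$ have the same type, hence by Lemma \ref{Lemma product of same type restricted rotation} the element $r\,s = (r^{-1})^{-1}s$ is a product of $\IET(\SGIET)$-transpositions. To actually bring $r$ and $s$ adjacent inside the word $r_1\cdots r_n$ one must commute $s$ past the intervening factors; each such commutation introduces a commutator $[\,\cdot\,,\text{restricted rotation}\,]$, and Lemma \ref{Lemma commutator with a restricted rotation is product of transposition} guarantees that each of these commutators is itself a product of $\IET(\SGIET)$-transpositions. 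Peeling off one balanced pair at a time, and absorbing all the commutator corrections into the growing reservoir of transpositions, I would induct on $n$ (or on the number of non-transposition factors) until nothing but $\IET(\SGIET)$-transpositions remain.

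There is one subtlety that needs care and which I expect to be the main obstacle: the support-size bookkeeping. An $\IET(\SGIET)$-transposition of type $a$ needs \emph{two} disjoint copies of an interval of length $a$ to live inside $\interfo{0}{1}$, whereas a restricted rotation of type $(a,b)$ only needs room $a+b\le 1$; so after conjugating everything into a small corner of $\interfo{0}{1}$ there must be enough empty space left to realize the transpositions produced by Lemmas \ref{Lemma product of same type restricted rotation} and \ref{Lemma commutator with a restricted rotation is product of transposition}. This is exactly where Lemma \ref{Lemma for every epsilon any element is a product of elements of support of measure smaller than epsilon} enters: using density of $\tilde{\SGIET}$ in $\RR$, I may first rewrite the balanced product so that every restricted rotation involved has support of measure at most $\eps$ for a prescribed small $\eps>0$ (and the rewriting stays balanced, since it merely subdivides each $(a,b)$-rotation into several rotations of smaller types without disturbing the type-count parity). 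With all supports uniformly small, there is ample room in $\interfo{0}{1}$ to carry out every transposition and every commutator correction produced along the way, and the induction closes. I would then remark that the argument is verbatim Vorobets's once these three $\SGIET$-versions of his lemmas are in hand, which justifies presenting only this sketch.
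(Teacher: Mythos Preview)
Your approach is essentially the paper's: induct, pair a factor of type $(a,b)$ with one of type $(b,a)$, dispose of the pair via Lemma~\ref{Lemma product of same type restricted rotation}, and absorb the rearrangement cost via Lemma~\ref{Lemma commutator with a restricted rotation is product of transposition}. Two remarks on execution.

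First, the paper replaces your ``commute $s$ past each intervening factor'' by a single identity. Writing $g_1=f_2\cdots f_{k-1}$ and $g_2=f_{k+1}\cdots f_n$, one has
\[
f_1 g_1 f_k g_2=(f_1 f_k)\,(f_k^{-1}g_1 f_k g_1^{-1})\,(g_1 g_2).
\]
Here $f_1 f_k$ is handled by Lemma~\ref{Lemma product of same type restricted rotation}, the middle factor is a single commutator $[f_k^{-1},g_1]$ handled by Lemma~\ref{Lemma commutator with a restricted rotation is product of transposition}, and $g_1 g_2$ is literally the balanced tuple $(f_2,\ldots,f_{k-1},f_{k+1},\ldots,f_n)$ of length $n-2$, so plain induction on $n$ closes. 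Your step-by-step commuting also works, but each swap inserts a commutator \emph{between} the remaining $f_i$'s, so the residual word is no longer a tuple of restricted rotations of length $<n$; this is why you are forced to induct on the number of non-transposition factors rather than on $n$. That fix is correct, but the one-line identity is cleaner.

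Second, your ``support-size bookkeeping'' paragraph is a phantom obstacle. Lemmas~\ref{Lemma product of same type restricted rotation} and~\ref{Lemma commutator with a restricted rotation is product of transposition} are granted as stated: their output is already a product of genuine $\IET(\SGIET)$-transpositions living in $\interfo{0}{1}$, with no size hypothesis on the input. Nothing in the induction requires you to ``realize'' a transposition in extra room, so the preprocessing via Lemma~\ref{Lemma for every epsilon any element is a product of elements of support of measure smaller than epsilon} is not needed here (and your claim that such a subdivision preserves balancedness would itself require an argument). The paper's proof sketch does not invoke that lemma.
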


\begin{proof}[Proof of Theorem \ref{Theorem balanced product of IET Gamma restricted rotation is a product of IET Gamma transpositions} (sketched)]~\\
Let $(f_1,f_2,\ldots,f_n)$ be a balanced tuple of restricted rotations. The proof is by strong induction on the length $n$ of the tuple. If $n=1$ then $(f_1)$ is a balanced tuple of $\SGIET$-restricted rotations, thus $f_1$ is a $\SGIET$-restricted rotation of type $(a,a)$ with $a \in \tilde{\SGIET}$ so it is also an $\IET(\SGIET)$-transposition.\\
For the general case, let $(a,b)$ be the type of $f_1$. If $a=b$ then $f_1$ is an $\IET(\SGIET)$-transposition and $(f_2,f_3,\ldots,f_n)$ is a balanced tuple of restricted rotations. By the induction assumption we obtain the result. If $a \neq b$ then there exists $k \in \lbrace 2, \ldots ,n \rbrace$ such that $f_k$ is a $\SGIET$-restricted rotation of type $(b,a)$. Let $g_1=f_2 \ldots f_{k-1}$ or $g_1=\Id$ if $k=2$. Let $g_2=f_{k+1}\ldots f_n$ or $g_2= \Id$ if $k=n$. Then we can write 
$$
f_1f_2\ldots f_n=f_1g_1f_kg_2=(f_1f_k)(f_k^{-1} g_1 f_k g_1^{-1})(g_1 g_2)
$$
Hence, the induction assumption and Lemmas \ref{Lemma product of same type restricted rotation} and \ref{Lemma commutator with a restricted rotation is product of transposition} give the result.
\end{proof}

\begin{Coro}\label{Corollary Ker phi is generated by transpositions}
The kernel $\Ker(\varphi_{\SGIET})$ is generated by the set of all $\IET(\SGIET)$-transpositions.
\end{Coro}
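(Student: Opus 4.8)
The plan is to deduce this directly from the two preceding results together with a one-line verification of the easy inclusion. First I would record that every $\IET(\SGIET)$-transposition lies in $\Ker(\varphi_{\SGIET})$: such an element is an $\SGIET$-restricted rotation of type $(a,a)$, so it translates by some value $v$ on one interval of length $a$ and by $-v$ on a second interval of length $a$; by Proposition \ref{Proposition SAF invariant with combinatorial description} (or simply by unwinding the definition of $\varphi$), its image is $v \wedge a + (-v) \wedge a = 0$. Hence the subgroup $\langle T \rangle$ generated by the set $T$ of all $\IET(\SGIET)$-transpositions is contained in $\Ker(\varphi_{\SGIET})$.

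For the reverse inclusion I would take an arbitrary $f \in \Ker(\varphi_{\SGIET})$. By Lemma \ref{Lemma ker(phi) is generated by balanced product}, $f$ can be written as a balanced product of $\SGIET$-restricted rotations. Then Theorem \ref{Theorem balanced product of IET Gamma restricted rotation is a product of IET Gamma transpositions} rewrites any such balanced product as a product of $\IET(\SGIET)$-transpositions, so $f \in \langle T \rangle$. Combining the two inclusions gives $\Ker(\varphi_{\SGIET}) = \langle T \rangle$, which is the assertion.

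There is essentially no serious obstacle: all of the substantive work has already been done in Lemma \ref{Lemma ker(phi) is generated by balanced product} (whose proof handles the non-finitely-generated case by passing to a finitely generated subgroup $A \subset \RR$ and using the freeness statement of Lemma \ref{Lemma freeness of the image of a basis in the exterior algebra}) and in Theorem \ref{Theorem balanced product of IET Gamma restricted rotation is a product of IET Gamma transpositions} (whose proof is an induction on the length of the balanced tuple, relying on Lemmas \ref{Lemma product of same type restricted rotation} and \ref{Lemma commutator with a restricted rotation is product of transposition}). The only point demanding a little care is bookkeeping: one must make sure that the transpositions produced along the way are genuinely $\IET(\SGIET)$-transpositions (their type lies in $\tilde{\SGIET}$ and they lie in $\IET(\SGIET)$) rather than merely $\IET$-transpositions, but this is already built into the statements being invoked, since "balanced product of restricted rotations" there refers to $\SGIET$-restricted rotations.
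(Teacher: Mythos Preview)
Your argument is correct and matches the paper's approach exactly: the corollary is stated there without proof, as an immediate consequence of Lemma~\ref{Lemma ker(phi) is generated by balanced product} and Theorem~\ref{Theorem balanced product of IET Gamma restricted rotation is a product of IET Gamma transpositions}, and you have simply spelled this out together with the easy inclusion. One small slip worth fixing: a general $\IET(\SGIET)$-transposition is \emph{not} a $\SGIET$-restricted rotation of type $(a,a)$, since the two swapped intervals need not be consecutive (see Definition~\ref{Definition IET transposition}); nevertheless your computation $v \wedge a + (-v)\wedge a = 0$ is valid for any transposition, so the inclusion $\langle T\rangle \subset \Ker(\varphi_{\SGIET})$ still holds.
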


\bigskip

\section{\texorpdfstring{Description of the abelianization of $\IET(\Gamma)$}{Description of the abelianization of IET(Gamma)}}\label{Section Description of the abelianization of IET(Gamma)}~
\smallskip

In this section we construct a group homomorphism $\eps_{\SGIET}: \IET(\SGIET) \rightarrow \Wedgealt \tilde{\SGIET}$ whose kernel is the derived subgroup $D(\IET(\SGIET))$.

\subsection{Boolean measures}~
\smallskip

In finite permutation groups there is a natural signature. One way to describe the signature is as follows: the signature of a finite permutation $f$, viewed in $\ZZ / 2\ZZ$ is the number modulo 2 of pairs $(x,y)$ such that $x<y$ and $f(x) >f(y)$. In our context where $f \in \IET(\SGIET)$, while this set is infinite, the idea is to measure it in a meaningful sense.

\begin{Def}
Let $A$ be a Boolean algebra and $G$ be an abelian group. Let $\mu: A \rightarrow G$ be a finitely additive map: $\forall~ U,V \in A$ disjoint, $\mu(U \sqcup V)=\mu(U)+\mu(V)$. Such a $\mu$ is called a \textit{Boolean algebra measure for $A$ in $G$}.
\end{Def}

\begin{Nota}
We recall that $\Itv(\SGIET)$ is the set of all intervals $[a,b[$ with $a$ and $b$ in $\tilde{\SGIET}$ and $0 \leq a <b \leq 1$. Let $A_{\SGIET}$ be the Boolean algebra of subsets of $\interfo{0}{1}$ generated by $\Itv(\SGIET)$. Then $A_{\SGIET}$ is a Boolean subalgebra of $\lbrace 0,1 \rbrace^{\mathopen{[}0,1 \mathclose{[}}$. By noting $\lambda$ the Lebesgue measure on $\mathopen{[}0,1\mathclose{[}$ we get that $\lambda$ is a Boolean measure for $A_{\SGIET}$ in $\tilde{\SGIET}$.
\end{Nota}

It might be useful to notice that for $k$ in $\NN_{\geq 1}$ and every $I_1,I_2, \ldots , I_k$ intervals in $\Itv(\SGIET)$, the intersection $\bigcap\limits_{i} I_i$ is still an element of $\Itv(\SGIET)$. Moreover for every $I$ in $\Itv(\SGIET)$, the complement of $I$ is the disjoint union of two elements of $\Itv(\SGIET)$. Thus any Boolean combination of elements of $\Itv(\SGIET)$ is a finite disjoint union of such elements.

\begin{Prop}
Let $X$ and $Y$ be two sets, let $A$ be a Boolean subalgebra of $\lbrace 0,1 \rbrace^{X}$ and let $B$ be a Boolean subalgebra of $\lbrace 0,1 \rbrace^Y$. Let $G$ and $H$ be two abelian groups, let $\mu: A \rightarrow G$ be a Boolean algebra measure for $A$ in $G$ and $\nu: B \rightarrow H$ be a Boolean algebra measure for $B$ in $H$. Let $C:= A \otimes B$ be the Boolean algebra product (generated by subsets of the form $a \times b$ with $a$ in $A$ and $b$ in $B$). Then there exists a unique Boolean algebra measure $\omega: C \rightarrow G \otimes H$ for $C$ in $G \otimes H$ such that for every $a$ in $A$ and $b$ in $B$ we have $\omega(a \times b)=\mu(a) \otimes \nu(b)$.
\end{Prop}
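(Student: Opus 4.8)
The plan is to identify $C$ with the family of finite disjoint unions of rectangles $a\times b$ (with $a\in A$, $b\in B$), to set $\omega(a\times b):=\mu(a)\otimes\nu(b)$, and then to extend $\omega$ additively. First I would record that the rectangles form a semiring of sets inside $\lbrace 0,1\rbrace^{X\times Y}$: one has $(a_1\times b_1)\cap(a_2\times b_2)=(a_1\cap a_2)\times(b_1\cap b_2)$, and
\[
(a_1\times b_1)\setminus(a_2\times b_2)=\bigl((a_1\setminus a_2)\times b_1\bigr)\sqcup\bigl((a_1\cap a_2)\times(b_1\setminus b_2)\bigr),
\]
a disjoint union of two rectangles. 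By the standard fact that the finite disjoint unions of members of a semiring form a Boolean algebra, every element of $C$ is a finite disjoint union of rectangles. This already yields uniqueness: a Boolean algebra measure on $C$ is determined by its values on rectangles, since it is additive.

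For existence, put $\omega_0(a\times b):=\mu(a)\otimes\nu(b)$. This is well defined on rectangles: if $a$ or $b$ is empty the rectangle is $\emptyset$ and one factor vanishes (note $\mu(\emptyset)=\mu(\emptyset\sqcup\emptyset)=2\mu(\emptyset)=0$, and likewise $\nu(\emptyset)=0$), while a nonempty rectangle determines its two sides. The key point — and the step I expect to be the main obstacle — is additivity of $\omega_0$ on the semiring: if $a\times b=\bigsqcup_{i=1}^n(a_i\times b_i)$ is a disjoint union of rectangles, then $\mu(a)\otimes\nu(b)=\sum_{i=1}^n\mu(a_i)\otimes\nu(b_i)$.

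To prove this I would pass to the finite Boolean subalgebras $A_0\subseteq A$ and $B_0\subseteq B$ generated by $\lbrace a,a_1,\dots,a_n\rbrace$ and $\lbrace b,b_1,\dots,b_n\rbrace$, with atoms $p_1,\dots,p_r$ and $q_1,\dots,q_s$ respectively (these are nonempty and partition $X$, resp.\ $Y$). Each $a_i$ is the disjoint union of the atoms $p_k\subseteq a_i$ and each $b_i$ of the atoms $q_l\subseteq b_i$, so finite additivity of $\mu$ and $\nu$ together with bilinearity of $\otimes$ gives
\[
\sum_{i=1}^n\mu(a_i)\otimes\nu(b_i)=\sum_{i=1}^n\ \sum_{\substack{p_k\subseteq a_i\\ q_l\subseteq b_i}}\mu(p_k)\otimes\nu(q_l).
\]
Now each product atom $p_k\times q_l$ is nonempty, and since the rectangles $a_i\times b_i$ partition $a\times b$, the atom $p_k\times q_l$ is contained in exactly one $a_i\times b_i$ when $p_k\subseteq a$ and $q_l\subseteq b$, and in none otherwise. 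Hence the right-hand side equals $\sum_{p_k\subseteq a,\,q_l\subseteq b}\mu(p_k)\otimes\nu(q_l)=\bigl(\sum_{p_k\subseteq a}\mu(p_k)\bigr)\otimes\bigl(\sum_{q_l\subseteq b}\nu(q_l)\bigr)=\mu(a)\otimes\nu(b)$, as wanted.

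Finally I would define $\omega(E):=\sum_i\omega_0(R_i)$ for any decomposition $E=\bigsqcup_i R_i$ of $E\in C$ into rectangles. Independence of the decomposition follows by comparing two decompositions $\bigsqcup_i R_i=\bigsqcup_j S_j$ through the common refinement into the rectangles $R_i\cap S_j$ and applying the additivity lemma to each $R_i$ and each $S_j$. Finite additivity of $\omega$ on $C$ is then immediate, since juxtaposing rectangle-decompositions of two disjoint members of $C$ yields one of their union; and $\omega(a\times b)=\omega_0(a\times b)=\mu(a)\otimes\nu(b)$ by construction. Together with the uniqueness noted above, this proves the proposition.
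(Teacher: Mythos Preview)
Your proof is correct and follows essentially the same approach as the paper: both reduce to finite Boolean subalgebras generated by finitely many rectangles, pass to the atoms $p_k\times q_l$, and use bilinearity of $\otimes$ together with finite additivity of $\mu,\nu$ to verify the measure is well defined. The only difference is organizational---you phrase the extension via the semiring of rectangles and a common-refinement argument, while the paper builds a compatible family of measures $m_D$ on finite Boolean subalgebras $D\subset C$ and glues them---but the substantive computation is the same.
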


\begin{proof}
Let $\omega_1$ and $\omega_2$ be two such Boolean algebra measures, thus they are equal on every $a \times b$ for $a \in A$ and $b \in B$. Let $c$ be an element of $C$, then there exist $k$ in $\NN$ and $a_1,\ldots , a_k$ in $A$ and $b_1,\ldots , b_k$ in $B$ such that $c=\bigsqcup\limits_{i=1}^k a_i \times b_i$. So $\omega_1 (c)=\sum\limits_{i=1}^k \omega_1(a_i\times b_i)=\sum\limits_{i=1}^k \omega_2 (a_i\times b_i)=\omega_2(c)$. Thus $\omega_1=\omega_2$ and the unicity is proved. \\
It is sufficient to prove the existence for every finite Boolean subalgebra of $C$. Indeed if we assume that for every $D$ finite Boolean subalgebra of $C$ there exists a Boolean algebra measure $m_D$ for $D$ in $G \otimes H$ such that $m_D(a \times b)=\mu(a)\otimes \nu (b)$ for every $a$ in $A$ and $b$ in $B$ with $a \times b$ in $D$. Let $c$ be an element of $C$. Then $\lbrace 0_C,~ c,~ \neg c,~ 1_C \rbrace$ is a finite Boolean subalgebra of $C$ non-trivial. Moreover if $c$ is in $D_1 \cap D_2$ where $D_1$ and $D_2$ are two finite Boolean subalgebras of $C$ then by noting $D$ the Boolean subalgebra generated by $D_1$ and $D_2$ we get that $D$ is a finite Boolean subalgebra of $C$ containing $c$. Thus $m_D |_{D_1}$ is a Boolean measure for $D_1$ in $G \otimes H$  which satisfies $m_D |_{D_1}(a \times b)=\mu(a)\otimes \nu (b)$ for every $a$ in $A$ and $b$ in $B$ with $a \times b$ in $D_1$. By unicity we get $m_D |_{D_1}=m_{D_1}$ and the same argument gives $m_D |_{D_2}=m_{D_2}$ thus $m_{D_1}(c)=m_D(c)=m_{D_2}(c)$. So by putting $\omega(c)=m_D(c)$, the map $\omega$ is well-defined. also if we take two disjoint elements $c$ and $c'$ in $C$. Then by taking any finite Boolean subalgebra $D$ of $C$ which contains $c$ and $c'$ we get $m_D(c+c')=m_D(c)+m_D(c')=\omega(c)+\omega(c')$ and the value does not depend on $D$. Thus $\omega$ is the wanted Boolean algebra measure.\\
Let now $D$ be a finite Boolean subalgebra of $C$. Then there exist $k,\ell \in \NN$ and $a_1, \ldots , a_k \in A$ and $b_1, \ldots , b_{\ell} \in B$ such that every $d \in D$ is a Boolean combination of $a_i \times b_j$. Then let $D'$ be the finite Boolean algebra generated by all the $a_i \times b_j$ with $1 \leq i \leq k$ and $1 \leq j \leq \ell$. Let $U$ be the finite Boolean subalgebra of $A$ generated by all $a_i $ and let $V$ be the finite Boolean subalgebra of $B$ generated by all $b_j$. Then $U$ and $V$ are atomic. Let $u_1, \ldots u_n$ be the atoms of $U$ and $v_1, \ldots v_m$ be the atoms of $V$. Hence $D'$ is atomic with atoms given by $u_i \times v_j$ for every $1 \leq i \leq k$ and $1 \leq j \leq \ell$. Then for each element $d$ in $D'$ there exists a unique $J_d \subset \lbrace1,2,\ldots, n \rbrace \times \lbrace 1,2, \ldots, m \rbrace$ such that $d=\bigsqcup\limits_{(i,j) \in J_d} u_i \otimes v_j$. Hence the map $m_{D'}$ defined by $m_{D'}(d)=m_{D'}(\bigsqcup\limits_{(i,j) \in J_d} u_i \times v_j)=\sum\limits_{(i,j) \in J_d} \mu(u_i) \otimes \nu(v_j)$  is well-defined, finitely additive and satisfies $m_{D'}(a \times b)=\mu(a) \otimes \nu(b)$ for every $a \in A, ~b \in B$ such that $ a \times b \in D'$.
\end{proof}

\begin{Nota}
By applying the previous proposition with $X=Y=\mathopen{[} 0,1 \mathclose{[}$ and $A=B=A_{\SGIET}$ and $\mu=\nu=\lambda$, there exists a unique Boolean algebra measure $\omega_{\SGIET}: A_{\SGIET} \otimes A_{\SGIET} \rightarrow \bigotimes^2_{\ZZ} \tilde{\SGIET}$ such that for every $a,b,c$ and $d$ in $\tilde{\SGIET}_+$ with $a<b \leq 1$ and $c<d \leq 1$ we have $\omega_{\SGIET}(\mathopen{[}a,b\mathclose{[} \times \mathopen{[}c,d \mathclose{[} )=(b-a)\otimes (d-c)$.
\end{Nota}

We need to check some $\IET(\SGIET)$-invariance for the measure $\omega_{\SGIET}$. For this we define an action of $\IET(\SGIET)$ on $\mathopen{[}0,1 \mathclose{[} \times \mathopen{[}0,1 \mathclose{[} $ by $f.(x,y)=(f(x),f(y))$. Hence for every $P$ in $A_{\SGIET}$ we have $f.P$ in $A_{\SGIET}$, this gives us a new Boolean algebra measure $f_{*}\omega_{\SGIET}$.

\begin{Prop}\label{Proposition the measure is IET(Gamma) invariant}
For every $f$ in $\IET(\SGIET)$ and every $P$ in $A_{\SGIET} \otimes A_{\SGIET}$ we have $f.P:=\lbrace (f(x),f(y)) \mid (x,y) \in P \rbrace$ in $A_{\SGIET} \otimes A_{\SGIET}$. Furthermore we have  $f_{*}\omega_{\SGIET}=\omega_{\SGIET}$.
\end{Prop}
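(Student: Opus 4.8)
The statement is that $f_*\omega_{\SGIET} = \omega_{\SGIET}$ for every $f \in \IET(\SGIET)$, together with the preliminary fact that $A_{\SGIET} \otimes A_{\SGIET}$ is preserved by the diagonal action. For the preliminary fact, I would first observe that it suffices to check this on a generating set of the Boolean algebra $A_{\SGIET} \otimes A_{\SGIET}$, namely on rectangles $I \times J$ with $I, J \in \Itv(\SGIET)$, since the diagonal action commutes with complements, finite unions and finite intersections. So I must show $f.(I \times J) = (f.I) \times (f.J)$ lies in $A_{\SGIET} \otimes A_{\SGIET}$; this reduces to showing $f.I \in A_{\SGIET}$ for every $I \in \Itv(\SGIET)$. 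Take a $\SGIET$-partition $\cP \in \Pi_f$ (which exists because $f \in \IET(\SGIET)$); then $I$ is the finite disjoint union of the sets $I \cap K$ for $K \in \cP$, each of which is an interval in $\Itv(\SGIET)$, and $f$ acts on $I \cap K$ as a translation by an element of $\tilde{\SGIET}$ (the translation vector of $f$ on $K$, which lies in $\tilde{\SGIET}$ because both endpoints of $K$ and of $f(K)$ are in $\tilde{\SGIET}$). Hence $f.(I \cap K)$ is again an interval with endpoints in $\tilde{\SGIET}$, up to the boundary-point ambiguity, which is irrelevant since we work inside the Boolean algebra; so $f.I \in A_{\SGIET}$.

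**Main step: invariance of the measure.** Since $f_*\omega_{\SGIET}$ and $\omega_{\SGIET}$ are both Boolean algebra measures on $A_{\SGIET} \otimes A_{\SGIET}$ valued in $\bigotimes^2_{\ZZ}\tilde{\SGIET}$, and the rectangles $I \times J$ generate, it suffices (by finite additivity and the fact that every element of the algebra is a finite disjoint union of rectangles) to prove $f_*\omega_{\SGIET}(I \times J) = \omega_{\SGIET}(I \times J)$ for all $I, J \in \Itv(\SGIET)$. Fix such $I, J$ and a $\SGIET$-partition $\cP \in \Pi_f$; refine it so that each atom is contained in some interval of $\cP$, and write $I = \bigsqcup_a I_a$, $J = \bigsqcup_b J_b$ as finite disjoint unions of intervals in $\Itv(\SGIET)$ on each of which $f$ acts by a single translation (by $v_a \in \tilde{\SGIET}$ on $I_a$ and by $w_b \in \tilde{\SGIET}$ on $J_b$). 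Then, using finite additivity of $\omega_{\SGIET}$ in each variable and the defining formula $\omega_{\SGIET}(\interfo{p}{q} \times \interfo{r}{s}) = (q-p)\otimes(s-r)$:
\[
\omega_{\SGIET}(I \times J) = \sum_{a,b} \omega_{\SGIET}(I_a \times J_b) = \sum_{a,b} \lambda(I_a) \otimes \lambda(J_b),
\]
and likewise
\[
f_*\omega_{\SGIET}(I \times J) = \omega_{\SGIET}\big((f.I) \times (f.J)\big) = \sum_{a,b} \omega_{\SGIET}\big((f.I_a) \times (f.J_b)\big) = \sum_{a,b} \lambda(f.I_a) \otimes \lambda(f.J_b).
\]
Because $f$ acts by a translation on each $I_a$ and each $J_b$, one has $\lambda(f.I_a) = \lambda(I_a)$ and $\lambda(f.J_b) = \lambda(J_b)$, so the two sums are term-by-term equal. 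This gives $f_*\omega_{\SGIET}(I \times J) = \omega_{\SGIET}(I \times J)$, and hence, by the uniqueness part of the construction of $\omega_{\SGIET}$ (or simply by finite additivity), $f_*\omega_{\SGIET} = \omega_{\SGIET}$.

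**Expected obstacle.** The content is entirely bookkeeping, so there is no deep obstacle; the only point requiring care is the compatibility between the diagonal action and the decomposition into rectangles, i.e. verifying that $f.(I_a \times J_b) = (f.I_a) \times (f.J_b)$ as sets and that the collection $\{(f.I_a) \times (f.J_b)\}$ is again pairwise disjoint with union $(f.I) \times (f.J)$ — this is automatic since $f$ is a bijection of $\interfo{0}{1}$, and the diagonal map $f \times f$ is then a bijection of $\interfo{0}{1}^2$ preserving disjointness and rectangular shape. A secondary subtlety is the handling of the finitely many boundary points where $f$ is discontinuous: since $\lambda$ ignores them and $A_{\SGIET}$ is a Boolean algebra of subsets (so single points and half-open-versus-closed distinctions are invisible modulo sets the measure vanishes on — indeed single points with coordinates in $\tilde{\SGIET}$ are already null for $\omega_{\SGIET}$), one may work throughout with the half-open representatives in $\Itv(\SGIET)$ without loss of generality. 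Once this is noted the proof is a direct computation as above.
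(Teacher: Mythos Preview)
Your proof is correct and follows essentially the same approach as the paper: reduce to rectangles $I\times J$ with $I,J\in\Itv(\SGIET)$, decompose each factor along a $\SGIET$-partition associated with $f$ so that $f$ acts by a translation on each piece, and then use finite additivity together with the translation-invariance of $\lambda$ to compare the two measures term by term. The only cosmetic difference is that the paper computes $\omega_{\SGIET}(f.P)=\omega_{\SGIET}(P)$ directly (obtaining $f^{-1}_{*}\omega_{\SGIET}=\omega_{\SGIET}$ first and then swapping $f\leftrightarrow f^{-1}$), whereas you check equality on generating rectangles and invoke uniqueness; the substance is identical.
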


\begin{proof}
Let $f \in \IET(\SGIET)$ and $(\mu,\sigma)$ be a $\SGIET$-combinatorial description of $f$ and let $\lbrace I_1, \ldots, I_n \rbrace$ be the partition into intervals associated. Let $P$ be an element of $A_{\SGIET} \otimes A_{\SGIET}$. There exist $m$ in $\NN$ and pairwise disjoint elements $p_1, p_2, \ldots , p_m$ of $\Itv(\SGIET) \times \Itv(\SGIET)$ such that $P=\bigsqcup\limits_{i=1}^{k} p_i$. As $f$ is a permutation of $\interfo{0}{1}$ we get $f.\bigsqcup\limits_{i=1}^{m} p_i=\bigsqcup\limits_{i=1}^{m} f.p_i$, so it is enough to show that $f.p$ belongs to $A_{\SGIET} \otimes A_{\SGIET}$. For $i \in \lbrace 1, \ldots, k \rbrace$, let $a_i,b_i,c_i,d_i \in \tilde{\SGIET}$ such that $p_i=\mathopen{[}a_i,b_i \mathclose{[} \times \mathopen{[}c_i,d_i \mathclose{[}$. Then $f.p_i=\bigsqcup\limits_{(k,l)} f(\mathopen{[}a_i,b_i \mathclose{[} \cap I_k) \times f(\mathopen{[}c_i,d_i \mathclose{[} \cap I_l)$ which is a finite disjoint union of elements of $\Itv(\SGIET) \times \Itv(\SGIET)$ because $a_i,b_i,c_i,d_i$ and extremities of $I_l$ are in $\tilde{\SGIET}$ for each $1 \leq l \leq n$. In conclusion $f.p_i$ is in $A_{\SGIET} \otimes A_{\SGIET}$ thus $f.P$ is in $A_{\SGIET} \otimes A_{\SGIET}$.\\
Also $f$ is piecewise a translation and $\lambda$ is the Lebesgue measure, so for any $J$ in $\Itv(\SGIET)$ we have $\lambda(J)=\lambda(f(J))$. Thus: 
\begin{align*}
\omega_{\SGIET}(f.p_i) &=\sum\limits_{(k,l)}\omega_{\SGIET}(f(\mathopen{[}a_i,b_i \mathclose{[} \cap I_k) \times f(\mathopen{[}c_i,d_i \mathclose{[} \cap I_l))\\
&=\sum\limits_{(k,l)} \lambda(f(\mathopen{[}a_i,b_i \mathclose{[} \cap I_k)) \otimes \lambda(f(\mathopen{[}c_i,d_i \mathclose{[} \cap I_l))\\
&=(\sum\limits_{k} \lambda(\mathopen{[}a_i,b_i \mathclose{[} \cap I_k)) \otimes (\sum\limits_{l} \lambda(\mathopen{[}c_i,d_i \mathclose{[} \cap I_l))\\
&=\lambda(\mathopen{[}a_i,b_i \mathclose{[}) \otimes \lambda(\mathopen{[}c_i,d_i \mathclose{[})\\
&=\omega_{\SGIET}(p_i)
\end{align*}

This gives us $\omega_{\SGIET}(f.P)=\sum\limits_{i=1}^{m} \omega_{\SGIET}(f.p_i)=\sum\limits_{i=1}^{m} \omega_{\SGIET}(p_i)=\omega_{\SGIET}(P)$. Hence $\omega_{\SGIET}=f^{-1}_{*}\omega_{\SGIET}$. As $f^{-1}$ is also in $\IET(\SGIET)$ we can do the same to deduce $\omega_{\SGIET}=f_{*}\omega_{\SGIET}$.
\end{proof}

\subsection{Creation of a signature}\label{Subsection creation of a signature}~

\begin{Def}\label{Definition inversions}
Let $f$ be a transformation in $\IET(\SGIET)$. Every pair $(x,y)$ in $\mathopen{[} 0,1 \mathclose{[} \times \mathopen{[} 0,1 \mathclose{[}$ such that $x<y$ and $f(x)>f(y)$ is called an \textit{inversion of $f$}. We denote by $\cE_f$ the set of all inversions of $f$.
\end{Def}

\begin{Prop}\label{Proposition cE_f in A_{Gamma}}
Let $f$ be a transformation in $\IET(\SGIET)$ and $(\mu,\tau)$ be a combinatorial description of $f$. We have $\cE_f=\bigsqcup\limits_{j=1}^{n} \bigsqcup\limits_{\substack{i<j \\ \tau(i)>\tau(j)}} I_i \times I_j$.
\end{Prop}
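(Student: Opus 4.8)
The proposition asks us to identify the set of inversions $\cE_f$ of an element $f \in \IET(\SGIET)$ with a finite disjoint union of rectangles $I_i \times I_j$ indexed by pairs $i < j$ with $\tau(i) > \tau(j)$, where $(\mu,\tau)$ is a combinatorial description with associated partition $\{I_1,\dots,I_n\}$.

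The plan is to exploit the fact that $f$ restricted to the interior of each $I_k$ is a translation, hence order-preserving on $I_k^\circ$, and that the image intervals $J_{\tau(k)} = f(I_k^\circ)$ are arranged in the same left-to-right order as dictated by $\tau$. First I would observe that, up to the finite set of partition endpoints (which has measure zero and, more to the point, contributes no rectangle of positive side to $\cE_f$ since a single point spans no interval of $\Itv(\SGIET)$), every pair $(x,y)$ with $x < y$ lies in some $I_i^\circ \times I_j^\circ$ with $i \le j$. I would then split into two cases. If $i = j$, then $x$ and $y$ lie in the same interval $I_i$ on which $f$ acts by a translation, so $f(x) < f(y)$ and $(x,y)$ is not an inversion. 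If $i < j$, then $x \in I_i^\circ$ and $y \in I_j^\circ$, so $f(x) \in J_{\tau(i)}^\circ$ and $f(y) \in J_{\tau(j)}^\circ$; since the $J$'s are consecutive intervals ordered by their index, $f(x) > f(y)$ precisely when $J_{\tau(i)}$ lies to the right of $J_{\tau(j)}$, i.e. exactly when $\tau(i) > \tau(j)$. In that case \emph{every} pair $(x,y) \in I_i^\circ \times I_j^\circ$ is an inversion, and when $\tau(i) < \tau(j)$ no such pair is. This shows $\cE_f$ and $\bigsqcup_{j}\bigsqcup_{i<j,\ \tau(i)>\tau(j)} I_i \times I_j$ agree up to a subset of the (finite) grid of partition endpoints.

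To upgrade this to an exact equality of subsets of $\interfo{0}{1}^2$ rather than an equality modulo a null set, I would handle the boundary points directly using the conventions of the setup: $f$ is right-continuous and the partition $\cP$ consists of right-open, left-closed intervals, and likewise for $f(\cP)$. So if $x$ is the left endpoint of $I_i$, then $x \in I_i$ and $f(x) = \inf J_{\tau(i)} \in J_{\tau(i)}$; the same translation formula that governs $I_i^\circ$ extends to the left endpoint by right-continuity. Thus the case analysis above applies verbatim to all pairs $(x,y)$, not merely to those in the interiors, and the two sets coincide exactly. The disjointness of the union is clear because the $I_i$ are pairwise disjoint, so the rectangles $I_i \times I_j$ for distinct $(i,j)$ are pairwise disjoint.

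The only genuine subtlety — and the step I expect to require the most care — is the bookkeeping at the endpoints: making sure that the right-open/left-closed conventions on both $\cP$ and $f(\cP)$, together with right-continuity of $f$, are consistent so that no boundary pair is miscounted. Once that is pinned down, the proof is a routine verification. In particular, an immediate corollary is that $\cE_f \in A_{\SGIET} \otimes A_{\SGIET}$ whenever $(\mu,\tau)$ is a $\SGIET$-combinatorial description, since then each $I_i \in \Itv(\SGIET)$, so $\omega_{\SGIET}(\cE_f)$ is well-defined; this is what we will use to build the signature $\eps_{\SGIET}$.
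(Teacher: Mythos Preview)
Your proposal is correct and follows essentially the same route as the paper's proof: partition pairs $(x,y)$ with $x<y$ according to the rectangles $I_i\times I_j$, rule out $i=j$ because $f$ is order-preserving on each piece, and for $i<j$ use that the images $J_{\tau(i)}$, $J_{\tau(j)}$ are disjoint intervals ordered by their index to see that the whole rectangle is or is not in $\cE_f$ according to whether $\tau(i)>\tau(j)$. Your explicit treatment of endpoints via right-continuity and the half-open convention is a small elaboration the paper leaves implicit, but otherwise the arguments coincide.
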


\begin{proof}
Let $(\mu, \tau)$ be a $\SGIET$-combinatorial description of $f$ and let $\lbrace I_1, I_2, \ldots , I_k\rbrace $ be the partition into intervals associated. Let $(x_0,y_0)$ be an element of $\cE_f$. Then there exist $i,j \in \lbrace 1,2,\ldots , k \rbrace$ such that $x_0 \in I_i$ and $y_0 \in I_j$. As $x_0 < y_0$ we have $i\leq j$. Furthermore if $i=j$ then as $f$ is an isometry which preserves the order on $I_i$ we get $f(x_0) < f(y_0)$ which is a contradiction, we deduce that $i<j$. By definition of $f$ we have $f(I_i),~ f(I_j) \in \Itv(\SGIET)$ and they are disjoint. Thus as $f(x_0)>f(y_0)$ we deduce that for every $x \in I_i$ and $y \in I_j$ we have $x<y$ and $f(x)>f(y)$, so $I_i \times I_j \subset \cE_f$. Also, this implies $\sigma(i)>\sigma(j)$ and we deduce that $\bigsqcup\limits_{j=1}^{n} \bigsqcup\limits_{\substack{i<j \\ \tau(i)>\tau(j)}} I_i \times I_j = \cE_f$.
\end{proof}

\begin{Coro}
For every $f \in \IET(\SGIET)$ we have $\cE_f \in A_{\SGIET} \otimes A_{\SGIET}$.
\end{Coro}

We denote by $p$ the projection from $\bigotimes^2_{\ZZ} \tilde{\SGIET}$ into $\Wedgealt \tilde{\SGIET}$.

\begin{Def}\label{Definition signature for IET(SGIET)}
The \textit{signature for $\IET(\SGIET)$} is the following map:
\[
\begin{array}{cccc}
\eps_{\SGIET}: & \IET(\SGIET) & \longrightarrow &\Wedgealt \tilde{\SGIET} \\
 & f & \longmapsto & p \circ \omega_{\SGIET}(\cE_f)
\end{array}
\]
\end{Def}

\begin{Prop}\label{Proposition symmetry for the signature}
For every $A$ and $B$ in $A_{\SGIET}$ we have:
\[
p \circ \omega_{\SGIET} (A \times B)=- p \circ \omega_{\SGIET} (B \times A)
\]
\end{Prop}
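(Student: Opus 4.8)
The plan is to reduce the identity to the case of rectangles and then invoke the defining relation of the skew-symmetric power. Recall from the remarks preceding the statement that every element of $A_{\SGIET}$ is a finite disjoint union of intervals of $\Itv(\SGIET)$. So I would write $A = \bigsqcup_{i=1}^{m} \interfo{a_i}{b_i}$ and $B = \bigsqcup_{j=1}^{n} \interfo{c_j}{d_j}$ with all endpoints in $\tilde{\SGIET}$; then $A \times B = \bigsqcup_{i,j} \interfo{a_i}{b_i} \times \interfo{c_j}{d_j}$ and, indexed by the same pairs $(i,j)$, $B \times A = \bigsqcup_{i,j} \interfo{c_j}{d_j} \times \interfo{a_i}{b_i}$. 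Since $\omega_{\SGIET}$ is finitely additive and $p$ is a group homomorphism, it then suffices to prove $p \circ \omega_{\SGIET}(\interfo{a}{b} \times \interfo{c}{d}) = - p \circ \omega_{\SGIET}(\interfo{c}{d} \times \interfo{a}{b})$ for single intervals, and to sum over $(i,j)$ afterwards.

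For such a rectangle, the defining property of $\omega_{\SGIET}$ gives $\omega_{\SGIET}(\interfo{a}{b} \times \interfo{c}{d}) = (b-a) \otimes (d-c)$ and $\omega_{\SGIET}(\interfo{c}{d} \times \interfo{a}{b}) = (d-c) \otimes (b-a)$. By definition, $\Wedgealt \tilde{\SGIET}$ is the quotient of $\bigotimes^2_{\ZZ} \tilde{\SGIET}$ by the submodule generated by all $x \otimes y + y \otimes x$, so $p\bigl((b-a)\otimes(d-c)\bigr) = - p\bigl((d-c)\otimes(b-a)\bigr)$, which is exactly the desired equality for a rectangle; summing over all pairs $(i,j)$ recovers the general statement.

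There is essentially no real obstacle here; the only point requiring a little care is that the two rectangle decompositions of $A \times B$ and $B \times A$ must be organized so that they match term by term, which is immediate with the indexing above. Alternatively, one can avoid even this by arguing structurally: the swap map $s \colon \bigotimes^2_{\ZZ} \tilde{\SGIET} \to \bigotimes^2_{\ZZ} \tilde{\SGIET}$ sending $x \otimes y$ to $y \otimes x$ satisfies $p \circ s = -p$, and both $s \circ \omega_{\SGIET}$ and the map $P \mapsto \omega_{\SGIET}(\lbrace (y,x) \mid (x,y) \in P \rbrace)$ are Boolean algebra measures on $A_{\SGIET} \otimes A_{\SGIET}$ valued in $\bigotimes^2_{\ZZ}\tilde{\SGIET}$ that agree on all rectangles $\interfo{a}{b}\times\interfo{c}{d}$, hence are equal by the uniqueness part of the proposition on Boolean algebra products; composing with $p$ then yields the claim at once.
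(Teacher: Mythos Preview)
Your argument is correct and is essentially the paper's proof with the first step unpacked: the paper writes directly $p\circ\omega_{\SGIET}(A\times B)=\lambda(A)\wedge\lambda(B)$ (using bilinearity of $\otimes$ together with finite additivity of $\omega_{\SGIET}$ and $\lambda$), then applies $\lambda(A)\wedge\lambda(B)=-\lambda(B)\wedge\lambda(A)$, which is exactly what your rectangle decomposition establishes. Your alternative structural argument via the uniqueness of the Boolean product measure is also fine but not needed here.
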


\begin{proof}
Let $A,B \in A_{\SGIET}$ then:
\begin{align*}
p \circ \omega_{\SGIET}(A\times B) &= \lambda(A) \wedge \lambda(B)\\
&=
- \lambda(B) \wedge \lambda (A) \\
&=
-p \circ \omega_{\SGIET}(B \times A)
\end{align*}
\end{proof}

\begin{Thm}
The map $\eps_{\SGIET}$ is a group homomorphism.
\end{Thm}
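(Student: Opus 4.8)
The plan is to show that for any $f, g \in \IET(\SGIET)$ the identity $\eps_{\SGIET}(gf) = \eps_{\SGIET}(f) + \eps_{\SGIET}(g)$ holds, by analyzing how the set of inversions $\cE_{gf}$ decomposes in terms of $\cE_f$ and (a pullback of) $\cE_g$. First I would fix a common $\SGIET$-partition $\cP = \{I_1, \ldots, I_n\}$ into intervals that is simultaneously in $\Pi_f$ and in $\Pi_{gf}$ and such that $f(\cP) \in \Pi_g$; this is possible by taking a sufficiently fine $\SGIET$-partition (refining the minimal partitions of $f$ and $gf$ and pulling back the one of $g$ by $f$). On each block $I_i$ both $f$ and $gf$ are order-preserving isometries onto intervals in $\Itv(\SGIET)$, so a pair $(x,y)$ with $x \in I_i$, $y \in I_j$, $i \ne j$ is an inversion of $gf$ iff the ordered pair of blocks it lives in is ``inverted'' by $gf$, exactly as in Proposition~\ref{Proposition cE_f in A_{Gamma}}.

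The combinatorial heart is the following sign bookkeeping for a pair of distinct blocks $I_i$, $I_j$ with (say) $i<j$: writing $x \in I_i$, $y \in I_j$, the pair $(x,y)$ contributes to $\cE_{gf}$ iff $gf(x) > gf(y)$; I would split according to whether $f$ already reverses the two blocks or not. If $f(I_i)$ and $f(I_j)$ lie in the same order ($f(x) < f(y)$), then $(x,y)$ is an inversion of $gf$ iff $(f(x),f(y))$ is an inversion of $g$; if $f$ reverses them ($f(x) > f(y)$), so $(x,y)$ is an inversion of $f$, then $(x,y)$ is an inversion of $gf$ iff $(f(y),f(x))$ is \emph{not} an inversion of $g$. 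Encoding this with indicator functions gives the pointwise identity
\[
\mathbf{1}_{\cE_{gf}}(x,y) = \mathbf{1}_{\cE_f}(x,y) + \mathbf{1}_{f^{-1}.\cE_g}(x,y) - 2\cdot\mathbf{1}_{\cE_f \cap f^{-1}.\cE_g}(x,y)
\]
on $\interfo{0}{1}\times\interfo{0}{1}$ minus the diagonal blocks, where $f^{-1}.\cE_g = \{(x,y) : (f(x),f(y)) \in \cE_g\}$ (here one must be mildly careful that $f^{-1}.\cE_g$ means the pair $(f(x),f(y))$, ordered as written, lies in $\cE_g$ — which is not symmetric, but on each pair of distinct blocks exactly one of $(f(x),f(y))$ and $(f(y),f(x))$ can be in $\cE_g$, so the formula is consistent). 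All four sets here lie in $A_{\SGIET}\otimes A_{\SGIET}$, so I may apply the Boolean measure $\omega_{\SGIET}$ and then the projection $p$.

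Now I would apply $p\circ\omega_{\SGIET}$ to the displayed identity. By $\IET(\SGIET)$-invariance of $\omega_{\SGIET}$ (Proposition~\ref{Proposition the measure is IET(Gamma) invariant}), $\omega_{\SGIET}(f^{-1}.\cE_g) = \omega_{\SGIET}(\cE_g)$ — modulo the caveat that $f^{-1}.\cE_g$ as defined above is $f^{-1}$ applied to $\cE_g$ in the coordinatewise sense, which is exactly the action in that proposition. The term $\cE_f \cap f^{-1}.\cE_g$ is a union of rectangles $I_i \times I_j$; crucially, since $\cE_f$ is ``antisymmetric'' block by block (if $I_i\times I_j \subset \cE_f$ then $I_j \times I_i \cap \cE_f = \emptyset$), and likewise for $f^{-1}.\cE_g$, the set $\cE_f \cap f^{-1}.\cE_g$ together with its transpose is symmetric; hence by Proposition~\ref{Proposition symmetry for the signature} its image under $p\circ\omega_{\SGIET}$ is killed by the factor $2$ — that is, $2\, p\circ\omega_{\SGIET}(\cE_f \cap f^{-1}.\cE_g)$ maps to zero in $\Wedgealt\tilde{\SGIET}$ because $p\circ\omega_{\SGIET}(S) = -p\circ\omega_{\SGIET}(S^{\mathrm{op}})$ forces $2\,p\circ\omega_{\SGIET}(S)$ to equal $p\circ\omega_{\SGIET}(S \sqcup S^{\mathrm{op}})$ of a symmetric set, which is zero since $\ell\wedge\ell' + \ell'\wedge\ell = 0$. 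Actually this last point needs a touch more care: $\cE_f\cap f^{-1}.\cE_g$ need not be disjoint from its own transpose in general, but one can instead argue that $2\omega_{\SGIET}$ of a rectangle $\interfo{a}{b}\times\interfo{c}{d}$ projects to $2((b-a)\wedge(d-c))$ and these are exactly the elements killed when we further observe the relevant combination is symmetric; I expect the cleanest route is to note $\omega_{\SGIET}(\cE_f\cap f^{-1}.\cE_g) + \omega_{\SGIET}((\cE_f\cap f^{-1}.\cE_g)^{\mathrm{op}})$ has zero image, and separately that $(\cE_f\cap f^{-1}.\cE_g)^{\mathrm{op}} = \cE_{f^{-1}}\text{-type set}$, reducing the doubled term to something symmetric. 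This is the step I expect to be the main obstacle — getting the $2$-torsion cancellation exactly right, since the whole point of $\eps_{\SGIET}$ over $\varphi_{\SGIET}$ is that it sees $2$-torsion, so one must not accidentally cancel too much. Once that cancellation is in hand, $p\circ\omega_{\SGIET}(\cE_{gf}) = p\circ\omega_{\SGIET}(\cE_f) + p\circ\omega_{\SGIET}(\cE_g)$, i.e. $\eps_{\SGIET}(gf) = \eps_{\SGIET}(f) + \eps_{\SGIET}(g)$, and since $\eps_{\SGIET}(\Id) = 0$ this makes $\eps_{\SGIET}$ a group homomorphism.
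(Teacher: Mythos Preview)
Your indicator identity is not correct with the definition you give. Since $\cE_g \subset \{u<v\}$, the set $f^{-1}.\cE_g := \{(x,y) : (f(x),f(y)) \in \cE_g\}$ forces $f(x)<f(y)$, whereas $\cE_f$ (on $\{x<y\}$) forces $f(x)>f(y)$; hence $\cE_f \cap f^{-1}.\cE_g = \emptyset$ there, and your identity collapses to $\cE_{gf} = \cE_f \sqcup (f^{-1}.\cE_g \cap \{x<y\})$, which is false whenever $f$ inverts a pair that $g$ then un-inverts. The set you actually need in place of $f^{-1}.\cE_g$ is $C = \{(x,y) : x<y,\ g \text{ inverts the unordered pair } \{f(x),f(y)\}\}$, with which the XOR identity does hold; but $C$ is \emph{not} the coordinatewise $f^{-1}$-pullback of $\cE_g$, so invariance no longer gives $\omega_{\SGIET}(C) = \omega_{\SGIET}(\cE_g)$. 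Your second difficulty is also genuine: the relation $p\circ\omega_{\SGIET}(S) = -p\circ\omega_{\SGIET}(S^{\mathrm{op}})$ yields $p\circ\omega_{\SGIET}(S) + p\circ\omega_{\SGIET}(S^{\mathrm{op}}) = 0$, not $2\,p\circ\omega_{\SGIET}(S) = 0$; for $S = I\times J$ with $\lambda(I)\ne\lambda(J)$ the latter equals $2\big(\lambda(I)\wedge\lambda(J)\big)$, generically nonzero in $\Wedgealt\tilde{\SGIET}$. So the $-2$ term cannot be discarded.

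The paper's proof avoids both problems by never producing a doubled term. Writing the composite as $f\circ g$, split $\cE_{f\circ g}$ into $A_1 \sqcup A_2$ according to whether $g(x)>g(y)$ or $g(x)<g(y)$, and set $A_3 = \{x<y,\ g(x)>g(y),\ fg(x)<fg(y)\}$. Then $A_1 \sqcup A_3 = \cE_g$, so adding and subtracting $p\circ\omega_{\SGIET}(A_3)$ gives $\eps_{\SGIET}(f\circ g) = \eps_{\SGIET}(g) + \big(p\circ\omega_{\SGIET}(A_2) - p\circ\omega_{\SGIET}(A_3)\big)$. Now apply $g$-invariance (Proposition~\ref{Proposition the measure is IET(Gamma) invariant}) to both remaining terms and antisymmetry (Proposition~\ref{Proposition symmetry for the signature}) only to the second: $-p\circ\omega_{\SGIET}(A_3) = p\circ\omega_{\SGIET}\big((g.A_3)^{\mathrm{op}}\big)$, and one checks directly that $g.A_2 \sqcup (g.A_3)^{\mathrm{op}} = \cE_f$, yielding $\eps_{\SGIET}(f)$. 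The crucial move is converting a minus sign into a transposition rather than trying to kill a factor of~$2$; this is exactly what allows $\eps_{\SGIET}$ to retain the $2$-torsion information that $\varphi_{\SGIET}$ loses.
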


\begin{proof}
Let $f,g \in \IET(\SGIET)$. We denote by $s$ the symmetry of axis $y=x$. We remark that every element $I$ of $A_{\SGIET} \otimes A_{\SGIET}$ satisfies $s(I) \in A_{\SGIET} \otimes A_{\SGIET}$. Then $\lbrace(x,y) \mid x<y,~ g(x)>g(y),~ fg(x)<fg(y) \rbrace= \cE_g \cap sg^{-1}(\cE_f)$ is an element of $A_{\SGIET} \otimes A_{\SGIET}$. We also notice that $\lbrace (x,y) \mid x<y,~ g(x)<g(y),~fg(x)>fg(y) \rbrace = \cE_{f\circ g} \cap \cE_g^{c}$ and $\lbrace (x,y) \mid x<y,~g(x)>g(y),~ fg(x) >fg(y) \rbrace = \cE_{f \circ g} \cap \cE_{g}$ are two elements of $A_{\SGIET} \otimes A_{\SGIET}$.

For more clarity we do some calculus first. By Proposition \ref{Proposition the measure is IET(Gamma) invariant} and Proposition \ref{Proposition symmetry for the signature} we get: 
\begin{align*}
&-p \circ \omega_{\SGIET} (\lbrace (x,y) \mid x<y,~ g(x)>g(y),~ fg(x) < fg(y) \rbrace)
&\\=~
&-p \circ \omega_{\SGIET} (\lbrace (g(x),g(y)) \mid x<y,~ g(x)>g(y),~ fg(x) < fg(y) \rbrace)
&\\=~
&p \circ \omega_{\SGIET} (\lbrace (g(y),g(x)) \mid x<y,~ g(x)>g(y),~ fg(x) < fg(y) \rbrace)
&\\=~
&p \circ \omega_{\SGIET} (\lbrace (u,v) \mid g^{-1}(u)>g^{-1}(v),~ u<v ,~ f(u)>f(v) \rbrace)
\end{align*}
and
\begin{align*}
&p \circ \omega_{\SGIET} (\lbrace (x,y) \mid x<y,~ g(x)<g(y) ,~ fg(x)>fg(y)  \rbrace)
&\\=~
&p \circ \omega_{\SGIET} (\lbrace (g(x),g(y)) \mid x<y,~ g(x)<g(y) ,~ fg(x)>fg(y) \rbrace)
&\\=~
&p \circ \omega_{\SGIET} (\lbrace (u,v) \mid g^{-1}(u)<g^{-1}(v),~ u<v,~ f(u)>f(v) \rbrace)
\end{align*}
Then:
\begin{align*}
&p\circ \omega_{\SGIET} (\lbrace (u,v) \mid g^{-1}(u)>g^{-1}(v),~ u<v ,~ f(u)>f(v) \rbrace)
&\\~
&~~~~~~+p \circ \omega_{\SGIET} (\lbrace (u,v) \mid g^{-1}(u)<g^{-1}(v),~ u<v,~ f(u)>f(v) \rbrace)
&\\=~
&p \circ \omega_{\SGIET}(\cE_f)
&\\=~
&\eps_{\SGIET}(f)
\end{align*}
Hence by adding and remove the same quantity at the fourth equality we obtain:
\begin{align*}
\eps_{\SGIET}(f \circ g)
=~
&p \circ \omega_{\SGIET}(\lbrace (x,y) \mid x<y,~ fg(x)>fg(y) \rbrace 
&\\=~
&p \circ \omega_{\SGIET}(\lbrace (x,y) \mid x<y,~ g(x)>g(y),~ fg(x)>fg(y) \rbrace
&\\~
&~~~~~~\sqcup \lbrace (x,y) \mid x<y,~ g(x)<g(y),~fg(x)>fg(y) \rbrace)
&\\=~
&p \circ \omega_{\SGIET}(\lbrace (x,y) \mid x<y,~ g(x)>g(y),~ fg(x)>fg(y) \rbrace)
&\\~
&~~~~~~+p \circ \omega_{\SGIET}(\lbrace (x,y) \mid x<y,~ g(x)<g(y),~fg(x)>fg(y) \rbrace)  
&\\=~
&p \circ \omega_{\SGIET}(\lbrace (x,y) \mid x<y,~ g(x)>g(y),~ fg(x)>fg(y) \rbrace)
&\\~
&~~~~~~+p \circ \omega_{\SGIET}(\lbrace (x,y) \mid x<y,~ g(x)>g(y),~ fg(x)<fg(y) \rbrace)
&\\~
&~~~~~~-p \circ \omega_{\SGIET}(\lbrace (x,y) \mid x<y,~ g(x)>g(y),~ fg(x)<fg(y) \rbrace)
&\\~
&~~~~~~+p \circ \omega_{\SGIET}(\lbrace (x,y) \mid x<y,~ g(x)<g(y),~fg(x)>fg(y) \rbrace)  
&\\=~
&p \circ \omega_{\SGIET}(\cE_g) + p \circ \omega_{\SGIET}(\cE_f) 
&\\=~
&\eps_{\SGIET}(g) + \eps_{\SGIET}(f)
\end{align*}

In conclusion, $\eps_{\SGIET}$ is additive thus it is a group homomorphism.
\end{proof}

\begin{Prop}\label{Proposition Value of eps_Gamma for a IET Gamma transposition}
Let $a \in \tilde{\SGIET}_+$ with $a \leq \frac{1}{2}$ and $f$ be an $\IET(\SGIET)$-transposition of type $a$. Then $\eps_{\SGIET}(f)= a \wedge a$.
\end{Prop}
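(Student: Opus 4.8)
The plan is to write down an explicit $\SGIET$-combinatorial description of $f$ and then read off $\cE_f$ from Proposition \ref{Proposition cE_f in A_{Gamma}}. Since $f$ swaps two non-overlapping intervals of length $a$, I would name them $I,J$ with $I$ lying entirely to the left of $J$ (without loss of generality, as $f$ is symmetric in $I$ and $J$), say $I=\interfo{p}{p+a}$ and $J=\interfo{q}{q+a}$ with $p+a\le q$. The breakpoints of the minimal partition in $\Pi_f$ are exactly $0,p,p+a,q,q+a,1$, so they all lie in $\tilde{\SGIET}$; in particular $c:=q-(p+a)$ belongs to $\tilde{\SGIET}_{+}$. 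Cutting $\interfo{0}{1}$ at $p,p+a,q,q+a$ then produces a $\SGIET$-partition $\lbrace K_1,\dots,K_5\rbrace$ into consecutive intervals with $K_2=I$, $K_4=J$, and $K_1,K_3,K_5$ the remaining (possibly empty) pieces; the permutation $\tau\in\mfS_5$ associated with this description is the transposition exchanging $2$ and $4$.

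Next I would list the inversions of $\tau$: the pairs $(i,j)$ with $i<j$ and $\tau(i)>\tau(j)$ are exactly $(2,3)$, $(2,4)$ and $(3,4)$. By Proposition \ref{Proposition cE_f in A_{Gamma}} this gives
\[
\cE_f=(I\times K_3)\sqcup(I\times J)\sqcup(K_3\times J).
\]
Applying $\omega_{\SGIET}$ and using $\lambda(I)=\lambda(J)=a$ and $\lambda(K_3)=c$, one obtains $\omega_{\SGIET}(\cE_f)=a\otimes c+a\otimes a+c\otimes a$ in $\bigotimes^2_{\ZZ}\tilde{\SGIET}$. Projecting to $\Wedgealt\tilde{\SGIET}$ and using the defining relation $a\wedge c+c\wedge a=0$, the two cross terms cancel and $\eps_{\SGIET}(f)=p\circ\omega_{\SGIET}(\cE_f)=a\wedge a$.

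I do not expect a genuine obstacle here. The only points needing (minor) care are that the endpoints of $I$ and $J$ indeed lie in $\tilde{\SGIET}$ (so that the cut intervals belong to $\Itv(\SGIET)$ and the whole computation takes place in $\bigotimes^2_{\ZZ}\tilde{\SGIET}$), and the degenerate configurations in which some of $K_1,K_3,K_5$ is empty, which are harmless since the corresponding tensors vanish. It is worth stressing that $a\wedge a$ need not be zero: antisymmetry only yields $2(a\wedge a)=0$, so it is a (possibly nontrivial) $2$-torsion element. This is precisely the contribution annihilated by $\varphi_{\SGIET}=-2\eps_{\SGIET}$, hence invisible to the $\SAF$-homomorphism, which is why $\eps_{\SGIET}$ rather than $\varphi_{\SGIET}$ is needed to describe the abelianization of $\IET(\SGIET)$.
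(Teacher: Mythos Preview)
Your proof is correct and follows essentially the same approach as the paper: both use the five-interval $\SGIET$-combinatorial description with permutation $(2~4)$, identify the three inversion rectangles via Proposition~\ref{Proposition cE_f in A_{Gamma}}, and observe that the cross terms cancel in $\Wedgealt\tilde{\SGIET}$ to leave $a\wedge a$. Your additional remarks on degenerate configurations and the $2$-torsion nature of $a\wedge a$ are accurate and provide helpful context, though the paper's proof omits them.
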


\begin{proof}
Let $u,v,b \in \tilde{\SGIET}$ such that $((u,a,b,a,v),(2~4))$ is a $\SGIET$-combinatorial description of $f$ (see Figure \ref{Exemple signature graphe de f}). Let $\lbrace I_1, \ldots ,I_5 \rbrace $ be the partition into intervals associated. We already proved in \ref{Proposition cE_f in A_{Gamma}} that it is sufficient to check if a pair $(x,y) \in I_i \times I_j$ is in $\cE_f$ to know that $I_i \times I_j$ is in $\cE_f$. We also have $I_i \times I_j \notin \cE_f$ if $j\leq i$. Thus one can look at the graph of $f$ to find that $\cE_f$ is equal to the tiling space on Figure \ref{Exemple signature graphe de f}. We deduce that $\eps_{\SGIET}(f)=a \wedge b + a \wedge a + b \wedge a =a \wedge a$.
\end{proof}

\begin{figure}[ht]
\includegraphics[width=0.4\linewidth]{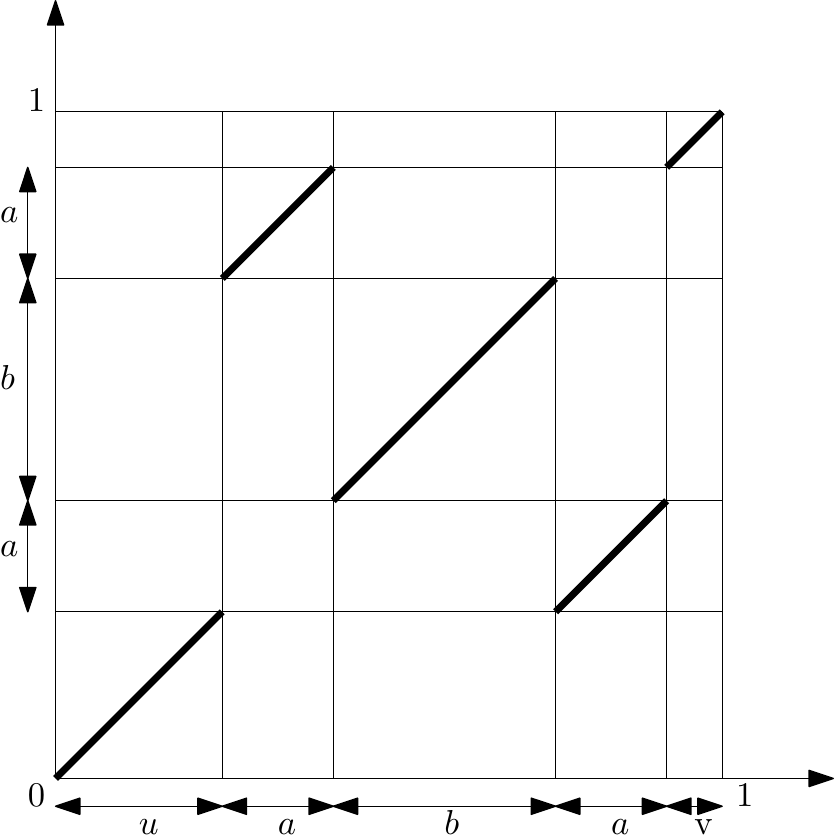} \hspace{20pt}
\includegraphics[width=0.4\linewidth]{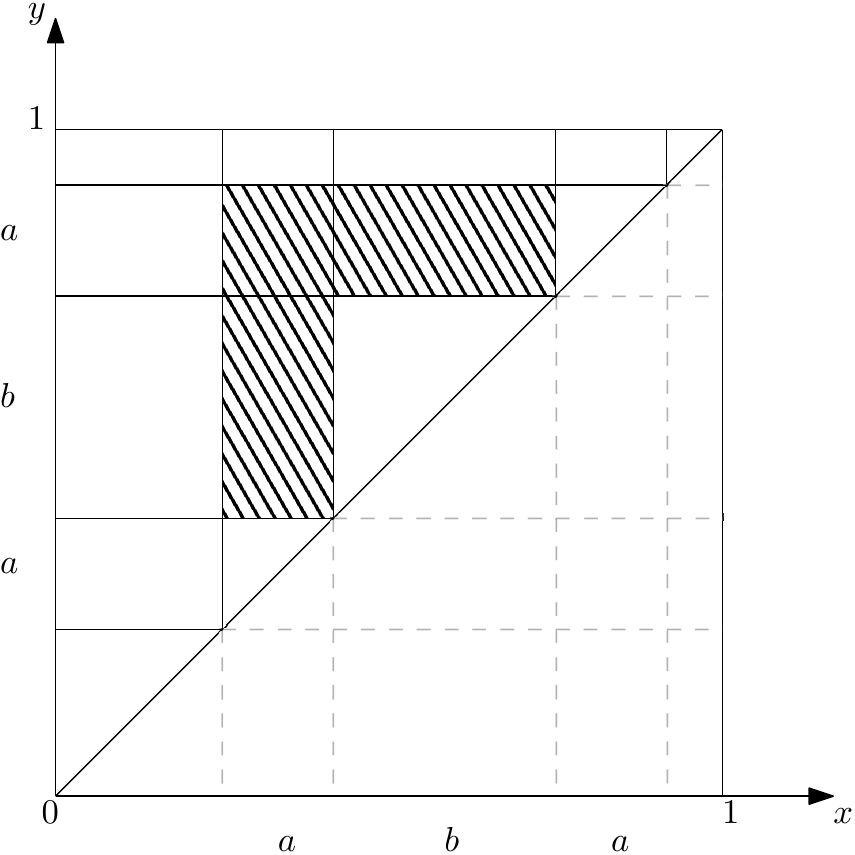}

\caption{\footnotesize{Illustration of how to calculate the value of $\eps_{\SGIET}$ on an $\IET(\SGIET)$-transposition $f$ in Proposition \ref{Proposition Value of eps_Gamma for a IET Gamma transposition}. \textbf{Left}: The graph of $f$. \textbf{Right}: The set $\cE_f$.}}\label{Exemple signature graphe de f}        
\end{figure}

\newpage

Thanks to Proposition \ref{Proposition cE_f in A_{Gamma}} we can calculate the value of $\eps_{\SGIET}$ for every $f \in \IET(\SGIET)$:

\begin{Prop}\label{Proposition signature morphism with combinatorial description}
Let $f \in \IET(\SGIET)$ and $(\alpha,\tau)$ be a $\SGIET$-combinatorial description of $f$. Let $n$ be the length of $\alpha$. Then
\[
\eps_{\SGIET}(f)=\sum\limits_{j=1}^n \sum\limits_{\substack{i<j \\ \tau(i) > \tau(j)}} \alpha_i \wedge \alpha_j
\]
\end{Prop}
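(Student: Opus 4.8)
The plan is to compute $\omega_{\SGIET}(\cE_f)$ directly from the explicit description of $\cE_f$ given in Proposition~\ref{Proposition cE_f in A_{Gamma}}, and then apply the projection $p$. Let $f \in \IET(\SGIET)$ and let $(\alpha,\tau)$ be a $\SGIET$-combinatorial description of $f$, with associated partition $\lbrace I_1,\ldots,I_n\rbrace$ into consecutive intervals, so that $\lambda(I_i)=\alpha_i$ for each $i$. By Proposition~\ref{Proposition cE_f in A_{Gamma}} we have the disjoint decomposition
\[
\cE_f = \bigsqcup_{j=1}^n \bigsqcup_{\substack{i<j \\ \tau(i)>\tau(j)}} I_i \times I_j ,
\]
and each $I_i \times I_j$ is a rectangle with sides in $\Itv(\SGIET)$.

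First I would use finite additivity of the Boolean algebra measure $\omega_{\SGIET}$ over this finite disjoint union, giving
\[
\omega_{\SGIET}(\cE_f) = \sum_{j=1}^n \sum_{\substack{i<j \\ \tau(i)>\tau(j)}} \omega_{\SGIET}(I_i \times I_j).
\]
Next, by the defining property of $\omega_{\SGIET}$ recorded in the Notation following its construction, $\omega_{\SGIET}(I_i \times I_j) = \lambda(I_i)\otimes\lambda(I_j) = \alpha_i \otimes \alpha_j$ in $\bigotimes^2_{\ZZ}\tilde{\SGIET}$. Finally, applying the projection $p\colon \bigotimes^2_{\ZZ}\tilde{\SGIET} \to \Wedgealt \tilde{\SGIET}$, which by definition sends $\alpha_i\otimes\alpha_j$ to $\alpha_i\wedge\alpha_j$, and using that $p$ is additive, I obtain
\[
\eps_{\SGIET}(f) = p\circ\omega_{\SGIET}(\cE_f) = \sum_{j=1}^n \sum_{\substack{i<j \\ \tau(i)>\tau(j)}} \alpha_i \wedge \alpha_j,
\]
which is exactly the claimed formula.

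There is no real obstacle here: the statement is essentially a translation of Proposition~\ref{Proposition cE_f in A_{Gamma}} through the two ingredients $\omega_{\SGIET}$ and $p$. The only point requiring a word of care is that the union in Proposition~\ref{Proposition cE_f in A_{Gamma}} is genuinely disjoint (the $I_i$ partition $\interfo{0}{1}$, so the rectangles $I_i\times I_j$ are pairwise disjoint), which legitimizes splitting $\omega_{\SGIET}$ over the sum; this is immediate. Thus the proof is a two-line computation chaining the definitions, and I would present it as such.
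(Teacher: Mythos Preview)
Your proof is correct and matches the paper's approach exactly: the paper states this proposition immediately after Proposition~\ref{Proposition cE_f in A_{Gamma}} with the remark ``Thanks to Proposition~\ref{Proposition cE_f in A_{Gamma}} we can calculate the value of $\eps_{\SGIET}$\ldots'' and gives no further argument, treating it as the direct computation you wrote out.
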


\begin{Prop}\label{Proposition epsilon is surjective}
The group homomorphism $\eps_{\SGIET}$ is surjective.
\end{Prop}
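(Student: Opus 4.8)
The plan is to show that the image of $\eps_{\SGIET}$ contains a generating subset of $\Wedgealt \tilde{\SGIET}$. Since $\Wedgealt \tilde{\SGIET}$ is generated by the simple wedges $a \wedge b$ with $a, b \in \tilde{\SGIET}$ and $\eps_{\SGIET}$ is already known to be a homomorphism, it is enough to realize enough of these wedges as values of $\eps_{\SGIET}$, and the natural candidates are the $\SGIET$-restricted rotations.

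Concretely, I would first evaluate $\eps_{\SGIET}$ on a $\SGIET$-restricted rotation $\rho$ of type $(a,b)$, for $a,b \in \tilde{\SGIET}_{+}$ with $a+b\le 1$. Setting $u:=1-a-b\in\tilde{\SGIET}_{+}$, the triple $((u,a,b),(2~3))$ is a $\SGIET$-combinatorial description of $\rho$: the first block is fixed while the two intervals associated with $\rho$, of lengths $a$ and $b$, are interchanged, so the underlying permutation is the adjacent transposition $(2~3)\in\mfS_3$, whose unique inversion is the pair $(2,3)$. Proposition~\ref{Proposition signature morphism with combinatorial description} then gives $\eps_{\SGIET}(\rho)=\alpha_2\wedge\alpha_3=a\wedge b$ (in the degenerate case $u=0$ one uses instead the description $((a,b),(1~2))$, with the same value). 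Hence $a\wedge b$ lies in the image of $\eps_{\SGIET}$ for every $a,b\in\tilde{\SGIET}_{+}$ with $a+b\le 1$.

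It then remains to see that these particular wedges generate $\Wedgealt\tilde{\SGIET}$. By $\ZZ$-bilinearity of the wedge it suffices to produce $a\wedge b$ when $a$ and $b$ range over a generating subset of the group $\tilde{\SGIET}$, and I would take this subset to be $\tilde{\SGIET}\cap\interfo{0}{\frac12}$. That it generates $\tilde{\SGIET}$ follows from density of $\tilde{\SGIET}$ in $\RR$: fixing $t\in\tilde{\SGIET}$ with $0<t<\frac12$, any $x\in\tilde{\SGIET}_{+}$ can be written $x=kt+r$ with $k\in\NN$ and $r=x-kt\in\tilde{\SGIET}\cap\interfo{0}{\frac12}$, hence is a sum of elements of $\tilde{\SGIET}\cap\interfo{0}{\frac12}$, and every element of $\tilde{\SGIET}$ is a difference of two elements of $\tilde{\SGIET}_{+}$. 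For $a,b\in\tilde{\SGIET}\cap\interfo{0}{\frac12}$ one has $a+b<1$, so $a\wedge b$ is in the image of $\eps_{\SGIET}$ by the previous step; therefore the image of $\eps_{\SGIET}$ contains a generating subset of $\Wedgealt\tilde{\SGIET}$, and $\eps_{\SGIET}$ is surjective. There is no real obstacle in this argument: the value of $\eps_{\SGIET}$ on a restricted rotation is an immediate application of the combinatorial formula, and the only point requiring a line of care is the elementary reduction of a general simple wedge to simple wedges of small positive elements.
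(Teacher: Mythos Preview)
Your argument is correct and follows essentially the same route as the paper: compute $\eps_{\SGIET}$ on a $\SGIET$-restricted rotation of type $(a,b)$ to obtain $a\wedge b$, and then use density of $\tilde{\SGIET}$ to reduce an arbitrary simple wedge to a $\ZZ$-linear combination of wedges of small positive elements. The paper places the rotation at the left end of $\interfo{0}{1}$ rather than the right, and carries out the bilinear expansion of $a\wedge b$ explicitly instead of invoking the generating-set argument, but these are cosmetic differences.
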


\begin{proof}
Let $a,b \in \tilde{\SGIET}$. We assume that $0\leq a,b < 1$ and $0 \leq a+b \leq 1$. Let $r$ be the $\IET(\SGIET)$ restricted rotation of type $(a,b)$, whose intervals associated are $\interfo{0}{a}$ and $\interfo{a}{a+b}$. Then we obtain that $\eps_{\SGIET}(r)=a \wedge b$.

In the general case, let $w \in \tilde{\SGIET}$ with $0 \leq w \leq \frac{1}{2}$. Then there exist $k,\ell \in \ZZ$ and $a',b' \in \tilde{\SGIET}$ with $0 \leq a',b' \leq w$ such that $a=kw+a'$ and $b=\ell w+b'$. Then $a \wedge b=k \ell w \wedge w + k w \wedge b' + \ell a' \wedge w + a' \wedge b'$. By the previous case and as $\eps_{\SGIET}$ is a group homomorphism, we deduce that $a \wedge b$ is in $\textup{Im}(\eps_{\SGIET})$.
\end{proof}

\subsection{\texorpdfstring{Description of $\Ker(\eps_{\Gamma})$}{Description of Ker(epsilon index Gamma)}}~

\smallskip

The aim of this part is to conclude that $\Ker(\eps_{\SGIET})=D(\IET(\SGIET))$ and the induced morphism $\IET(\SGIET)_{\mathrm{ab}} \rightarrow \Wedgealt \tilde{\SGIET}$ is an isomorphism. We recall that $\varphi_{\SGIET}$ is the restriction of the $\SAF$-homomorphism $\varphi$ to $\IET(\SGIET)$.

\begin{Lem}\label{Lemma 2 epsilon = -phi}
We have $2\eps_{\SGIET}=-\varphi_{\SGIET}$.
\end{Lem}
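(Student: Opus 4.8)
The plan is to verify the identity on a generating set and then invoke linearity. Since both $\eps_{\SGIET}$ and $\varphi_{\SGIET}$ are group homomorphisms into abelian groups, it suffices to check $2\eps_{\SGIET}(f) = -\varphi_{\SGIET}(f)$ for $f$ ranging over any generating subset of $\IET(\SGIET)$; by Lemma \ref{Lemma Decomposition in IET(Gamma)-restricted rotations}, the $\SGIET$-restricted rotations form such a set. So first I would fix a $\SGIET$-restricted rotation $r$ of type $(a,b)$ with $a,b \in \tilde{\SGIET}_+$ and $a+b \le 1$, and compute both sides directly. Taking the intervals associated with $r$ to be $I$ and $J$ with $\lambda(I)=a$, $\lambda(J)=b$, a combinatorial description of $r$ is, up to relabelling, $(\alpha,\tau)$ with $\alpha$ listing the lengths of the (at most four) intervals and $\tau$ the transposition swapping the blocks of lengths $a$ and $b$. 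Then Proposition \ref{Proposition signature morphism with combinatorial description} gives $\eps_{\SGIET}(r) = a \wedge b$ (the only inversion-rectangle is $I \times J$, the outer fixed intervals contributing nothing since they are order-preserved and map past or before both $I$ and $J$ symmetrically), and Proposition \ref{Proposition SAF invariant with combinatorial description} gives $\varphi_{\SGIET}(r) = b \wedge a - a \wedge b = -2(a\wedge b)$ (the value of $r - \Id$ is $+b$ on $I$ and $-a$ on $J$). Hence $2\eps_{\SGIET}(r) = 2(a\wedge b) = -\varphi_{\SGIET}(r)$, as desired.

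Alternatively — and this is probably the cleanest route — I would avoid restricted rotations entirely and argue on an arbitrary $f$ via its combinatorial description $(\alpha,\tau)$, using Proposition \ref{Proposition signature morphism with combinatorial description} and Proposition \ref{Proposition SAF invariant with combinatorial description} together with the basic identity $x \wedge y + y \wedge x = 0$ in $\Wedgealt \tilde{\SGIET}$. Writing $n$ for the length of $\alpha$, we have $\eps_{\SGIET}(f) = \sum_{j} \sum_{i<j,\, \tau(i)>\tau(j)} \alpha_i \wedge \alpha_j$, while $\varphi_{\SGIET}(f) = \sum_j \big( \sum_{\tau(i)<\tau(j)} \alpha_i - \sum_{i<j} \alpha_i \big) \wedge \alpha_j$. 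The plan is to expand the latter: $\sum_{\tau(i)<\tau(j)} \alpha_i \wedge \alpha_j - \sum_{i<j} \alpha_i \wedge \alpha_j$. In the first sum, split the condition $\tau(i)<\tau(j)$ according to whether $i<j$ or $i>j$; in the second, keep only $i<j$. The terms with $i<j$ and $\tau(i)<\tau(j)$ cancel, leaving $\sum_{i>j,\, \tau(i)<\tau(j)} \alpha_i\wedge\alpha_j - \sum_{i<j,\, \tau(i)>\tau(j)} \alpha_i \wedge \alpha_j$. Re-indexing the first sum (swap the names $i \leftrightarrow j$) turns it into $\sum_{j>i,\,\tau(j)<\tau(i)} \alpha_j \wedge \alpha_i = -\sum_{i<j,\,\tau(i)>\tau(j)} \alpha_i \wedge \alpha_j$, using antisymmetry. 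Thus $\varphi_{\SGIET}(f) = -2 \sum_{i<j,\,\tau(i)>\tau(j)} \alpha_i \wedge \alpha_j = -2\eps_{\SGIET}(f)$.

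The only point requiring genuine care — the main (minor) obstacle — is bookkeeping: making sure the index-set manipulation is a true partition of the summation ranges and that the antisymmetry relation $\alpha_j \wedge \alpha_i = -\alpha_i \wedge \alpha_j$ is applied to elements of $\Wedgealt \tilde{\SGIET}$ (where it holds by definition of the skew-symmetric power, including the diagonal relations), not merely in $\bigwedge^2$. There is no subtlety about well-definedness, since the displayed formulas for $\eps_{\SGIET}$ and $\varphi_{\SGIET}$ are already established to be independent of the chosen combinatorial description. I would present the second argument as the proof, with the restricted-rotation computation available as a sanity check.
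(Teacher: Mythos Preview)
Your proposal is correct, and your second (``cleanest'') route is exactly the paper's proof: take an arbitrary $f$ with combinatorial description $(\alpha,\tau)$, expand $\varphi_{\SGIET}(f)$ via Proposition~\ref{Proposition SAF invariant with combinatorial description}, split the summation index according to $i<j$ versus $i>j$, cancel the non-inversion terms, re-index and apply $\alpha_j \wedge \alpha_i = -\alpha_i \wedge \alpha_j$ to collapse both surviving sums into $-2\eps_{\SGIET}(f)$. Your first route (checking on restricted rotations via Lemma~\ref{Lemma Decomposition in IET(Gamma)-restricted rotations}) is a perfectly valid alternative that the paper does not take; it trades the index bookkeeping for a single explicit computation, but the direct argument is short enough that there is no real gain.
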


\begin{proof}
Let $f \in \IET(\SGIET)$ and $(\alpha,\tau)$ be a $\SGIET$-combinatorial description of $f$. Let $n$ be the length of $\alpha$. Thanks to Propositions \ref{Proposition SAF invariant with combinatorial description} and \ref{Proposition signature morphism with combinatorial description} we have:
\begin{align*}
\varphi_{\SGIET}(f) 
=~
&\sum\limits_{j=1}^{n} \big( \sum\limits_{\substack{i \\ \tau(i)< \tau(j) }} \alpha_i - \sum\limits_{i<j}\alpha_i \big) \wedge \alpha_j
\\=~
&\sum\limits_{j=1}^{n} \sum\limits_{\substack{i>j \\ \tau(i)< \tau(j)}} \alpha_i \wedge \alpha_j ~+~ \sum\limits_{j=1}^{n} \big( \sum\limits_{\substack{i<j \\ \tau(i)< \tau(j)}} \alpha_i - \sum\limits_{i<j} \alpha_i \big) \wedge \alpha_j
\\=~
&\sum\limits_{i=1}^{n} \sum\limits_{\substack{j<i \\ \tau(j)>\tau(i)}} \alpha_i \wedge \alpha_j  ~+~ \sum\limits_{j=1}^{n} \sum\limits_{\substack{i<j \\ \tau(i)>\tau(j)}} \alpha_i \wedge \alpha_j
\\=~
&-2 \sum\limits_{i=1}^{n} \sum\limits_{\substack{j<i \\ \tau(j) > \tau(i)}} \alpha_j \wedge \alpha_i
\\=~
&-2\eps_{\SGIET}(f)
\end{align*}
\end{proof}

\begin{Coro}\label{Corollary kernel of epsilon is inside kernel of phi}
We have the inclusion $\Ker(\eps_{\SGIET}) \subset \Ker(\varphi_{\SGIET})$.
\end{Coro}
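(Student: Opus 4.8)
The plan is simply to read this off from Lemma \ref{Lemma 2 epsilon = -phi}. That lemma establishes the identity of group homomorphisms $2\eps_{\SGIET} = -\varphi_{\SGIET}$ on $\IET(\SGIET)$. Hence, if $f \in \Ker(\eps_{\SGIET})$, i.e.\ $\eps_{\SGIET}(f) = 0$, then $\varphi_{\SGIET}(f) = -2\eps_{\SGIET}(f) = 0$, so $f \in \Ker(\varphi_{\SGIET})$. There is no genuine obstacle here: the whole content has already been absorbed into Lemma \ref{Lemma 2 epsilon = -phi}, and the corollary is a purely formal consequence.

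It is worth stressing, however, that the reverse inclusion is false in general, which is precisely why a new homomorphism $\eps_{\SGIET}$ was needed in the first place. Indeed, by Proposition \ref{Proposition Value of eps_Gamma for a IET Gamma transposition} an $\IET(\SGIET)$-transposition $f$ of type $a$ satisfies $\eps_{\SGIET}(f) = a \wedge a$; when $a \notin 2\tilde{\SGIET}$ this is a nontrivial $2$-torsion element of $\Wedgealt \tilde{\SGIET}$, whereas $\varphi_{\SGIET}(f) = -2(a \wedge a) = 0$. So $\Ker(\eps_{\SGIET})$ is typically a proper subgroup of $\Ker(\varphi_{\SGIET})$.

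Consequently, the substantive work still to come in this subsection — identifying $\Ker(\eps_{\SGIET})$ with $D(\IET(\SGIET))$ — cannot be obtained from the SAF-homomorphism alone. It will instead proceed from the fact that $\Ker(\varphi_{\SGIET})$ is generated by $\IET(\SGIET)$-transpositions (Corollary \ref{Corollary Ker phi is generated by transpositions}), the explicit value of $\eps_{\SGIET}$ on such transpositions (Proposition \ref{Proposition Value of eps_Gamma for a IET Gamma transposition}), and the finite-order behaviour of products of two transpositions established in Section \ref{Section Behaviour of the composition of two IET-transpositions}, which lets one realize the relations $a \wedge a = b \wedge b$ (for $a, b$ of the same parity class mod $2\tilde\SGIET$) as commutators.
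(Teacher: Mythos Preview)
Your proof is correct and matches the paper's own treatment: the corollary is stated immediately after Lemma \ref{Lemma 2 epsilon = -phi} with no separate proof, precisely because it is an immediate formal consequence of the identity $2\eps_{\SGIET}=-\varphi_{\SGIET}$. The additional commentary you give about the failure of the reverse inclusion and the outline of what follows is accurate and helpful context, though not part of the proof itself.
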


By Corollary \ref{Corollary Ker phi is generated by transpositions} we know that $\Ker(\varphi_{\SGIET})$ is generated by the set of all $\IET(\SGIET)$-transpositions. Thus it is natural to look at these elements who are also in $\Ker(\eps_{\SGIET})$. If $\sigma$ is an $\IET(\SGIET)$-transposition of type $a$ in $\Ker(\eps_{\SGIET})$ then we have the equality $a \wedge a =0$. We want to prove that $\sigma$ is in $D(\IET(\SGIET))$ if and only if $a \in 2 \tilde{\SGIET}$. 

We denote by $S^2 \tilde{\SGIET}$ the second symmetric power of $\tilde{\SGIET}$ and we denote by $a \odot a$ image of $a \otimes a$ in $S^2 \tilde{\SGIET}$. \\
For every group $G$ and every $w \in G$ we use the notation $w ~[\mathrm{mod}\ 2]$ for the image of $w$ in $G / 2G$.

\begin{Prop}\label{Proposition sufficient condition for dividing by 2 in Gamma}
Let $a \in \tilde{\SGIET}$, if $a \wedge a~[\mathrm{mod}\ 2]=0$ then $a$ belongs to $2\tilde{\SGIET}$.
\end{Prop}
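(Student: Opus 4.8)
The plan is to analyze the statement $a \wedge a \ [\mathrm{mod}\ 2] = 0$ in $\Wedgealt \tilde{\SGIET} / 2(\Wedgealt \tilde{\SGIET})$ by pushing it into a finitely generated subgroup where we can compute explicitly. First I would reduce to the finite-rank case: if $a \wedge a = 2w$ in $\Wedgealt \tilde{\SGIET}$ for some $w$, then write $w$ as a $\ZZ$-combination of finitely many wedges $x_i \wedge y_i$, and let $H \leq \tilde{\SGIET}$ be the (finitely generated, torsion-free, hence free abelian) subgroup generated by $a$ together with all the $x_i, y_i$. The relation $a \wedge a = 2w$ already holds in $\Wedgealt H$ since all the relevant elements live there and $\Wedgealt$ is functorial; thus it suffices to prove the claim for $H$ free abelian of finite rank $d$.

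Next, for $H$ free abelian of finite rank $d$, recall from the introduction that $\Wedgealt H \simeq \bigwedge^2_\ZZ H \oplus (\text{the 2-torsion kernel})$, and more precisely that the kernel of $\Wedgealt H \to \bigwedge^2_\ZZ H$ is generated by $\{e_i \wedge e_i\}$ for a basis $\{e_1,\dots,e_d\}$, and is isomorphic to $(\ZZ/2\ZZ)^d$. The key structural fact I would want is an explicit normal form: choosing a $\ZZ$-basis $e_1, \dots, e_d$ of $H$, every element of $\Wedgealt H$ is uniquely $\sum_{i<j} c_{ij}\, e_i \wedge e_j + \sum_i \delta_i\, e_i \wedge e_i$ with $c_{ij} \in \ZZ$ and $\delta_i \in \ZZ/2\ZZ$ (using $2(e_i \wedge e_i) = 0$ and $e_j \wedge e_i = -e_i \wedge e_j$). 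With such a basis I would expand $a = \sum_i n_i e_i$ and compute $a \wedge a = \sum_i n_i^2 (e_i \wedge e_i) + \sum_{i<j} 2 n_i n_j (e_i \wedge e_j) = \sum_i n_i (e_i \wedge e_i)$ in $\Wedgealt H$ (since $n_i^2 \equiv n_i \bmod 2$ and the cross terms are even). Hence in $\Wedgealt H / 2(\Wedgealt H)$, the hypothesis $a \wedge a \equiv 0$ forces each $n_i$ to be even: indeed $2(\Wedgealt H)$ is spanned by $\{2 e_i \wedge e_j\}_{i<j}$ (the $2 e_i \wedge e_i$ vanish), so the $(e_i \wedge e_i)$-coordinates survive in the quotient and must all be zero, i.e. $n_i \equiv 0 \bmod 2$ for all $i$. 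Therefore $a \in 2H \subseteq 2\tilde{\SGIET}$.

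The step I expect to require the most care is establishing the uniqueness in the normal form for $\Wedgealt H$, i.e. that the coordinates $\delta_i \in \ZZ/2\ZZ$ are genuinely well-defined and that $2(\Wedgealt H)$ contains no $e_i \wedge e_i$ term. This is essentially the computation of $\Wedgealt H \simeq \bigwedge^2_\ZZ H \times (\ZZ/2\ZZ)^d$ asserted (for $\ZZ^d$) in the introduction; I would either cite that assertion directly or give the short argument using the bilinear map $H \times H \to \bigwedge^2_\ZZ H \oplus \bigoplus_i (\ZZ/2\ZZ)\cdot f_i$ sending $(x,y)$ with $x = \sum x_i e_i$, $y = \sum y_i e_i$ to $(x \wedge y, \sum_i x_i y_i\, f_i)$, checking it is symmetric-killing (i.e. $x \otimes y + y \otimes x \mapsto 0$) and hence factors through $\Wedgealt H$, producing a left inverse to the natural map that reads off the $e_i \wedge e_i$-coordinates mod $2$. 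Once that splitting is in hand, the rest is the routine parity computation above.
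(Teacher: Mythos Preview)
Your argument is essentially correct, but the reduction to finite rank has a small gap: saying ``$\Wedgealt$ is functorial'' only produces a map $\Wedgealt H \to \Wedgealt \tilde{\SGIET}$, not an injective one, so a relation in $\Wedgealt \tilde{\SGIET}$ does not automatically lift to $\Wedgealt H$. The fix is routine: unwind $a \wedge a = 2w$ to an identity $a \otimes a - 2\tilde{w} = \sum_j (u_j \otimes v_j + v_j \otimes u_j)$ in $\bigotimes^2_{\ZZ}\tilde{\SGIET}$, enlarge $H$ to contain the $u_j,v_j$ as well, and use that $\bigotimes^2_{\ZZ} H \to \bigotimes^2_{\ZZ} \tilde{\SGIET}$ is injective (both groups are torsion-free, hence flat over $\ZZ$). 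After that your basis computation in $\Wedgealt H$ is correct, and your proposed bilinear map does verify the needed splitting.

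The paper takes a different, more functorial route that sidesteps the finite-rank reduction. It first observes that $\Wedgealt G / 2(\Wedgealt G) \simeq S^2 G / 2(S^2 G)$ naturally for any abelian group $G$ (both classify symmetric $\ZZ$-bilinear maps into $2$-elementary targets), and then applies the map $S^2\tilde{\SGIET}/2S^2\tilde{\SGIET} \to S^2(\tilde{\SGIET}/2\tilde{\SGIET})$ induced by reduction mod $2$. The claim then becomes: in the symmetric square of an $\mathbb{F}_2$-vector space, $\bar a \odot \bar a = 0$ forces $\bar a = 0$, which is immediate from a basis. Your basis-dependent form $(x,y)\mapsto \sum_i x_i y_i\,f_i$ is precisely the finite-rank incarnation of this map; the paper's version has the advantage of being coordinate-free and of requiring no injectivity or flatness argument.
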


\begin{proof}
For every group $G$, the group $\Wedgealt G / 2 (\Wedgealt G)$ is naturally isomorphic to the second symmetric power $S^2 G / 2 (S^2 G)$. This comes from the fact that these groups satisfy the following universal property: for every group $G$ and every abelian elementary $2$-group We denote $i$ the natural inclusion of $G \times G$ into $S^2 G / 2 (S^2 G)$. For every bilinear symmetric group homomorphism $b: G \times G \to A$ there exists a unique group homomorphism $f: S^2 G / 2 (S^2 G) \to A$ such that for every $g,h \in G$ we have $b(g,h)=f(i(g,h))$.

Let $a \in \tilde{\SGIET}$ with $a \neq 0$ (because  we already have $0=2 \times 0$). We denote by $a \odot a$ the image of $a \otimes a $ in $S^2 G / 2 (S^2 G)$, and we assume that $a \odot a ~[\textup{mod}~2]=0$. The projection $\tilde{\SGIET} \rightarrow \tilde{\SGIET} / 2\tilde{\SGIET}$ gives rise to a morphism $\zeta: S^2 \tilde{\SGIET} \rightarrow S^2 (\tilde{\SGIET} / 2\tilde{\SGIET})$. As $2 (S^2 \tilde{\SGIET}) \subset \Ker(\zeta)$ we obtain a morphism $\zeta ': S^2 \tilde{\SGIET} / 2 S^2 \tilde{\SGIET} \rightarrow S^2 (\tilde{\SGIET} / 2\tilde{\SGIET})$. Hence if $a \notin 2 \tilde{\SGIET}$ then $\zeta(a \odot a) \neq 0$ thus $\zeta ' (a \odot a~[\mathrm{mod}\ 2]) \neq 0$ which is a contradiction with the assumption. In conclusion $a \in 2 \tilde{\SGIET}$.
\end{proof}

\begin{Coro}\label{Corollary Ker(epsilon) case one transposition}
Every $\IET(\SGIET)$-transposition $f$ in $\Ker(\eps_{\SGIET})$ is in $D(\IET(\SGIET))$.
\end{Coro}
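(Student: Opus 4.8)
The plan is to combine Proposition \ref{Proposition Value of eps_Gamma for a IET Gamma transposition}, Proposition \ref{Proposition sufficient condition for dividing by 2 in Gamma}, and the decomposition displayed in Figure \ref{Figure IET transposition is inside D(IET)} (an $\IET$-transposition of type $2b$ is a commutator of two $\IET$-transpositions of type $b$). Let $f$ be an $\IET(\SGIET)$-transposition of type $a$ lying in $\Ker(\eps_{\SGIET})$. First I would invoke Proposition \ref{Proposition Value of eps_Gamma for a IET Gamma transposition}, which gives $\eps_{\SGIET}(f)=a\wedge a$; so the hypothesis $f\in\Ker(\eps_{\SGIET})$ forces $a\wedge a=0$ in $\Wedgealt\tilde{\SGIET}$, hence \emph{a fortiori} $a\wedge a~[\mathrm{mod}\ 2]=0$.

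Next I would apply Proposition \ref{Proposition sufficient condition for dividing by 2 in Gamma} to conclude that $a\in 2\tilde{\SGIET}$, say $a=2b$ with $b\in\tilde{\SGIET}_+$ and (after replacing $b$ by a suitable element, using $a\le\frac12$) $b\le\frac14$. Then I would use the geometric fact recorded after Definition \ref{Definition IET transposition} and illustrated in Figure \ref{Figure IET transposition is inside D(IET)}: an $\IET(\SGIET)$-transposition of type $2b$ can be written as a commutator $[s_1,s_2]$ of two $\IET(\SGIET)$-transpositions of type $b$. Care is needed to arrange the supports of the relevant transpositions inside $\interfo{0}{1}$ so that this commutator identity holds on the nose; but since $\IET(\SGIET)$ acts highly transitively on disjoint tuples of $\tilde{\SGIET}$-intervals of equal length (by density of $\SGIET$), and since all $\IET(\SGIET)$-transpositions of a fixed type $a$ are conjugate in $\IET(\SGIET)$, it suffices to verify the identity for one convenient configuration and then conjugate. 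This exhibits $f$ as a commutator, hence $f\in D(\IET(\SGIET))$.

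The only real subtlety is the last step: making the "$2b$-transposition is a commutator of $b$-transpositions" picture precise while staying inside $\IET(\SGIET)$ rather than $\IET$. This is not a genuine obstacle, because once $a=2b$ with $b\in\tilde{\SGIET}$, every interval appearing in Figure \ref{Figure IET transposition is inside D(IET)} has endpoints in $\tilde{\SGIET}$, so both $b$-transpositions genuinely belong to $\IET(\SGIET)$; and conjugacy of same-type transpositions inside $\IET(\SGIET)$ takes care of reducing to the standard configuration. So the corollary follows immediately from the two quoted propositions together with this elementary commutator identity.
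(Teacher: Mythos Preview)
Your proposal is correct and follows essentially the same approach as the paper: compute $\eps_{\SGIET}(f)=a\wedge a$, deduce $a\in 2\tilde{\SGIET}$ via Proposition~\ref{Proposition sufficient condition for dividing by 2 in Gamma}, and then use the commutator identity of Figure~\ref{Figure IET transposition is inside D(IET)}. The only cosmetic difference is that the paper bypasses your conjugacy reduction by directly building the two order-$2$ elements $g,h$ out of the halves of the very intervals swapped by $f$ (so the endpoints automatically lie in $\tilde{\SGIET}$ and $f=ghgh$ on the nose); your detour through a ``standard configuration'' is unnecessary but harmless.
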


\begin{proof}
Let $a \in \tilde{\SGIET}_+$, with $a \leq \frac{1}{2}$, be the type of $f$ and let $u,v \in \tilde{\SGIET}_+$ such that $I_1=\mathopen{[}u,u+a \mathclose{[}$ and $I_2=\mathopen{[}v,v+a \mathclose{[}$ are the two intervals swapped by $f$. From $f \in \Ker(\eps)$ we deduce that $\eps_{\SGIET}(f)= a \wedge a=0$. Hence $a \wedge a ~[\mathrm{mod}\ 2]=0$ in $\Wedgealt \tilde{\SGIET} / 2 (\Wedgealt \tilde{\SGIET})$. Then by Proposition \ref{Proposition sufficient condition for dividing by 2 in Gamma} there exists $b \in \tilde{\SGIET}$ such that $a=2b$. Thus if we define $g$ as the unique $\IET(\SGIET)$-transposition of type $b$ that swaps $\mathopen{[}u,u+b \mathclose{[}$ and $\mathopen{[}v,v+b \mathclose{[}$ and $h$ as the unique $\IET(\SGIET)$ that swaps $\mathopen{[}u,u+b \mathclose{[}$ with $\mathopen{[}u+b,u+a \mathclose{[}$ and $\mathopen{[}v,v+b \mathclose{[}$ with $\mathopen{[}v+b,v+a \mathclose{[}$. Then $f=ghgh$ (see Figure \ref{Figure IET transposition is inside D(IET)}) and $g^2=h^2=\Id$, hence $f \in D(\IET(\SGIET))$.
\end{proof}

In order to show that $\Ker(\eps_{\SGIET})=D(\IET(\SGIET))$ we prove that any element $f \in \Ker(\eps_{\SGIET})$ can be written as $f=\sigma h$ where $h \in D(\IET(\SGIET))$ and $\sigma$ is an $\IET(\SGIET)$-transposition. This concludes because we just show that an $\IET(\SGIET)$-transposition which is also in $\Ker(\eps_{\SGIET})$ is in $D(\IET(\SGIET))$. We begin by a particular case of a product of $\IET(\SGIET)$-transpositions with pairwise disjoint support. The aim will be to reduce the general case to this one. We recall that the identity is considered as an $\IET(\SGIET)$-transposition.

\begin{Lem}\label{Lemma Ker(epsilon) case non overlapping}
Let $k \in \NN$ and $\tau_1, \tau_2, \ldots , \tau_k$ be $\IET(\SGIET)$-transpositions with pairwise disjoint support. Then $\tau_1\tau_2 \ldots \tau_k = \sigma h$ where $\sigma$ is an $\IET(\SGIET)$-transposition and $h$ is an element of $D(\IET(\SGIET))$. Moreover the support of $h$ and $\sigma$ do not overlap and are included in the union of the supports of the $\tau_i$.
\end{Lem}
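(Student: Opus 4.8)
The plan is to induct on $k$. For $k=0$ we take $\sigma = \Id$ and $h = \Id$. For $k=1$ the statement is either immediate (if $\eps_{\SGIET}(\tau_1)=0$, apply Corollary \ref{Corollary Ker(epsilon) case one transposition} and set $\sigma=\Id$, $h=\tau_1$) or we just set $\sigma=\tau_1$ and $h=\Id$. So assume $k\geq 2$ and that the statement holds for fewer factors. Let $a_i \in \tilde{\SGIET}_+$ be the type of $\tau_i$. The idea is to peel off one transposition at a time, combining pairs of transpositions into a commutator plus a single transposition of a controlled type, all with support inside $\support(\tau_{k-1})\cup\support(\tau_k)$ (or inside a single $\support(\tau_i)$), so that the commutators accumulate in $D(\IET(\SGIET))$ and the leftover transpositions have pairwise disjoint support, reducing the count.

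The main computational input is a ``fusion'' identity for two $\IET(\SGIET)$-transpositions $\tau, \tau'$ of types $a$ and $b$ with disjoint supports: up to an element of $D(\IET(\SGIET))$ supported in $\support(\tau)\cup\support(\tau')$, the product $\tau\tau'$ equals a single $\IET(\SGIET)$-transposition whose type can be arranged to be $a+b$ (assuming $a+b\leq \tfrac12$; otherwise one arranges something of the form $|a-b|$ together with another transposition, which keeps the induction going). Concretely, cut the interval supporting $\tau'$ and reattach it adjacent to one of the two intervals moved by $\tau$: the difference between $\tau\tau'$ and the resulting ``long'' transposition of type $a+b$ is realized by conjugation, and any such conjugator that fixes the set $\support(\tau)\cup\support(\tau')$ can be chosen inside $\IET(\SGIET)$ because all endpoints involved lie in $\tilde{\SGIET}$; the conjugation of a transposition back to a transposition of the same type differs from the identity by a commutator (two transpositions of the same type differ by an element of $D(\IET(\SGIET))$, since $a\wedge a - a\wedge a = 0$ and by Lemma \ref{Lemma product of same type restricted rotation} this difference is a product of transpositions which, by Corollary \ref{Corollary Ker(epsilon) case one transposition}, lies in $D(\IET(\SGIET))$). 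Iterating, $\tau_1\tau_2\cdots\tau_k = \sigma h$ with $h\in D(\IET(\SGIET))$ supported in $\bigcup_i\support(\tau_i)$ and $\sigma$ a single $\IET(\SGIET)$-transposition; one then checks $\support(\sigma)$ can be taken disjoint from $\support(h)$ by pushing $h$ off the two intervals moved by $\sigma$ via a further conjugation inside $\IET(\SGIET)$, again absorbing the correction into $D(\IET(\SGIET))$.

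The point where care is needed — and what I expect to be the main obstacle — is the bookkeeping of supports: we must guarantee at every fusion step that the auxiliary commutators remain supported inside the union of the original $\support(\tau_i)$, and that after all fusions the residual transposition $\sigma$ and the product $h$ of commutators have genuinely disjoint (not merely ``generically disjoint'') support. This is handled by always choosing the conjugators to be $\IET(\SGIET)$-transpositions or restricted rotations whose support is contained in the relevant union of intervals, and by invoking Lemma \ref{Lemma commutator with a restricted rotation is product of transposition} to keep everything expressed through transpositions so that Corollary \ref{Corollary Ker(epsilon) case one transposition} applies to each factor. The density of $\SGIET$ is not needed here; only that the lengths and endpoints stay in $\tilde{\SGIET}$, which is automatic since the $\tau_i$ are $\IET(\SGIET)$-transpositions. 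Once the decomposition $f=\sigma h$ is obtained with the support condition, the hypothesis $f\in\Ker(\eps_{\SGIET})$ gives $\eps_{\SGIET}(\sigma) = -\eps_{\SGIET}(h) = 0$ (as $h\in D(\IET(\SGIET))\subset\Ker(\eps_{\SGIET})$), so $\sigma\in\Ker(\eps_{\SGIET})$ and Corollary \ref{Corollary Ker(epsilon) case one transposition} places $\sigma$ in $D(\IET(\SGIET))$, finishing the lemma.
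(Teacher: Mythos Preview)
Your proposal has several genuine gaps.

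First, the lemma has no hypothesis that $\tau_1\cdots\tau_k\in\Ker(\eps_{\SGIET})$; your final paragraph is proving a different statement. The lemma is a purely structural decomposition result, used later as input to Theorem~\ref{Theorem D(IET(Gamma)) is equal to the Kernel of Eps Gamma}.

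Second, your key step is circular. To show that two transpositions of the same type have product in $D(\IET(\SGIET))$, you invoke Lemma~\ref{Lemma product of same type restricted rotation} to write the product as a product of transpositions and then appeal to Corollary~\ref{Corollary Ker(epsilon) case one transposition}. But that corollary applies only to a \emph{single} transposition lying in $\Ker(\eps_{\SGIET})$; the assertion that a product of transpositions in $\Ker(\eps_{\SGIET})$ lies in $D(\IET(\SGIET))$ is precisely what this entire chain of lemmas is building toward. The correct (and elementary) argument is that two transpositions $\tau,\tau'$ of the same type are conjugate, say $\tau'=g\tau g^{-1}$, and since $\tau^2=\Id$ one has $\tau\tau'=\tau g\tau g^{-1}=[\tau,g]$, a single commutator.

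Third, the ``fusion into type $a+b$'' route does not deliver the support conditions. To make intervals adjacent you must conjugate by an element whose support leaves $\bigcup_i\support(\tau_i)$; the resulting commutator $h$ then has support overlapping that of your new $\sigma$ and need not sit inside the original union. Your proposed fix (``push $h$ off by a further conjugation'') is not a proof.

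The paper's argument avoids all of this by going in the opposite direction: instead of fusing, it \emph{splits} the larger transposition. If $a_1>a_2$, cut each of the two intervals of $\tau_1$ into a piece of length $a_2$ and a piece of length $a_1-a_2$, writing $\tau_1=hg$ with $g$ of type $a_2$ and $h$ of type $a_1-a_2$, with $\support(g)$ and $\support(h)$ disjoint and contained in $\support(\tau_1)$. Then $g$ and $\tau_2$ have the same type and disjoint supports, so $g\tau_2\in D(\IET(\SGIET))$ by the conjugacy argument above, and $\tau_1\tau_2=h\cdot(g\tau_2)$ with $\support(h)\subset\support(\tau_1)$ disjoint from $\support(g\tau_2)\subset\support(\tau_1)\cup\support(\tau_2)$. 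No conjugators outside the original supports are needed; the disjointness and containment are automatic. Induction on $k$ then finishes.
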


\begin{proof}
By induction it is enough to show the result in the case $k=2$
Let respectively $a_1$ and $a_2$ be the type of $\tau_1$ and $\tau_2$. As their support do not overlap we know that $\tau_1$ and $\tau_2$ commute. Hence we can assume $a_1 \geq a_2$ without loss of generality. If $a_1=a_2$ then there exists $f \in \IET(\SGIET)$ such that $\tau_1=f \tau_2 f^{-1}$. Thus $\tau_1 \tau_2$ is in $D(\IET(\SGIET))$. If $a_1 >a_2$ let $u,v \in \tilde{\SGIET}$ such that $\mathopen{[}u,u+a_1\mathclose{[}$ and $\mathopen{[}v,v+a_1\mathclose{[}$ are the intervals swapped by $\tau_1$. Let $g$ and $h$ be $\IET(\SGIET)$-transpositions such that $g$ swaps the intervals $\mathopen{[}u,u+a_2\mathclose{[}$ and $\mathopen{[}v,v+a_2\mathclose{[}$ and $h$ swaps the intervals $\mathopen{[}u+a_2,u+a_1\mathclose{[}$ and $\mathopen{[}v+a_2,v+a_1\mathclose{[}$. Thus $\tau_1=hg$. Moreover $g$ and $\tau_2$ are two $\IET(\SGIET)$-transpositions with same type and non-overlapping support. Then by the previous case, the product $g\tau_2$ is in $D(\IET(\SGIET))$ and its support does not intersect the support of $h$. Then $f=h(g\tau_2)$ is the wanted decomposition.
\end{proof}

We can now treat the case of finite order elements:

\begin{Lem}\label{Lemma finite order implies transposition compose with an element of derived subgroup}
If $f \in \IET(\SGIET)$ has finite order then there exist an $\IET(\SGIET)$-transposition $\sigma$ and $h \in D(\IET(\SGIET))$ such that the support of $\sigma $ and $h$ are inside the support of $f$ and $f=\sigma h$.
\end{Lem}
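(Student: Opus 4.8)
The plan is to study $f$ through its action on a $\SGIET$-partition, split $f$ into ``blocks'' each of which rigidly permutes a family of pairwise disjoint intervals of a common length, observe that the alternating part of every block is a product of commutators of $\IET(\SGIET)$-transpositions, and then absorb the remaining transpositions — which will have pairwise disjoint supports — using Lemma~\ref{Lemma Ker(epsilon) case non overlapping}. So the finite-order hypothesis enters only to put $f$ into this rigid block form.

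First I would fix a $\SGIET$-partition $\cP=\{I_1,\dots,I_n\}\in\Pi_f$, which exists because $f\in\IET(\SGIET)$. Since $f$ has finite order, Lemma~\ref{Lemma finite order implies being a product of IET transpositions} gives $\sigma\in\mfS_n$ with $f(I_i)=I_{\sigma(i)}$; as $f$ is piecewise a translation, it carries each $I_i$ isometrically onto $I_{\sigma(i)}$, so $\sigma$ preserves the lengths $\lambda(I_i)$. Grouping the indices by common length $a$ into sets $\mathcal{C}_a=\{i:\lambda(I_i)=a\}$, one gets $\sigma=\prod_a\sigma_a$ with $\sigma_a$ a permutation of $\mathcal{C}_a$, and correspondingly $f=\prod_a g_a$, where $g_a$ agrees with $f$ on $\bigcup_{i\in\mathcal{C}_a}I_i$ and is the identity elsewhere. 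Each $g_a$ lies in $\IET(\SGIET)$, the blocks have pairwise disjoint supports contained in $\support(f)$, and $g_a$ rigidly permutes the equal-length intervals $(I_i)_{i\in\mathcal{C}_a}$ according to $\sigma_a$.

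The key point concerns a single block. The rigid permutations of a fixed finite family of pairwise disjoint intervals of a common length $a$ with endpoints in $\tilde{\SGIET}$ form a copy of a finite symmetric group inside $\IET(\SGIET)$, via an embedding sending transpositions to $\IET(\SGIET)$-transpositions of type $a$; moreover each $3$-cycle $(I_i\,I_j\,I_k)$ of three of them equals the commutator $[(I_i\,I_j),(I_j\,I_k)]$ of two of these transpositions, hence, since the alternating subgroup is generated by $3$-cycles, its image lies in $D(\IET(\SGIET))$. Thus if $\sigma_a$ is even then $g_a\in D(\IET(\SGIET))$; if $\sigma_a$ is odd, it moves at least two indices and I would pick an $\IET(\SGIET)$-transposition $\tau_a$ of type $a$ that swaps two intervals genuinely moved by $f$, so that $\tau_a g_a$ corresponds to an even permutation and $g_a=\tau_a(\tau_a g_a)$ with $\tau_a g_a\in D(\IET(\SGIET))$. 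In either case every factor has support inside $\support(g_a)\subseteq\support(f)$.

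Finally I would reassemble. Because the $g_a$ have pairwise disjoint supports, the transpositions $\tau_a$ and the $D(\IET(\SGIET))$-factors $h_a$ commute across distinct values of $a$, so $f=(\prod_a\tau_a)\,h$ with $h=\prod_a h_a\in D(\IET(\SGIET))$ of support inside $\support(f)$, and $\prod_a\tau_a$ a product of $\IET(\SGIET)$-transpositions with pairwise disjoint supports. Lemma~\ref{Lemma Ker(epsilon) case non overlapping} then rewrites $\prod_a\tau_a$ as $\sigma h'$ with $\sigma$ a single $\IET(\SGIET)$-transposition, $h'\in D(\IET(\SGIET))$, and both supports inside $\bigcup_a\support(\tau_a)\subseteq\support(f)$; thus $f=\sigma(h'h)$ with $h'h\in D(\IET(\SGIET))$, which is the required form. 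I expect the substantive ingredient to be Lemma~\ref{Lemma Ker(epsilon) case non overlapping}, already available; the main things to be careful about here are the support bookkeeping — in particular choosing each $\tau_a$ to swap intervals genuinely moved by $f$ — and the elementary fact that $3$-cycles of equal intervals are commutators of $\IET(\SGIET)$-transpositions, which is precisely what handles the ``cyclic'' part of $f$ that cannot be written as a product of transpositions with pairwise disjoint support.
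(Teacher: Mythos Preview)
Your argument is correct and follows essentially the same route as the paper's proof: invoke Lemma~\ref{Lemma finite order implies being a product of IET transpositions} to get a permutation $\sigma$ of the blocks, reduce to rigid permutations of families of equal-length intervals, use that the alternating part of such a finite symmetric group lands in $D(\IET(\SGIET))$, extract one transposition per odd block, and finish with Lemma~\ref{Lemma Ker(epsilon) case non overlapping}. The only organizational difference is that the paper decomposes $\sigma$ into its disjoint cycles while you group by length class $\mathcal{C}_a$; since each cycle lies in a single length class, your blocks are unions of the paper's, and the core observation (even permutations of equal intervals are products of commutators of $\IET(\SGIET)$-transpositions) is identical. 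Your support bookkeeping, in particular choosing $\tau_a$ to swap intervals genuinely moved by $f$, is handled correctly.
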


\begin{proof}
Let $f \in \IET(\SGIET)$ as in the statement and let $n \in \NN$ and $\cP=\lbrace I_1, I_2, \ldots , I_n \rbrace \in \Pi_f$. By Lemma \ref{Lemma finite order implies being a product of IET transpositions}   there exists $\sigma \in \mfS_n$ such that $f(I_i)=I_{\sigma(i)}$. Let $k \in \NN$ and $\sigma=c_1 c_2 \ldots c_k$ be the disjoint cycle decomposition for $\sigma$. Let $f_i$ be the element of $\IET(\SGIET)$ that is equal to $f$ on $I_j$ for every $j \in \support(c_i)$ while fixing the rest of $\interfo{0}{1}$. Then $f=f_1 \ldots f_k$ and $f_i$ commutes with $f_j$ for every $1 \leq i \neq j \leq k$. Then if the statement is true for every $f_i$ we can write $f_i=\tau_ih_i$ with $\tau_i$ an $\IET(\SGIET)$-transposition and $h \in D(\IET(\SGIET))$, both of them with support inside the support of $f_i$. Then $f=f_1\ldots f_k=\tau_1h_1 \ldots \tau_k h_k=\tau_1 \ldots \tau_k h_1 \ldots h_k$ because the support of $\tau_i$ does not overlap with the support of $\tau_j$ of $h_j$ for every $1 \leq j \leq k$ and $j \neq i$. We conclude with Lemma \ref{Lemma Ker(epsilon) case non overlapping} applied to $\tau_1 \ldots \tau_k$.\\
Let $c$ by a cycle of length $n \geq 2$ and let $I_1, I_2, \ldots , I_n$ be non-overlapping intervals of $\Itv(\SGIET)$ of same length. Let $f \in \IET(\SGIET)$ be the element that permutes the set $\lbrace I_1, I_2, \ldots, I_n \rbrace $ by $c$. Then if $c \in D(\mfS_n)=\mfA_n$ we deduce that $f \in D(\IET(\SGIET))$. If $c \notin D(\mfS_n)$ then let $g$ be the unique $\IET(\SGIET)$-transposition that swaps $I_1$ with $I_{c(1)}$ (we notice that the support of $g$ is included in the support of $f$). By the previous case, $gf \in D(\IET(\SGIET))$ and we conclude that $f=g(gf)$ is a wanted decomposition.
\end{proof}

\begin{Coro}\label{Corollary Ker(epsilon) case 2 elements}
Let $\tau$ and $\tau '$ be two $\IET(\SGIET)$-transpositions. There exist an $\IET(\SGIET)$-transposition $\sigma$ and $h \in D(\IET(\SGIET))$ such that $\tau \tau '=\sigma h$ and the support of $\sigma $ and $h$ are included in the union of the support of $\tau$ and the one of $\tau'$.
\end{Coro}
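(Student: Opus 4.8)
The plan is to obtain this corollary by simply chaining two results proved above, so that essentially no new argument is needed. First I would observe that $\tau\tau'$ lies in the group $\IET(\SGIET)$, and that by Lemma~\ref{Lemma product of two transpositions has finite order} the product of any two $\IET$-transpositions has finite order; hence $f := \tau\tau'$ is a finite order element of $\IET(\SGIET)$.

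The second step is the elementary support estimate: if $x \in \interfo{0}{1}$ lies outside $\support(\tau) \cup \support(\tau')$, then $\tau'(x) = x$ and $\tau(x) = x$, so $\tau\tau'(x) = x$; therefore $\support(\tau\tau') \subseteq \support(\tau) \cup \support(\tau')$.

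Finally I would apply Lemma~\ref{Lemma finite order implies transposition compose with an element of derived subgroup} to the finite order element $f = \tau\tau'$. This furnishes an $\IET(\SGIET)$-transposition $\sigma$ and an element $h \in D(\IET(\SGIET))$ with $\tau\tau' = \sigma h$ and with $\support(\sigma) \cup \support(h) \subseteq \support(\tau\tau')$; combining this with the support estimate from the second step yields $\support(\sigma) \cup \support(h) \subseteq \support(\tau) \cup \support(\tau')$, which is exactly the claimed decomposition.

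Since the two genuinely substantial ingredients — the finiteness of the order of a product of two transpositions, and the decomposition of a finite order element of $\IET(\SGIET)$ as a transposition times a commutator-type element with controlled support — have already been established, there is no real obstacle here. The only point that requires any care is bookkeeping of the supports, and that is immediate.
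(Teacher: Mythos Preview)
Your proposal is correct and follows exactly the same approach as the paper: invoke Lemma~\ref{Lemma product of two transpositions has finite order} to see that $\tau\tau'$ has finite order, then apply Lemma~\ref{Lemma finite order implies transposition compose with an element of derived subgroup}. The only addition you make is spelling out the elementary support inclusion $\support(\tau\tau') \subseteq \support(\tau) \cup \support(\tau')$, which the paper leaves implicit.
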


\begin{proof}
By Proposition \ref{Lemma product of two transpositions has finite order} we deduce that $f:=\tau \tau '$ has finite order. Hence by Lemma \ref{Lemma finite order implies transposition compose with an element of derived subgroup} we obtain the result.
\end{proof}

\begin{Lem}\label{Lemma Ker(epsilon) decomposition k elements}
Let $k$ in $\NN$ and $\tau_1, \tau_2, \ldots , \tau_k$ be some $\IET(\SGIET)$-transpositions. Then there exist an $\IET(\SGIET)$-transposition $\sigma$ and $h \in D(\IET(\SGIET))$ such that $\tau_1\tau_2 \ldots \tau_k = \sigma h$.
\end{Lem}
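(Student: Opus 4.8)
The plan is a straightforward induction on $k$, with Corollary \ref{Corollary Ker(epsilon) case 2 elements} (the two-transposition case) as the only real input and the normality of $D(\IET(\SGIET))$ in $\IET(\SGIET)$ handling the bookkeeping. For $k \leq 1$ there is nothing to prove: take $h = \Id$ and $\sigma = \tau_1$, with the convention $\sigma = \Id$ when $k = 0$, recalling that the identity is counted as an $\IET(\SGIET)$-transposition.

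Assume now $k \geq 2$ and that the statement holds for $k-1$ transpositions. Applying the induction hypothesis to $\tau_1 \tau_2 \cdots \tau_{k-1}$ yields an $\IET(\SGIET)$-transposition $\sigma'$ and an element $h' \in D(\IET(\SGIET))$ with $\tau_1 \cdots \tau_{k-1} = \sigma' h'$. Since $D(\IET(\SGIET))$ is normal, the element $h'' := \tau_k^{-1} h' \tau_k$ again belongs to $D(\IET(\SGIET))$ and satisfies $h' \tau_k = \tau_k h''$, so
\[
\tau_1 \tau_2 \cdots \tau_k = \sigma' h' \tau_k = (\sigma' \tau_k)\, h''.
\]
Here $\sigma' \tau_k$ is a product of two $\IET(\SGIET)$-transpositions, so Corollary \ref{Corollary Ker(epsilon) case 2 elements} provides an $\IET(\SGIET)$-transposition $\sigma$ and $h''' \in D(\IET(\SGIET))$ with $\sigma' \tau_k = \sigma h'''$. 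Substituting gives $\tau_1 \cdots \tau_k = \sigma\, h''' h''$, and $h''' h'' \in D(\IET(\SGIET))$ because $D(\IET(\SGIET))$ is a subgroup; this is the desired decomposition.

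There is no serious obstacle here: the argument is purely group-theoretic once the two-transposition case is in hand. The one point worth noticing is that one should not try to merge the transposition parts $\sigma'$ and $\tau_k$ before dealing with $h'$; instead one pushes the derived-subgroup factor $h'$ across $\tau_k$ using normality, which frees $\sigma' \tau_k$ to be treated by Corollary \ref{Corollary Ker(epsilon) case 2 elements}. Combined with Corollary \ref{Corollary Ker phi is generated by transpositions} and Corollary \ref{Corollary kernel of epsilon is inside kernel of phi}, this lemma reduces the identification of $\Ker(\eps_{\SGIET})$ to the single-transposition case already settled in Corollary \ref{Corollary Ker(epsilon) case one transposition}.
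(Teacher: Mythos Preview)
Your proof is correct and follows essentially the same inductive strategy as the paper, with Corollary \ref{Corollary Ker(epsilon) case 2 elements} as the key input. The only difference is cosmetic: the paper applies the induction hypothesis to $\tau_2\cdots\tau_{k+1}$ rather than $\tau_1\cdots\tau_{k-1}$, which places the derived-subgroup factor on the right from the start and so avoids the (harmless) normality step you use to push $h'$ past $\tau_k$.
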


\begin{proof}
The proof is by induction on $k$. The initialisation $k=1$ is immediate. The case $k=2$ is Corollary \ref{Corollary Ker(epsilon) case 2 elements}. Now if we assume the result for $k \geq 2$ let $\tau_1, \tau_2, \ldots , \tau_k, \tau_{k+1}$ be $\IET(\SGIET)$-transpositions. Then by assumption, applied to $\tau_2\tau_3 \ldots \tau_{k+1}$, there exist an $\IET(\SGIET)$-transposition $\sigma$ and $h \in D(\IET(\SGIET))$ such that $\tau_2\tau_3 \ldots \tau_{k+1} = \sigma h$. Hence $\tau_1 \tau_2 \ldots \tau_{k+1}=\tau_1 \sigma h$. By using the case $k=2$ we deduce that there exist an $\IET(\SGIET)$-transposition $\sigma '$ and $h' \in D(\IET(\SGIET))$ such that $\tau_1 \sigma = \sigma ' h'$. Thus $\tau_1 \tau_2 \ldots \tau_{k+1} = \sigma' g$, with $g=h'h \in D(\IET(\SGIET))$, which is a wanted decomposition.
\end{proof}

Finally we can prove the main theorem of the section:

\begin{Thm}\label{Theorem D(IET(Gamma)) is equal to the Kernel of Eps Gamma}
We have the equality $\Ker(\eps_{\SGIET}) = D(\IET(\SGIET))$, and the induced morphism $\IET(\SGIET)_{\mathrm{ab}} \rightarrow \Wedgealt \tilde{\SGIET}$ is an isomorphism.
\end{Thm}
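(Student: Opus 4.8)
The plan is to establish the set equality $\Ker(\eps_{\SGIET})=D(\IET(\SGIET))$ by proving the two inclusions separately, and then to read off the isomorphism from the first isomorphism theorem together with the surjectivity of $\eps_{\SGIET}$ (Proposition~\ref{Proposition epsilon is surjective}).

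The inclusion $D(\IET(\SGIET))\subseteq\Ker(\eps_{\SGIET})$ is automatic, since $\eps_{\SGIET}$ is a group homomorphism whose target $\Wedgealt\tilde{\SGIET}$ is abelian, so its kernel contains the derived subgroup. The real content is the reverse inclusion. First I would take $f\in\Ker(\eps_{\SGIET})$ and apply Corollary~\ref{Corollary kernel of epsilon is inside kernel of phi} (which rests on the identity $2\eps_{\SGIET}=-\varphi_{\SGIET}$ of Lemma~\ref{Lemma 2 epsilon = -phi}) to get $f\in\Ker(\varphi_{\SGIET})$; then Corollary~\ref{Corollary Ker phi is generated by transpositions} writes $f$ as a finite product $\tau_1\tau_2\cdots\tau_k$ of $\IET(\SGIET)$-transpositions. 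Next I would invoke Lemma~\ref{Lemma Ker(epsilon) decomposition k elements} to put this product into the normal form $f=\sigma h$ with $\sigma$ an $\IET(\SGIET)$-transposition and $h\in D(\IET(\SGIET))$. Applying $\eps_{\SGIET}$ and using the inclusion just established (so $\eps_{\SGIET}(h)=0$), one gets $\eps_{\SGIET}(\sigma)=\eps_{\SGIET}(f)=0$, so $\sigma$ is an $\IET(\SGIET)$-transposition lying in $\Ker(\eps_{\SGIET})$; by Corollary~\ref{Corollary Ker(epsilon) case one transposition} this forces $\sigma\in D(\IET(\SGIET))$, hence $f=\sigma h\in D(\IET(\SGIET))$. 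This closes the reverse inclusion.

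With $\Ker(\eps_{\SGIET})=D(\IET(\SGIET))$ in hand, the homomorphism $\eps_{\SGIET}$ induces an injective homomorphism $\IET(\SGIET)_{\mathrm{ab}}=\IET(\SGIET)/D(\IET(\SGIET))\to\Wedgealt\tilde{\SGIET}$, and surjectivity of $\eps_{\SGIET}$ then makes it an isomorphism, which is the assertion.

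I do not expect a genuine obstacle in this last assembly step: all the difficulty has deliberately been isolated in the preparatory results. If I had to name the delicate ingredients actually being used, they are (i) the reduction of an arbitrary finite product of transpositions to the two-factor case in Lemma~\ref{Lemma Ker(epsilon) decomposition k elements}, which ultimately depends on the finiteness of the order of a product of two $\IET$-transpositions (Lemma~\ref{Lemma product of two transpositions has finite order}) together with Lemma~\ref{Lemma finite order implies transposition compose with an element of derived subgroup}; and (ii) the $2$-divisibility criterion ``$a\wedge a\ [\mathrm{mod}\ 2]=0\Rightarrow a\in 2\tilde{\SGIET}$'' underlying Corollary~\ref{Corollary Ker(epsilon) case one transposition}, since it is exactly this step that accounts for the extra $2$-torsion in $\Wedgealt\tilde{\SGIET}$ invisible to the $\SAF$-homomorphism. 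Granting these, the theorem follows in a few lines as above.
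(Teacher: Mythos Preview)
Your proof is correct and follows essentially the same route as the paper's own argument: reduce to $\Ker(\varphi_{\SGIET})$ via Corollary~\ref{Corollary kernel of epsilon is inside kernel of phi}, write $f$ as a product of $\IET(\SGIET)$-transpositions, collapse this product to $\sigma h$ via Lemma~\ref{Lemma Ker(epsilon) decomposition k elements}, and conclude with Corollary~\ref{Corollary Ker(epsilon) case one transposition}. The closing appeal to Proposition~\ref{Proposition epsilon is surjective} for the isomorphism is also exactly what the paper does.
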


\begin{proof}
The inclusion from right to left is immediate. For the other inclusion let $f \in \Ker(\eps_{\SGIET})$. By Corollary \ref{Corollary kernel of epsilon is inside kernel of phi} we know that $f \in \Ker(\varphi_{\SGIET})$, then by Corollary \ref{Lemma ker(phi) is generated by balanced product} there exists $k \in \NN$ such that $f$ is equal to the product $\tau_1 \tau_2 \ldots \tau_k$ where $\tau_i$ is an $\IET(\SGIET)$-transposition. By Lemma \ref{Lemma Ker(epsilon) decomposition k elements} there exist an $\IET(\SGIET)$-transposition $\sigma$ and $h \in D(\IET(\SGIET))$ such that $f = \sigma h$. Then $\eps(f)=\eps(\sigma) \eps(h)=\eps(\sigma)=0$. By Corollary \ref{Corollary Ker(epsilon) case one transposition} we deduce $\sigma \in D(\IET(\SGIET))$. Hence $f=\sigma h \in D(\IET(\SGIET))$.

We deduce that the induced group homomorphism $\eps'_{\SGIET}: \IET(\SGIET)_{\mathrm{ab}} \rightarrow \Wedgealt \tilde{\SGIET}$ is injective. Furthermore $\eps_{\SGIET}$ is surjective by Proposition \ref{Proposition epsilon is surjective} thus $\eps'_{\SGIET}$ is surjective and we conclude that $\eps'_{\SGIET}$ is an isomorphism.
\end{proof}

\bigskip

\section{\texorpdfstring{Abelianization of $\IET^{\bowtie}(\Gamma)$}{Abelianization of IET bowtie (Gamma)}}\label{Section Abelianization of IET ^ bowtie (Gamma)}

In order to identify $\IET^{\bowtie}(\SGIET)_{\mathrm{ab}}$ we construct two group homomorphisms. One is an analogue of the signature homomorphism $\eps_{\SGIET}$ constructed in Section \ref{Section Description of the abelianization of IET(Gamma)}. Its kernel will be slightly larger than $D(\IET^{\bowtie}(\SGIET))$; in fact, it will miss some reflections of a certain type. In order to tackle this issue we notice that, in some cases, a reflection is a conjugate of a restricted rotation and thus we will try to use the group homomorphism $\eps_{\SGIET}$. To this we need to affect for every element of $\IET^{\bowtie}(\SGIET)$ an element of $\IET(\SGIET)$.

We recall that the group $\IET$ can be seen as a subgroup of $\IET^{\bowtie}$. Thus restricted rotations in $\IET^{\bowtie}$ are well-defined.

The next proposition implies that a lot of $\SGIET$-reflections are in $D(\IET^{\bowtie}(\SGIET))$ with a sufficient condition on their type (see Definition \ref{Definition reflection map and reflection} for the definition of reflection and type of a reflection). It is a consequence of Proposition \ref{Proposition reflections of type 2a are conjugated to restricted rotation of type (a,a)} and Theorem \ref{Theorem D(IET(Gamma)) is equal to the Kernel of Eps Gamma}:

\begin{Prop}\label{Proposition a Gamma reflection with type in 4Gamma is in D(IET bowtie(Gamma))}
Every $\SGIET$-reflection of type $\ell \in 4 \tilde{\SGIET}$ belongs to $D(\IET^{\bowtie}(\SGIET))$
\end{Prop}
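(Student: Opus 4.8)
The plan is to reduce a $\SGIET$-reflection of type $\ell\in 4\tilde{\SGIET}$ to a restricted rotation that the work of Section \ref{Section Description of the abelianization of IET(Gamma)} already places inside the derived subgroup. Write $\ell=4m$ with $m\in\tilde{\SGIET}_+$. By Proposition \ref{Proposition reflections of type 2a are conjugated to restricted rotation of type (a,a)} (applied with $a=2m$) every $\SGIET$-reflection of type $\ell=2(2m)$ is conjugate in $\IET^{\bowtie}(\SGIET)$ to a restricted rotation of type $(2m,2m)$; the conjugating element can be taken in $\IET^{\bowtie}(\SGIET)$ since the cut points of the interval of length $\ell$ and its halves all lie in $\tilde{\SGIET}$. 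So it suffices to show that a restricted rotation of type $(2m,2m)$ lies in $D(\IET^{\bowtie}(\SGIET))$.

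Next I would observe that a restricted rotation of type $(2m,2m)$ is exactly an $\IET(\SGIET)$-transposition of type $2m$ (this is the remark ``The case $a=b$ coincide with an $\IET$-transposition of type $a$'' just after the definition of restricted rotations). Since $2m\in 2\tilde{\SGIET}$, Proposition \ref{Proposition Value of eps_Gamma for a IET Gamma transposition} gives $\eps_{\SGIET}(\sigma)=(2m)\wedge(2m)=4\,(m\wedge m)=2\,(2m\wedge m)$, which need not vanish; instead one uses the explicit commutator picture of Figure \ref{Figure IET transposition is inside D(IET)}. Concretely, as in the proof of Corollary \ref{Corollary Ker(epsilon) case one transposition}, write $2m=2b$ with $b=m\in\tilde{\SGIET}$ and factor the $\IET(\SGIET)$-transposition of type $2m$ as $ghgh$ where $g$ and $h$ are $\IET(\SGIET)$-transpositions of type $m$ (hence of order $2$). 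Then $\sigma=ghgh=[g,h]$ up to the relations $g^2=h^2=\Id$, so $\sigma\in D(\IET(\SGIET))\subset D(\IET^{\bowtie}(\SGIET))$.

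Finally, since conjugation preserves the derived subgroup, the original reflection of type $\ell\in 4\tilde{\SGIET}$ lies in $D(\IET^{\bowtie}(\SGIET))$, as claimed. The only point requiring a little care — and the one I would write out in full — is the conjugacy step: one must check that the conjugator produced in Proposition \ref{Proposition reflections of type 2a are conjugated to restricted rotation of type (a,a)} indeed belongs to $\IET^{\bowtie}(\SGIET)$ (not merely $\IET^{\bowtie}$), which holds because all the interval endpoints involved are dyadic multiples of $m\in\tilde{\SGIET}$ and hence lie in $\tilde{\SGIET}$; everything else is a direct citation of earlier results.
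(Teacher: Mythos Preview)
Your argument is correct and matches the paper's approach: the proposition is stated there as a direct consequence of Proposition \ref{Proposition reflections of type 2a are conjugated to restricted rotation of type (a,a)} together with Theorem \ref{Theorem D(IET(Gamma)) is equal to the Kernel of Eps Gamma}. One small correction: your claim that $(2m)\wedge(2m)$ ``need not vanish'' is mistaken. In $\Wedgealt\tilde{\SGIET}$ one always has $2(a\wedge a)=0$, hence $(2m)\wedge(2m)=4(m\wedge m)=2\cdot\big(2(m\wedge m)\big)=0$; so the restricted rotation of type $(2m,2m)$ already lies in $\Ker(\eps_{\SGIET})=D(\IET(\SGIET))$ by the theorem, and the detour through the explicit commutator of Figure \ref{Figure IET transposition is inside D(IET)} (while perfectly valid) is unnecessary.
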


\subsection{Partitions associated and positive substitute}~
\smallskip

\subsubsection{Partitions}~
\smallskip

Partitions of $\interfo{0}{1}$ into finite intervals will be really useful there but the definition of $\IET^{\bowtie}(\SGIET)$ imposes to be more careful and to take them up to a finitely supported permutation.

\begin{Def}
Let $\cP$ be a finite partition of $\interfo{0}{1}$ into right-open and left-closed intervals and let $\widehat{f} \in \widehat{\IET^{\bowtie}}$. The partition $\cP$ is called a \textit{partition associated with $\widehat{f}$} if $\widehat{f}$ is continuous on every $I \in \cP$. Also $\cP$ is an \textit{essential partition associated with $\widehat{f}$} if there exists a finitely supported permutation $\sigma$ such that $\cP$ is a partition associated with $\sigma \widehat{f}$. We denote by $\widehat{f}(\cP)$ the partition into intervals of $\interfo{0}{1}$ composed of all right-open and left-closed intervals whose interior is the image by $\widehat{f}$ of the interior of an interval in $\cP$ up to a finite number of points; it is called the \textit{arrival partition of $\widehat{f}$ associated to $\cP$}.
\end{Def}

We can now define partitions associated with an element of $\IET^{\bowtie}$:

\begin{Def}
Let $\cP$ be a finite partition of $\interfo{0}{1}$ into right-open and left-closed intervals. Let $f \in \IET^{\bowtie}$ and let $\widehat{f}$ be a representative of $f$ in $\widehat{\IET^{\bowtie}}$. The partition $\cP$ is called a \textit{partition associated with $f$} if $\cP$ is an essential partition associated to $\widehat{f}$. We denote by $f(\cP)$ the partition $\widehat{f}(\cP)$;  it is called the \textit{arrival partition of $f$ associated to $\cP$}.
\end{Def}

\begin{Rem}
For every $f \in \IET^{\bowtie}$, the set $\Pi_f$ has a minimal element for the refinement. This element is also the unique partition that has a minimal number of interval.
\end{Rem}

We recall that a partition $\cP$ is a $\SGIET$-partition if for every $I \in \cP$ we have $I \in \Itv(\SGIET)$. Thus for every $f \in \IET^{\bowtie}$ we have $f \in \IET^{\bowtie}(\SGIET)$ if and only if there exists a $\SGIET$-partition associated with $f$.

Sometimes we will need to be more precise on where the length of intervals are. We recall that $\lambda$ denote the Lebesgue measure:

\begin{Def}
Let $S$ be a finite subset of $\RR$, we say that a subinterval $I$ of $\RR$ is a \textit{$S$-interval} if $\lambda(I) \in S$.
\end{Def}

In order to have more rigidity we also associate a partition to a tuple:

\begin{Def}
Let $n \in \NN$ and $f_1,f_2, \ldots, f_n \in \IET^{\bowtie}(\SGIET)$. Let $S$ be a finite subset of $\RR$ and let $\cP$ be a partition into intervals of $\mathopen{[} 0,1 \mathclose{[}$, we said that $\cP$ is a \textit{partition into $S$-intervals associated with $(f_1,f_2,\ldots,f_n)$} if:
\begin{enumerate}
\item $\cP$ is a partition into $S$-intervals associated with $f_1$,
\item For every $2 \leq i \leq n-1$, $f_if_{i-1} \ldots f_1 (\cP)$ is a partition into $S$-intervals associated with $f_{i+1}$.
\end{enumerate}
\end{Def}

\begin{Rem}\label{Remark refinement of a partition associated to a tuple is still a partition associated to the tuple}
Let $S$ and $T$ be two finite subsets of $\RR$ and let $\cP$ be a partition into $S$-intervals associated with $(f_1,f_2,\ldots , f_n)$. Then any refinement of $\cP$ into $T$-intervals is a partition into $T$-intervals associated with $(f_1,f_2,\ldots , f_n)$.
\end{Rem}

We also want to talk about order-preserving and order-reversing for elements in $\IET^{\bowtie}$. 

\begin{Def}
Let $f \in \IET^{\bowtie}$ and $\cP \in \Pi_f$. Let $I \in \cP$, we say that $f$ is \textit{order-preserving on $I$} (resp.\ \textit{order-reversing on $I$}) if there exists a representative of $f$ in $\widehat{\IET^{\bowtie}}$ that is order-preserving on $I$ (resp.\ order-reversing on $I$).
\end{Def}

Thanks to this when we have a partition $\cP$ associated with an element $f \in \IET^{\bowtie}$ we can always say that $f$ is either order-preserving or order-reversing on every interval of $\cP$.

\subsubsection{Positive substitute}~
\smallskip

We introduce the notion of positive substitute to affect an element of $\IET$ for every element of $\IET^{\bowtie}$. This substitute depends on a partition associated with $f$.

\begin{Def}
Let $\widehat{f} \in \widehat{\IET^{\bowtie}}$ and $\cP$ be an essential partition associated with $f$. Let $f$ be its image in $\IET^{\bowtie}$. Then there exists a unique $f_{\cP}^+ \in \IET$ such that $\cP$ is a partition associated with $f_{\cP}^+$ and such that for every $I \in \cP$ we have $f_{\cP}^+(I)=\widehat{f}(I)$ up to a finite number of points. This element is called the \textit{positive $\cP$-substitute of $\widehat{f}$ and $f$}.
\end{Def}

This element exists because there exists a finitely supported permutation $\sigma$ such that $\cP$ is a partition associated with $\sigma \widehat{f}$ and we define $f_{\cP}^+$ as the element of $\IET$ such that $f_{\cP}^+(I)=\sigma f(I)$ for every $I \in \cP$. The unicity come from the right continuity of $f_{\cP}^+$. The dependance on the partition is really important as we can see on Figures \ref{Figure positive substitute_1} and \ref{Figure positive substitute_2}.

\begin{figure}[ht]
\includegraphics[width=0.8\linewidth]{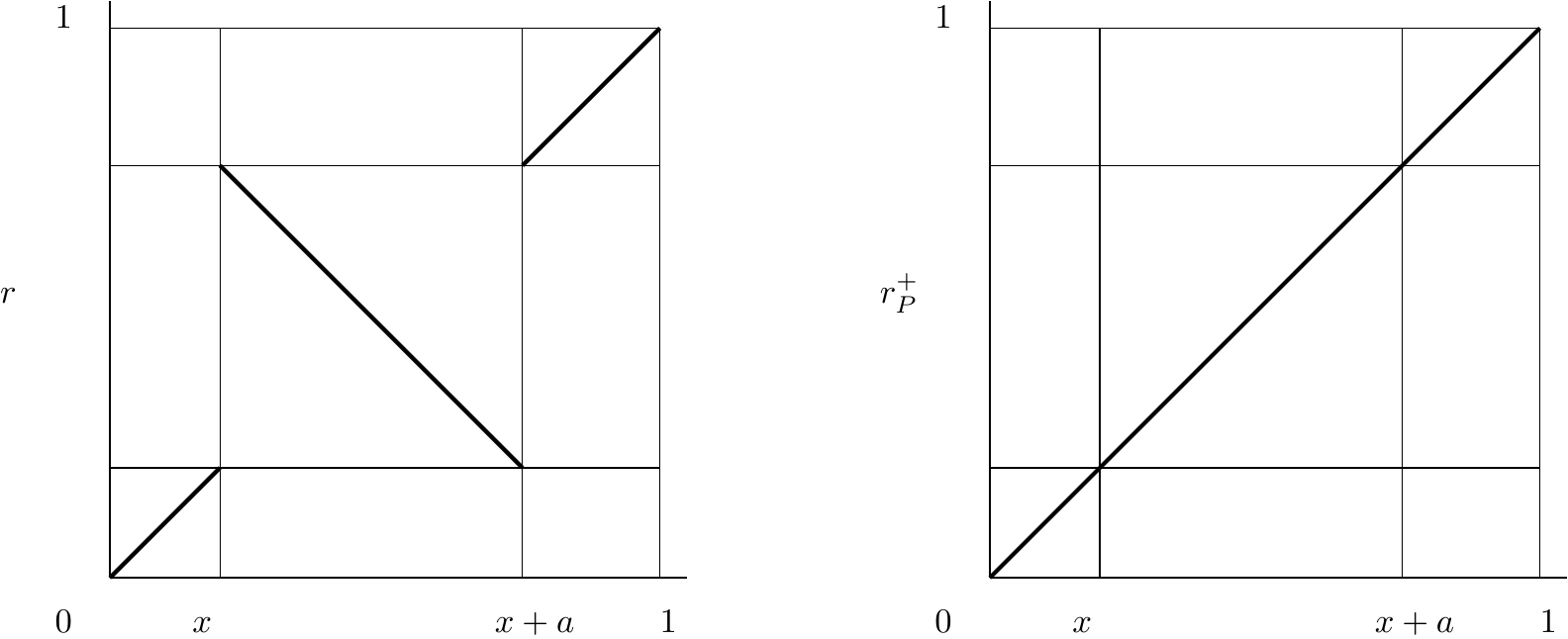}
\caption{\footnotesize{Positive substitute for a reflection in the case $S=\lbrace x,a,1-(x+a) \rbrace$.}}\label{Figure positive substitute_1}
\end{figure}

\begin{figure}[ht]
\includegraphics[width=0.8\linewidth]{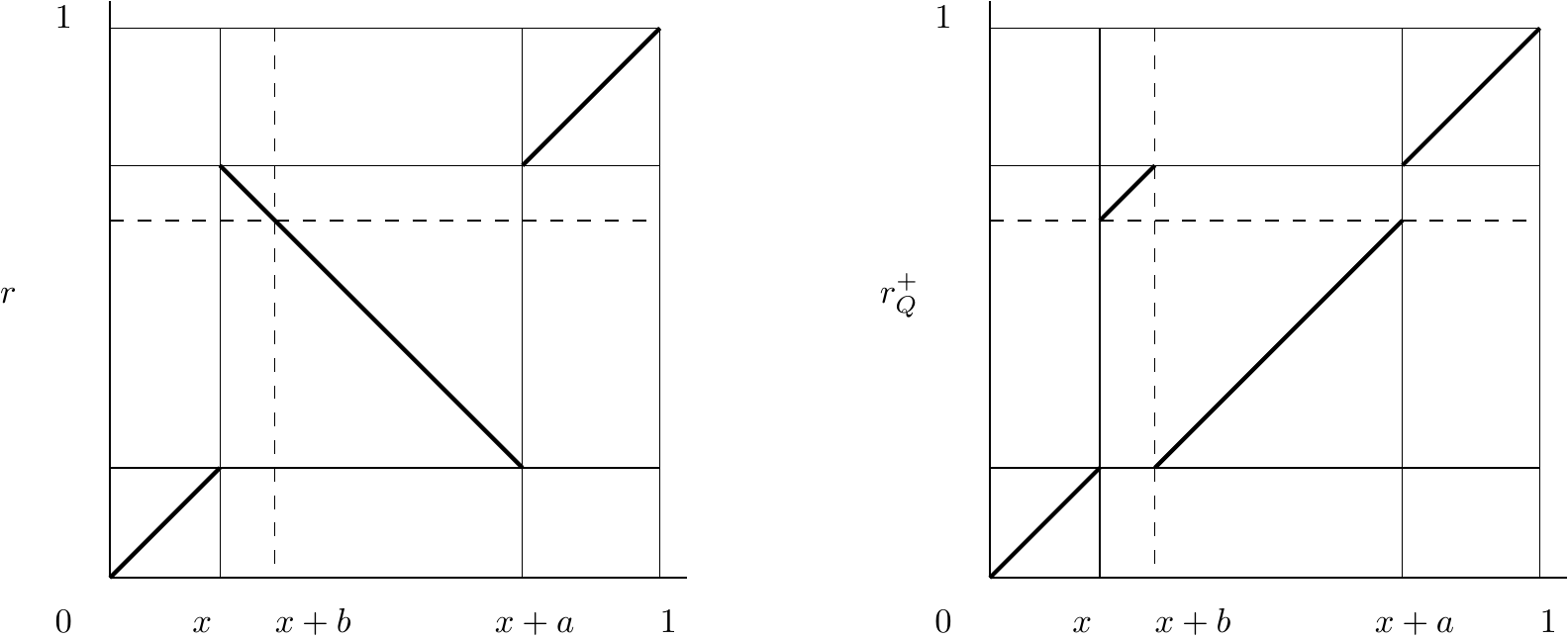}
\caption{\footnotesize{Positive substitute for a reflection in the case $S=\lbrace x,b,a-b,1-(x+a) \rbrace$.}}\label{Figure positive substitute_2}
\end{figure}

\newpage

This gives us a decomposition for every element of $\widehat{\IET^{\bowtie}}$.

\begin{Prop}\label{Proposition decomposition in widehat IET bowtie}
Let $\widehat{f} \in \widehat{\IET^{\bowtie}}$ and $\cP$ be an essential partition associated with $\widehat{f}$. Then there exist a unique finitely supported permutation $\sigma$ and a product $r$ of $I$-reflection maps with $I \in \widehat{f}(\cP)$ such that $\widehat{f}=\sigma r f_{\cP}^{+}$.
\end{Prop}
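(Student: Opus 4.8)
The plan is to build the decomposition interval by interval, using the positive substitute $f_{\cP}^{+}$ as the ``order-corrected'' version of $\widehat{f}$ and then accounting for the discrepancy by reflections on the arrival intervals. First I would observe that by definition of $f_{\cP}^{+}$, for every $I \in \cP$ both $\widehat{f}$ and $f_{\cP}^{+}$ send $I^{\circ}$ onto the interior of the same interval $\widehat{f}(I) \in \widehat{f}(\cP)$, and both are isometries of $I$ onto that image. Hence on each $I$ the composite $\widehat{f} \circ (f_{\cP}^{+})^{-1}$, restricted to the arrival interval $\widehat{f}(I)$, is an isometry of $\widehat{f}(I)$ to itself, so it is either the identity or the reflection of $\widehat{f}(I)$; there is no translation part since both maps have the same image interval. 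This dichotomy is exactly the order-preserving/order-reversing dichotomy on $I$ made available by the discussion preceding the statement.

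Next I would define $r$ as the product, over those $I \in \cP$ on which $\widehat{f}$ is order-reversing (equivalently, on which $f_{\cP}^{+}$ and $\widehat f$ disagree in orientation), of the $\widehat{f}(I)$-reflection map. Since the intervals of $\widehat{f}(\cP)$ have pairwise disjoint interiors, these reflection maps commute and $r$ is a well-defined element of $\widehat{\IET^{\bowtie}}$ whose constituent reflections are all of the form ``$I$-reflection map with $I \in \widehat{f}(\cP)$''. By the computation in the previous paragraph, $r \circ f_{\cP}^{+}$ agrees with $\widehat{f}$ on the interior of every interval of $\cP$, hence on all but finitely many points of $\interfo{0}{1}$. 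Therefore $\sigma := \widehat{f} \circ (r f_{\cP}^{+})^{-1}$ is a bijection of $\interfo{0}{1}$ that is the identity outside a finite set, i.e.\ a finitely supported permutation, and $\widehat{f} = \sigma r f_{\cP}^{+}$ as required for existence.

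For uniqueness, suppose $\widehat{f} = \sigma r f_{\cP}^{+} = \sigma' r' f_{\cP}^{+}$ with $r, r'$ products of reflection maps on intervals of $\widehat f(\cP)$ and $\sigma,\sigma'$ finitely supported. Then $\sigma^{-1}\sigma' = r (r')^{-1}$; the left side is finitely supported, while the right side is a product of reflections on the disjoint intervals of $\widehat f(\cP)$, hence is continuous and orientation-determined on the interior of each such interval. A reflection map of an interval of positive length moves a set of positive measure, so any nontrivial product of the $r(r')^{-1}$ type cannot be finitely supported; thus $r(r')^{-1} = \Id$ off a finite set, forcing $r = r'$ (reflection maps on disjoint intervals are independent and $r,r'$ are each determined by the subset of intervals of $\widehat f(\cP)$ they reflect) and then $\sigma = \sigma'$. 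The only subtlety — the main point requiring care — is making sure the orientation computation in the first paragraph is genuinely forced: that the restriction of $\widehat f\circ(f_{\cP}^+)^{-1}$ to each arrival interval has no translation component. This follows because $\widehat f$ and $f_{\cP}^+$ are prescribed to have the \emph{same} image interval $\widehat f(I)$ on each $I\in\cP$ (by the very definition of the positive substitute), so an isometry of $\widehat f(I)$ onto itself is pinned down to be $\Id$ or the reflection, with no freedom to translate.
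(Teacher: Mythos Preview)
Your proof is correct and is precisely the argument the paper has in mind: the paper states this proposition without proof, treating it as an immediate consequence of the definition of the positive $\cP$-substitute (the sentence ``This gives us a decomposition for every element of $\widehat{\IET^{\bowtie}}$'' is the entire justification offered). Your write-up makes explicit what is implicit there --- namely that on each $I\in\cP$ the maps $\widehat f$ and $f_{\cP}^{+}$ have the same image interval, so their discrepancy on each arrival interval is either the identity or the reflection, and the leftover is finitely supported --- and your uniqueness argument (a nontrivial product of reflections on the disjoint intervals of $\widehat f(\cP)$ moves a set of positive measure, hence cannot equal a finitely supported permutation) is the natural one.

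One very small wording point: in your first paragraph you write that $\widehat f$ sends $I^{\circ}$ onto the interior of $\widehat f(I)$; strictly speaking this holds only up to a finite set of points (since $\cP$ is only an \emph{essential} partition for $\widehat f$). You already account for this correctly in the second paragraph when you pass to ``all but finitely many points of $\interfo{0}{1}$'', so this is purely cosmetic.
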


With some conditions we deduce how positive substitute behaves with composition.

\begin{Prop}\label{Proposition behaviour of positive substitute of a composition}
Let $n \in \NN$ and $f_1,f_2,\ldots,f_n \in \IET^{\bowtie}(\SGIET)$. Let $\cP$ be a partition into $S$-intervals associated with $(f_1,f_2,\ldots,f_n)$. Let $g_1 = (f_1)^+_{\cP}$ and $g_i=(f_if_{i-1}\ldots f_1)^+_{f_{i-1}\ldots f_1 (\cP)}$ for every $2 \leq i \leq n$. Then $(f_n f_{n-1} \ldots f_1)^+_{\cP}=g_n g_{n-1} \ldots g_1$.
\end{Prop}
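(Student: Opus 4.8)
The plan is to reduce the statement about the composition $f_n \cdots f_1$ to a statement about composing two elements at a time, and then check that two-element case directly from the definition of the positive substitute. First I would set up the inductive skeleton: write $F_i := f_i f_{i-1} \cdots f_1$, so that $g_1 = (F_1)^+_{\cP}$ and $g_i = (F_i)^+_{F_{i-1}(\cP)}$, and the claim is $(F_n)^+_{\cP} = g_n g_{n-1} \cdots g_1$. The key observation making the induction go through is that, by the very definition of ``partition into $S$-intervals associated with $(f_1,\ldots,f_n)$'', the partition $\cQ_{i} := F_i(\cP) = f_i F_{i-1}(\cP)$ (image as partitions into intervals) is again an $S$-partition associated with the tail $(f_{i+1},\ldots,f_n)$; this is exactly what lets us feed $\cQ_i$ into the next step. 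So the induction hypothesis at stage $i$ is $(F_i)^+_{\cP} = g_i g_{i-1} \cdots g_1$, and the inductive step amounts to showing $(f_{i+1} F_i)^+_{\cP} = (f_{i+1} F_i)^+_{F_i(\cP)} \circ (F_i)^+_{\cP}$, i.e. the two-element compatibility statement applied with the outer map $f_{i+1}$, the inner map $F_i$, and the partition $\cP$ (refined appropriately so that $\cP$ is associated with $F_i$ and $F_i(\cP)$ is associated with $f_{i+1}$).

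The heart of the argument is therefore the two-map identity: if $\widehat{g},\widehat{h} \in \widehat{\IET^{\bowtie}}$ with images $g,h \in \IET^{\bowtie}$, if $\cP$ is an essential partition associated with $h$ and $h(\cP)$ is an essential partition associated with $g$, then $(gh)^+_{\cP} = g^+_{h(\cP)} \circ h^+_{\cP}$. I would prove this by unwinding the defining property of the positive substitute: $h^+_{\cP}$ is the unique element of $\IET$ with $\cP$ as an associated partition such that $h^+_{\cP}(I) = \widehat{h}(I)$ up to finitely many points for every $I \in \cP$; similarly $g^+_{h(\cP)}(J) = \widehat{g}(J)$ up to finitely many points for every $J \in h(\cP)$. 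Since $\cP$ is a refinement fine enough to be associated with $h$ and $h(\cP)$ with $g$, the composite $g^+_{h(\cP)} \circ h^+_{\cP}$ lies in $\IET$, has $\cP$ as an associated partition, and satisfies: for $I \in \cP$, writing $J = h(I) \in h(\cP)$ (equality of intervals up to a finite set), we get $(g^+_{h(\cP)} \circ h^+_{\cP})(I) = g^+_{h(\cP)}(J) = \widehat{g}(J) = \widehat{g}(\widehat{h}(I)) = \widehat{gh}(I)$, all up to finitely many points. By the uniqueness clause in the definition of the positive substitute (which rests on right-continuity), this forces $g^+_{h(\cP)} \circ h^+_{\cP} = (gh)^+_{\cP}$.

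The main obstacle I anticipate is bookkeeping of the ``up to a finite number of points'' clauses: the equalities $h(\cP) = \{\widehat h(I) : I \in \cP\}$, $\widehat{gh} = \widehat g\,\widehat h$, and the interval identifications are all only valid off finite sets, and one must make sure these finite exceptional sets do not accumulate or interfere when composing. The clean way to handle this is to fix finitely supported permutations $\sigma, \rho$ with $\sigma\widehat h$ genuinely continuous on each $I \in \cP$ and $\rho\widehat g$ genuinely continuous on each $J \in h(\cP)$, to work with honest maps $\sigma\widehat h$ and $\rho\widehat g$ where the set equalities become exact, and only at the end observe that passing to $\IET^{\bowtie} = \mfS(X)/\mfS_{\mathrm{fin}}(X)$ kills the correction permutations — so the identity descends. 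Once the two-map case is nailed down, the general statement follows by a routine induction on $n$ using Remark \ref{Remark refinement of a partition associated to a tuple is still a partition associated to the tuple} to replace $\cP$ by a common refinement on which every required association holds simultaneously.
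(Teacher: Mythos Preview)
Your overall strategy is the same as the paper's: reduce to $n=2$ by induction, then verify the two-map identity $(gh)^+_{\cP} = g^+_{h(\cP)} \circ h^+_{\cP}$. Where you diverge is in how you handle this base case. The paper argues interval by interval via a four-case analysis on the orientation behaviour of $f$ and $g$ on each $I_i$, relying on the identity $r_{J_i} \circ f^+_{\cP}|_{I_i} \circ r_{I_i} = f^+_{\cP}|_{I_i}$ (valid because $f^+_{\cP}$ is a translation on $I_i$). You instead invoke the \emph{uniqueness} clause in the definition of the positive substitute: you check that $g^+_{h(\cP)} \circ h^+_{\cP}$ lies in $\IET$, has $\cP$ as an associated partition, and sends each $I \in \cP$ to $\widehat{gh}(I)$ up to a finite set, hence must coincide with $(gh)^+_{\cP}$. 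This is a legitimate and arguably cleaner route --- it avoids the orientation bookkeeping entirely --- though it trades the explicit computation for an appeal to an abstract characterisation.

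One notational point worth cleaning up: the statement (and your transcription of it) writes $g_i = (f_i f_{i-1}\cdots f_1)^+_{f_{i-1}\cdots f_1(\cP)}$, but the intended meaning (visible from how the result is used in Proposition~\ref{Proposition psi_S is nearly group homomorphism }) is $g_i = (f_i)^+_{f_{i-1}\cdots f_1(\cP)}$. Your inductive step formula $(f_{i+1} F_i)^+_{\cP} = (f_{i+1} F_i)^+_{F_i(\cP)} \circ (F_i)^+_{\cP}$ carries this typo, but the two-map identity you actually prove, $(gh)^+_{\cP} = g^+_{h(\cP)} \circ h^+_{\cP}$, is the correct one, and applying it with $g = f_{i+1}$, $h = F_i$ is exactly what the induction needs. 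So the substance is right; just align the notation with the intended $g_i = (f_i)^+_{F_{i-1}(\cP)}$.
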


\begin{proof}
For this proof we denote by $r_I$ the $I$-reflection for every subinterval $I$ of $\mathopen{[}0,1 \mathclose{[}$.\\
By iteration it is sufficient to show the result for $n=2$. Let $f,g \in \IET^{\bowtie}(\SGIET)$ and $\cP$ be a partition into $S$-intervals associated with $(f,g)$. Let $n \in \NN$ such that $\cP=\lbrace I_1,I_2, \ldots ,I_n \rbrace$ and $f(\cP)=\lbrace J_1,J_2, \ldots ,J_n \rbrace$ and $f_{\cP}^+ (I_i)=J_i$ for $1 \leq i \leq n$. Let $1 \leq i \leq n$, we notice that $r_{J_i} \circ f^+_{\cP}|_{I_i} \circ r_{I_i}=f^+_{\cP}|_{I_i}$. There are $4$ cases:
\begin{enumerate}
\item If $f$ is order-preserving on $I_i$ and $g$ is order-preserving on $J_i$ then $g \circ f$ is order-preserving on $I_i$ so :
\[
(g \circ f)^+_{\cP}|_{I_i}=(g \circ f)|_{I_i}=g|_{J_i} \circ f|_{I_i}=g^+_{f(\cP)}|_{J_i} \circ f^+_{\cP}|_{I_i}
\]
\item If $f$ is order-preserving on $I_i$ and $g$ is order-reversing on $J_i$ then $g \circ f$ is order-reversing on $I_i$ so :
\[
(g \circ f )^+_{\cP}|_{I_i} = (g \circ f)_{I_i} \circ r_{I_i}= g|_{J_i} \circ f|_{I_i} \circ r_{I_i} =g^+_{f(\cP)}|_{J_i} \circ r_{J_i} \circ f^+_{\cP}|_{I_i} \circ r_{I_i}= g^+_{f(\cP)}|_{J_i} \circ f^+_{\cP}|_{I_i}
\]
\item If $f$ is order-reversing on $I_i$ and $g$ is order-preserving on $J_i$ then $g \circ f$ is order-reversing on $I_i$ so :
\[
(g \circ f )^+_{\cP}|_{I_i} = (g \circ f)_{I_i} \circ r_{I_i}= g|_{J_i} \circ f|_{I_i} \circ r_{I_i} =g^+_{f(\cP)}|_{J_i} \circ f^+_{\cP}|_{I_i} \circ r_{I_i} \circ r_{I_i}= g^+_{f(\cP)}|_{J_i} \circ f^+_{\cP}|_{I_i}
\]
\item If $f$ is order-reversing on $I_i$ and $g$ is order-reversing on $J_i$ then $g \circ f$ is order-preserving on $I_i$ so :
\[
(g \circ f )^+_{\cP}|_{I_i} = (g \circ f)|_{I_i}= g|_{J_i} \circ f|_{I_i}=g^+_{f(\cP)}|_{J_i} \circ r_{J_i} \circ f^+_{\cP}|_{I_i} \circ r_{I_i}= g^+_{f(\cP)}|_{J_i} \circ f^+_{\cP}|_{I_i}
\]
\end{enumerate}
\end{proof}

\subsection{Analogue of the signature}

\subsubsection{Balanced product of reflections}~
\smallskip

We give here a first description of $D(\IET^{\bowtie})$. It is inspired of the work done for $\IET$ with balanced product restricted rotations.

\begin{Def}
Let $n \in \NN$ and $r_1,r_2, \ldots, r_n$ be some $\SGIET$-reflections. For every $\ell \in \tilde{\SGIET}_+$ let $n_{\ell}$ be the number of $\SGIET$-reflections of type $\ell$ among these elements The tuple $(r_1,r_2,\ldots, r_n)$ is a \textit{balanced tuple of $\SGIET$-reflections} if $2$ divides $n_{\ell}$ for every $\ell \in \tilde{\SGIET}_+$. We say that a product of $\SGIET$-reflections is a \textit{a balanced product of $\SGIET$-reflections} if it can be written as a product of a balanced tuple of $\SGIET$-reflections. 
\end{Def}

\begin{Lem}\label{Lemma D(IET bowtie Gamma) is generated by balanced product of reflections}
The set of all balanced products of $\SGIET$-reflections is a generating subset of $D(\IET^{\bowtie}(\SGIET))$.
\end{Lem}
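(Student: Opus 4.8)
The plan is to show each of the two containments. For the easy direction, I first observe that a balanced product of $\SGIET$-reflections lies in $D(\IET^{\bowtie}(\SGIET))$: by Proposition \ref{Proposition reflections of same type are conjugated} any two $\SGIET$-reflections of the same type $\ell$ are conjugate, so if $r$ and $r'$ both have type $\ell$ then $r r' = (r r'^{-1}) r'^2 = (r r'^{-1})$ is a product of a commutator-type element; more precisely $rr'$ is conjugate to $r'^2 = \Id$ times a commutator, hence $r r' \in D(\IET^{\bowtie}(\SGIET))$ since $r=s r' s^{-1}$ for some $s$ gives $r r' = s r' s^{-1} r'^{-1} \in D(\IET^{\bowtie}(\SGIET))$ (using $r'^2=\Id$). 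A balanced product can be rearranged, modulo $D(\IET^{\bowtie}(\SGIET))$, into a product of such pairs $r r'$ of equal type, since the abelianization is a $2$-group (so reordering factors only changes the element by a product of commutators), and each pair is in the derived subgroup; hence the whole balanced product is.

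For the reverse inclusion I would use Lemma \ref{Lemma for every epsilon any element is a product of elements of support of measure smaller than epsilon} together with Proposition \ref{Proposition Generating set for IET bowtie SGIET}: $D(\IET^{\bowtie}(\SGIET))$ is generated by commutators $[g,h]$ of elements of $\IET^{\bowtie}(\SGIET)$, and by Proposition \ref{Proposition Generating set for IET bowtie SGIET} it suffices to treat commutators of products of $\SGIET$-reflections, hence (after expanding a commutator of products into a product of conjugates of commutators of single reflections and using that conjugation preserves type by Proposition \ref{Proposition reflections of same type are conjugated}) commutators $[r,s]$ of two $\SGIET$-reflections. Writing $[r,s] = r \, s \, r^{-1} \, s^{-1} = r \, (s r^{-1} s^{-1}) \, s^{-1} \cdot$, and noting $r^{-1}=r$, $s^{-1}=s$ since reflections are involutions, I get $[r,s] = r \cdot s r s \cdot s = r \cdot (srs) \cdot s$, a product of four reflections: $r$ has some type $\ell_r$, $s$ has type $\ell_s$, $srs$ is conjugate to $r$ so has type $\ell_r$ again, and the final $s$ has type $\ell_s$. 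Thus each type appears exactly twice, so $[r,s]$ is a balanced product of $\SGIET$-reflections. Since balanced products are closed under multiplication (concatenating two balanced tuples gives a balanced tuple), an arbitrary element of $D(\IET^{\bowtie}(\SGIET))$, being a product of such commutators, is itself a balanced product of $\SGIET$-reflections.

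The main obstacle I anticipate is the bookkeeping in the reverse direction: carefully justifying that a commutator of two \emph{products} of reflections reduces to a balanced product, and that conjugating a reflection of type $\ell$ yields again a reflection of type $\ell$ (Proposition \ref{Proposition reflections of same type are conjugated} is stated for $\SGIET$-reflections, so one must check that $g r g^{-1}$ is genuinely an $I'$-reflection for some $I' \in \Itv(\SGIET)$ when $g \in \IET^{\bowtie}(\SGIET)$ and $r$ is an $I$-reflection — this follows because $g$ maps $\SGIET$-intervals to finite unions of $\SGIET$-intervals and conjugation of a reflection by any element is again a reflection of the reversed interval's image). I would also double-check the involutivity bookkeeping: in $\IET^{\bowtie}(\SGIET) = \widehat{\IET^{\bowtie}(\SGIET)}/\mfS_{\mathrm{fin}}$ a reflection squares to the identity (the square of a reflection map is finitely supported), which is what makes the identity $r^{-1}=r$ valid and the whole argument go through.
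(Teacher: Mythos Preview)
Your two directions match the paper's, and the argument for ``balanced products lie in $D(\IET^{\bowtie}(\SGIET))$'' is essentially the paper's: two $\SGIET$-reflections $r,r'$ of the same type are conjugate, so $rr' = (sr's^{-1})r' = [s,r']$ is a commutator, and a balanced tuple can be reordered modulo commutators into such pairs.

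The other direction, however, has two concrete problems. First, your algebra slips: from $[r,s]=rsr^{-1}s^{-1}$ with $r^{-1}=r$ and $s^{-1}=s$ you should get $[r,s]=rsrs$, a product of four reflections (two of type $\ell_r$, two of type $\ell_s$), which is already balanced. Your rewriting $r\cdot(srs)\cdot s$ equals $rsrss=rsr$, not $rsrs$. Second, and more seriously, your reduction from commutators of products to commutators of single reflections relies on the claim that a conjugate $grg^{-1}$ of a $\SGIET$-reflection is again a $\SGIET$-reflection. This is false: if $g$ does not send the interval reversed by $r$ to a single interval, then $grg^{-1}$ is not an $I'$-reflection for any $I'$. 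Proposition~\ref{Proposition reflections of same type are conjugated} says only that reflections of the same type are conjugate, not that conjugates of reflections are reflections.

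The paper avoids this entirely with a one-line argument: write $g=r_1\cdots r_m$ and $h=s_1\cdots s_n$ as products of $\SGIET$-reflections (Proposition~\ref{Proposition Generating set for IET bowtie SGIET}); since each $r_i,s_j$ is an involution, $[g,h]=r_1\cdots r_m\,s_1\cdots s_n\,r_m\cdots r_1\,s_n\cdots s_1$, in which every reflection occurs exactly twice. No conjugation of reflections is needed, and the same observation handles arbitrary products of commutators since concatenating balanced tuples yields a balanced tuple.
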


\begin{proof}
As any element of $\IET^{\bowtie}(\SGIET)$ is a finite product of $\SGIET$-reflections (see Proposition \ref{Proposition Generating set for IET bowtie SGIET}) and as a reflection has order $2$ we deduce that every element of $D(\IET^{\bowtie}(\SGIET))$ is a balanced product of reflections.\\
Let $r$ and $s$ be two reflections with the same type. By Proposition \ref{Proposition reflections of same type are conjugated} we deduce that they are conjugate. As a reflection has order $2$, the product $rs$ is a commutator.\\
Let $n \in \NN$ and $r_1,r_2,\ldots r_n$ be $\SGIET$-reflections such that $r_1r_2 \ldots r_n$ is a balanced product of reflections. Then $n$ is even and up to compose with an element of $D(\IET^{\bowtie})$ we can assume that $r_{2i-1}$ and $r_{2i}$ have the same type for every $1 \leq i \leq \frac{n}{2}$. Thus by the previous case we deduce that $r_1r_2 \ldots r_n$ is in $D(\IET^{\bowtie})$.
\end{proof}

By Proposition \ref{Proposition Generating set for IET bowtie SGIET} and Lemma \ref{Lemma D(IET bowtie Gamma) is generated by balanced product of reflections} we deduce the following:
\begin{Coro}\label{Corollary squares of an element are in D(IET^bowtie(Gamma))}
Let $f \in \IET^{\bowtie}(\SGIET)$ then $f^2 \in D(\IET^{\bowtie}(\SGIET))$.
\end{Coro}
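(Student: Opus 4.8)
The plan is to combine the generating set of $\IET^{\bowtie}(\SGIET)$ by $\SGIET$-reflections with the description of $D(\IET^{\bowtie}(\SGIET))$ as the set of balanced products of $\SGIET$-reflections. First I would apply Proposition \ref{Proposition Generating set for IET bowtie SGIET} to write an arbitrary $f \in \IET^{\bowtie}(\SGIET)$ as a finite product $f = r_1 r_2 \cdots r_n$ of $\SGIET$-reflections. Then $f^2 = r_1 \cdots r_n r_1 \cdots r_n$ is exactly the product of the length-$2n$ tuple $(r_1, r_2, \ldots, r_n, r_1, r_2, \ldots, r_n)$ of $\SGIET$-reflections.

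Next I would observe that this doubled tuple is balanced in the sense of the definition preceding Lemma \ref{Lemma D(IET bowtie Gamma) is generated by balanced product of reflections}: for each $\ell \in \tilde{\SGIET}_+$, if $n_\ell$ is the number of indices $i$ with $r_i$ of type $\ell$, then the tuple $(r_1, \ldots, r_n, r_1, \ldots, r_n)$ contains precisely $2 n_\ell$ reflections of type $\ell$, and $2$ divides $2 n_\ell$. Hence $f^2$ is a balanced product of $\SGIET$-reflections, and since the proof of Lemma \ref{Lemma D(IET bowtie Gamma) is generated by balanced product of reflections} establishes that every balanced product of $\SGIET$-reflections lies in $D(\IET^{\bowtie}(\SGIET))$, we conclude $f^2 \in D(\IET^{\bowtie}(\SGIET))$.

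There is essentially no obstacle here; the only thing requiring a moment's care is the trivial bookkeeping of types upon concatenating the two copies of the product, together with the (already available) fact that balanced products of reflections do not merely generate $D(\IET^{\bowtie}(\SGIET))$ but actually form all of it. I note in passing that the statement can also be seen at the level of the abelianization: by Proposition \ref{Proposition Generating set for IET bowtie SGIET} the group $\IET^{\bowtie}(\SGIET)_{\mathrm{ab}}$ is generated by the images of the $\SGIET$-reflections, each of which has order dividing $2$, so $\IET^{\bowtie}(\SGIET)_{\mathrm{ab}}$ has exponent dividing $2$ and the image of $f^2$ is trivial; this is the same argument repackaged.
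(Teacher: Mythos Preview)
Your proof is correct and follows exactly the approach the paper indicates: write $f$ as a product of $\SGIET$-reflections using Proposition~\ref{Proposition Generating set for IET bowtie SGIET}, observe that the doubled tuple is balanced, and conclude via Lemma~\ref{Lemma D(IET bowtie Gamma) is generated by balanced product of reflections}. The paper itself states the corollary with no further argument beyond citing these two results, so your explicit bookkeeping and the alternative abelianization remark are simply a more detailed rendering of the same idea.
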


\begin{Prop}\label{Proposition balanced product of restricted rotations is a balanced product of reflections}
Any balanced product of $\SGIET$-restricted rotations with type in $\lbrace (a,b) \mid a \neq b \in \tilde{\SGIET}_+ \rbrace$ is a balanced product of $\SGIET$-reflections.
\end{Prop}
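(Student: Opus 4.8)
The plan is to write each $\SGIET$-restricted rotation occurring in a balanced tuple as a product of three $\SGIET$-reflections, and then check by a parity count — using the balancedness hypothesis — that the resulting product of reflections is itself balanced. Concretely, I would first recall the decomposition already used to prove Proposition~\ref{Proposition Generating set for IET bowtie SGIET}: if $r$ is a $\SGIET$-restricted rotation of type $(a,b)$ with associated consecutive intervals $I$ and $J$ (so $\lambda(I)=a$, $\lambda(J)=b$ and $\sup I=\inf J$), then $r\in\IET(\SGIET)$ forces $I$, $J$ and $I\sqcup J=\interfo{\inf I}{\sup J}$ all to lie in $\Itv(\SGIET)$, and in $\IET^{\bowtie}(\SGIET)$ the element $r$ is the product of the $I$-reflection, the $J$-reflection and the $(I\sqcup J)$-reflection, which are $\SGIET$-reflections of types $a$, $b$ and $a+b$ respectively. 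Then, given a balanced tuple $(r_1,\dots,r_n)$ of $\SGIET$-restricted rotations with $r_i$ of type $(a_i,b_i)$, $a_i\neq b_i$, and $f=r_1r_2\cdots r_n$, I would replace every $r_i$ by its triple of reflections to exhibit $f$ as a product of $3n$ $\SGIET$-reflections.

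It then remains to check that in this product every length $\ell\in\tilde{\SGIET}_+$ occurs with even multiplicity $m_\ell$. Write $n_{a,b}$ for the number of indices $i$ with $(a_i,b_i)=(a,b)$, so the balancedness hypothesis reads $n_{a,b}=n_{b,a}$ for all $a,b$. A reflection of type $\ell$ in the product comes from an $r_i$ with $a_i=\ell$, or with $b_i=\ell$, or with $a_i+b_i=\ell$, whence
\[
m_\ell=\sum_{b\in\tilde{\SGIET}_+}n_{\ell,b}+\sum_{a\in\tilde{\SGIET}_+}n_{a,\ell}+\sum_{\substack{a,b\in\tilde{\SGIET}_+\\ a+b=\ell}}n_{a,b}.
\]
By balancedness the middle sum equals $\sum_{a}n_{\ell,a}$, hence equals the first sum, so the first two terms together are even. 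In the last sum every pair $(a,b)$ with $n_{a,b}\neq 0$ has $a\neq b$, so the involution $(a,b)\mapsto(b,a)$ acts without fixed points on the finite set of such pairs and preserves $n_{a,b}$; hence that sum is even too. Therefore every $m_\ell$ is even, the tuple of $3n$ reflections is balanced, and $f$ is a balanced product of $\SGIET$-reflections.

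I do not expect a genuine obstacle, since the three-reflection decomposition of a restricted rotation is already available; the one point that must be handled with care is the last sum above, which is exactly where the hypothesis $a_i\neq b_i$ is needed. Without it, a factor of type $(c,c)$ would contribute a lone reflection of type $2c$ that the involution cannot pair off — consistently with the fact that, for $c\in\tilde{\SGIET}\setm 2\tilde{\SGIET}$, a $\SGIET$-restricted rotation of type $(c,c)$ is not in $D(\IET^{\bowtie}(\SGIET))$.
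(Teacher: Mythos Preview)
Your proof is correct and follows essentially the same approach as the paper: decompose each $\SGIET$-restricted rotation of type $(a,b)$ into three $\SGIET$-reflections of types $a$, $b$, $a+b$, and then use balancedness (together with $a\neq b$) to see that each reflection type appears an even number of times. The paper's version is simply terser, arguing on a single pair $r$ of type $(a,b)$ and $s$ of type $(b,a)$ rather than carrying out the full multiplicity count you give.
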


\begin{proof}
Let $a,b \in \tilde{\SGIET}_+$. Let $r$ be a restricted rotation of type $(a,b)$ an $s$ be a restricted rotation of type $(b,a)$. Let $I$ and $J$ be the two consecutive intervals associated with $r$. Then $r$ is the composition of the $I$-reflection, the $J$-reflection and the $I\cup J$-reflection thus a product of a reflections of type $a$ with one of type $b$ and one of type $a+b$. The same is true for $s$ thus we obtain that $rs$ is the product of two reflections of type $a$, two of type $b$ and two of type $a+b$; so $rs$ is a balanced product of $\SGIET$-reflections.
\end{proof}

\subsubsection{The group homomorphism}~
\smallskip

Here we start with the work done in Section \ref{Section Description of the abelianization of IET(Gamma)}. We remark that if we denote $A'_{\RR}$ the Boolean algebra of subsets of $\interfo{0}{1}$ generated by the set of all intervals $\mathopen{[}a,b \mathclose{]}$, $A_{\RR}$ the one generated by the set of all intervals $\interfo{a}{b}$ and $A_{\mathrm{fin}}$ the one generated by all the singletons $\lbrace x \rbrace$ then $A_{\RR}$ is isomorphic to $A_{\RR}^*:=A'_{\RR} / A_{\mathrm{fin}}$. This is why we do not make a difference between $A_{\SGIET}$ and its image in $A_{\RR}^*$.

 The notion of inversions as defined in \ref{Definition inversions} is no longer relevant because for every reflection $r$ we have $\cE_{r} \notin A_{\SGIET} \otimes A_{\SGIET}$; indeed $\cE_{r}$ is a triangle. We need to be more precise:

\begin{Def}\label{Definition inversions in IET bowtie}
For every element $f \in \widehat{\IET^{\bowtie}(\SGIET)}$ we define:
\begin{enumerate}
\item $\cE_{f,1}:= \lbrace (x,y) \mid x<y,~f(x)>f(y) \rbrace$, the set of all inversions of type 1 of $f$,
\item $\cE_{f,2}:= \lbrace (x,y) \mid y<x,~f(y)>f(x) \rbrace$, the set of all inversions of type 2 of $f$,
\item $\cE_{f}^{\bowtie}:= \cE_{f,1} \cup \cE_{f,2}$, the set of all inversions of $f$.
\end{enumerate}
\end{Def}

From now on we will write $\cE_{f}^{\bowtie}=\cE_{f}$. 

\begin{Exem}
\begin{enumerate}
\item If we consider a $\SGIET$-restricted rotation $r$ of type $(a,b)$ (with $a,b \in \tilde{\SGIET}_+$), we obtain $w_{\SGIET}(\cE_{r})=a \otimes b + b \otimes a$ (see Figure \ref{Figure Set of pair inverse for a restricted rotation and a reflection}).

\item Let $I \in \Itv(\SGIET)$ be a subinterval of $\mathopen{[} 0,1 \mathclose{[}$ of length $a$. Let $r$ be the $I$-reflection. We have $w_{\SGIET}(\cE_{r})= a \otimes a$ (see Figure \ref{Figure Set of pair inverse for a restricted rotation and a reflection}).

\item Let $\tau$ a finitely supported permutation. Then $\cE_{\tau}$ is a union of singletons thus $\cE_{\tau} = \emptyset \in A_{\SGIET} \otimes A_{\SGIET}$ and $w_{\SGIET}(\cE_{\tau})=w_{\SGIET}(\emptyset)=0$.

\end{enumerate}
\begin{figure}[ht]
\includegraphics[width=0.45\linewidth]{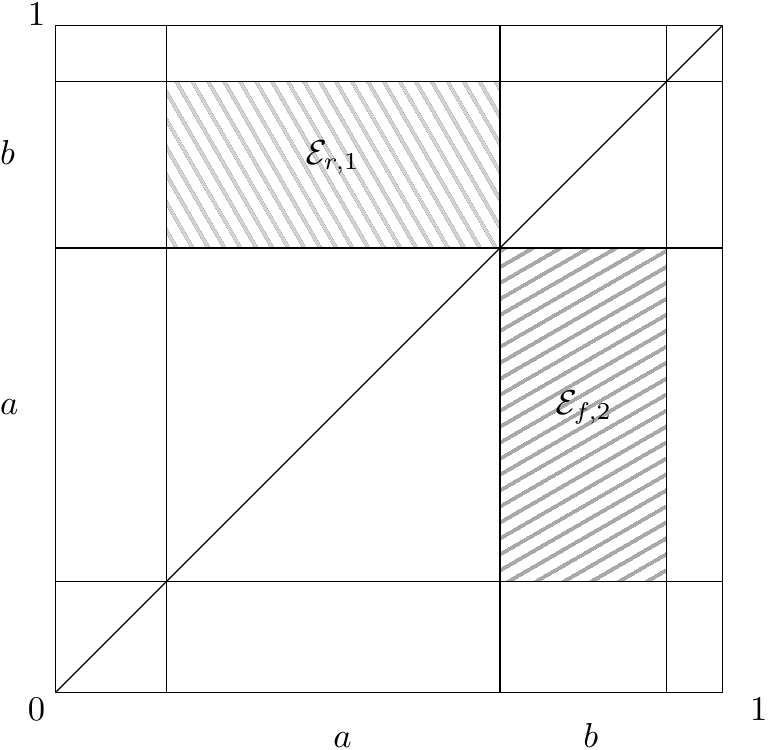}
\hspace{20pt}
\includegraphics[width=0.45\linewidth]{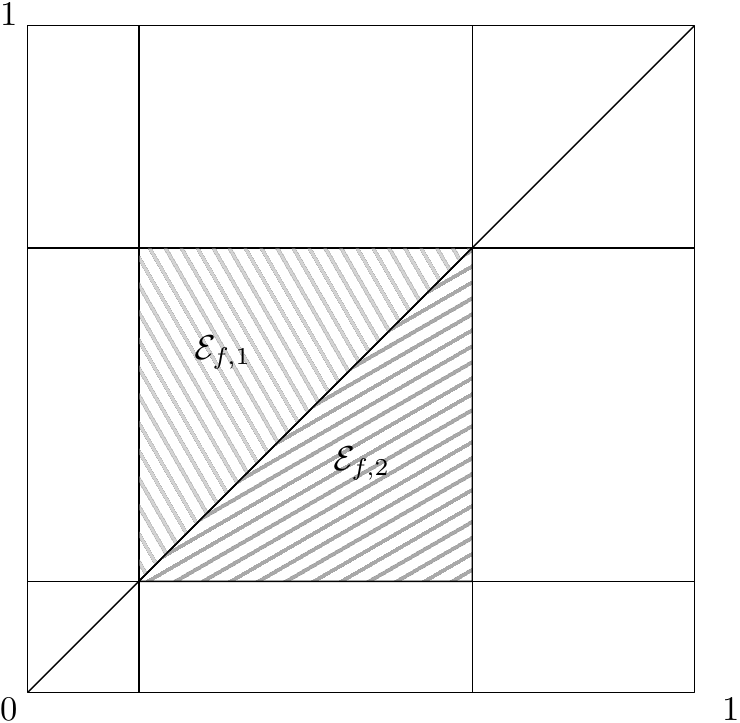}
\caption{\footnotesize{\textbf{Left}: Set of inversions for a restricted rotation. \textbf{Right}: Set of inversions for a $I$-reflection}}\label{Figure Set of pair inverse for a restricted rotation and a reflection}
\end{figure}

\end{Exem}

\begin{Prop}
For every $f \in \widehat{\IET^{\bowtie}(\SGIET)}$ we have $\cE_{f} \in A_{\SGIET} \otimes A_{\SGIET}$.
\end{Prop}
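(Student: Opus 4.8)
The plan is to reduce the claim to the case of a single interval of a combinatorial description, exactly as was done for $\IET(\SGIET)$ in Proposition~\ref{Proposition cE_f in A_{Gamma}}, taking care of the two possible local behaviours (order-preserving and order-reversing) on each piece. First I would fix $f \in \widehat{\IET^{\bowtie}(\SGIET)}$ and choose a $\SGIET$-partition $\cP=\lbrace I_1,\ldots,I_n\rbrace$ associated with $f$, so that $f(\cP)=\lbrace J_1,\ldots,J_n\rbrace$ with all $I_i,J_k \in \Itv(\SGIET)$ and $f$ either order-preserving or order-reversing on each $I_i$; every endpoint involved lies in $\tilde{\SGIET}$.

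The core of the argument is to describe $\cE_f$ as a finite union of rectangles from $\Itv(\SGIET)\times\Itv(\SGIET)$ together with, on the "diagonal blocks'' $I_i\times I_i$, a controlled correction. For a pair $(x,y)$ with $x\in I_i$ and $y\in I_j$ and $i\neq j$: if $i<j$ (so $x<y$ automatically) then $(x,y)\in\cE_{f,1}$ iff $f(I_i)$ lies entirely to the right of $f(I_j)$, and this is a property of the blocks, so $I_i\times I_j$ is either contained in $\cE_f$ or disjoint from it; symmetrically $I_j\times I_i$ contributes to $\cE_{f,2}$ exactly when $I_i\times I_j$ contributes to $\cE_{f,1}$. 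Hence the off-diagonal part of $\cE_f$ is a finite disjoint union of rectangles with endpoints in $\tilde{\SGIET}$, so it lies in $A_{\SGIET}\otimes A_{\SGIET}$. The new feature is the diagonal block $I_i\times I_i$: if $f$ is order-preserving on $I_i$ then $\cE_f\cap(I_i\times I_i)$ contains neither a type-1 nor a type-2 inversion coming from inside $I_i$ in the relevant half-plane, so (up to the diagonal, which is a set of measure zero and in any case a union of singletons, hence negligible in $A_{\SGIET}$) it contributes $\emptyset$; if $f$ is order-reversing on $I_i$, then writing $I_i=\interfo{c}{c+a}$ with $c,a\in\tilde{\SGIET}$, the set $\cE_f\cap(I_i\times I_i)$ is, up to the diagonal, all of $I_i\times I_i$ in the sense that every pair $(x,y)\in I_i\times I_i$ with $x\neq y$ is an inversion (of type~1 if $x<y$, of type~2 if $y<x$); so $\cE_f\cap(I_i\times I_i) \doteq I_i\times I_i$, which is again a rectangle with endpoints in $\tilde{\SGIET}$.

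Putting these pieces together, $\cE_f$ equals, modulo a finite union of singletons (the diagonal points inside each $I_i$), a finite disjoint union of rectangles $\interfo{a}{b}\times\interfo{c}{d}$ with $a,b,c,d\in\tilde{\SGIET}$; since we are working in $A_{\RR}^\ast = A'_{\RR}/A_{\mathrm{fin}}$ (so singletons are identified with $\emptyset$, as recalled in the remark preceding the statement), this shows $\cE_f\in A_{\SGIET}\otimes A_{\SGIET}$. The one point requiring a little care — and the step I expect to be the main obstacle — is the bookkeeping on the diagonal blocks: one must verify that the sets of diagonal inversions that get discarded (respectively, the near-full blocks that get kept) really do differ from $\emptyset$ (respectively from $I_i\times I_i$) only by a subset of the antidiagonal $\lbrace x=y\rbrace$, which is a finite union of singletons at the level of the atoms of the relevant finite Boolean algebra, hence zero in $A_\RR^\ast$; this is exactly the reason the definition passes through $A_{\mathrm{fin}}$, and it is why $\cE_f=\cE_{f,1}\cup\cE_{f,2}$ (rather than just $\cE_{f,1}$) is the right object to measure.
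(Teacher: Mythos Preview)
Your argument is correct and follows essentially the same route as the paper: decompose $[0,1[^2$ into blocks $I_i\times I_j$ coming from a $\SGIET$-partition, observe that off-diagonal blocks are either entirely in $\cE_f$ or entirely out, and that each diagonal block contributes $\emptyset$ or all of $I_i\times I_i$ (minus the diagonal) according to orientation. The paper packages the off-diagonal part in one stroke by introducing the positive substitute $f_\cP^+\in\IET(\SGIET)$ and writing, in $A_\RR^*$,
\[
\cE_f \;=\; \cE_{f_\cP^{+}} \;\cup\; \bigcup_{I\in\cJ} I\times I,
\]
with $\cJ\subset\cP$ the set of intervals on which $f$ reverses orientation; since $\cE_{f_\cP^{+}}$ is already known to lie in $A_\SGIET\otimes A_\SGIET$ from the $\IET$ case, only the diagonal squares are new. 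Your direct block analysis simply unpacks this.

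One correction on your last paragraph: the diagonal $\{x=y\}\cap(I_i\times I_i)$ is not a finite union of singletons---it is an uncountable segment, and modding each factor out by $A_{\mathrm{fin}}$ does not literally annihilate it. The reason it can be ignored is that one is working in the image of $A_\SGIET\otimes A_\SGIET$ in $A_\RR^*\otimes A_\RR^*$ (equivalently, up to Lebesgue-null sets), and this is exactly the identification the paper invokes when it writes the equality above; your instinct is right, but the justification is not ``finitely many points''.
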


\begin{proof}
Let $f \in \widehat{\IET^{\bowtie}}$ and let $\cP$ be a partition associated with $f$. Let $f_{\cP}^+$ be the positive $\cP$-substitute of $f$. Let $\cJ \subset \cP$ be the subset of intervals where $f$ is order-reversing. By seeing $A_{\SGIET}$ as its image in $A_{\RR}^*$ then we deduce that $\cE_f=\cE_{f_{\cP}^{+}} \cup \bigcup\limits_{I \in \cJ} I \times I$ is an element of $A_{\SGIET} \otimes A_{\SGIET}$.
\end{proof}

Then this set can be measured with the same measure $\omega_{\SGIET}$ used in Section \ref{Section Description of the abelianization of IET(Gamma)}. Here we expect a $2$-group for the image of our group homomorphism. We denote by $\pi$ the projection of $\bigotimes^2_{\ZZ} \tilde{\SGIET}$ onto $\bigotimes^2_{\ZZ} \tilde{\SGIET} / 2(\bigotimes^2_{\ZZ} \tilde{\SGIET})$ and for every $a \in \tilde{\SGIET}$ we denote $\pi(a \otimes a)$ by $a \otimes a ~[\textup{mod}~2]$.

\begin{Def}
We define \textit{the signature for $\widehat{\IET^{\bowtie}(\SGIET)}$} as the map:
\[
\begin{array}{cccc}
\widehat{\eps_{\SGIET}^{\bowtie}}: & \widehat{\IET^{\bowtie}(\SGIET)} & \longrightarrow &  \bigotimes^2_{\ZZ} \tilde{\SGIET} / 2(\bigotimes^2_{\ZZ} \tilde{\SGIET})\\
 & f & \longrightarrow & w_{\SGIET}(\cE_f) \modulo{2}
\end{array}
\]
\end{Def}

For more clarity we explicit some equalities used to show that $\widehat{\eps_{\SGIET}^{\bowtie}}$ is a group homomorphism.

\begin{Lem}\label{Interactions des ensemble E(fg), E(f) et E(g)}
Let $f,g \in \widehat{\IET^{\bowtie}(\SGIET)}$. We have the following equalities:
\begin{enumerate}
\item $\cE_{f \circ g} \cup (\cE_{g} \cap g^{-1}(\cE_f))=\cE_g \cup g^{-1}(\cE_f)$,
\item $\cE_{f \circ g} \cap \cE_{g} \cap g^{-1}(\cE_f)= \emptyset$.
\end{enumerate}
\end{Lem}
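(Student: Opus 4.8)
The plan is to reduce both identities to a pair-by-pair verification, exploiting that elements of $\widehat{\IET^{\bowtie}(\SGIET)}$ are genuine bijections of $\interfo{0}{1}$: thus $g^{-1}(\cE_f)$ is the literal preimage $\lbrace (x,y) \mid (g(x),g(y)) \in \cE_f \rbrace$, and each of $\cE_{f \circ g}$, $\cE_g$, $g^{-1}(\cE_f)$ is a subset of $(\interfo{0}{1})^2$ disjoint from the diagonal.

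First I would record the elementary reformulation of Definition \ref{Definition inversions in IET bowtie}: for a bijection $h$ of $\interfo{0}{1}$ and $x \neq y$ (so that also $h(x) \neq h(y)$), one has $(x,y) \in \cE_h$ precisely when the truth values of the assertions $x < y$ and $h(x) < h(y)$ differ. This is immediate by splitting Definition \ref{Definition inversions in IET bowtie} into the cases $x < y$ and $x > y$. Next, fixing $x \neq y$, I would let $t_0, t_1, t_2 \in \ZZ / 2\ZZ$ be the truth values (with $1$ meaning ``true'') of the assertions $x < y$, $\ g(x) < g(y)$, $\ f g(x) < f g(y)$ respectively; these are well defined because $g$ and $f \circ g$ are injective. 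By the reformulation, $(x,y) \in \cE_g$ iff $t_0 + t_1 = 1$, $\ (x,y) \in g^{-1}(\cE_f)$ iff $t_1 + t_2 = 1$, and $(x,y) \in \cE_{f \circ g}$ iff $t_0 + t_2 = 1$. Setting $a = t_0 + t_1$, $b = t_1 + t_2$, $c = t_0 + t_2$, one has the trivial identity $c = a + b$ in $\ZZ / 2\ZZ$.

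Both claims then drop out. For (1): $(x,y)$ belongs to $\cE_{f\circ g} \cup (\cE_g \cap g^{-1}(\cE_f))$ iff $c = 1$ or ($a = 1$ and $b = 1$); by $c = a + b$ this is equivalent to $a = 1$ or $b = 1$, i.e.\ to $(x,y) \in \cE_g \cup g^{-1}(\cE_f)$, giving the desired set equality. For (2): a point of $\cE_{f\circ g} \cap \cE_g \cap g^{-1}(\cE_f)$ would satisfy $a = b = c = 1$, contradicting $c = a + b = 0$; hence the intersection is empty.

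I do not expect any genuine obstacle here. The only points requiring care are that the diagonal is excluded throughout (so the $\ZZ/2\ZZ$-bookkeeping is never disturbed by pairs with $x = y$, for which all the sets are automatically empty), and that the argument must be carried out with the fixed representatives $f, g \in \widehat{\IET^{\bowtie}(\SGIET)}$, since the statements are literal equalities of subsets of $(\interfo{0}{1})^2$ and not merely equalities modulo finite sets.
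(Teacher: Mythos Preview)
Your proof is correct. Both your argument and the paper's proceed by a pair-by-pair verification off the diagonal, so the underlying approach is the same; the difference is purely in packaging. The paper argues (1) by an explicit double inclusion, assuming $x<y$ (with the case $x>y$ symmetric) and splitting into subcases according to the signs of $g(x)-g(y)$ and $fg(x)-fg(y)$, and proves (2) by a direct contradiction in the same style. Your $\ZZ/2\ZZ$ truth-value encoding, with the single relation $c=a+b$, compresses all of these subcases into a two-line Boolean check; this is shorter and makes the symmetry between (1) and (2) transparent, at the minor cost of introducing the auxiliary reformulation of $\cE_h$. Either presentation is entirely adequate here.
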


\begin{Rem}
We notice that $\cE_{g} \cap g^{-1}(\cE_f)$ is an element of $A_{\SGIET} \otimes A_{\SGIET}$.
\end{Rem}

\begin{proof}
\begin{enumerate}

\item We proceed by double inclusions.\\
From left to right, we know that $\cE_{g} \cap g^{-1}(\cE_f) \subset \cE_{g} \cup g^{-1}(\cE_f)$ hence it is sufficient to show the inclusion $\cE_{f\circ g} \subset \cE_{g} \cup g^{-1}(\cE_f)$. Let $(x,y)$ be in $\cE_{f \circ g}$. We can assume that $x<y$, the case $x>y$ is similar. Then we deduce that $f(g(x)) >f(g(y))$. We have two cases, if $g(x)>g(y)$ then $(x,y) \in \cE_g$ else we have $g(x)<g(y)$ and $f(g(x))>f(g(y))$ thus $(x,y)=g^{-1}(g(x),g(y)) \in g^{-1} \cE_f$.\\
From right to left, let $(x,y) \in \cE_g \cup g^{-1}(\cE_f)$. We can assume that $x<y$ the case $x>y$ is similar. If $(x,y) \in \cE_g \cap g^{-1}(\cE_f)$ then it is done. We have two cases, if $(x,y) \in \cE_g$ and $(x,y) \notin g^{-1}(\cE_f)$ then as $x< y$ we have $g(x)>g(y)$ and $f(g(x))>f(g(y))$ thus $(x,y) \in \cE_{f \circ g}$. If $(x,y) \in g^{-1}(\cE_f)$ and $(x,y) \notin \cE_g$ then as $x<y$ we have $ g(x)< g(y)$ and $f(g(x))>f(g(y))$ thus $(x,y) \in \cE_{f \circ g}$.

\item By contradiction let us assume that there exists $(x,y) \in \cE_{f \circ g} \cap \cE_{g} \cap g^{-1}(\cE_f)$. We can assume that $x<y$, the case $x>y$ is similar. As $x<y$ and $(x,y) \in \cE_{f\circ g} \cap \cE_{g}$ we know that $g(x) > g(y)$ and $f(g(x))> f(g(y))$. However $g.(x,y)=(g(x),g(y)) \in \cE_f$ thus as $g(x)>g(y)$ we have $f(g(x))<f(g(y))$ which is a contradiction.
\end{enumerate}
\end{proof}

\begin{Thm}
The signature $\widehat{\eps_{\SGIET}^{\bowtie}}$ is a group homomorphism.
\end{Thm}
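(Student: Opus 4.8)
The plan is to deduce the additivity $\widehat{\eps_{\SGIET}^{\bowtie}}(f\circ g)=\widehat{\eps_{\SGIET}^{\bowtie}}(f)+\widehat{\eps_{\SGIET}^{\bowtie}}(g)$ directly from Lemma~\ref{Interactions des ensemble E(fg), E(f) et E(g)}, via an inclusion--exclusion computation in which the correction term carries a factor $2$ and therefore vanishes modulo $2$. Fix $f,g\in\widehat{\IET^{\bowtie}(\SGIET)}$. First I note that all the relevant sets are $w_{\SGIET}$-measurable: $\cE_f,\cE_g,\cE_{f\circ g}$ lie in $A_{\SGIET}\otimes A_{\SGIET}$ by the proposition above, and $g^{-1}(\cE_f)$ and $\cE_g\cap g^{-1}(\cE_f)$ lie there by the $\widehat{\IET^{\bowtie}(\SGIET)}$-analogue of Proposition~\ref{Proposition the measure is IET(Gamma) invariant} (whose proof applies verbatim, since an element of $\widehat{\IET^{\bowtie}(\SGIET)}$ is piecewise an isometry and hence preserves the Lebesgue measure of every subinterval). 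That same analogue gives the invariance $w_{\SGIET}(g^{-1}(\cE_f))=w_{\SGIET}(\cE_f)$, which will be used at the end.

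Next I run the set-theoretic bookkeeping. Put $A:=\cE_{f\circ g}$ and $B:=\cE_g\cap g^{-1}(\cE_f)$. By part (2) of Lemma~\ref{Interactions des ensemble E(fg), E(f) et E(g)} the sets $A$ and $B$ are disjoint, while by part (1) their union is $\cE_g\cup g^{-1}(\cE_f)$; hence finite additivity of $w_{\SGIET}$ yields
\[
w_{\SGIET}(\cE_{f\circ g})+w_{\SGIET}(B)=w_{\SGIET}\big(\cE_g\cup g^{-1}(\cE_f)\big).
\]
Separately, the standard inclusion--exclusion identity for a finitely additive Boolean-algebra measure (obtained from the disjoint decompositions $\cE_g\cup g^{-1}(\cE_f)=\cE_g\sqcup\big(g^{-1}(\cE_f)\setm\cE_g\big)$ and $g^{-1}(\cE_f)=\big(g^{-1}(\cE_f)\setm\cE_g\big)\sqcup B$) gives
\[
w_{\SGIET}\big(\cE_g\cup g^{-1}(\cE_f)\big)=w_{\SGIET}(\cE_g)+w_{\SGIET}(g^{-1}(\cE_f))-w_{\SGIET}(B).
\]
Combining the two displays produces $w_{\SGIET}(\cE_{f\circ g})=w_{\SGIET}(\cE_g)+w_{\SGIET}(g^{-1}(\cE_f))-2\,w_{\SGIET}(B)$.

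Finally I reduce modulo $2$: the term $2\,w_{\SGIET}(B)$ dies in $\bigotimes^2_{\ZZ}\tilde{\SGIET}/2(\bigotimes^2_{\ZZ}\tilde{\SGIET})$, and by the invariance recalled above $w_{\SGIET}(g^{-1}(\cE_f))=w_{\SGIET}(\cE_f)$; therefore
$\widehat{\eps_{\SGIET}^{\bowtie}}(f\circ g)=w_{\SGIET}(\cE_{f\circ g})\modulo{2}=\big(w_{\SGIET}(\cE_f)+w_{\SGIET}(\cE_g)\big)\modulo{2}=\widehat{\eps_{\SGIET}^{\bowtie}}(f)+\widehat{\eps_{\SGIET}^{\bowtie}}(g)$, as desired. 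I do not expect any real obstacle: the only point meriting an explicit line is the extension of Proposition~\ref{Proposition the measure is IET(Gamma) invariant} to $\widehat{\IET^{\bowtie}(\SGIET)}$ (needed both for measurability of $g^{-1}(\cE_f)$ and for the invariance $w_{\SGIET}(g^{-1}(\cE_f))=w_{\SGIET}(\cE_f)$); the rest is routine manipulation of finitely additive measures, and it is precisely the passage to the quotient by $2$ that makes the argument go through even though the analogous computation fails on the nose in $\bigotimes^2_{\ZZ}\tilde{\SGIET}$.
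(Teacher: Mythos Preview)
Your proof is correct and follows essentially the same route as the paper's: both invoke Lemma~\ref{Interactions des ensemble E(fg), E(f) et E(g)} to write $\omega_{\SGIET}(\cE_{f\circ g})=\omega_{\SGIET}(\cE_g)+\omega_{\SGIET}(g^{-1}(\cE_f))-2\,\omega_{\SGIET}(\cE_g\cap g^{-1}(\cE_f))$ via inclusion--exclusion, then kill the doubled term modulo~$2$ and use $\omega_{\SGIET}(g^{-1}(\cE_f))=\omega_{\SGIET}(\cE_f)$. You are in fact slightly more explicit than the paper about the need to extend Proposition~\ref{Proposition the measure is IET(Gamma) invariant} to $\widehat{\IET^{\bowtie}(\SGIET)}$, which the paper uses tacitly in its last line.
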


\begin{proof}
Let $f,g \in \widehat{\IET^{\bowtie}(\SGIET)}$. Thanks to Lemma \ref{Interactions des ensemble E(fg), E(f) et E(g)} we have:
\begin{align*}
\widehat{\eps_{\SGIET}^{\bowtie}}(f \circ g)= &\omega_{\SGIET}(\cE_{f\circ g}) \modulo{2}\\
=& \omega_{\SGIET}(\cE_{f\circ g}) + \omega_{\SGIET}(\cE_g \cap g^{-1}(\cE_f)) - \omega_{\SGIET}(\cE_g \cap g^{-1}(\cE_f)) \modulo{2}\\
=& \omega_{\SGIET}(\cE_{f\circ g} \sqcup (\cE_g \cap g^{-1}(\cE_f))) - \omega_{\SGIET}(\cE_g \cap g^{-1}(\cE_f)) \modulo{2}\\
=& \omega_{\SGIET}(\cE_g \cup g^{-1}(\cE_f)) - \omega_{\SGIET}(\cE_g \cap g^{-1}(\cE_f)) \modulo{2}\\
=& \omega_{\SGIET}(\cE_g)+\omega_{\SGIET}(g^{-1}(\cE_f)) -2 \omega_{\SGIET}(\cE_g \cap g^{-1}(\cE_f))  \modulo{2}\\
=& \omega_{\SGIET}(\cE_g)+\omega_{\SGIET}(g^{-1}(\cE_f)) \modulo{2}\\
=& \widehat{\eps_{\SGIET}^{\bowtie}}(g) + \widehat{\eps_{\SGIET}^{\bowtie}}(f)
\end{align*}

\end{proof}

We notice that every finitely supported permutation is in $\Ker(\widehat{\eps_{\SGIET}^{\bowtie}})$.

\begin{Coro}
There exists a surjective group homomorphism $\eps_{\SGIET}^{\bowtie}: \IET^{\bowtie}(\SGIET) \rightarrow \bigotimes^2_{\ZZ} \tilde{\SGIET} / 2(\bigotimes^2_{\ZZ} \tilde{\SGIET})$.
\end{Coro}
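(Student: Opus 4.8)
The plan is to obtain $\eps_{\SGIET}^{\bowtie}$ by descending the homomorphism $\widehat{\eps_{\SGIET}^{\bowtie}}$ to the quotient $\IET^{\bowtie}(\SGIET)=\widehat{\IET^{\bowtie}(\SGIET)}/\mfS_{\mathrm{fin}}(X)$, and then to determine its image explicitly by exhibiting preimages. First I would invoke the preceding theorem, which gives that $\widehat{\eps_{\SGIET}^{\bowtie}}$ is a group homomorphism, together with the observation already recorded that every finitely supported permutation $\tau$ lies in $\Ker(\widehat{\eps_{\SGIET}^{\bowtie}})$: indeed $\cE_\tau$ is a finite union of singletons, so $\cE_\tau=\emptyset$ in $A_{\SGIET}\otimes A_{\SGIET}$ and $w_{\SGIET}(\cE_\tau)=0$. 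Hence $\widehat{\IET^{\bowtie}(\SGIET)}\cap\mfS_{\mathrm{fin}}(X)\subseteq\Ker(\widehat{\eps_{\SGIET}^{\bowtie}})$, and the universal property of the quotient produces a unique group homomorphism $\eps_{\SGIET}^{\bowtie}$ on $\IET^{\bowtie}(\SGIET)$ with the same image as $\widehat{\eps_{\SGIET}^{\bowtie}}$.

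For the surjectivity assertion I would produce a preimage for each generator of the target. By the example computations preceding the theorem, for a subinterval $I\in\Itv(\SGIET)$ of length $a$ the $I$-reflection is sent to $a\otimes a\modulo 2$; thus $a\otimes a\modulo2$ is attained for every $a\in\tilde{\SGIET}$ with $0\le a\le 1$. To reach $a\otimes a\modulo2$ for an arbitrary $a\in\tilde{\SGIET}$, I would write $a=k+a'$ with $k\in\ZZ$ and $a'\in\tilde{\SGIET}\cap\interfo{0}{1}$ and expand $a\otimes a=a'\otimes a'+k(1\otimes a'+a'\otimes1)+k^2(1\otimes1)$. The cross term is itself reducible to reflection values modulo $2$ through the identity $a'\otimes1+1\otimes a'\equiv 1\otimes1+a'\otimes a'+(1-a')\otimes(1-a')\modulo2$, obtained by expanding $(1-a')\otimes(1-a')$; since $a'$, $1-a'$ and $1$ are all admissible reflection types and $k^2\equiv k\modulo2$, the element $a\otimes a\modulo2$ is a sum of reflection values and therefore lies in the image.

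This shows $\eps_{\SGIET}^{\bowtie}$ surjects onto $\langle\lbrace a\otimes a\modulo2\mid a\in\tilde{\SGIET}\rbrace\rangle$, and I would make precise that this is exactly the image rather than leaving the target loosely specified. For every $f$ the set $\cE_f^{\bowtie}=\cE_{f,1}\sqcup\cE_{f,2}$ is invariant under the flip $s\colon(x,y)\mapsto(y,x)$, since $\cE_{f,2}=s(\cE_{f,1})$; hence $w_{\SGIET}(\cE_f^{\bowtie})=T+T^{\mathsf t}$ is a symmetric tensor, where $T=w_{\SGIET}(\cE_{f,1})$ and $T^{\mathsf t}$ is its transpose. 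Modulo $2$ the symmetric tensors are precisely $\langle\lbrace a\otimes a\modulo2\rbrace\rangle$, by the identity $a\otimes b+b\otimes a=(a+b)\otimes(a+b)-a\otimes a-b\otimes b$, so the image cannot be larger than this symmetric-square subgroup.

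The main obstacle is this image identification, and with it the precise reading of ``surjective''. When $\tilde{\SGIET}$ has rank at least $2$ the symmetric-square subgroup $\langle\lbrace a\otimes a\modulo2\rbrace\rangle$ is proper in $\bigotimes^2_{\ZZ}\tilde{\SGIET}/2(\bigotimes^2_{\ZZ}\tilde{\SGIET})$ — for instance $1\otimes\theta\modulo2$ with $\theta\in\tilde{\SGIET}\setm\ZZ$ is not flip-invariant, hence never attained — so the homomorphism surjects onto this symmetric-square subgroup, which is the content to be extracted from the statement, and not onto the full quotient. Once the image is correctly pinned down by the two inclusions of the preceding paragraph, the descent through $\mfS_{\mathrm{fin}}(X)$ is immediate, and the construction of explicit reflection preimages (rather than a tautological change of codomain) is what makes the surjectivity onto $\langle\lbrace a\otimes a\modulo2\rbrace\rangle$ a genuine verification.
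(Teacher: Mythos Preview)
Your proposal is correct and the descent step is exactly what the paper does: the line preceding the corollary reads ``We notice that every finitely supported permutation is in $\Ker(\widehat{\eps_{\SGIET}^{\bowtie}})$'', and the corollary is then stated without further proof.

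For the image, the paper takes a shorter route than you do. Rather than bounding the image from above via flip-invariance of $\cE_f^{\bowtie}$, the paper simply invokes Proposition~\ref{Proposition Generating set for IET bowtie SGIET}: since $\IET^{\bowtie}(\SGIET)$ is generated by $\SGIET$-reflections and a reflection of type $a$ is sent to $a\otimes a\modulo{2}$, the image is automatically contained in (and hence equal to) $\langle\lbrace a\otimes a\modulo{2}\mid a\in\tilde{\SGIET}\rbrace\rangle$. Your symmetry argument is a pleasant alternative, and your explicit reduction of $a\otimes a$ for arbitrary $a\in\tilde{\SGIET}$ to reflection values is more careful than what the paper writes, but neither is strictly needed once one knows the generating set.

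You are also right to flag the word ``surjective'' as imprecise: the very next corollary in the paper (Corollary~\ref{Corollary Image de epsilon_Gamma}) identifies the image as the symmetric-square subgroup $\langle\lbrace a\otimes a\modulo{2}\mid a\in\tilde{\SGIET}\rbrace\rangle$, which is indeed proper in $\bigotimes^2_{\ZZ}\tilde{\SGIET}/2(\bigotimes^2_{\ZZ}\tilde{\SGIET})$ whenever $\tilde{\SGIET}$ has rank at least $2$. So the statement should be read as ``there exists a group homomorphism with the indicated codomain'', with surjectivity onto the symmetric-square subgroup established immediately afterwards.
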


\begin{Exem}
We give the value of $\eps_{\SGIET}^{\bowtie}$ for two kinds of elements:
\begin{enumerate}
\item Let $r$ be a $\SGIET$-reflection of type $a$ then $\eps_{\SGIET}^{\bowtie}(r)=a \otimes a \modulo{2}$,
\item Let $s$ be a $\SGIET$-restricted rotation of type $(p,q)$ then:
\[\eps_{\SGIET}^{\bowtie}(s)=p \otimes q + q \otimes p \modulo{2}
\]
\end{enumerate}
\end{Exem}

As $\IET^{\bowtie}$ is generated by reflections, we deduce the image of $\eps_{\SGIET}^{\bowtie}$:

\begin{Coro}\label{Corollary Image de epsilon_Gamma}
We have the following isomorphism:
\[
\IET^{\bowtie}(\SGIET) / \Ker(\eps_{\SGIET}^{\bowtie}) \simeq \textup{Im}(\eps_{\SGIET}^{\bowtie})=\langle {\lbrace a \otimes a \modulo{2} \mid a \in \tilde{\SGIET} \rbrace} \rangle,
\]
where $\langle {\lbrace a \otimes a \modulo{2} \mid a \in \tilde{\SGIET} \rbrace} \rangle$ is a subgroup of $\bigotimes^2_{\ZZ} \tilde{\SGIET} / 2(\bigotimes^2_{\ZZ} \tilde{\SGIET})$.
\end{Coro}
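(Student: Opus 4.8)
The plan is to combine the first isomorphism theorem with the generating set of $\IET^{\bowtie}(\SGIET)$ by reflections. Since $\eps_{\SGIET}^{\bowtie}$ is a group homomorphism, the left isomorphism $\IET^{\bowtie}(\SGIET)/\Ker(\eps_{\SGIET}^{\bowtie}) \simeq \textup{Im}(\eps_{\SGIET}^{\bowtie})$ is immediate, so the whole content reduces to identifying the image $\textup{Im}(\eps_{\SGIET}^{\bowtie})$ with $\langle \lbrace a \otimes a \modulo{2} \mid a \in \tilde{\SGIET} \rbrace \rangle$.

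For the inclusion $\textup{Im}(\eps_{\SGIET}^{\bowtie}) \subseteq \langle \lbrace a \otimes a \modulo{2} \mid a \in \tilde{\SGIET} \rbrace \rangle$, I would invoke Proposition \ref{Proposition Generating set for IET bowtie SGIET}: as $\IET^{\bowtie}(\SGIET)$ is generated by the set of all $\SGIET$-reflections, its image under $\eps_{\SGIET}^{\bowtie}$ is generated by the images of those reflections. By the computation recorded in the Example just above, a $\SGIET$-reflection of type $a$ is sent to $a \otimes a \modulo{2}$, and the types occurring are exactly the elements $a \in \tilde{\SGIET}$ with $0 < a \leq 1$, all of which lie in $\tilde{\SGIET}$; this gives the inclusion.

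For the reverse inclusion it remains to check that $a \otimes a \modulo{2}$ lies in $\textup{Im}(\eps_{\SGIET}^{\bowtie})$ for \emph{every} $a \in \tilde{\SGIET}$, including those outside $[0,1]$. I would do this exactly as in the proof of Proposition \ref{Proposition epsilon is surjective}: by density of $\SGIET$ pick $w \in \tilde{\SGIET}_{+}$ with $w \leq \tfrac{1}{2}$, and write $a = kw + a'$ with $k \in \ZZ$ and $a' \in \tilde{\SGIET}$, $0 \leq a' \leq w$. Expanding and using that $\textup{Im}(\eps_{\SGIET}^{\bowtie})$ has exponent $2$ and $k^2 \equiv k \modulo{2}$, one obtains
\[
a \otimes a \equiv k\,(w \otimes w) + k\,(w \otimes a' + a' \otimes w) + a' \otimes a' \modulo{2}.
\]
Here $w \otimes w \modulo{2}$ and $a' \otimes a' \modulo{2}$ are the images of $\SGIET$-reflections of type $w$ and $a'$, while $w \otimes a' + a' \otimes w \modulo{2}$ is the image of a $\SGIET$-restricted rotation of type $(w,a')$, which is an element of $\IET(\SGIET) \subseteq \IET^{\bowtie}(\SGIET)$ since $w + a' \leq 1$; each integer multiple of such an element again lies in the image. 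As $\textup{Im}(\eps_{\SGIET}^{\bowtie})$ is a subgroup, $a \otimes a \modulo{2} \in \textup{Im}(\eps_{\SGIET}^{\bowtie})$. Combining the two inclusions identifies $\textup{Im}(\eps_{\SGIET}^{\bowtie})$ with $\langle \lbrace a \otimes a \modulo{2} \mid a \in \tilde{\SGIET} \rbrace \rangle$, which by construction is a subgroup of $\bigotimes^2_{\ZZ} \tilde{\SGIET} / 2(\bigotimes^2_{\ZZ} \tilde{\SGIET})$, and the statement follows.

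There is no serious obstacle here; the only point needing a little care is the passage from reflection types in $[0,1]$ to arbitrary elements of $\tilde{\SGIET}$, which reuses verbatim the trick already employed for $\eps_{\SGIET}$ in Proposition \ref{Proposition epsilon is surjective}, together with the $\eps_{\SGIET}^{\bowtie}$-values of reflections and restricted rotations recorded earlier.
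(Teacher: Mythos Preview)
Your argument is correct and follows the same route as the paper: the paper simply observes, in one line, that since $\IET^{\bowtie}(\SGIET)$ is generated by $\SGIET$-reflections and each reflection of type $a$ is sent to $a\otimes a\ [\mathrm{mod}\ 2]$, the image is $\langle\{a\otimes a\ [\mathrm{mod}\ 2]\mid a\in\tilde{\SGIET}\}\rangle$. You add a point the paper leaves implicit, namely that reflection types only range over $\tilde{\SGIET}\cap(0,1]$ while the claimed generating set runs over all of $\tilde{\SGIET}$, and you close this gap with the $a=kw+a'$ reduction; this is a legitimate clarification rather than a different method.
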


\subsubsection{Description of $\Ker(\eps_{\SGIET}^{\bowtie}$)}~
\smallskip

As $\bigotimes^2_{\ZZ} \tilde{\SGIET} / 2(\bigotimes^2_{\ZZ} \tilde{\SGIET})$ is an abelian group we know that $D(\IET^{\bowtie}(\SGIET))$ is included in $\Ker(\eps_{\SGIET}^{\bowtie})$. We will see later that the other inclusion is false in general.

\begin{Nota}
 We denote by $\Omega_{\SGIET}$ the conjugate closure of the group generated by the set of all $\SGIET$-reflections of type $2 \ell$ with $\ell \in \tilde{\SGIET} \setm 2 \tilde{\SGIET}$ (the closure inside $\IET^{\bowtie}(\SGIET)$).
\end{Nota}
The inclusion $\Omega_{\SGIET} \subset \Ker(\eps_{\SGIET}^{\bowtie})$ is immediate. By \ref{Proposition reflections of type 2a are conjugated to restricted rotation of type (a,a)} we know that every restricted rotation of type $(\ell,\ell)$, with $\ell \in \tilde{\SGIET}$, is in $\Omega_{\SGIET}$.

The aim here is to show the equality $\Ker(\eps_{\SGIET}^{\bowtie})=D(\IET^{\bowtie}(\SGIET))\Omega_{\SGIET}$. 

\medskip

We begin by proving the result in the specific case where $\SGIET$ has finite rank. We will reduce the general case to this one.

\begin{Lem}\label{Ker=Derive Omega cas finiment engendré}
Let $\SGIET$ be a finitely generated subgroup of $\mathbb{R/Z}$. Then for every $f \in \Ker(\eps_{\SGIET}^{\bowtie})$ there exist $\delta \in D(\IET^{\bowtie}(\SGIET))$ and $h \in \Omega_{\SGIET}$ such that $f=\delta h$.
\end{Lem}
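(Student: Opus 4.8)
The plan is to write $f$ as an explicit product of $\SGIET$-reflections whose types range over a finite set adapted to a $\ZZ$-basis of $\tilde\SGIET$, to deduce from the hypothesis $\eps_\SGIET^\bowtie(f)=0$ that all but finitely many of these types occur with even multiplicity, and finally to observe that the product of the remaining factors is a product of reflections of type $2\ell$ with $\ell$ a basis element, hence lies in $\Omega_\SGIET$, while the rest is a balanced product of reflections and so lies in $D(\IET^\bowtie(\SGIET))$.

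First I would fix a $\SGIET$-partition $\cP_0 \in \Pi_f$. Since $\SGIET$ is finitely generated, $\tilde\SGIET$ is free abelian of finite rank $d$, and Corollary \ref{Corollary ultrasimplicially ordered group} applied to the finitely many lengths of the intervals of $\cP_0$ gives a $\ZZ$-basis $\mathcal{L}=\{\ell_1,\dots,\ell_d\}\subseteq\tilde\SGIET_+$ of $\tilde\SGIET$ in which each such length is an $\NN$-combination of the $\ell_i$; refining $\cP_0$ I get $\cP\in\Pi_f$ all of whose intervals have length in $\mathcal{L}$. From now on I would work in $\IET^\bowtie(\SGIET)$, where finitely supported permutations are trivial. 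Combining Proposition \ref{Proposition decomposition in widehat IET bowtie} (which, after discarding the finitely supported permutation, writes $f$ as a product of $I$-reflections, $I\in f(\cP)$, followed by $f_\cP^+$), the identity expressing a restricted rotation with associated intervals $I,J$ as the product of the $I$-, $J$- and $(I\sqcup J)$-reflections, and Lemma \ref{Lemma Decomposition in IET(Gamma)-restricted rotations} applied to $f_\cP^+$ and $\cP$, I would write $f$ as a product of $\SGIET$-reflections whose types lie in
\[
T:=\{\ell_i\mid 1\le i\le d\}\cup\{\ell_i+\ell_j\mid 1\le i<j\le d\}\cup\{2\ell_i\mid 1\le i\le d\}.
\]
The reflections coming from $f(\cP)$ have types in $\mathcal{L}$ because $f$ is piecewise isometric, and the three families making up $T$ are pairwise distinct because $\{\ell_i\}$ is $\ZZ$-independent.

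Writing $n_t$ for the number of factors of type $t$, and working in $\bigotimes^2_\ZZ\tilde\SGIET/2(\bigotimes^2_\ZZ\tilde\SGIET)\cong(\ZZ/2\ZZ)^{d^2}$ with basis $\{\ell_i\otimes\ell_j\}$, the factors of type $2\ell_i$ contribute $4\,\ell_i\otimes\ell_i\equiv 0$, so
\[
\eps_\SGIET^\bowtie(f)=\sum_{i}\Bigl(n_{\ell_i}+\sum_{j\neq i}n_{\ell_i+\ell_j}\Bigr)\ell_i\otimes\ell_i\;+\;\sum_{i<j}n_{\ell_i+\ell_j}\bigl(\ell_i\otimes\ell_j+\ell_j\otimes\ell_i\bigr)\pmod 2 .
\]
As $\{\ell_i\otimes\ell_i\}_i\cup\{\ell_i\otimes\ell_j+\ell_j\otimes\ell_i\}_{i<j}$ is $\mathbb{F}_2$-linearly independent, $\eps_\SGIET^\bowtie(f)=0$ forces $n_{\ell_i+\ell_j}$ to be even for every $i<j$, and then $n_{\ell_i}$ to be even for every $i$.

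To finish I would use that $\IET^\bowtie(\SGIET)_{\mathrm{ab}}$ is generated by images of reflections, hence has exponent $2$, and that all reflections of a given type $t$ have a common image $\rho_t$ there, by Proposition \ref{Proposition reflections of same type are conjugated}. Thus the class of $f$ in the abelianization equals $\sum_t n_t\rho_t$, which by exponent $2$ and the parities found above reduces to $\sum_i m_i\,\rho_{2\ell_i}$, where $m_i\in\{0,1\}$ is the parity of $n_{2\ell_i}$. For each $i$ with $m_i=1$ pick a $\SGIET$-reflection $s_i$ of type $2\ell_i$; such a reflection exists since $n_{2\ell_i}\neq 0$ only when the decomposition of $f$ contains a restricted rotation of type $(\ell_i,\ell_i)$, which forces $2\ell_i\le 1$. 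Setting $h=\prod_{i\,:\,m_i=1}s_i$, the elements $f$ and $h$ have the same class in $\IET^\bowtie(\SGIET)_{\mathrm{ab}}$, so $\delta:=fh^{-1}$ lies in $D(\IET^\bowtie(\SGIET))$; and $h\in\Omega_\SGIET$, since every basis element $\ell_i$ lies in $\tilde\SGIET\setm 2\tilde\SGIET$. This yields the decomposition $f=\delta h$. I expect the most delicate points to be the passage from $\widehat{\IET^\bowtie}$ to $\IET^\bowtie(\SGIET)$ while keeping the set of reflection types confined to $T$, and the $\mathbb{F}_2$-linear-algebra bookkeeping isolating which types are forced to even multiplicity.
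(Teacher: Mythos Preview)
Your proof is correct and follows the same overall strategy as the paper: pick an ultrasimplicial $\ZZ$-basis $\{\ell_1,\dots,\ell_d\}$ of $\tilde\SGIET$, decompose $f$ into elementary pieces whose types are controlled by that basis, use $\eps_\SGIET^\bowtie(f)=0$ together with the $\ZZ$-freeness of the $\ell_i\otimes\ell_j$ to force parity constraints, and collect what remains into an $\Omega_\SGIET$-factor.

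The execution differs in a pleasant way. The paper keeps reflections and restricted rotations separate: it writes $f=(\text{reflections})\cdot g$ with $g\in\IET(\SGIET)$, expands $g$ into restricted rotations, explicitly moves the $(\ell_p,\ell_p)$-rotations together by commutators to form $h\in\Omega_\SGIET$, and then shows the remaining reflection-product and rotation-product are each balanced. You instead convert everything to reflections from the outset, and rather than rearranging the product you pass to $\IET^\bowtie(\SGIET)_{\mathrm{ab}}$, using that it has exponent $2$ and that same-type reflections are conjugate (Proposition~\ref{Proposition reflections of same type are conjugated}) to read off the class of $f$ directly as $\sum_i m_i\rho_{2\ell_i}$. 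This avoids the explicit commutator bookkeeping (the paper's $w_1$) and makes the argument a line of $\mathbb{F}_2$-linear algebra; the price is that it leans on Corollary~\ref{Corollary squares of an element are in D(IET^bowtie(Gamma))}/Lemma~\ref{Lemma D(IET bowtie Gamma) is generated by balanced product of reflections} implicitly rather than explicitly. Your observation that basis elements $\ell_i$ automatically lie in $\tilde\SGIET\setminus 2\tilde\SGIET$, so that each $s_i$ of type $2\ell_i$ is genuinely in $\Omega_\SGIET$, is exactly the point that makes the final step work.
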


\begin{proof}
As $\SGIET$ is finitely generated, we know that $\tilde{\SGIET}$ is finitely generated. Let $d$ be the rank of $\tilde{\SGIET}$.

Let $f \in \Ker(\eps_{\SGIET}^{\bowtie})$. Let $n \in \NN$ and $\cP:=\lbrace I_1,I_2,\ldots, I_n \rbrace$ be a partition into $\SGIET$-intervals associated with $f$. We denote by $L_i$ the length of $I_i$ for every $1 \leq i \leq n$. By Corollary \ref{Corollary ultrasimplicially ordered group} there exists $B:=\lbrace \ell_1, \ell_2, \ldots ,\ell_d \rbrace$ a basis of $\tilde{\SGIET}$ with elements in $\tilde{\SGIET}_{+}$ such that $L_i \in \Vect_{\NN}(B)$ for every $1 \leq i \leq n$. Hence we can cut each $I_i$ into smaller intervals with length in $B$. This operation gives us a new partition $\cQ:= \lbrace J_1,J_2, \ldots ,J_k \rbrace$, with $k \in \NN$, into $\SGIET$-intervals associated with $f$.

For every $1 \leq i \leq k$ we define $r_i$ as the $J_i$-reflection if $f$ is order-reversing on $J_i$ else we put $r_i= \Id$. Let $g$ be the product $fr_1r_2\ldots r_k$; it is an element of $\IET(\SGIET)$ and $\cQ$ is a partition into $\SGIET$-intervals associated with $g$. By Lemma \ref{Lemma Decomposition in IET(Gamma)-restricted rotations} $g$ can be written as a finite product of $\SGIET$-restricted rotations with type inside $\lbrace (l_p,l_q) \mid p,q \in \lbrace 1 ,2, \ldots, l_d \rbrace \rbrace$. Thanks to an element of $D(\IET^{\bowtie}(\SGIET))$ we can organize this product to put all $\SGIET$-restricted rotations of type $(l_p,l_p)$ together (with $1 \leq p \leq d$; they are elements in $\Omega_{\SGIET}$): there exist $w_1 \in D(\IET^{\bowtie}(\SGIET))$ and $h \in \Omega_{\SGIET}$ and $m \in \NN$ and $s_1,s_2,\ldots, s_m$ some $\SGIET$-restricted rotations with type inside $\lbrace (l_p,l_q) \mid p,q \in \lbrace 1,2,\ldots ,d \rbrace ,~ p \neq q \rbrace$ such that $g=w_1hs_1s_2 \ldots s_m$. Then $f=w_1 h s_1 s_2 \ldots s_m r_k r_{k-1} \ldots r_1$.

We define $u_p:= \Card \lbrace i \in \lbrace 1,2, \ldots ,k \rbrace \mid r_i \neq \Id ,~ \text{type}(r_i)=l_p \rbrace$. Let $v_{p,p}=0$ for every $1 \leq p \leq d$ and let $v_{p,q}:= \Card \lbrace j \in \lbrace 1,2, \ldots ,m \rbrace \mid \text{type}(s_j)=(l_p,l_q) \rbrace$ for every $1 \leq p \neq q \leq d$. Then we have:
$$\eps_{\SGIET}^{\bowtie}(f)=\sum\limits_{i=1}^k \eps_{\SGIET}^{\bowtie}(r_i) + \sum\limits_{j=1}^m \eps_{\SGIET}^{\bowtie}(s_j)=\sum\limits_{p=1}^d u_p l_p \otimes l_p + \sum\limits_{p=1}^d \sum\limits_{q=1}^d v_{p,q} (l_p \otimes l_q + l_q \otimes l_p) \modulo{2}=0$$
We notice that $\sum\limits_{p=1}^d \sum\limits_{q=1}^d v_{p,q} (l_p \otimes l_q + l_q \otimes l_p)=\sum\limits_{p=1}^d \sum\limits_{q=1}^d (v_{p,q}+v_{q,p}) l_p \otimes l_q$.
Furthermore $B$ is a basis of $\tilde{\SGIET}$ so $\lbrace l_p \otimes l_q \rbrace_{1 \leq p,q \leq d}$ is a basis of $\bigotimes^2_{\ZZ} \tilde{\SGIET}$, thus we deduce that $2$ divides $u_p$ for every $1 \leq p \leq d$ and $2$ divides $v_{p,q} + v_{q,p}$ for every $1 \leq p,q \leq d$.

We obtain that $r_1r_2\ldots r_k$ is a balanced product of $\SGIET$-reflections hence by Lemma \ref{Lemma D(IET bowtie Gamma) is generated by balanced product of reflections} it is an element of $D(\IET^{\bowtie}(\SGIET))$ denoted $w_2$. We also deduce that the product $s_1s_2\ldots s_m$ is a balanced product of $\SGIET$-restricted rotations with type inside $\lbrace (a,b) \mid a \neq b \in \tilde{\SGIET}_+ \rbrace$. Hence by Proposition \ref{Proposition balanced product of restricted rotations is a balanced product of reflections} we obtain that it is also an element of $D(\IET^{\bowtie}(\SGIET))$, denoted $w_3$.

Finally we have $f=w_1hw_3w_2=\delta h$ with $\delta \in D(\IET^{\bowtie}(\SGIET))$ and $h \in \Omega_{\SGIET}$.
\end{proof}

The next lemma gives an inclusion used to conclude in the general case:

\begin{Lem}\label{Lemme inclusion D()Omega pour Gamma subset Gamma}
For all $\SGIET, A$ subgroups of $\mathbb{R/Z}$ such that $A \subset \SGIET$ we have:
$$D(\IET^{\bowtie}(A))\Omega_{A} \subset D(\IET^{\bowtie}(\SGIET))\Omega_{\SGIET} $$
\end{Lem}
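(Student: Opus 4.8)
The plan is to reduce the statement to two inclusions, $D(\IET^{\bowtie}(A))\subset D(\IET^{\bowtie}(\SGIET))\Omega_{\SGIET}$ and $\Omega_A\subset D(\IET^{\bowtie}(\SGIET))\Omega_{\SGIET}$, and then to conclude by observing that the right-hand side is a subgroup (in fact a normal subgroup) of $\IET^{\bowtie}(\SGIET)$, hence closed under products. First I would record the elementary facts that follow from $A\subset\SGIET$: every element of $\IET^{\bowtie}$ continuous outside $A$ is continuous outside $\SGIET$, so $\IET^{\bowtie}(A)$ is a subgroup of $\IET^{\bowtie}(\SGIET)$, whence $D(\IET^{\bowtie}(A))\subset D(\IET^{\bowtie}(\SGIET))$; and $\Itv(A)\subset\Itv(\SGIET)$, so every $A$-reflection is a $\SGIET$-reflection, and every element of $\tilde A$ lies in $\tilde\SGIET$.

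The core of the argument is the analysis of the generating reflections of $\Omega_A$. Recall $\Omega_A$ is the conjugate closure in $\IET^{\bowtie}(A)$ of the subgroup generated by the $A$-reflections of type $2\ell$ with $\ell\in\tilde A\setm 2\tilde A$. I would fix such a reflection $r$ of type $2\ell$ and distinguish two cases according to the position of $\ell$ inside $\tilde\SGIET$. If $\ell\notin 2\tilde\SGIET$, then $r$ is a $\SGIET$-reflection of type $2\ell$ with $\ell\in\tilde\SGIET\setm 2\tilde\SGIET$, so $r\in\Omega_{\SGIET}$ by definition. If instead $\ell\in 2\tilde\SGIET$, write $\ell=2m$ with $m\in\tilde\SGIET$; then $r$ has type $2\ell=4m\in 4\tilde\SGIET$, and Proposition \ref{Proposition a Gamma reflection with type in 4Gamma is in D(IET bowtie(Gamma))} gives $r\in D(\IET^{\bowtie}(\SGIET))$. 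In both cases $r\in D(\IET^{\bowtie}(\SGIET))\Omega_{\SGIET}$.

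To finish, I would note that $D(\IET^{\bowtie}(\SGIET))$ and $\Omega_{\SGIET}$ are both normal subgroups of $\IET^{\bowtie}(\SGIET)$ ($\Omega_{\SGIET}$ by construction, being a conjugate closure), so their product $D(\IET^{\bowtie}(\SGIET))\Omega_{\SGIET}$ is a normal subgroup of $\IET^{\bowtie}(\SGIET)$. Being a subgroup, it contains the subgroup generated by the reflections considered above; being invariant under conjugation by all of $\IET^{\bowtie}(\SGIET)$, and in particular by $\IET^{\bowtie}(A)$, it contains the conjugate closure of that subgroup taken inside $\IET^{\bowtie}(A)$, i.e. $\Omega_A\subset D(\IET^{\bowtie}(\SGIET))\Omega_{\SGIET}$. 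Combining this with $D(\IET^{\bowtie}(A))\subset D(\IET^{\bowtie}(\SGIET))\subset D(\IET^{\bowtie}(\SGIET))\Omega_{\SGIET}$ and using that the latter is closed under multiplication yields $D(\IET^{\bowtie}(A))\Omega_A\subset D(\IET^{\bowtie}(\SGIET))\Omega_{\SGIET}$.

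I do not expect a genuine obstacle here; the only subtle point — and the reason the statement is not completely trivial — is that an element $\ell$ that fails to be in $2\tilde A$ may nevertheless become divisible by $2$ in the larger group $\tilde\SGIET$, so the generators of $\Omega_A$ need not all be generators of $\Omega_{\SGIET}$. This is precisely the case where Proposition \ref{Proposition a Gamma reflection with type in 4Gamma is in D(IET bowtie(Gamma))} (reflections of type in $4\tilde\SGIET$ lie in $D(\IET^{\bowtie}(\SGIET))$) is invoked, and it is the one place the argument must appeal to something more than functoriality of $D(-)$ and of normal closures.
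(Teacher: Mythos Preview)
Your proof is correct and follows essentially the same approach as the paper: both argue that $D(\IET^{\bowtie}(A))\subset D(\IET^{\bowtie}(\SGIET))$ is immediate, and then handle $\Omega_A$ via the dichotomy on whether $\ell\in 2\tilde{\SGIET}$ or not, invoking Proposition~\ref{Proposition a Gamma reflection with type in 4Gamma is in D(IET bowtie(Gamma))} in the second case. The only difference is cosmetic: you use that $D(\IET^{\bowtie}(\SGIET))\Omega_{\SGIET}$ is a normal subgroup to pass from generators to the full normal closure $\Omega_A$, while the paper works with an explicit element $f=\prod g_iw_ig_i^{-1}$ and rearranges the factors using a commutator; both arguments amount to the same thing.
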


\begin{proof}
The inclusion $D(\IET^{\bowtie}(A)) \subset D(\IET^{\bowtie}(\SGIET))$ is immediate. It is sufficient to show that $\Omega_{A} \subset D(\IET^{\bowtie}(\SGIET))\Omega_{\SGIET}$. Let $\tilde{A}$ be the preimage of $A$ in $\RR$.

Let $f$ be an element of $\Omega_{A}$. Then there exist $n \in \NN$ and $a_1,a_2,\ldots,a_n \in \tilde{A} \setm 2 \tilde{A}$ and $w_1,w_2,\ldots, w_n$ some $\SGIET$-reflections such that the type of $w_i$ is $2a_i$ and there exist $g_1,g_2,\ldots g_n \in \IET^{\bowtie}(A)$ such that $f = \prod\limits_{i=1}^n g_iw_ig_i^{-1}$.

Let $U:= \lbrace i \in \lbrace 1,2, \ldots, n \rbrace \mid a_i \in \tilde{\SGIET} \setm 2\tilde{\SGIET} \rbrace$. By definition we have $\lbrace g_iw_ig_i^{-1} \mid i \in U \rbrace \subset \Omega_{\SGIET}$. Take $V:= \lbrace 1,2,\ldots, n \rbrace \setm U$. As $A$ is a subgroup of $\SGIET$ we deduce that $V= \lbrace j \in \lbrace 1,2,\ldots ,n \rbrace \mid a_j \in 2\tilde{\SGIET} \rbrace$. Thus for every $j \in V$ the type of $w_j$ is in $4 \tilde{\SGIET}$, then by Proposition \ref{Proposition a Gamma reflection with type in 4Gamma is in D(IET bowtie(Gamma))} we deduce that $w_j$ and $g_jw_jg_j^{-1}$ belong to $D(\IET^{\bowtie}(\SGIET))$. We know that there exists $h \in D(\IET^{\bowtie}(\SGIET))$ such that: 
\[
f= h \prod\limits_{j \in V} g_j w_jg_j^{-1} \prod \limits_{i \in U} g_iw_ig_i^{-1}
\]
Then $f \in D(\IET^{\bowtie}(\SGIET))\Omega_{\SGIET}$.
\end{proof}

We can prove the theorem for the general case:
\begin{Thm}\label{Theorem description kernel of epsilon_Gamma}
For any dense subgroup $\SGIET$ of $\mathbb{R/Z}$ we have:
\[
\Ker(\eps_{\SGIET}^{\bowtie}) = D(\IET^{\bowtie}(\SGIET)) \Omega_{\SGIET}
\]
\end{Thm}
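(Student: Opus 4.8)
The plan is to reduce the reverse inclusion to the finitely generated case already settled in Lemma \ref{Ker=Derive Omega cas finiment engendré}, using the same device as in the proof of Lemma \ref{Lemma ker(phi) is generated by balanced product}, and then to push the conclusion back up to $\SGIET$ with Lemma \ref{Lemme inclusion D()Omega pour Gamma subset Gamma}. One inclusion is immediate: since $\bigotimes^2_{\ZZ}\tilde{\SGIET}/2(\bigotimes^2_{\ZZ}\tilde{\SGIET})$ is abelian we have $D(\IET^{\bowtie}(\SGIET)) \subset \Ker(\eps_{\SGIET}^{\bowtie})$, and $\Omega_{\SGIET} \subset \Ker(\eps_{\SGIET}^{\bowtie})$ has already been observed, so $D(\IET^{\bowtie}(\SGIET))\Omega_{\SGIET} \subset \Ker(\eps_{\SGIET}^{\bowtie})$. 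Everything is therefore in proving the opposite inclusion.

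First I would take $f \in \Ker(\eps_{\SGIET}^{\bowtie})$ and fix a $\SGIET$-partition $\cP$ associated with $f$; writing $\cJ \subset \cP$ for the intervals on which $f$ is order-reversing, we have $\cE_f = \cE_{f_{\cP}^{+}} \cup \bigcup_{I \in \cJ} I \times I$, a finite disjoint union of rectangles whose corners lie in $\tilde{\SGIET}$, so $w_{\SGIET}(\cE_f)$ is a $\ZZ$-combination of elements $p \otimes q$ with $p,q$ lengths of subintervals of intervals of $\cP$. The hypothesis $\eps_{\SGIET}^{\bowtie}(f)=0$ says precisely that $w_{\SGIET}(\cE_f) \in 2(\bigotimes^2_{\ZZ}\tilde{\SGIET})$, so I may pick finitely many $x_j,y_j \in \tilde{\SGIET}$ with $w_{\SGIET}(\cE_f) = 2\sum_j x_j \otimes y_j$. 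I then let $\tilde A$ be the subgroup of $\tilde{\SGIET}$ generated by $\ZZ$ (present since $1$ is an endpoint of $\cP$), by the endpoints of the intervals of $\cP$, and by the $x_j,y_j$; it is a finitely generated subgroup of $\RR$ containing $\ZZ$, and its image $A \leq \mathbb{R/Z}$ is finitely generated with preimage $\tilde A$. (Note $A$ may well be finite, e.g. when all breakpoints of $f$ are rational; Lemma \ref{Ker=Derive Omega cas finiment engendré} is stated for arbitrary finitely generated $\SGIET$ and covers that case.)

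Next I would check $f \in \Ker(\eps_A^{\bowtie})$. Indeed $\cP$ is an $A$-partition associated with $f$, so $f \in \IET^{\bowtie}(A)$; recomputing $w_A(\cE_f)$ from the same rectangle decomposition gives an element of $\bigotimes^2_{\ZZ}\tilde A$ whose image in $\bigotimes^2_{\ZZ}\tilde{\SGIET}$ is $w_{\SGIET}(\cE_f) = 2\sum_j x_j\otimes y_j$, and $\sum_j x_j\otimes y_j$ itself lifts to $\bigotimes^2_{\ZZ}\tilde A$ by the choice of $\tilde A$. Since $\tilde A$ is free abelian and $\tilde{\SGIET}$ is torsion-free (hence flat over $\ZZ$), the natural map $\bigotimes^2_{\ZZ}\tilde A \to \bigotimes^2_{\ZZ}\tilde{\SGIET}$ is injective, so $w_A(\cE_f) = 2\sum_j x_j\otimes y_j$ already in $\bigotimes^2_{\ZZ}\tilde A$, whence $\eps_A^{\bowtie}(f)=w_A(\cE_f)\modulo{2}=0$. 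Applying Lemma \ref{Ker=Derive Omega cas finiment engendré} to $A$ yields $\delta \in D(\IET^{\bowtie}(A))$ and $h \in \Omega_A$ with $f=\delta h$, and Lemma \ref{Lemme inclusion D()Omega pour Gamma subset Gamma} (with $A \subset \SGIET$) gives $f = \delta h \in D(\IET^{\bowtie}(A))\Omega_A \subset D(\IET^{\bowtie}(\SGIET))\Omega_{\SGIET}$, which is the desired inclusion.

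The only genuinely delicate point I foresee is the transfer of the divisibility statement $w_{\SGIET}(\cE_f) \in 2(\bigotimes^2_{\ZZ}\tilde{\SGIET})$ down to the finitely generated subgroup $\tilde A$; it is handled exactly as in the proof of Lemma \ref{Lemma ker(phi) is generated by balanced product}, by absorbing an explicit finite witness $\sum_j x_j\otimes y_j$ of the divisibility into the generators of $\tilde A$ and invoking flatness of $\tilde{\SGIET}$. Everything else — the shape of $\cE_f$, the compatibility of $w$ with restriction of the acting subgroup, and the two quoted lemmas — is routine or already established.
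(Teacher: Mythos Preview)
Your proof is correct and follows essentially the same route as the paper: reduce to a finitely generated subgroup $A$ by adjoining to $\ZZ$ the interval data of a partition for $f$ together with a finite witness $\sum_j x_j\otimes y_j$ for the divisibility $w_{\SGIET}(\cE_f)\in 2\bigotimes^2_{\ZZ}\tilde{\SGIET}$, then apply Lemma~\ref{Ker=Derive Omega cas finiment engendré} and push up via Lemma~\ref{Lemme inclusion D()Omega pour Gamma subset Gamma}. You are in fact slightly more careful than the paper on the one delicate point, namely the transfer of the equality $w(\cE_f)=2\sum_j x_j\otimes y_j$ down to $\bigotimes^2_{\ZZ}\tilde A$, which you justify by flatness of $\tilde{\SGIET}$; the paper simply asserts $\omega_A(\cE_f)=\omega_{\SGIET}(\cE_f)$ and reads off $\eps_A^{\bowtie}(f)=0$.
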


\begin{proof}
The inclusion from right to left is already proved.

Let $f \in \Ker(\eps_{\SGIET}^{\bowtie})$, let $n \in \NN$ and $\cP:=\lbrace I_1,I_2,\ldots, I_n \rbrace$ be a partition into $\SGIET$-intervals associated with $f$. We denote by $L_i$ the length of $I_i$ for every $1 \leq i \leq n$. As $\eps_{\SGIET}^{\bowtie}(f)=0$ we know that there exist $k \in \NN$ and $a_1,a_2\ldots,a_k,b_1,b_2,\ldots ,b_k \in \tilde{\SGIET}$ such that $\omega_{\SGIET}(\cE_f)=2\sum\limits_{i=1}^k a_i \otimes b_i$ inside $\bigotimes^2_{\ZZ} \tilde{\SGIET}$.

Let $\tilde{A}$ be the subgroup of $\RR$ generated by $\lbrace L_i \rbrace_{i=1\ldots n} \cup \lbrace 1 \rbrace \cup \lbrace a_i,b_i \rbrace_{i=1 \ldots k }$. Then $\tilde{A}$ contains $\ZZ$, is finitely generated and is a subgroup of $\tilde{\SGIET}$. Let $A$ be the image of $\tilde{A}$ in $\mathbb{R/Z}$. The partition $\cP$ is also a partition into $A$-intervals associated with $f$ thus $f$ belongs to $\IET^{\bowtie}(A)$. Furthermore we have $\omega_{A}(\cE_f)=\omega_{\SGIET}(\cE_f)$. Hence:

\[
\eps^{\bowtie}_{A}(f)=[\omega_{A}(\cE_f)]_{\bigotimes^2_{\ZZ} A / 2(\bigotimes^2_{\ZZ} A)}=[\omega_{\SGIET}(\cE_f)]_{\bigotimes^2_{\ZZ} A / 2(\bigotimes^2_{\ZZ} A)}=[2\sum\limits_{i=1}^k a_i \otimes b_i]_{\bigotimes^2_{\ZZ} A / 2(\bigotimes^2_{\ZZ} A)}=0
\]

By Lemma \ref{Ker=Derive Omega cas finiment engendré} we deduce that $f \in D(\IET^{\bowtie}(A))\Omega_{A}$ and by Lemma \ref{Lemme inclusion D()Omega pour Gamma subset Gamma} we deduce that $f \in D(\IET^{\bowtie}(\SGIET))\Omega_{\SGIET}$.
\end{proof}

\subsection{The positive contribution}~
\smallskip

We know that every $\SGIET$-reflection of type $2\ell$ with $\ell \in \tilde{\SGIET} \setm 2 \tilde{\SGIET}$ is conjugated to a $\SGIET$-restricted rotation of type $(\ell,\ell)$. Also this is an element of $\IET^+(\SGIET)$ which is not send on the trivial element by the morphism $\eps_{\SGIET}$.
We use the notion of positive substitute in order to use the group homomorphism $\eps_{\SGIET}$ to send such a reflection on a nontrivial element.
For this we need to use the sequence $(S_n)_{ n \in \NN}$ of $\ZZ$-independent subset of $\tilde{\SGIET}_{+}$ given by Corollary \ref{Corollary the sequence of finite Z indepndent subset for tilde SGIET}. We recall that for each $n \in \NN$ we have $\Vect_{\NN}(S_n) \subset \Vect_{\NN}(S_{n+1})$ and $\tilde{\SGIET}_{+}$ is equal to the direct limit $\lim\limits_{\longrightarrow} \Vect_{\NN}(S_n)$. Also if $\SGIET$ is finitely generated we can choose $S_n$ to be a basis of $\tilde{\SGIET}$.

For every $n \in \NN$ we construct a map that depends on $S_n$. They are not group homomorphisms but satisfies the group homomorphism property on some products thanks to Proposition \ref{Proposition behaviour of positive substitute of a composition}.

\subsubsection{Some subsets of $\IET^{\bowtie}(\SGIET)$}~

Let $S$ be a finite set of $\tilde{\SGIET}_{+}$. We denote by $G_S$ the set of all $f $ in $\IET^{\bowtie}(\SGIET)$ such that there exists a partition $\cP$ into $S$-intervals associated with $f$. We remark that $G_S$ is not a group in general. We want to know how these sets and $\IET^{\bowtie}(\SGIET)$ are linked.

\begin{Prop}\label{Proposition inclusion of G_S in G_T}
Let $S$ and $T$ be two finite subsets of $\tilde{\SGIET}_{+}$. If $S \subset \Vect_{\NN}(T)$ then $G_S \subset G_T$. More precisely for every partition $\cP$ into $S$-intervals of $\interfo{0}{1}$ there exists a refinement $\cQ$ of $\cP$ which is a partition into $T$-intervals of $\interfo{0}{1}$.
\end{Prop}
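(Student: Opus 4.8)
The plan is to reduce the whole statement to a single observation about subdividing one interval. First I would fix a right-open, left-closed interval $I \subset \interfo{0}{1}$ with $\lambda(I) \in S$. Since $S \subset \Vect_{\NN}(T)$, we may write $\lambda(I) = \sum_{t \in T} n_t\, t$ with each $n_t \in \NN$. Hence $I$ is the disjoint union of $\sum_{t \in T} n_t$ consecutive right-open, left-closed subintervals, of which exactly $n_t$ have length $t$ for each $t \in T$; every one of these is a $T$-interval. Moreover the endpoints of $I$ lie in $\tilde{\SGIET}$ and $T \subset \tilde{\SGIET}_+$, so all the subdivision points also lie in $\tilde{\SGIET}$, and the pieces genuinely belong to $\Itv(\SGIET)$.

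Applying this construction to every interval of a given partition $\cP$ into $S$-intervals of $\interfo{0}{1}$ produces a partition $\cQ$ of $\interfo{0}{1}$ that refines $\cP$ and all of whose intervals are $T$-intervals. This is exactly the ``more precisely'' assertion.

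For the inclusion $G_S \subset G_T$, I would take $f \in G_S$ together with a partition $\cP$ into $S$-intervals associated with $f$, and then pass to the refinement $\cQ$ just built. Since $f$ is continuous on the interior of each interval of $\cP$, it is a fortiori continuous on the interior of each (smaller) interval of $\cQ$; this is the content of Remark \ref{Remark refinement of a partition associated to a tuple is still a partition associated to the tuple} applied to the one-term tuple $(f)$. Therefore $\cQ$ is a partition into $T$-intervals associated with $f$, which gives $f \in G_T$.

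I do not expect any real obstacle here: the only points that require a word of care are that the subdivision must be carried out with \emph{consecutive} subintervals so that the outcome is again an honest ordered partition of $\interfo{0}{1}$ into right-open, left-closed intervals, and that the new cut points remain inside $\tilde{\SGIET}$ — both of which are immediate from $T \subset \tilde{\SGIET}_+$ and the fact that the cut points of $\cP$ already lie in $\tilde{\SGIET}$.
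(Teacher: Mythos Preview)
Your argument is correct and follows exactly the same route as the paper's proof: subdivide each $S$-interval of $\cP$ into consecutive $T$-intervals using $S \subset \Vect_{\NN}(T)$, collect the pieces into a refinement $\cQ$, and observe that a refinement of a partition associated with $f$ is still associated with $f$. The only extra content you add is the remark that the new cut points stay in $\tilde{\SGIET}$ (hence the pieces lie in $\Itv(\SGIET)$); this is true for a partition of $\interfo{0}{1}$ since the leftmost endpoint is $0$ and all lengths lie in $\tilde{\SGIET}_+$, but it is not actually required by the statement, which only asks that the pieces be $T$-intervals (i.e.\ have length in $T$).
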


\begin{proof}
Let $f \in G_S$ and let $\cP$ be a partition into $S$-intervals associated with $f$. As $S \subset \Vect_{\NN}(T)$ each interval $I \in P$ can be subdivided with intervals of length in $T$. After subdividing this way the intervals of $\cP$, we obtain a refinement $\cQ$ of $\cP$. We notice that $\cQ$ is a partition into $T$-intervals associated with $f$ so $f \in G_T$.
\end{proof}

As $\tilde{\SGIET}_+$ is the filtered union of the $\Vect_{\NN}(S_n)$ we deduce the next proposition:

\begin{Prop}\label{Proposition G_S is always inside a (G_S)_n}
For every finite subset $S$ of $\tilde{\SGIET}_+$ there exists $N \in \NN$ such that for every $n \geq N$ we have $S \subset \Vect_{\NN}(S_n)$.
\end{Prop}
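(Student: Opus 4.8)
The plan is to exploit the fact, recorded in Corollary \ref{Corollary the sequence of finite Z indepndent subset for tilde SGIET}, that $\tilde{\SGIET}_+$ is the \emph{increasing} union $\bigcup_{n} \Vect_{\NN}(S_n)$, with $\Vect_{\NN}(S_n) \subset \Vect_{\NN}(S_{n+1})$ for every $n$. The assertion is exactly the general principle that a finite subset of an increasing union of subsemigroups is contained in a single term of the union, applied to $S$.

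First I would write $S=\{s_1,s_2,\ldots,s_k\}$ with each $s_i\in\tilde{\SGIET}_+$. Since $\tilde{\SGIET}_+=\bigcup_n\Vect_{\NN}(S_n)$, for each $i\in\{1,\ldots,k\}$ there is an index $N_i\in\NN$ with $s_i\in\Vect_{\NN}(S_{N_i})$. Because $k$ is finite, I may set $N:=\max_{1\leq i\leq k}N_i$. Next I would invoke the monotonicity of the sequence: iterating $\Vect_{\NN}(S_m)\subset\Vect_{\NN}(S_{m+1})$ gives $\Vect_{\NN}(S_{N_i})\subset\Vect_{\NN}(S_N)$ for every $i$, hence $s_i\in\Vect_{\NN}(S_N)$ for all $i$, that is, $S\subset\Vect_{\NN}(S_N)$. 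The same monotonicity yields $\Vect_{\NN}(S_N)\subset\Vect_{\NN}(S_n)$ for every $n\geq N$, so $S\subset\Vect_{\NN}(S_n)$ for all $n\geq N$, which is the claim.

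There is essentially no obstacle here: the only place the hypotheses are used is in the filteredness of the union, which is what allows the maximum of the finitely many indices $N_i$ to produce one term of the sequence containing all of $S$. (Without the inclusions $\Vect_{\NN}(S_n)\subset\Vect_{\NN}(S_{n+1})$ the conclusion could fail, but Corollary \ref{Corollary the sequence of finite Z indepndent subset for tilde SGIET} supplies precisely this.) Finally I would observe that, combined with Proposition \ref{Proposition inclusion of G_S in G_T}, this gives $G_S\subset G_{S_n}$ for all $n\geq N$, which is the form in which the statement will be used in the construction of the positive contribution.
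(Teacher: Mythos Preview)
Your argument is correct and is essentially the same as the paper's: the paper does not give a detailed proof but simply notes that the proposition follows because $\tilde{\SGIET}_+$ is the filtered union of the $\Vect_{\NN}(S_n)$, which is exactly the principle you spell out.
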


From Propositions \ref{Proposition inclusion of G_S in G_T} and \ref{Proposition G_S is always inside a (G_S)_n} we obtain:

\begin{Coro}\label{Corollary Every element of IET bowtie Gamma is in one  (G_S)_n}
Let $f \in \IET^{\bowtie}(\SGIET)$. There exists $N \in \NN$ such that for every $n \geq N$ we have $f \in G_{S_n}$. Furthermore $\IET^{\bowtie}(\SGIET)=\bigcup\limits_{n \in \NN} G_{S_n}$.
\end{Coro}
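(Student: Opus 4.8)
The plan is to read this off directly from Propositions~\ref{Proposition inclusion of G_S in G_T} and~\ref{Proposition G_S is always inside a (G_S)_n}, the only preliminary point being that every element of $\IET^{\bowtie}(\SGIET)$ admits a partition into intervals whose finitely many lengths form a finite subset of $\tilde{\SGIET}_{+}$.

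First I would fix $f \in \IET^{\bowtie}(\SGIET)$. By the characterisation of $\IET^{\bowtie}(\SGIET)$ recalled just before the definition of $S$-intervals, there is a $\SGIET$-partition $\cP = \lbrace I_1, \ldots, I_m \rbrace$ associated with $f$; since each $I_j \in \Itv(\SGIET)$, its length $\lambda(I_j)$ lies in $\tilde{\SGIET}_{+}$. Setting $S := \lbrace \lambda(I_1), \ldots, \lambda(I_m) \rbrace$, the partition $\cP$ is then a partition into $S$-intervals associated with $f$, so $f \in G_S$.

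Next I would apply Proposition~\ref{Proposition G_S is always inside a (G_S)_n} to the finite set $S$: there is $N \in \NN$ with $S \subset \Vect_{\NN}(S_n)$ for every $n \geq N$. For each such $n$, Proposition~\ref{Proposition inclusion of G_S in G_T} gives $G_S \subset G_{S_n}$, hence $f \in G_{S_n}$, which is the first assertion. For the equality $\IET^{\bowtie}(\SGIET) = \bigcup_{n} G_{S_n}$, the inclusion $\supseteq$ holds because each $G_{S_n}$ is by definition contained in $\IET^{\bowtie}(\SGIET)$, while $\subseteq$ follows from the first assertion, which produces for every $f$ at least one index $n$ with $f \in G_{S_n}$.

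There is no genuine obstacle here: the statement is a formal consequence of the two cited propositions. The only step that deserves a word is the passage from $f$ to the finite length set $S$, which is precisely where the hypothesis $f \in \IET^{\bowtie}(\SGIET)$ (equivalently, the existence of a $\SGIET$-partition associated with $f$) is used.
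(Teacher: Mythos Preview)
Your proof is correct and is precisely the argument the paper has in mind: the paper states this corollary as an immediate consequence of Propositions~\ref{Proposition inclusion of G_S in G_T} and~\ref{Proposition G_S is always inside a (G_S)_n} without writing out the details, and you have simply supplied them, including the one small point (passing from a $\SGIET$-partition for $f$ to the finite length set $S$) that makes the application of those propositions possible.
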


We also want to check that if we take any product of elements in $\IET^{\bowtie}(\SGIET)$ there will be a moment where we have a partition associated to the tuple of these elements.

\begin{Prop}\label{Proposition There exists a common S_n for a tuple of IET bowtie Gamma}
Let $k \in \NN$ and $f_1,f_2, \ldots ,f_k \in \IET^{\bowtie}(\SGIET)$. There exists $N \in \NN$ such that for every $n \geq N$ there exists $\cP_n$ a partition into $S_n$-intervals associated with $(f_1,f_2, \ldots, f_k)$.
\end{Prop}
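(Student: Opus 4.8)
The plan is to produce a single $\SGIET$-partition $\cP_0$ that is associated with the tuple $(f_1,f_2,\ldots,f_k)$ in the sense of the definition, and then to subdivide it into $S_n$-intervals as soon as $n$ is large enough, invoking Remark~\ref{Remark refinement of a partition associated to a tuple is still a partition associated to the tuple}. So the content splits into an existence statement (a partition associated with the whole tuple exists) and a routine refinement step (such a partition can be chosen with interval lengths in $S_n$ for $n \gg 0$).

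\textbf{Step 1: existence of $\cP_0$.} I would argue by induction on $k$. For $k=1$, since $f_1 \in \IET^{\bowtie}(\SGIET)$ there is a $\SGIET$-partition associated with $f_1$, which is what is needed. Assume the claim for $k-1$ and let $\cP$ be a $\SGIET$-partition associated with $(f_1,\ldots,f_{k-1})$; set $g := f_{k-1}\cdots f_1$. Because each $f_i$ is piecewise an isometry with parameters in $\tilde{\SGIET}$, on every interval of $\cP$ the map $g$ acts, up to finitely many points, as $x \mapsto \pm x + c$ with $c \in \tilde{\SGIET}$, so $g(\cP)$ is again a $\SGIET$-partition. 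Choose a $\SGIET$-partition $\mathcal{R}$ associated with $f_k$ and let $\mathcal{S}$ be the common refinement of $g(\cP)$ and $\mathcal{R}$: it is a $\SGIET$-partition, it refines $g(\cP)$, and it is associated with $f_k$ (refining a partition on whose pieces $f_k$ is continuous keeps $f_k$ continuous on the smaller pieces). Let $\cP_0 := g^{-1}(\mathcal{S})$ be the partition whose pieces are the $g$-preimages of the pieces of $\mathcal{S}$. Since $g$ is an isometry with $\tilde{\SGIET}$-parameters on each interval of $\cP$ and $\mathcal{S}$ refines $g(\cP)$, the partition $\cP_0$ is a $\SGIET$-partition refining $\cP$, and $g(\cP_0)=\mathcal{S}$ up to finitely many points. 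As $\cP_0$ refines $\cP$ it is still associated with $(f_1,\ldots,f_{k-1})$ (Remark~\ref{Remark refinement of a partition associated to a tuple is still a partition associated to the tuple}), and since $f_{k-1}\cdots f_1(\cP_0)=\mathcal{S}$ is associated with $f_k$, the partition $\cP_0$ is associated with $(f_1,\ldots,f_k)$.

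\textbf{Step 2: refinement into $S_n$-intervals.} Put $T := \{\lambda(I)\mid I\in\cP_0\}\subset\tilde{\SGIET}_{+}$, a finite set, so that $\cP_0$ is a partition into $T$-intervals associated with $(f_1,\ldots,f_k)$. By Proposition~\ref{Proposition G_S is always inside a (G_S)_n} there is $N\in\NN$ with $T\subset\Vect_{\NN}(S_n)$ for every $n\geq N$. Fix $n\geq N$; then, exactly as in the proof of Proposition~\ref{Proposition inclusion of G_S in G_T}, each interval $I\in\cP_0$, whose length is a non-negative integer combination of elements of $S_n$, can be cut into consecutive subintervals of lengths in $S_n$. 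Doing this on every interval of $\cP_0$ produces a refinement $\cP_n$ of $\cP_0$ which is a partition into $S_n$-intervals, and by Remark~\ref{Remark refinement of a partition associated to a tuple is still a partition associated to the tuple} it is a partition into $S_n$-intervals associated with $(f_1,\ldots,f_k)$, which proves the proposition.

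The main obstacle is Step 1: the equalities used there (that $g(\cP)$ is a $\SGIET$-partition, that $g(g^{-1}(\mathcal{S}))=\mathcal{S}$, that $g(\cP_0)=\mathcal{S}$) hold only up to finitely supported permutations, because in $\IET^{\bowtie}(\SGIET)$ the maps and their associated partitions are defined only up to finite sets; this is harmless for the notion of an associated partition but must be tracked carefully throughout the induction. Once a single $\SGIET$-partition associated with the whole tuple is available, Step 2 is routine and only uses results already established.
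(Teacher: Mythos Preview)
Your proof is correct and follows essentially the same strategy as the paper: first build a single $\SGIET$-partition associated with the whole tuple, then refine it into $S_n$-intervals for $n$ large via Proposition~\ref{Proposition G_S is always inside a (G_S)_n} and Remark~\ref{Remark refinement of a partition associated to a tuple is still a partition associated to the tuple}. The only cosmetic difference is that you construct the common partition by induction on $k$ (pulling back the common refinement of $g(\cP)$ and a partition for $f_k$), whereas the paper does it in one shot by taking the union $V_1\cup f_1^{-1}(V_2)\cup\cdots\cup(f_{k-1}\cdots f_1)^{-1}(V_k)$ of all pulled-back break points and forming the partition they determine; these are the same construction unfolded differently, and your caveat about equalities holding up to finitely supported permutations applies equally to both.
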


\begin{proof}
Thanks to Proposition \ref{Proposition inclusion of G_S in G_T} and Remark \ref{Remark refinement of a partition associated to a tuple is still a partition associated to the tuple} we deduce that it is sufficient to find only one $n \in \NN$ where the statement is satisfied.

By Proposition \ref{Corollary Every element of IET bowtie Gamma is in one  (G_S)_n} there exist $n_i \in \NN$ and $\cP_i$ a partition into $S_{n_i}$ intervals associated with $f_i$ for every $1 \leq i \leq k$. We denote by $V_i$ the set of all the endpoints of the intervals in $\cP_i$. Let $V= V_1 \cup f_1^{-1}(V_2) \cup \ldots \cup (f_{k-1} \ldots f_1)^{-1}(V_k)$. We know that $V$ is finite thus there exist $m \in \NN$ and $v_0,v_1,\ldots ,v_m \in \tilde{\SGIET}_{+}$ such that $V=\lbrace v_i \rbrace_{i=0 \ldots m}$. Up to change the order we can assume that $v_0=0<v_1<v_2< \ldots < v_{m-1}<v_m=1$. Let $I_i$ be the interval $\mathopen{[}v_{i-1},v_i\mathclose{[}$ and $\ell_i$ be the length of $I_i$ for every $1 \leq i \leq m$. Let $T=\lbrace \ell_i \rbrace_{i=1 \ldots m}$ and $\cP= \lbrace I_i \rbrace_{i=1 \ldots m}$.\\
We have $V_1 \subset V$ then $\cP$ is a refinement of $\cP_1$ so $\cP$ is a partition into $T$-intervals associated with $f_1$. Similarly for every $2 \leq i \leq k$ we know that $V_i \subset f_{i-1}\ldots f_1(V)$ thus $f_{i-1}\ldots f_1(\cP)$ is a refinement of $\cP_i$ so $f_{i-1}\ldots f_1(\cP)$ is a partition into $T$-intervals associated with $f_i$. Hence $\cP$ is a partition into $T$-intervals associated with $(f_1,f_2,\ldots ,f_k)$. \\
Thanks to Proposition \ref{Proposition G_S is always inside a (G_S)_n} there exists $n \in \NN$ such that $T \subset \Vect_{\NN}(S_n)$ and by Proposition \ref{Proposition inclusion of G_S in G_T} and Remark \ref{Remark refinement of a partition associated to a tuple is still a partition associated to the tuple} there exists a refinement $\cQ$ of $\cP$ which is a partition into $S_n$-intervals associated with $(f_1,f_2,\ldots ,f_k)$.

\end{proof}

\subsubsection{The $S$-map}~
\smallskip

Let $S$ be a finite subset of $\tilde{\SGIET}_{+}$ for all this subsection. We assume that $S$ is free inside $\Vect_{\ZZ}(S)$. For every $a,b \in \tilde{\SGIET}$, we use the notation $[a \wedge b]_{2 \Wedgealt \tilde{\SGIET}}= a \wedge b \modulo{2}$.

Let $f \in G_S$ and $\cP$ be a $S$-partition associated with $f$. We show that the value $\eps_{\SGIET}(f^+_{\cP}) \modulo{2}$ does not depend on $\cP$, where $\eps_{\SGIET}$ is the group homomorphism define in \ref{Definition signature for IET(SGIET)}.

\begin{Prop}\label{Proposition eps_Gamma mod 2 is independent of the chosen partition}
Let $f \in \IET^{\bowtie}$ and let $\cP, \cQ$ be two partitions into $S$-intervals associated with $f$. Then $\eps_{\SGIET}(f^+_{\cP}) \modulo{2}=\eps_{\SGIET}(f^+_{\cQ}) \modulo{2}$. 
\end{Prop}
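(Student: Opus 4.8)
The plan is to reduce the claim that $\eps_{\SGIET}(f^+_{\cP}) \modulo{2}$ is independent of $\cP$ to the case where one of the two partitions refines the other. Given two partitions $\cP$ and $\cQ$ into $S$-intervals associated with $f$, their common refinement $\cR$ — obtained by intersecting all intervals — need not be a partition into $S$-intervals, so I cannot simply compare each of $\cP$ and $\cQ$ to $\cR$. Instead I will interpolate: the endpoints of intervals in $\cP \vee \cQ$ lie in $\tilde{\SGIET}_+$, and since $\tilde{\SGIET}_+ = \bigcup_n \Vect_{\NN}(S_n)$ (Corollary \ref{Corollary the sequence of finite Z indepndent subset for tilde SGIET}) there is a larger free $\ZZ$-independent set $T \subset \tilde{\SGIET}_+$ with $S \subset \Vect_{\NN}(T)$ such that $\cP \vee \cQ$ can itself be refined to a partition $\cR$ into $T$-intervals. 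Then it suffices to prove the statement for the pair $(\cP, \cR)$ where $\cR$ refines $\cP$, but now with $\eps_{\SGIET}$ taking values in $\Wedgealt \Vect_{\ZZ}(T)$ and projecting mod $2$; one then checks that the two ways of reaching a common answer agree, using that $\Wedgealt \Vect_{\ZZ}(S) / 2 \hookrightarrow \Wedgealt \Vect_{\ZZ}(T)/2$ is injective by Lemma \ref{Lemma freeness of the image of a basis in the exterior algebra}.

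So the heart of the matter is: if $\cR$ refines $\cP$ (both $S$-partitions, or $\cP$ an $S$-partition and $\cR$ a $T$-partition), then $\eps_{\SGIET}(f^+_{\cP}) \equiv \eps_{\SGIET}(f^+_{\cR}) \pmod 2$. By an immediate induction it is enough to treat the case where $\cR$ is obtained from $\cP$ by cutting a single interval $I \in \cP$ into two pieces $I'$ and $I''$. Write $v$ for the common value of $\widehat{f} - \Id$ type behaviour on $I$ — more precisely, on $I$ the map $f$ is either order-preserving or order-reversing, and the arrival interval $\widehat{f}(I)$ is cut correspondingly into $\widehat{f}(I')$ and $\widehat{f}(I'')$. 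The two positive substitutes $f^+_{\cP}$ and $f^+_{\cR}$ agree off $I$ and on $\widehat{f}(I)$; on $I$ itself, $f^+_{\cR}$ may reorder $I'$ and $I''$ relative to $f^+_{\cP}$. Using the combinatorial description of $\eps_{\SGIET}$ (Proposition \ref{Proposition signature morphism with combinatorial description}), $\eps_{\SGIET}(f^+_{\cR}) - \eps_{\SGIET}(f^+_{\cP})$ is a sum of wedge terms coming only from new inversions created between $I'$ and $I''$ (the contributions from the pairs $(I', K)$ and $(I'', K)$ for $K \neq I', I''$ add up to exactly the old contribution of $(I,K)$, since $\lambda(I') + \lambda(I'') = \lambda(I)$ and bilinearity of $\wedge$). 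The only possible discrepancy is the term $\lambda(I') \wedge \lambda(I'')$ or its negative, which in $\Wedgealt \tilde{\SGIET}$ equals $\pm\, \lambda(I') \wedge \lambda(I'')$; and since $x \wedge y + y \wedge x = 0$ in $\Wedgealt$, we get $2(\lambda(I') \wedge \lambda(I'')) \equiv 0 \pmod{2 \Wedgealt \tilde{\SGIET}}$ — hence the difference vanishes mod $2$ whether or not the reflection behaviour on $I$ forces a swap. (When $f$ is order-preserving on $I$ the two substitutes actually coincide on $I$ and there is nothing to prove; the genuine content is the order-reversing case, where the reversal exchanges $I'$ and $I''$ and produces precisely one $\lambda(I')\wedge\lambda(I'')$ term.)

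The main obstacle I anticipate is bookkeeping the interaction between the order-reversing intervals and the positive substitute correctly: one must be careful that cutting $I$ commutes with forming $f^+_{\cP}$ in the sense that $f^+_{\cR}$ restricted to $I$ equals, up to a permutation of $\{I', I''\}$, the restriction of $f^+_{\cP}$, and that this permutation is the transposition exactly when $f$ is order-reversing on $I$. Once this local picture is pinned down, the computation of the difference of the two signatures via Proposition \ref{Proposition signature morphism with combinatorial description} is routine, and the mod-$2$ collapse of the single leftover term $\lambda(I') \wedge \lambda(I'')$ (using $a \wedge b = - b \wedge a$, so $2 a \wedge b = a \wedge b - b \wedge a$ need not vanish in $\Wedgealt$, but it does vanish in $\Wedgealt / 2\Wedgealt$) closes the argument. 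A secondary, purely formal point is to make the reduction to the refinement case fully rigorous: one invokes Proposition \ref{Proposition G_S is always inside a (G_S)_n} (or directly Corollary \ref{Corollary the sequence of finite Z indepndent subset for tilde SGIET}) to find $T$, notes that $S, \cP, \cQ$ all become $T$-data, proves the refinement statement in the ambient group $\Vect_{\ZZ}(T)$, and then pulls back along the injection $\Wedgealt \Vect_{\ZZ}(S)/2 \hookrightarrow \Wedgealt \Vect_{\ZZ}(T)/2$ guaranteed by the $\ZZ$-independence of $S$ and $T$ together with Lemma \ref{Lemma freeness of the image of a basis in the exterior algebra}.
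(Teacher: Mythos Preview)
Your argument has a genuine gap at its core step. You claim that cutting a single interval $I \in \cP$ (on which $f$ is order-reversing) into pieces $I', I''$ of lengths $a, b$ changes $\eps_{\SGIET}(f^+_{\cP})$ by an amount that vanishes mod $2$. This is false. A direct computation shows that $(f^+_{\cP})^{-1} \circ f^+_{\cR}$ is, on $I$, exactly the restricted rotation of type $(a,b)$ with associated intervals $I', I''$ (and the identity elsewhere). Hence
\[
\eps_{\SGIET}(f^+_{\cR}) - \eps_{\SGIET}(f^+_{\cP}) \;=\; a \wedge b,
\]
and $a \wedge b$ is \emph{not} in $2\Wedgealt\tilde{\SGIET}$ in general. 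Your sentence ``since $x \wedge y + y \wedge x = 0$ \ldots\ we get $2(\lambda(I') \wedge \lambda(I'')) \equiv 0$, hence the difference vanishes mod $2$'' is a non-sequitur: the quantity you need to kill is $a \wedge b$, not $2(a \wedge b)$, and the identity $a\wedge b + b\wedge a = 0$ gives you nothing here. Consequently the refinement strategy cannot work: passing from $\cP$ to a common refinement $\cR$ and from $\cQ$ to $\cR$ each introduces a sum of terms $a_i \wedge b_i$ which you have no mechanism to cancel.

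The paper's proof avoids refinements entirely and instead exploits the $\ZZ$-independence of $S$ in a different way. Both $\cP$ and $\cQ$ refine the minimal partition $\cM$ associated with $f$; on each interval $M_i$ of $\cM$, the multiset of $S$-lengths used by $\cP$ must equal that used by $\cQ$ (because $\lambda(M_i)$ has a unique expression as an $\NN$-combination of the free family $S$). So $\cP$ and $\cQ$ differ by a permutation of equal-length blocks inside each $M_i$, and one reduces to a single adjacent transposition $I_a \leftrightarrow I_{a+1}$. In that case, with $f$ order-reversing on $M_{i_0}$, one finds that $(f^+_{\cQ})^{-1} \circ f^+_{\cP}$ is the \emph{square} of the restricted rotation on $I_a, I_{a+1}$; thus the discrepancy in $\eps_{\SGIET}$ is $2(\lambda(I_a)\wedge\lambda(I_{a+1}))$, which genuinely lies in $2\Wedgealt\tilde{\SGIET}$. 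The doubling comes precisely from comparing two partitions with the same pieces in swapped order, rather than one partition with a strict refinement; this is what your approach misses.
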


\begin{proof}
For this proof we denote by $r_I$ the $I$-reflection for every subinterval $I$ of $\mathopen{[}0,1 \mathclose{[}$ and we recall that $\lambda$ is the Lebesgue measure. First we reduce the case to a simpler one.

Let $f \in \IET^{\bowtie}(\SGIET)$ and $\cP,\cQ$ be two partitions into $S$-intervals associated with $f$. Let $n,k \in \NN$ such that $\cP=\lbrace I_1,I_2, \ldots, I_n \rbrace $ and $\cQ=\lbrace J_1,J_2,\ldots J_k \rbrace$. Up to change the index we can assume that the $I_i$ are consecutive intervals and so are the $J_i$. We denote by $\cM$ the unique partition into $\SGIET$-intervals associated with $f$ that has the minimal number of intervals. Let $m$ be this number and $\cM:= \lbrace M_1,M_2,\ldots ,M_m \rbrace$ where the $M_i$ are consecutive intervals. As $\cP,\cQ$ are also partitions into $\SGIET$-intervals we know that they are refinements of $M$.

Let $1 \leq i \leq n$ and let $n_0=k_0=0$. There exist $n_1 <n_2< \ldots <n_m, k_1<k_2<\ldots<k_n \in \NN$ such that $M_i=\bigsqcup\limits_{j=n_{i-1} +1}^{n_{i}} I_j=\bigsqcup\limits_{j=k_{i-1} +1}^{k_{i}} J_j$. Hence $\lambda(M_i)=\sum\limits_{j=n_{i-1} +1}^{n_{i}} \lambda(I_i) =\sum\limits_{j=k_{i-1} +1}^{k_{i}} \lambda(J_j)$. For every $s \in S$ we define $a_s= \Card( \lbrace j \in \lbrace n_{i-1} +1, \ldots,n_{i} \rbrace \mid \lambda(I_j)=s \rbrace$ and $b_s= \Card( \lbrace j \in \lbrace k_{i-1} +1, \ldots,k_{i} \rbrace \mid \lambda(J_j)=s \rbrace$. Then $\sum\limits_{s \in S} a_s s= \sum\limits_{s \in S} b_s s$. As $S$ is free in $\Vect_{\ZZ}(S)$ we deduce that $a_s=b_s$ for every $s \in S$. This implies that $n_i=k_i$ for every $1 \leq i \leq n$ and $n=k$. We also get the existence of a permutation $\sigma_i$ of the set $\lbrace n_{i-1}+1, \ldots, n_{i} \rbrace$ such that $\lambda(I_j)=\lambda(J_{\sigma_i(j)})$ for every $j \in \lbrace n_{i-1}+1, \ldots, n_{i} \rbrace$.

We deduce that it is sufficient to show the result when only one permutation $\sigma_i$ is a transposition $(a ~ a+1)$ with $n_{i-1}+1 \leq a < n_i$.\\
Let $i_0$ in $\lbrace1,2, \ldots m \rbrace$ and $a \in \lbrace n_{i_0-1}+1, \ldots, n_{i_0} \rbrace$. We assume now that $\sigma_{i_0}$ is the transposition $(a ~ a+1)$ and $\sigma_i=\Id$ for every $i \neq i_0$. Then $I_j=J_j$ for every $j \in \lbrace 1,2, \ldots, n \rbrace \setm \lbrace a,a+1 \rbrace$. We also have $I_a \cup I_{a+1}=J_a \cup J_{a+1}$ and $\lambda(I_{a+1})=\lambda(J_{a})$. In the case where $f$ is order-preserving on $M_{i_0}$ then $f_{\cP}^+ = f_{\cQ}^+ $. In the case where $f$ is order-reversing on $M_{i_0}$ we deduce that $(f_{\cQ}^+)^{-1} \circ f_{\cP}^+$ is equal to the square of the restricted rotation whose intervals associated are $I_a$ and $I_{a+1}$ (see Figure \ref{Figure Independence of the partition for eps_Gamma of the positive substitute}).

In both cases we deduce $\eps_{\SGIET}(f^+_P) \modulo{2}=\eps_{\SGIET}(f^+_Q) \modulo{2}$.
\end{proof}
\begin{figure}[ht]
\includegraphics[width=\linewidth]{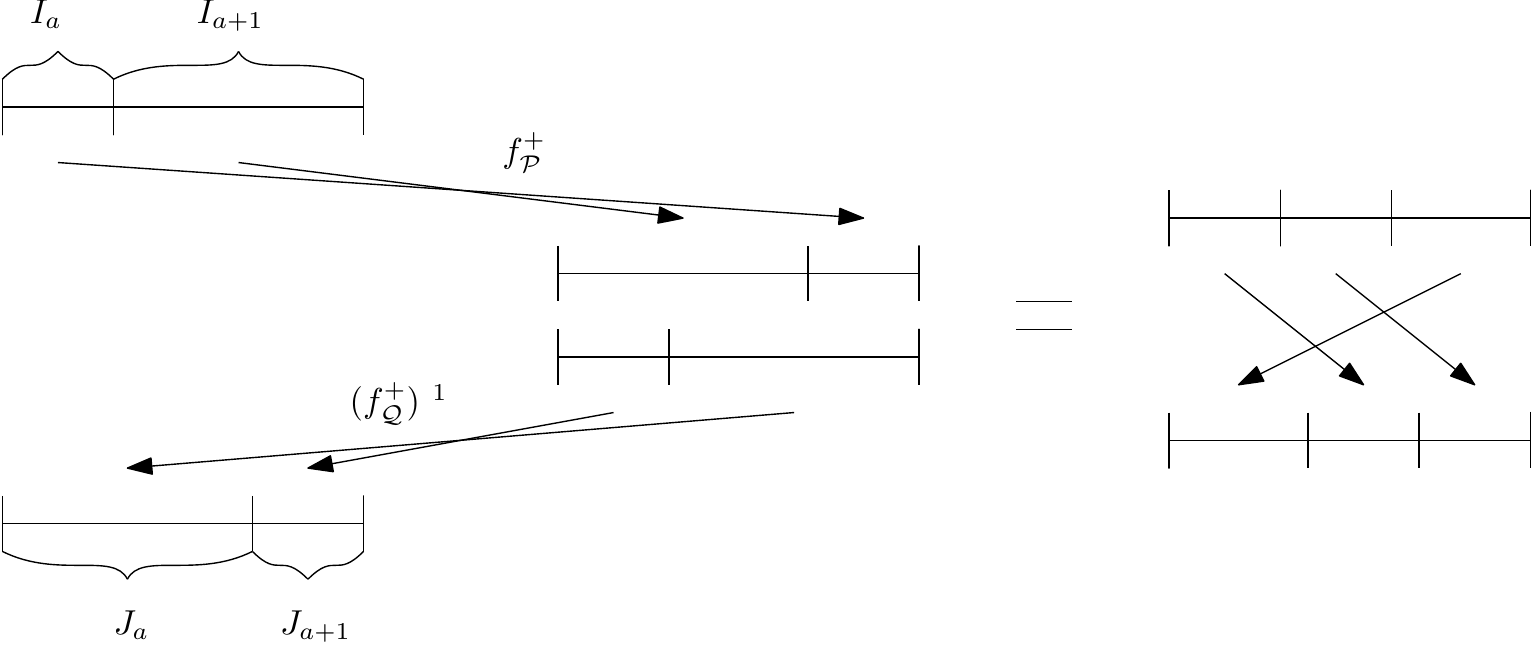}
\caption{\footnotesize{Illustration of the fact that $(f_{\cQ}^+)^{-1} \circ f_{\cP}^+$ is equal to the square of a restricted rotation in Proposition \ref{Proposition eps_Gamma mod 2 is independent of the chosen partition}}}\label{Figure Independence of the partition for eps_Gamma of the positive substitute}
\end{figure}

\begin{Def}
We define the $S$-map as:
\[
\begin{array}{rccc}
\psi_S: &\IET^{\bowtie}(\SGIET) & \longrightarrow & \Wedgealt \tilde{\SGIET} / 2\Wedgealt \tilde{\SGIET} \\
& f & \longmapsto &
\left\{
\begin{array}{cl}
\eps_{\SGIET}( f^+_{\cP}) \modulo{2} & f \in G_S \\
0 & f \notin G_S
\end{array} 
\right.
\end{array}
\]
Where $\cP$ is a partition into $S$-intervals associated with $f$.
\end{Def}

Thanks to Proposition \ref{Proposition eps_Gamma mod 2 is independent of the chosen partition}, the map $\psi_{\SGIET}$ is well-defined. We check that $\psi_S$ satisfies the morphism condition when we have the existence of a partition in $S$-intervals associated with a tuple.

\begin{Prop}\label{Proposition psi_S is nearly group homomorphism }
Let $n \in \NN$ and $f_1,f_2,\ldots ,f_n \in \IET^{\bowtie}(\SGIET)$. If there exists $\cP$ a partition into $S$-intervals associated with $(f_1,f_2,\ldots, f_n )$ then:
$$
\psi_S(f_nf_{n-1} \ldots f_1)=\sum\limits_{i=1}^n \psi_S(f_i)
$$
\end{Prop}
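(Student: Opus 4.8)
The plan is to reduce everything to the composition rule for positive substitutes, Proposition \ref{Proposition behaviour of positive substitute of a composition}, together with the fact that $\eps_{\SGIET}$ is a genuine group homomorphism (Theorem in Section \ref{Section Description of the abelianization of IET(Gamma)}). First I would set up notation: since $\cP$ is a partition into $S$-intervals associated with $(f_1,\dots,f_n)$, each partial arrival partition $\cP_0:=\cP$, $\cP_i:=f_i f_{i-1}\cdots f_1(\cP)$ is a partition into $S$-intervals associated with $f_{i+1}$, and in particular with the whole composition $f_nf_{n-1}\cdots f_1$ in the sense that $\cP$ works for it (this is immediate because $f_n\cdots f_1$ sends interiors of intervals of $\cP$ onto interiors of intervals of $\cP_n$, up to finitely many points). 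Hence all the positive substitutes appearing below are defined, each $f_i$ and the product $f_n\cdots f_1$ lie in $G_S$, and so every term $\psi_S(f_i)$ and $\psi_S(f_n\cdots f_1)$ is computed by the non-trivial branch of the definition.

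Next I would invoke Proposition \ref{Proposition behaviour of positive substitute of a composition} with the partition $\cP$: writing $g_1=(f_1)^+_{\cP}$ and $g_i=(f_if_{i-1}\cdots f_1)^+_{\cP_{i-1}}$ for $2\le i\le n$, it gives the identity $(f_nf_{n-1}\cdots f_1)^+_{\cP}=g_n g_{n-1}\cdots g_1$ in $\IET(\SGIET)$. Here I must be slightly careful: Proposition \ref{Proposition behaviour of positive substitute of a composition} as stated produces $g_i=(f_if_{i-1}\cdots f_1)^+_{\cP_{i-1}}$, the positive substitute of the \emph{partial product}, not of $f_i$ alone; but the very same argument (applied to the two-term identity in its proof, reading $f=f_if_{i-1}\cdots f_1$ and $g=f_i$ composed onto it) shows that on each interval of $\cP_{i-1}$ one has $g_i|_{\cdot}=(f_i)^+_{\cP_{i-1}}|_{\cdot}\circ(\text{something already absorbed})$, so that in fact $g_i=(f_i)^+_{\cP_{i-1}}$ as elements of $\IET(\SGIET)$. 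This is the routine bookkeeping step and I would state it as an immediate consequence of Proposition \ref{Proposition behaviour of positive substitute of a composition} rather than reprove it. With this in hand, applying the group homomorphism $\eps_{\SGIET}$ and reducing modulo $2$ gives
\[
\psi_S(f_nf_{n-1}\cdots f_1)=\eps_{\SGIET}\big((f_nf_{n-1}\cdots f_1)^+_{\cP}\big)\modulo{2}=\sum_{i=1}^n \eps_{\SGIET}(g_i)\modulo{2}.
\]

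Finally I would identify each summand $\eps_{\SGIET}(g_i)\modulo 2$ with $\psi_S(f_i)$. Since $g_i=(f_i)^+_{\cP_{i-1}}$ and $\cP_{i-1}$ is a partition into $S$-intervals associated with $f_i$, the definition of $\psi_S$ gives $\psi_S(f_i)=\eps_{\SGIET}\big((f_i)^+_{\cP_{i-1}}\big)\modulo 2=\eps_{\SGIET}(g_i)\modulo 2$; note that Proposition \ref{Proposition eps_Gamma mod 2 is independent of the chosen partition} guarantees this value does not depend on which $S$-partition associated with $f_i$ we picked, so using $\cP_{i-1}$ rather than some reference partition is harmless. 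Substituting back yields $\psi_S(f_nf_{n-1}\cdots f_1)=\sum_{i=1}^n\psi_S(f_i)$, as desired. The only genuinely delicate point, and the one I would spend the most care on, is the identification $g_i=(f_i)^+_{\cP_{i-1}}$: one has to check that the finitely supported permutations and reflection factors discarded when passing from $\widehat{f_i}$ to its positive substitute are exactly the ones discarded in Proposition \ref{Proposition behaviour of positive substitute of a composition}, which is why the order-preserving / order-reversing case analysis there is precisely what is needed — everything else is formal.
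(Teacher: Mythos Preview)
Your approach is correct and essentially identical to the paper's proof: set up the partitions $\cP_{i-1}=f_{i-1}\cdots f_1(\cP)$, apply Proposition~\ref{Proposition behaviour of positive substitute of a composition} to factor $(f_n\cdots f_1)^+_{\cP}=g_n\cdots g_1$, then use that $\eps_{\SGIET}$ is a group homomorphism and the partition-independence of $\psi_S$ (Proposition~\ref{Proposition eps_Gamma mod 2 is independent of the chosen partition}) to conclude.

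One clarification is in order regarding the point you flag as ``genuinely delicate''. You noticed that the \emph{statement} of Proposition~\ref{Proposition behaviour of positive substitute of a composition} defines $g_i=(f_if_{i-1}\cdots f_1)^+_{f_{i-1}\cdots f_1(\cP)}$, the positive substitute of a partial product, rather than $(f_i)^+_{\cP_{i-1}}$. This is a typo in the paper's statement: the proof of that proposition (carried out for $n=2$) actually establishes $(g\circ f)^+_{\cP}=g^+_{f(\cP)}\circ f^+_{\cP}$, i.e.\ with $g_i=(f_i)^+_{\cP_{i-1}}$, and the paper's own proof of Proposition~\ref{Proposition psi_S is nearly group homomorphism } silently uses this correct version. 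So there is no separate ``bookkeeping step'' to perform --- the identification $g_i=(f_i)^+_{\cP_{i-1}}$ is exactly what Proposition~\ref{Proposition behaviour of positive substitute of a composition} (correctly read) delivers, and your caution, while justified by the typo, is not needed for the mathematics.
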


\begin{proof}

Let $n \in \NN$ and $f_1,f_2,\ldots ,f_n \in \IET^{\bowtie}(\SGIET)$. The case $n=1$ is trivial thus we assume $ n \geq 2$. Let $\cP$ be a partition into $S$-intervals associated with $(f_1,f_2, \ldots , f_n)$. Let $g_1=(f_1)^+_{\cP}$ and for every $2 \leq i \leq n$ let $g_i=(f_i)^+_{f_{i-1}\ldots f_1(\cP)}$. By Proposition \ref{Proposition behaviour of positive substitute of a composition} we know that $(f_nf_{n-1} \ldots f_1)^+_{\cP}=g_n g_{n-1} \ldots g_1$, then:
\[
\psi_S(f_nf_{n-1} \ldots f_1) = \eps_{\SGIET} ( (f_n f_{n-1} \ldots f_1)^+_{\cP}) \modulo{2}= \eps_{\SGIET}(g_ng_{n-1} \ldots g_1) \modulo{2}
\]

\noindent Also $\eps_{\SGIET}$ is a group homomorphism thus:
\[
\psi_S(f_nf_{n-1} \ldots f_1)=\sum\limits_{i=1}^n \eps_{\SGIET}(g_i) \modulo{2}=\sum\limits_{i=1}^n \psi_S(f_i)
\]
\end{proof}

\subsubsection{The group homomorphism}~
\smallskip

We are now able to define a new group homomorphism from $\IET^{\bowtie}(\SGIET)$. 

\begin{Def}
We define the \textit{positive contribution of $\IET^{\bowtie}(\SGIET)$} as the next map:
\[
\begin{array}{rccl}
\psi_{\SGIET}: &\IET^{\bowtie}(\SGIET) & \longrightarrow & (\Wedgealt \tilde{\SGIET} / 2\Wedgealt \tilde{\SGIET})^{\NN} / \lbrace (w_n)_{n\in \NN} \mid \exists N \in \NN, \forall n \geq N, w_n=0 \rbrace \\
& f & \longmapsto & [(\psi_{S_n}(f))_{n \in \NN}]
\end{array}
\]
\end{Def}

\begin{Prop}
The map $\psi_{\SGIET}$ is a group homomorphism.
\end{Prop}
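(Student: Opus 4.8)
The goal is to show that $\psi_{\SGIET}\colon \IET^{\bowtie}(\SGIET)\to (\Wedgealt\tilde{\SGIET}/2\Wedgealt\tilde{\SGIET})^{\NN}/W$, where $W$ is the subgroup of eventually-zero sequences, is a group homomorphism. The essential point is that although each individual $\psi_{S_n}$ fails to be a homomorphism, for any fixed pair $f,g\in\IET^{\bowtie}(\SGIET)$ the ``defect'' $\psi_{S_n}(fg)-\psi_{S_n}(f)-\psi_{S_n}(g)$ vanishes for all sufficiently large $n$, hence represents $0$ in the quotient by $W$. So the proof is: first check $\psi_{\SGIET}$ is well-defined (the target is a quotient, and we must see the formula $[(\psi_{S_n}(f))_n]$ makes sense), then verify the homomorphism property using Proposition~\ref{Proposition psi_S is nearly group homomorphism } together with Proposition~\ref{Proposition There exists a common S_n for a tuple of IET bowtie Gamma}.

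First I would note that $\psi_{\SGIET}$ is well-defined simply because its value is defined as the class of an actual sequence $(\psi_{S_n}(f))_{n\in\NN}$ in $(\Wedgealt\tilde{\SGIET}/2\Wedgealt\tilde{\SGIET})^{\NN}$, and we then pass to the quotient by $W$; there is nothing to check for well-definedness beyond that each $\psi_{S_n}(f)$ is itself well-defined, which is guaranteed by Proposition~\ref{Proposition eps_Gamma mod 2 is independent of the chosen partition}. Next, to prove additivity, fix $f,g\in\IET^{\bowtie}(\SGIET)$. By Proposition~\ref{Proposition There exists a common S_n for a tuple of IET bowtie Gamma} applied to the pair $(g,f)$, there exists $N\in\NN$ such that for every $n\ge N$ there is a partition $\cP_n$ into $S_n$-intervals associated with $(g,f)$. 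Then Proposition~\ref{Proposition psi_S is nearly group homomorphism } gives $\psi_{S_n}(fg)=\psi_{S_n}(f)+\psi_{S_n}(g)$ for all $n\ge N$. Consequently the sequence $\big(\psi_{S_n}(fg)-\psi_{S_n}(f)-\psi_{S_n}(g)\big)_{n\in\NN}$ is supported on $\{0,1,\dots,N-1\}$, hence lies in $W$, so its class is zero. Therefore $\psi_{\SGIET}(fg)=\psi_{\SGIET}(f)+\psi_{\SGIET}(g)$ in the quotient.

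The only mildly delicate point — and the one I would spell out carefully — is making sure the indexing conventions match: Proposition~\ref{Proposition psi_S is nearly group homomorphism } is stated for a product $f_n f_{n-1}\cdots f_1$ with a partition associated with the tuple $(f_1,\dots,f_n)$ in that order, so for the two-element product $fg$ one applies it with $f_1=g$, $f_2=f$, needing a partition into $S_n$-intervals associated with $(g,f)$, which is exactly what Proposition~\ref{Proposition There exists a common S_n for a tuple of IET bowtie Gamma} supplies. I expect this bookkeeping about the order of factors, rather than any substantive argument, to be the main place one could slip; everything else is a direct concatenation of the two cited propositions with the observation that finitely-supported discrepancies are killed in the target.

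Here is the proof as I would write it.

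\begin{proof}
The map $\psi_{\SGIET}$ is well-defined: for each $n\in\NN$ the element $\psi_{S_n}(f)\in\Wedgealt\tilde{\SGIET}/2\Wedgealt\tilde{\SGIET}$ is well-defined by Proposition \ref{Proposition eps_Gamma mod 2 is independent of the chosen partition}, so $(\psi_{S_n}(f))_{n\in\NN}$ is a well-defined element of $(\Wedgealt\tilde{\SGIET}/2\Wedgealt\tilde{\SGIET})^{\NN}$, and $\psi_{\SGIET}(f)$ is its class modulo the subgroup of eventually-zero sequences.

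Let $f,g\in\IET^{\bowtie}(\SGIET)$. By Proposition \ref{Proposition There exists a common S_n for a tuple of IET bowtie Gamma} applied to the pair $(g,f)$, there exists $N\in\NN$ such that for every $n\geq N$ there is a partition $\cP_n$ into $S_n$-intervals associated with $(g,f)$. For such $n$, Proposition \ref{Proposition psi_S is nearly group homomorphism } yields
\[
\psi_{S_n}(f\circ g)=\psi_{S_n}(f)+\psi_{S_n}(g).
\]
Hence the sequence $\big(\psi_{S_n}(f\circ g)-\psi_{S_n}(f)-\psi_{S_n}(g)\big)_{n\in\NN}$ vanishes for all $n\geq N$, so it is an eventually-zero sequence and represents $0$ in the quotient. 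Therefore
\[
\psi_{\SGIET}(f\circ g)=\psi_{\SGIET}(f)+\psi_{\SGIET}(g),
\]
so $\psi_{\SGIET}$ is a group homomorphism.
\end{proof}
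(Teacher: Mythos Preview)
Your proof is correct and follows essentially the same route as the paper: invoke Proposition~\ref{Proposition There exists a common S_n for a tuple of IET bowtie Gamma} to find $N$ beyond which a common $S_n$-partition exists, apply Proposition~\ref{Proposition psi_S is nearly group homomorphism } to get additivity of $\psi_{S_n}$ for $n\ge N$, and conclude that the defect sequence is eventually zero. Your extra care with the tuple ordering $(g,f)$ versus the product $f\circ g$ is a nice touch that the paper leaves implicit.
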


\begin{proof}
Let $f,g \in \IET^{\bowtie}(\SGIET)$. By Proposition \ref{Proposition There exists a common S_n for a tuple of IET bowtie Gamma} there exists $N \in \NN$ such that for every $n \geq N$ there exists a partition $\cP_n$ into $S_n$-intervals associated with $(f,g)$. Then  by Proposition \ref{Proposition psi_S is nearly group homomorphism } we deduce that for every $n \geq N$ we have $\psi_{S_n}(g \circ f)=\psi_{S_n}(g)+\psi_{S_n}(f)$. We denote by $(w_n)_{n\in \NN}$ the element of $(\Wedgealt \tilde{\SGIET} / 2\Wedgealt \tilde{\SGIET})^{\NN}$ defined by $w_n=\psi_{S_n}(g \circ f) -\psi_{S_n}(g)- \psi_{S_n}(f)$ for every $n <N$ and $0$ elsewhere.\\
We notice that $(\psi_{S_n}(g \circ f))_n=(\psi_{S_n}(g))_n + (\psi_{S_n}(f))_n -w$. So $\psi_{\SGIET}(g \circ f) = \psi_{\SGIET}(g) + \psi_{\SGIET}(f)$.
\end{proof}

The following lemma gives the value of an element in $\IET(\SGIET)$. The proof is immediate from the definition of $\psi_{S_n}$.

\begin{Lem}\label{Proposition value of psi_Gamma for elements in IET(Gamma)}
For every $f \in \IET(\SGIET)$ there exists $N \in \NN$ such that for $ n < N$ we have $\psi_{S_n}(f)=0$ and for $n \geq N$ we have $\psi_{S_n}(f)= \eps_{\SGIET}(f) ~[\textup{mod}~2]$.
\end{Lem}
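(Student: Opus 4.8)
The plan is to observe that an element $f$ of $\IET(\SGIET)$, regarded inside $\IET^{\bowtie}(\SGIET)$, is order-preserving on every interval of every partition associated with it, so that its positive substitute coincides with $f$ itself, and then to track at which stage of the filtration $(S_n)_n$ a suitable partition becomes available. First I would note that, since $\Vect_{\NN}(S_n) \subset \Vect_{\NN}(S_{n+1})$ and $S_n \subset \Vect_{\NN}(S_n)$, Proposition \ref{Proposition inclusion of G_S in G_T} yields the chain of inclusions $G_{S_n} \subset G_{S_{n+1}}$ for every $n \in \NN$. Combining this monotonicity with Corollary \ref{Corollary Every element of IET bowtie Gamma is in one  (G_S)_n} applied to $f$, the set of indices $n$ for which $f \in G_{S_n}$ is of the form $\lbrace n \in \NN \mid n \geq N \rbrace$ for some $N \in \NN$; this $N$ is the integer claimed in the statement.

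For $n < N$ we have $f \notin G_{S_n}$ by the choice of $N$, hence $\psi_{S_n}(f) = 0$ directly from the definition of the $S_n$-map.

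For $n \geq N$, I would choose a partition $\cP$ into $S_n$-intervals associated with $f$. Since $f$ lies in $\IET$, a representative $\widehat{f}$ of $f$ in $\widehat{\IET^{\bowtie}}$ may be taken equal to $f$ itself; it is order-preserving (indeed a translation) on each $I \in \cP$, and $\cP$ is a partition associated with $f$ with $f(I) = \widehat{f}(I)$ for every $I \in \cP$. By the uniqueness clause in the definition of the positive $\cP$-substitute (equivalently, by the fact that the decomposition of Proposition \ref{Proposition decomposition in widehat IET bowtie} has trivial finitely supported part and no reflection factors), one gets $f^+_{\cP} = f$. Therefore
\[
\psi_{S_n}(f) = \eps_{\SGIET}(f^+_{\cP}) \modulo{2} = \eps_{\SGIET}(f) \modulo{2},
\]
which is the second claim.

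There is no genuine obstacle in this argument; the only point deserving a line of care is the equality $f^+_{\cP} = f$ — not merely that $f^+_{\cP}$ agrees with $f$ off a finite set — which is exactly what the right-continuity (uniqueness) requirement in the definition of the positive substitute delivers, and is precisely the reason $\IET$ was set up using right-continuous maps in the first place.
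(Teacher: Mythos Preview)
Your proof is correct and is essentially a careful unpacking of what the paper dismisses as ``immediate from the definition of $\psi_{S_n}$.'' The two key observations you make --- that the inclusions $G_{S_n}\subset G_{S_{n+1}}$ force the set $\{n : f\in G_{S_n}\}$ to be an upward-closed ray, and that $f^+_{\cP}=f$ for any $f\in\IET(\SGIET)$ by the uniqueness clause in the definition of the positive substitute --- are exactly the content the paper is leaving implicit.
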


The next proposition show that the set we used to define $\Omega_{\SGIET}$ is not sent to the trivial element by $\psi_{\SGIET}$; thus this is not a subset of $D(\IET^{\bowtie}(\SGIET))$.

\begin{Prop}\label{Proposition Value of psi_Gamma on a reflection of type 2l with l notin 2Gamma}
Let $\ell \in \tilde{\SGIET} \setm 2\tilde{\SGIET}$ and $r$ be a $\SGIET$-reflection of type $2\ell$. Then $\psi_{\SGIET}(r)=[(\ell \wedge \ell \modulo{2})_n] \neq 0$.
\end{Prop}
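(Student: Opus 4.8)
The plan is to evaluate $\psi_{\SGIET}(r)$ directly from its definition: for every sufficiently large $n$ I would exhibit an explicit partition $\cP$ into $S_n$-intervals associated with $r$, identify the positive substitute $r^+_{\cP}\in\IET(\SGIET)$, and compute $\eps_{\SGIET}(r^+_{\cP})$; the constant sequence thus obtained will represent $\psi_{\SGIET}(r)$.

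First I would fix a representative of $r$ as an $I$-reflection with $I=\interfo{t}{t+2\ell}\in\Itv(\SGIET)$, so that $t$ and $1-t-2\ell$ lie in $\tilde{\SGIET}_{+}$, and from $\lambda(I)=2\ell\le 1$ that $\ell\in\tilde{\SGIET}_{+}$ with $\ell\le\frac12$. Since $\tilde{\SGIET}_{+}=\bigcup_n\Vect_{\NN}(S_n)$ is a filtered union (Corollary \ref{Corollary the sequence of finite Z indepndent subset for tilde SGIET}), there is $N$ with $t,\,1-t-2\ell,\,\ell\in\Vect_{\NN}(S_n)$ for all $n\ge N$. For such $n$ I would write $\ell=\sum_{s\in S_n}c_s s$ with $c_s\in\NN$, set $p=\sum_s c_s$, and let $t_1,\dots,t_p$ enumerate $S_n$ with each $s$ repeated $c_s$ times, so $\sum_{i=1}^p t_i=\ell$. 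Then I would build a \emph{palindromic} $S_n$-partition $\cP$ associated with $r$: subdivide $\interfo{0}{t}$ and $\interfo{t+2\ell}{1}$ into $S_n$-intervals arbitrarily, and cut $I$ into $2p$ consecutive blocks $K_1,\dots,K_{2p}$ of lengths $t_1,\dots,t_p,t_p,\dots,t_1$ (legitimate since $r$ is continuous on the interior of each block). Using the palindrome one checks $r(K_i^{\circ})=K_{2p+1-i}^{\circ}$ with $r$ order-reversing there, so $r^+_{\cP}$ is the involution translating $K_i$ onto $K_{2p+1-i}$ for each $i$ and fixing the complement of $I$; equivalently $r^+_{\cP}=\sigma_1\cdots\sigma_p$ with the $\sigma_i$ a commuting family of $\IET(\SGIET)$-transpositions, $\sigma_i$ of type $t_i$.

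By Proposition \ref{Proposition Value of eps_Gamma for a IET Gamma transposition} and additivity of $\eps_{\SGIET}$ one gets $\eps_{\SGIET}(r^+_{\cP})=\sum_{i=1}^p t_i\wedge t_i$; since $t_i\wedge t_j=-t_j\wedge t_i$ in $\Wedgealt\tilde{\SGIET}$, the off-diagonal terms of $\big(\sum_i t_i\big)\wedge\big(\sum_i t_i\big)$ cancel, so $\sum_i t_i\wedge t_i=\ell\wedge\ell$. Hence $\psi_{S_n}(r)=\ell\wedge\ell\modulo{2}$ for every $n\ge N$, and since altering finitely many coordinates does not change the class, $\psi_{\SGIET}(r)=[(\ell\wedge\ell\modulo{2})_n]$.

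Finally, this class is nonzero precisely because $\ell\wedge\ell\modulo{2}\ne 0$ in $\Wedgealt\tilde{\SGIET}/2\Wedgealt\tilde{\SGIET}$: a constant nonzero sequence is not eventually zero. And $\ell\wedge\ell\modulo{2}=0$ would force $\ell\in2\tilde{\SGIET}$ by Proposition \ref{Proposition sufficient condition for dividing by 2 in Gamma}, contradicting the hypothesis $\ell\in\tilde{\SGIET}\setm2\tilde{\SGIET}$. I expect the only mildly delicate step to be the verification that $r^+_{\cP}$ really is the block-reversal involution (which is where the palindrome condition is used); the freedom to choose the convenient palindromic partition rather than an arbitrary $S_n$-partition is guaranteed by Proposition \ref{Proposition eps_Gamma mod 2 is independent of the chosen partition}.
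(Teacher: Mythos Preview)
Your argument is correct, but it takes a different route from the paper. The paper's proof is shorter and more conceptual: since a $\SGIET$-reflection of type $2\ell$ is conjugate to a $\SGIET$-restricted rotation $s$ of type $(\ell,\ell)$ (Proposition \ref{Proposition reflections of type 2a are conjugated to restricted rotation of type (a,a)}), and $\psi_{\SGIET}$ is a homomorphism into an abelian group, one has $\psi_{\SGIET}(r)=\psi_{\SGIET}(s)$; then because $s\in\IET(\SGIET)$, Lemma \ref{Proposition value of psi_Gamma for elements in IET(Gamma)} gives $\psi_{\SGIET}(s)=[(\eps_{\SGIET}(s)\modulo{2})_n]=[(\ell\wedge\ell\modulo{2})_n]$ directly. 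The nonvanishing step is the same as yours.

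Your approach instead unwinds the definition of $\psi_{\SGIET}$ on $r$ itself: you build an explicit palindromic $S_n$-partition so that the positive substitute becomes a product of disjoint $\IET(\SGIET)$-transpositions, and then use the identity $\sum_i t_i\wedge t_i=(\sum_i t_i)\wedge(\sum_i t_i)=\ell\wedge\ell$ in $\Wedgealt\tilde{\SGIET}$. This is more hands-on but has the virtue of bypassing both the conjugacy statement and Lemma \ref{Proposition value of psi_Gamma for elements in IET(Gamma)}; it also gives a concrete picture of what the positive substitute of a reflection looks like, which is informative in its own right. The paper's argument buys brevity by reusing machinery already in place; yours buys self-containment at the cost of an explicit construction.
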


\begin{proof}
We know that such a $\SGIET$-reflection is conjugate to a $\SGIET$-restricted rotation of type $(\ell,\ell)$. We denote by $s$ this $\SGIET$-restricted rotation. We have $\eps_{\SGIET}(s)= \ell \wedge \ell ~[\textup{mod}~2]$. Then by Lemma \ref{Proposition value of psi_Gamma for elements in IET(Gamma)} we have $\psi_{\SGIET}(r)=\psi_{\SGIET}(s)=[(\ell \wedge \ell \modulo{2})_n]$.
Thus if $\psi_{\SGIET}(r)=0$ then there exists $(w_n)_n \in (\Wedgealt \tilde{\SGIET} / 2\Wedgealt \tilde{\SGIET})^{\NN}$ which is $0$ above a certain rank such that $(\ell \wedge \ell \modulo{2})_n+(w_n)_n=0$. As $w_n$ is $0$ above a certain rank, we deduce that $\ell \wedge \ell \modulo{2}=0$. Or this implies $\ell \in 2\tilde{\SGIET}$ and this is a contradiction. Hence $\psi_{\SGIET}(r) \neq 0$.
\end{proof}

\subsection{\texorpdfstring{Description of $\IET^{\bowtie}(\Gamma)_{\text{ab}}$}{Description of the abelianization of IET bowtie (Gamma)}}~
\smallskip

With both morphisms $\eps_{\SGIET}^{\bowtie}$ and $\psi_{\SGIET}$ we are now able to describe $D(\IET^{\bowtie}(\SGIET))$. We recall that $\Omega_{\SGIET}$ is the conjugate closure of the group generated by the set of all $\SGIET$-reflections of type $2 \ell$ with $\ell \in \tilde{\SGIET} \setm 2\tilde{\SGIET}$.

\begin{Lem}\label{Lemma Omega_Gamma cap kernel of psi_Gamma is in the derived subgroup}
We have the inclusion:
\[
\Omega_{\SGIET}\cap \Ker(\psi_{\SGIET}) \subset D(\IET^{\bowtie}(\SGIET))
\]
\end{Lem}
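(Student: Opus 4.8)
The plan is to take an element $f \in \Omega_{\SGIET}\cap \Ker(\psi_{\SGIET})$, write it explicitly as a product of conjugates of $\SGIET$-reflections of type $2\ell_i$ with $\ell_i \in \tilde{\SGIET}\setm 2\tilde{\SGIET}$, and push this decomposition through the other morphism $\eps_{\SGIET}^{\bowtie}$ combined with $\psi_{\SGIET}$ to deduce enough "balancing" to conclude $f \in D(\IET^{\bowtie}(\SGIET))$. More precisely, by definition of $\Omega_{\SGIET}$ there exist $n \in \NN$, elements $\ell_1,\ldots,\ell_n \in \tilde{\SGIET}\setm 2\tilde{\SGIET}$, conjugating elements $g_1,\ldots,g_n \in \IET^{\bowtie}(\SGIET)$ and $\SGIET$-reflections $w_i$ of type $2\ell_i$ such that $f = \prod_{i=1}^n g_i w_i g_i^{-1}$. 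Since each $g_iw_ig_i^{-1}$ is conjugate, hence equal modulo $D(\IET^{\bowtie}(\SGIET))$, to $w_i$ itself (and also, by Proposition \ref{Proposition reflections of type 2a are conjugated to restricted rotation of type (a,a)}, to a restricted rotation of type $(\ell_i,\ell_i)$), it suffices to understand $\prod_{i=1}^n s_i$ where $s_i$ is a restricted rotation of type $(\ell_i,\ell_i)$, modulo $D(\IET^{\bowtie}(\SGIET))$.

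The key computation is to apply $\psi_{\SGIET}$ to this product. Grouping the $\ell_i$ that are equal, write $f \equiv \prod_{\ell} s_\ell^{m_\ell} \pmod{D(\IET^{\bowtie}(\SGIET))}$ over the distinct values $\ell$ appearing, where $m_\ell$ is the multiplicity. By Lemma \ref{Proposition value of psi_Gamma for elements in IET(Gamma)}, $\psi_{S_n}(s_\ell) = \ell\wedge\ell \modulo 2$ for all large $n$, and these maps are group homomorphisms on the relevant tuples, so $\psi_{\SGIET}(f) = [(\sum_{\ell} m_\ell\, (\ell\wedge\ell \modulo 2))_n]$. The hypothesis $\psi_{\SGIET}(f)=0$ forces $\sum_\ell m_\ell\,(\ell\wedge\ell) \in 2\Wedgealt\tilde{\SGIET}$. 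I would then argue, using the isomorphism $\Wedgealt G / 2\Wedgealt G \simeq S^2 G / 2 S^2 G$ established in the proof of Proposition \ref{Proposition sufficient condition for dividing by 2 in Gamma}, and passing to the quotient $\tilde{\SGIET}/2\tilde{\SGIET}$ as an $\mathbb{F}_2$-vector space, that this forces each $m_\ell$ to be even (because the squares $\bar\ell \odot \bar\ell$ of distinct nonzero vectors of $\tilde{\SGIET}/2\tilde{\SGIET}$ are... actually not independent in general — see the obstacle below). Once each $m_\ell$ is even, pair up the factors: $s_\ell^{m_\ell}$ is a product of squares $s_\ell^2$, each of which lies in $D(\IET^{\bowtie}(\SGIET))$ by Corollary \ref{Corollary squares of an element are in D(IET^bowtie(Gamma))}, so $f \in D(\IET^{\bowtie}(\SGIET))$.

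The main obstacle is exactly the step "$\sum_\ell m_\ell\,(\ell\wedge\ell)\in 2\Wedgealt\tilde{\SGIET}$ implies each $m_\ell$ even": the elements $\ell\wedge\ell \modulo 2$ need not be linearly independent over $\mathbb{F}_2$, so one cannot conclude coefficient-wise. The fix is to not work with the given $\ell_i$ directly but first reduce, using the ultrasimplicial structure of Corollary \ref{Corollary the sequence of finite Z indepndent subset for tilde SGIET} / Corollary \ref{Corollary ultrasimplicially ordered group}, to the case where all the $\ell_i$ lie in a common $\ZZ$-independent set $S$: one rewrites each restricted rotation of type $(\ell_i,\ell_i)$ as a product of restricted rotations with types in $S\times S$, at the cost of introducing extra types $(s,s')$ with $s\ne s'$ (which contribute balanced products of reflections, hence elements of $D(\IET^{\bowtie}(\SGIET))$ by Proposition \ref{Proposition balanced product of restricted rotations is a balanced product of reflections}) and types $(s,s)$ with $s \in S$. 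Since $S$ is $\ZZ$-independent, $\{s\otimes s \modulo 2 \mid s\in S\}$ is $\mathbb{F}_2$-independent in $\bigotimes^2_\ZZ\tilde{\SGIET}/2(\bigotimes^2_\ZZ\tilde{\SGIET})$, so applying $\eps_{\SGIET}^{\bowtie}$ (rather than $\psi_{\SGIET}$) to the rewritten product — whose $\eps_{\SGIET}^{\bowtie}$-value is $\sum_{s\in S}(\text{multiplicity of }(s,s))\, s\otimes s \modulo 2$ and which vanishes since $\Omega_{\SGIET}\subset\Ker(\eps_{\SGIET}^{\bowtie})$ — shows each such multiplicity is even. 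Then each $s$-square factor is in $D(\IET^{\bowtie}(\SGIET))$ by Corollary \ref{Corollary squares of an element are in D(IET^bowtie(Gamma))}, and collecting everything gives $f\in D(\IET^{\bowtie}(\SGIET))$. One must be slightly careful that the rewriting produces a genuine identity in $\IET^{\bowtie}(\SGIET)$ up to $D(\IET^{\bowtie}(\SGIET))$ and that $\eps_{\SGIET}^{\bowtie}$ is insensitive to the reordering, both of which follow from $\eps_{\SGIET}^{\bowtie}$ being a homomorphism into an abelian group.
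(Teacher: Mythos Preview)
Your reduction in the first paragraph is correct and matches the paper: modulo $D(\IET^{\bowtie}(\SGIET))$ one may replace $f$ by $w':=\prod_i s_i$ with each $s_i$ a $\SGIET$-restricted rotation of type $(\ell_i,\ell_i)$, and from $\psi_{\SGIET}(f)=0$ together with Lemma \ref{Proposition value of psi_Gamma for elements in IET(Gamma)} one gets $\eps_{\SGIET}(w')\in 2\Wedgealt\tilde{\SGIET}$. You also correctly identify that one cannot conclude ``each $m_\ell$ is even'' from this.

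The proposed fix, however, does not work. For a restricted rotation $s$ of type $(p,q)$ one has $\eps_{\SGIET}^{\bowtie}(s)=p\otimes q+q\otimes p\modulo{2}$, so for $p=q=s$ this is $2\,s\otimes s\equiv 0$, not $s\otimes s$. Hence after your rewriting the $\eps_{\SGIET}^{\bowtie}$-value carries \emph{no} information about the multiplicities of the diagonal types $(s,s)$; it only sees the off-diagonal part. More tellingly, your fix never uses the hypothesis $\psi_{\SGIET}(f)=0$: it would apply to every element of $\Omega_{\SGIET}$ and thus prove $\Omega_{\SGIET}\subset D(\IET^{\bowtie}(\SGIET))$, contradicting Proposition \ref{Proposition Value of psi_Gamma on a reflection of type 2l with l notin 2Gamma}.

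The paper's fix is simpler and avoids any independence argument. Having obtained $\eps_{\SGIET}(w')\in 2\Wedgealt\tilde{\SGIET}$, write $\eps_{\SGIET}(w')=-\sum_j 2n_j\nu_j\, a_j\wedge b_j$ with $a_j,b_j\in\tilde{\SGIET}_+$, choose $\SGIET$-restricted rotations $\gamma_j$ of type $(a_j,b_j)$, and observe that
\[
\eps_{\SGIET}\Bigl(w'\prod_j(\gamma_j^{\,n_j\nu_j})^2\Bigr)=0.
\]
By Theorem \ref{Theorem D(IET(Gamma)) is equal to the Kernel of Eps Gamma} this element lies in $D(\IET(\SGIET))\subset D(\IET^{\bowtie}(\SGIET))$, and the correcting factor $\prod_j(\gamma_j^{\,n_j\nu_j})^2$ is a product of squares, hence in $D(\IET^{\bowtie}(\SGIET))$ by Corollary \ref{Corollary squares of an element are in D(IET^bowtie(Gamma))}. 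Therefore $w'\in D(\IET^{\bowtie}(\SGIET))$, and so $f\in D(\IET^{\bowtie}(\SGIET))$. In short: do not try to force the individual $m_\ell$ to be even; instead cancel $\eps_{\SGIET}(w')$ \emph{globally} by squares and invoke the already-established description of $\Ker(\eps_{\SGIET})$.
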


\begin{proof}
Let $w \in \Omega_{\SGIET}\cap \Ker(\psi_{\SGIET})$. There exist $n \in \NN$ and $g_1,g_2,\ldots g_n \in \IET^{\bowtie}(\SGIET)$ and $r_1,r_2,\ldots ,r_n$ some $\SGIET$-reflections with type inside $\tilde{\SGIET} \setm 2\tilde{\SGIET}$ such that $w = \prod\limits_{i=1}^n g_ir_ig_i^{-1}$. We know that every $\SGIET$-reflection is conjugate to a $\SGIET$-restricted rotation so there exist $h_1,h_2, \ldots h_n \in \IET^{\bowtie}(\SGIET)$ and $s_1,s_2,\ldots ,s_n$ some $\SGIET$-restricted rotations such that $s_i=h_ir_ih_i^{-1}$ for every $1 \leq i \leq n$. Then $w=\prod\limits_{i=1}^n g_ih_i^{-1}s_ih_ig_i^{-1}$. By Lemma \ref{Lemma D(IET bowtie Gamma) is generated by balanced product of reflections} it is sufficient to show that $w$ is a balanced product of $\SGIET$-reflections.

As for every $1 \leq i \leq n$ we have $g_i$ and $g_i^{-1}$ which appear the same number of time in $w$ (the same is true for $h$ and $h^{-1}$) we deduce that it is sufficient to show that $w':=\prod\limits_{i=1}^n s_i$ is a balanced product of $\SGIET$-reflections. By Lemma \ref{Lemma D(IET bowtie Gamma) is generated by balanced product of reflections} it is enough to show that $w' \in D(\IET^{\bowtie}(\SGIET))$.

 We notice that $\psi_{\SGIET}(w)=\psi_{\SGIET}(w')$ thus by the assumption we deduce that $\psi_{\SGIET}(w')=0$. Hence by Lemma \ref{Proposition value of psi_Gamma for elements in IET(Gamma)} we deduce that $\eps_{\SGIET}(w') ~[\textup{mod}~2] =0$. This equality stands in $\Wedgealt \tilde{\SGIET} / 2\Wedgealt \tilde{\SGIET}$. There exists $k \in \NN$ and for every $1 \leq j \leq k$ there exist $a_j,b_j \in \tilde{\SGIET}_{+}$ with $a_j+b_j<1$ and $n_j \in \NN$ and $\nu_j \in \lbrace -1,1 \rbrace$ such that $\eps_{\SGIET}(w')+ \sum\limits_{j=1}^k 2n_j\nu_j a_j \wedge b_j=0$. For every $1 \leq j \leq k$ let $\gamma_j$ be a $\SGIET$-restricted rotation of type $(a_j,b_j)$. Then the element $w'\prod\limits_{j=1}^k (\gamma_j^{n_j\nu_j})^2$ is in $\IET(\SGIET)$ and satisfies:
\[
\eps_{\SGIET}(w'\prod\limits_{j=1}^k (\gamma_j^{n_j\nu_j})^2)=\eps_{\SGIET}(w')+ \sum\limits_{j=1}^k 2n_j\nu_j a_j \wedge b_j=0
\]

By Theorem \ref{Theorem D(IET(Gamma)) is equal to the Kernel of Eps Gamma} the element $w'\prod\limits_{j=1}^k (\gamma_j^{n_j\nu_j})^2$ is in $D(\IET(\SGIET)) \subset D(\IET^{\bowtie}(\SGIET))$. By Corollary \ref{Corollary squares of an element are in D(IET^bowtie(Gamma))} we know that $\prod\limits_{j=1}^k (\gamma_j^{n_j\nu_j})^2$ is in $D(\IET^{\bowtie}(\SGIET))$. Then we deduce that $w' \in D(\IET^{\bowtie}(\SGIET))$. Hence $w$ is in $D(\IET^{\bowtie}(\SGIET))$.
\end{proof}

\begin{Thm}
We have $D(\IET^{\bowtie}(\SGIET))=\Ker(\eps_{\SGIET}^{\bowtie}) \cap \Ker(\psi_{\SGIET})$.
\end{Thm}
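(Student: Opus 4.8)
The inclusion $\subseteq$ is immediate: both $\eps_{\SGIET}^{\bowtie}$ and $\psi_{\SGIET}$ take values in abelian groups, so their kernels contain the derived subgroup. For the reverse inclusion, let $f \in \Ker(\eps_{\SGIET}^{\bowtie}) \cap \Ker(\psi_{\SGIET})$. The plan is to exploit Theorem \ref{Theorem description kernel of epsilon_Gamma}, which tells us that $\Ker(\eps_{\SGIET}^{\bowtie}) = D(\IET^{\bowtie}(\SGIET))\Omega_{\SGIET}$. So we may write $f = \delta h$ with $\delta \in D(\IET^{\bowtie}(\SGIET))$ and $h \in \Omega_{\SGIET}$. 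Since $\delta$ lies in the derived subgroup, it suffices to show that $h \in D(\IET^{\bowtie}(\SGIET))$.

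To do this, I would first observe that $\psi_{\SGIET}(h) = \psi_{\SGIET}(f) - \psi_{\SGIET}(\delta)$; the second term vanishes because $\delta \in D(\IET^{\bowtie}(\SGIET)) \subseteq \Ker(\psi_{\SGIET})$ (again using that the target of $\psi_{\SGIET}$ is abelian), and the first vanishes by hypothesis. Hence $\psi_{\SGIET}(h) = 0$, i.e.\ $h \in \Omega_{\SGIET} \cap \Ker(\psi_{\SGIET})$. Now Lemma \ref{Lemma Omega_Gamma cap kernel of psi_Gamma is in the derived subgroup} gives exactly $\Omega_{\SGIET} \cap \Ker(\psi_{\SGIET}) \subseteq D(\IET^{\bowtie}(\SGIET))$, so $h \in D(\IET^{\bowtie}(\SGIET))$. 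Therefore $f = \delta h \in D(\IET^{\bowtie}(\SGIET))$, completing the proof.

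The argument is a short two-step assembly of the two main structural results already established, so there is no real obstacle left at this point: the hard work was done in proving Theorem \ref{Theorem description kernel of epsilon_Gamma} (the identification of $\Ker(\eps_{\SGIET}^{\bowtie})$, via the finite-rank case and the ultrasimplicial ordering) and Lemma \ref{Lemma Omega_Gamma cap kernel of psi_Gamma is in the derived subgroup} (controlling $\Omega_{\SGIET} \cap \Ker(\psi_{\SGIET})$ through the positive contribution $\psi_{\SGIET}$ and Theorem \ref{Theorem D(IET(Gamma)) is equal to the Kernel of Eps Gamma}). The only point to be slightly careful about is that $\psi_{\SGIET}$ genuinely kills the derived subgroup — but this is automatic from its target being abelian — and that the factorization $f = \delta h$ and the membership $h \in \Omega_{\SGIET} \cap \Ker(\psi_{\SGIET})$ are used in a coherent way, which the computation $\psi_{\SGIET}(h) = \psi_{\SGIET}(f) - \psi_{\SGIET}(\delta) = 0$ makes transparent.
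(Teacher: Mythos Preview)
Your proof is correct and follows essentially the same approach as the paper: write $f=\delta h$ with $\delta\in D(\IET^{\bowtie}(\SGIET))$ and $h\in\Omega_{\SGIET}$ via Theorem~\ref{Theorem description kernel of epsilon_Gamma}, deduce $\psi_{\SGIET}(h)=0$, and invoke Lemma~\ref{Lemma Omega_Gamma cap kernel of psi_Gamma is in the derived subgroup} to conclude. The paper's argument is identical up to notation.
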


\begin{proof}
The inclusion from left to right is trivial.

Let $f \in \Ker(\eps_{\SGIET}^{\bowtie}) \cap \Ker(\psi_{\SGIET})$. By Theorem \ref{Theorem description kernel of epsilon_Gamma} there exist $g \in D(\IET^{\bowtie}(\SGIET)$ and $h \in \Omega_{\SGIET}$ such that $f=gh$. We deduce that $\psi_{\SGIET}(h)=\psi_{\SGIET}(f)=0$ so $h \in \Ker(\psi_{\SGIET}) \cap \Omega_{\SGIET}$. By Lemma \ref{Lemma Omega_Gamma cap kernel of psi_Gamma is in the derived subgroup} we obtain that $h \in D(\IET^{\bowtie}(\SGIET))$, thus $f \in D(\IET^{\bowtie}(\SGIET))$.
\end{proof}

\begin{Coro}\label{Corollary derived subgroup is equal to the kernel of psi_Gamma restricted to kernel of eps_Gamma}
We have $D(\IET^{\bowtie}(\SGIET))=\Ker(\psi_{\SGIET}|_{ \Ker(\eps_{\SGIET}^{\bowtie})})$.
\end{Coro}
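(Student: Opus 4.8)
The plan is to observe that this is a direct consequence of the theorem just proved, namely $D(\IET^{\bowtie}(\SGIET))=\Ker(\eps_{\SGIET}^{\bowtie}) \cap \Ker(\psi_{\SGIET})$, together with the elementary fact that the kernel of the restriction of a homomorphism to a subgroup is the intersection of the original kernel with that subgroup.

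Concretely, I would first spell out what $\Ker(\psi_{\SGIET}|_{\Ker(\eps_{\SGIET}^{\bowtie})})$ means: an element $f$ lies in this set if and only if $f \in \Ker(\eps_{\SGIET}^{\bowtie})$ and $\psi_{\SGIET}(f)=0$, i.e.\ if and only if $f \in \Ker(\eps_{\SGIET}^{\bowtie}) \cap \Ker(\psi_{\SGIET})$. This gives the identity
\[
\Ker\bigl(\psi_{\SGIET}|_{\Ker(\eps_{\SGIET}^{\bowtie})}\bigr)=\Ker(\eps_{\SGIET}^{\bowtie}) \cap \Ker(\psi_{\SGIET}),
\]
which requires no hypothesis on the groups involved. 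Then I would invoke the preceding theorem to rewrite the right-hand side as $D(\IET^{\bowtie}(\SGIET))$, which concludes the proof.

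There is essentially no obstacle here: the entire content has already been placed in the theorem $D(\IET^{\bowtie}(\SGIET))=\Ker(\eps_{\SGIET}^{\bowtie}) \cap \Ker(\psi_{\SGIET})$, whose proof in turn rests on Theorem \ref{Theorem description kernel of epsilon_Gamma} (giving $\Ker(\eps_{\SGIET}^{\bowtie}) = D(\IET^{\bowtie}(\SGIET))\Omega_{\SGIET}$) and Lemma \ref{Lemma Omega_Gamma cap kernel of psi_Gamma is in the derived subgroup} (giving $\Omega_{\SGIET}\cap \Ker(\psi_{\SGIET}) \subset D(\IET^{\bowtie}(\SGIET))$). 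The corollary itself is just a reformulation, and the only thing worth stating is why one bothers: this reformulation is exactly the shape needed to read off $\IET^{\bowtie}(\SGIET)_{\mathrm{ab}}$ as an extension described by the pair $(\eps_{\SGIET}^{\bowtie},\psi_{\SGIET})$, since $\psi_{\SGIET}$ restricted to $\Ker(\eps_{\SGIET}^{\bowtie})$ has image $\langle \lbrace \ell \wedge \ell \modulo{2} \mid \ell \in \tilde{\SGIET} \rbrace \rangle$ by Proposition \ref{Proposition Value of psi_Gamma on a reflection of type 2l with l notin 2Gamma} and the earlier computations, while $\eps_{\SGIET}^{\bowtie}$ has image $\langle \lbrace a \otimes a \modulo{2} \mid a \in \tilde{\SGIET} \rbrace \rangle$ by Corollary \ref{Corollary Image de epsilon_Gamma}.
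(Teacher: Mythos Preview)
Your proposal is correct and matches the paper's treatment: the paper states this corollary without proof immediately after the theorem $D(\IET^{\bowtie}(\SGIET))=\Ker(\eps_{\SGIET}^{\bowtie}) \cap \Ker(\psi_{\SGIET})$, relying on exactly the elementary identity $\Ker(\psi_{\SGIET}|_{\Ker(\eps_{\SGIET}^{\bowtie})})=\Ker(\eps_{\SGIET}^{\bowtie}) \cap \Ker(\psi_{\SGIET})$ that you spell out.
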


\begin{Lem}\label{Lemma Ker/D simeq Gamma wedge Gamma [2]}
The quotient $\Ker(\eps_{\SGIET}^{\bowtie}) / D(\IET^{\bowtie}(\SGIET))$ is isomorphic to the subgroup $\langle \lbrace \ell \wedge \ell \modulo{2} \mid \ell \in \tilde{\SGIET} \rbrace \rangle$ of $\Wedgealt \tilde{\SGIET} / 2\Wedgealt \tilde{\SGIET}$.
\end{Lem}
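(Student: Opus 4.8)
The plan is to combine Corollary~\ref{Corollary derived subgroup is equal to the kernel of psi_Gamma restricted to kernel of eps_Gamma} with the first isomorphism theorem and then identify an image. Since $D(\IET^{\bowtie}(\SGIET)) = \Ker\bigl(\psi_{\SGIET}|_{\Ker(\eps_{\SGIET}^{\bowtie})}\bigr)$, the restriction of $\psi_{\SGIET}$ to $\Ker(\eps_{\SGIET}^{\bowtie})$ induces an isomorphism $\Ker(\eps_{\SGIET}^{\bowtie}) / D(\IET^{\bowtie}(\SGIET)) \cong \psi_{\SGIET}\bigl(\Ker(\eps_{\SGIET}^{\bowtie})\bigr)$, so everything reduces to computing this image. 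A priori it lies in the quotient $(\Wedgealt \tilde{\SGIET} / 2\Wedgealt \tilde{\SGIET})^{\NN} / \{\text{eventually zero sequences}\}$; I will use the ``constant sequence'' homomorphism $c$ from $\Wedgealt \tilde{\SGIET} / 2\Wedgealt \tilde{\SGIET}$ into this quotient, $w \mapsto [(w)_n]$, which is injective because a constant sequence that is eventually zero is identically zero. The goal is then to prove $\psi_{\SGIET}\bigl(\Ker(\eps_{\SGIET}^{\bowtie})\bigr) = c\bigl(\langle \{ \ell \wedge \ell \modulo{2} \mid \ell \in \tilde{\SGIET} \} \rangle\bigr)$.

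For the inclusion ``$\subseteq$'' I would invoke Theorem~\ref{Theorem description kernel of epsilon_Gamma}: any $f \in \Ker(\eps_{\SGIET}^{\bowtie})$ equals $gh$ with $g \in D(\IET^{\bowtie}(\SGIET))$ and $h \in \Omega_{\SGIET}$. As $\psi_{\SGIET}$ is a homomorphism into an abelian group it vanishes on $g$ and is unchanged under conjugation, so $\psi_{\SGIET}(f) = \psi_{\SGIET}(h)$ with $h$ a product of conjugates of $\SGIET$-reflections of types $2\ell_i$ with $\ell_i \in \tilde{\SGIET} \setm 2\tilde{\SGIET}$; Proposition~\ref{Proposition Value of psi_Gamma on a reflection of type 2l with l notin 2Gamma} then gives $\psi_{\SGIET}(f) = c\bigl(\sum_i \ell_i \wedge \ell_i \modulo{2}\bigr) \in c\bigl(\langle \{ \ell \wedge \ell \modulo{2} \mid \ell \in \tilde{\SGIET} \} \rangle\bigr)$, and in particular $\psi_{\SGIET}\bigl(\Ker(\eps_{\SGIET}^{\bowtie})\bigr) \subseteq \mathrm{Im}(c)$. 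For ``$\supseteq$'' I must realize every generator $\ell \wedge \ell \modulo{2}$ with $\ell \in \tilde{\SGIET}$: if $\ell \in 2\tilde{\SGIET}$ it equals $0 = \psi_{\SGIET}(\Id)$. Otherwise (and we may take $\ell \in \tilde{\SGIET}_+$ since $\ell \wedge \ell = (-\ell) \wedge (-\ell)$), write $\ell = kw + u$ with $k \in \ZZ$ and $0 \le u < w < \tfrac12$ in $\tilde{\SGIET}$, as in the proof of Proposition~\ref{Proposition epsilon is surjective}; the cross terms vanish in $\Wedgealt \tilde{\SGIET}$ and $k^2 \equiv k \pmod 2$, so $\ell \wedge \ell \modulo{2} = k\,(w \wedge w) + (u \wedge u) \modulo{2}$. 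Hence it is enough to realize $\ell' \wedge \ell' \modulo{2}$ for $\ell' \in \tilde{\SGIET}$ with $0 < \ell' \le \tfrac12$; if $\ell' \in 2\tilde{\SGIET}$ this is again $0$, and otherwise a $\SGIET$-reflection $r$ of type $2\ell'$ exists, lies in $\Ker(\eps_{\SGIET}^{\bowtie})$ because $\eps_{\SGIET}^{\bowtie}(r) = (2\ell') \otimes (2\ell') \modulo{2} = 4(\ell' \otimes \ell') \modulo{2} = 0$, and satisfies $\psi_{\SGIET}(r) = c(\ell' \wedge \ell' \modulo{2})$ by Proposition~\ref{Proposition Value of psi_Gamma on a reflection of type 2l with l notin 2Gamma}.

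Combining the two inclusions yields $\psi_{\SGIET}\bigl(\Ker(\eps_{\SGIET}^{\bowtie})\bigr) = c\bigl(\langle \{ \ell \wedge \ell \modulo{2} \mid \ell \in \tilde{\SGIET} \} \rangle\bigr)$; since $c$ is injective this group is isomorphic to $\langle \{ \ell \wedge \ell \modulo{2} \mid \ell \in \tilde{\SGIET} \} \rangle$, and with the first paragraph this gives the stated isomorphism. I expect the only delicate points to be the bookkeeping in the target quotient --- checking that the relevant values of $\psi_{\SGIET}$ are classes of constant sequences and that $c$ is injective --- and the reduction $\ell = kw + u$ used to dispose of types larger than $1$; the remainder is a direct assembly of Corollary~\ref{Corollary derived subgroup is equal to the kernel of psi_Gamma restricted to kernel of eps_Gamma}, Theorem~\ref{Theorem description kernel of epsilon_Gamma}, and Proposition~\ref{Proposition Value of psi_Gamma on a reflection of type 2l with l notin 2Gamma}.
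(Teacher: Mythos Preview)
Your proof is correct and follows essentially the same route as the paper: apply Corollary~\ref{Corollary derived subgroup is equal to the kernel of psi_Gamma restricted to kernel of eps_Gamma} together with the first isomorphism theorem, then use Theorem~\ref{Theorem description kernel of epsilon_Gamma} to replace $\Ker(\eps_{\SGIET}^{\bowtie})$ by $\Omega_{\SGIET}$ in the computation of the image, and finally evaluate $\psi_{\SGIET}$ on the generating reflections via Proposition~\ref{Proposition Value of psi_Gamma on a reflection of type 2l with l notin 2Gamma}. You are in fact more explicit than the paper on two points it leaves implicit: the injectivity of the constant-sequence embedding $c$ (needed to pass from constant classes back to $\Wedgealt\tilde{\SGIET}/2\Wedgealt\tilde{\SGIET}$), and the reduction $\ell = kw+u$ ensuring that every generator $\ell\wedge\ell\modulo{2}$ is realized by an actual $\SGIET$-reflection of type $2\ell'\le 1$.
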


\begin{proof}
By Corollary \ref{Corollary derived subgroup is equal to the kernel of psi_Gamma restricted to kernel of eps_Gamma} we have:
\[\Ker(\eps_{\SGIET}^{\bowtie}) / D(\IET^{\bowtie}(\SGIET)) \simeq \textup{Im}(\psi_{\SGIET}|_{\Ker(\eps_{\SGIET}^{\bowtie})})=\psi_{\SGIET}(\Ker(\eps_{\SGIET}^{\bowtie}))
\]
By Theorem \ref{Theorem description kernel of epsilon_Gamma} and as $\psi_{\SGIET}$ is a group homomorphism we have the equality $\psi_{\SGIET}(\Ker(\eps_{\SGIET}^{\bowtie}))=\psi_{\SGIET}(\Omega_{\SGIET})$. Furthermore $\Omega_{\SGIET}$ is the normal closure of the group generated by all $\SGIET$-reflections of type $2\ell$ with $\ell \in \tilde{\SGIET} \setm 2\tilde{\SGIET}$. Hence we deduce that:
\[
\psi_{\SGIET}(\Omega_{\SGIET})= \langle \lbrace [(\ell \wedge \ell \modulo{2})_{n \in \NN}] \mid \ell \in \tilde{\SGIET} \setm 2\tilde{\SGIET} \rangle= \langle \lbrace [(\ell \wedge \ell \modulo{2})_{n \in \NN}] \mid \ell \in \tilde{\SGIET} \rangle
\]
Thus $\psi_{\SGIET}(\Ker(\eps_{\SGIET}^{\bowtie})) \simeq \langle \lbrace \ell \wedge \ell \modulo{2} \mid \ell \in \tilde{\SGIET} \rbrace \rangle$.
\end{proof}

\begin{Thm}
We have the following group ismomorphisms:
\begin{align*}
\IET^{\bowtie}(\SGIET)_{\mathrm{ab}} &\simeq \textup{Im}(\eps_{\SGIET}^{\bowtie}) \times \Ker(\eps_{\SGIET}^{\bowtie}) / D(\IET^{\bowtie}(\SGIET)) \\
&\simeq \langle \lbrace a \otimes a \modulo{2} \mid a \in \tilde{\SGIET} \rbrace  \rangle \times \langle \lbrace \ell \wedge \ell \modulo{2} \mid \ell \in \tilde{\SGIET} \rbrace \rangle,
\end{align*}

where the left term of the product is in $\bigotimes^2_{\ZZ} \tilde{\SGIET} / (2\bigotimes^2_{\ZZ} \tilde{\SGIET})$ and the right one is in 
$\Wedgealt \tilde{\SGIET} / (2\Wedgealt \tilde{\SGIET})$.
\end{Thm}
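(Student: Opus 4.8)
The plan is to exhibit $\IET^{\bowtie}(\SGIET)_{\mathrm{ab}}$ as an internal direct product by combining the two homomorphisms $\eps_{\SGIET}^{\bowtie}$ and $\psi_{\SGIET}$ already constructed. First I would package them into a single group homomorphism
\[
\Phi := (\eps_{\SGIET}^{\bowtie}, \psi_{\SGIET}) : \IET^{\bowtie}(\SGIET) \longrightarrow \textup{Im}(\eps_{\SGIET}^{\bowtie}) \times \textup{Im}(\psi_{\SGIET}).
\]
By the theorem asserting $D(\IET^{\bowtie}(\SGIET)) = \Ker(\eps_{\SGIET}^{\bowtie}) \cap \Ker(\psi_{\SGIET})$, the kernel of $\Phi$ is exactly $D(\IET^{\bowtie}(\SGIET))$, so $\Phi$ induces an injection of $\IET^{\bowtie}(\SGIET)_{\mathrm{ab}}$ into the product. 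The remaining work is to identify the image, and to see that it splits as the asserted product with the right-hand factor equal to $\langle \lbrace \ell \wedge \ell \modulo{2} \mid \ell \in \tilde{\SGIET} \rbrace \rangle$ rather than all of $\textup{Im}(\psi_{\SGIET})$.

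Next I would use the short exact sequence
\[
1 \longrightarrow \Ker(\eps_{\SGIET}^{\bowtie})/D(\IET^{\bowtie}(\SGIET)) \longrightarrow \IET^{\bowtie}(\SGIET)_{\mathrm{ab}} \longrightarrow \textup{Im}(\eps_{\SGIET}^{\bowtie}) \longrightarrow 1,
\]
which comes from Corollary \ref{Corollary Image de epsilon_Gamma} together with the fact that $D(\IET^{\bowtie}(\SGIET)) \subset \Ker(\eps_{\SGIET}^{\bowtie})$. By Lemma \ref{Lemma Ker/D simeq Gamma wedge Gamma [2]} the left-hand term is $\langle \lbrace \ell \wedge \ell \modulo{2} \mid \ell \in \tilde{\SGIET} \rbrace \rangle$, and by Corollary \ref{Corollary Image de epsilon_Gamma} the right-hand term is $\langle \lbrace a \otimes a \modulo{2} \mid a \in \tilde{\SGIET} \rbrace \rangle$. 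All three groups here have exponent $2$, hence are $\mathbb{F}_2$-vector spaces, so this short exact sequence of $\mathbb{F}_2$-vector spaces splits; this is the point flagged in the introduction ("we will need to notice that groups of exponent $2$ are also $\mathbb{F}_2$-vector spaces"). A splitting gives the desired isomorphism
\[
\IET^{\bowtie}(\SGIET)_{\mathrm{ab}} \simeq \textup{Im}(\eps_{\SGIET}^{\bowtie}) \times \bigl(\Ker(\eps_{\SGIET}^{\bowtie})/D(\IET^{\bowtie}(\SGIET))\bigr).
\]
Substituting the two identifications finishes the computation.

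The main obstacle, and the step I would be most careful about, is the splitting — more precisely, confirming that every group involved genuinely has exponent $2$ so that "short exact sequence of $\mathbb{F}_2$-vector spaces" is legitimate. For $\textup{Im}(\eps_{\SGIET}^{\bowtie}) \subset \bigotimes^2_{\ZZ}\tilde{\SGIET}/(2\bigotimes^2_{\ZZ}\tilde{\SGIET})$ and $\Wedgealt \tilde{\SGIET}/(2\Wedgealt \tilde{\SGIET})$ this is clear, but one must check that $\IET^{\bowtie}(\SGIET)_{\mathrm{ab}}$ itself has exponent $2$; this follows from Corollary \ref{Corollary squares of an element are in D(IET^bowtie(Gamma))}, which says $f^2 \in D(\IET^{\bowtie}(\SGIET))$ for every $f$, so every element of the abelianization is $2$-torsion. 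Once this is in place the splitting is automatic and no choice of complement needs to be made canonical. A secondary point to address is that $\textup{Im}(\psi_{\SGIET})$ restricted to $\Ker(\eps_{\SGIET}^{\bowtie})$ — which is what actually appears as the fibre — equals $\psi_{\SGIET}(\Omega_{\SGIET})$ by Theorem \ref{Theorem description kernel of epsilon_Gamma}, and this was already shown in Lemma \ref{Lemma Ker/D simeq Gamma wedge Gamma [2]} to be $\langle \lbrace \ell \wedge \ell \modulo{2} \mid \ell \in \tilde{\SGIET} \rbrace \rangle$; I would simply cite that lemma rather than redo the argument with the positive-substitute machinery.
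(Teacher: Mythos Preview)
Your proposal is correct and follows essentially the same route as the paper: set up the short exact sequence
\[
1 \to \Ker(\eps_{\SGIET}^{\bowtie})/D(\IET^{\bowtie}(\SGIET)) \to \IET^{\bowtie}(\SGIET)_{\mathrm{ab}} \to \IET^{\bowtie}(\SGIET)/\Ker(\eps_{\SGIET}^{\bowtie}) \to 1,
\]
observe that all terms have exponent $2$ and hence are $\mathbb{F}_2$-vector spaces, conclude that the sequence splits, and then read off the two factors from Corollary~\ref{Corollary Image de epsilon_Gamma} and Lemma~\ref{Lemma Ker/D simeq Gamma wedge Gamma [2]}. Your explicit invocation of Corollary~\ref{Corollary squares of an element are in D(IET^bowtie(Gamma))} to justify that the middle term has exponent $2$ is a useful clarification that the paper leaves implicit; the initial packaging into $\Phi = (\eps_{\SGIET}^{\bowtie},\psi_{\SGIET})$ is harmless but not needed once you pass to the exact-sequence argument.
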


\begin{proof}
The second isomorphism is given by Corollary \ref{Corollary Image de epsilon_Gamma} and Lemma \ref{Lemma Ker/D simeq Gamma wedge Gamma [2]}.

For the first isomorphism we recall that we have the following exact sequence:
$$1 \rightarrow \Ker(\eps_{\SGIET}^{\bowtie}) / D(\IET^{\bowtie}(\SGIET)) \rightarrow \IET^{\bowtie}(\SGIET)_{\mathrm{ab}} \rightarrow \IET^{\bowtie}(\SGIET) / \Ker(\eps_{\SGIET}^{\bowtie}) \rightarrow 1 $$
Each group in this exact sequence has exponent $2$. Then they are also $\mathbb{F}_2$-vectorial spaces. We deduce that this exact sequence is an exact sequence of $\mathbb{F}_2$-vectorial spaces, thus it splits and gives the result.
\end{proof}

\begin{Rem}
If $\tilde{\SGIET}$ has dimension $d$ then $\langle \lbrace a \otimes a \modulo{2} \mid a \in \tilde{\SGIET} \rbrace  \rangle$ has dimension $\frac{d(d+1)}{2}$, as $\mathbb{F}_2$-vector space, and $\langle \lbrace \ell \wedge \ell \modulo{2} \mid \ell \in \tilde{\SGIET} \rbrace \rangle$ has dimension $d$, as $\mathbb{F}_2$-vector space, so $\IET^{\bowtie}(\SGIET)_{\mathrm{ab}}$ has dimension $\frac{d(d+3)}{2}$ over $\mathbb{F}_2$.
\end{Rem}

\begin{Rem}
The inclusion of $\IET(\SGIET)$ in $\IET^{\bowtie}(\SGIET)$ induces a group morphism $\iota$ from $\IET(\SGIET)_{\mathrm{ab}} / 2(\IET(\SGIET)_{\mathrm{ab}})$ to $\IET^{\bowtie}(\SGIET)_{\mathrm{ab}}$. By Lemma \ref{Lemma Decomposition in IET(Gamma)-restricted rotations} we know that $\IET(\SGIET)$ is generated by $\SGIET$-restricted rotations thus we deduce that the image of $\iota$ is the subgroup $\langle \lbrace p \otimes q + q \otimes p ~[\textup{mod}~2] \mid p,q \in \tilde{\SGIET} \rbrace \rangle \times \langle \lbrace a \wedge a ~[\textup{mod}~2] \rbrace \rangle$ of $\bigotimes^2_{\ZZ} \tilde{\SGIET} /(2\bigotimes^2_{\ZZ} \tilde{\SGIET}) \times \Wedgealt \tilde{\SGIET} / (2\Wedgealt \tilde{\SGIET})$. This is isomorphic to $\Wedgealt \tilde{\SGIET}/ (2\Wedgealt \tilde{\SGIET})$ and if $\tilde{\SGIET}$ has dimension $d$ then its dimension is $\frac{d(d+1)}{2}$ as $\mathbb{F}_2$-vector space. In this case $\iota$ is not surjective and its cokernel has dimension $d$ over $\mathbb{F}_2$.
In the case where $\tilde{\Gamma}$ has infinite dimension over $\ZZ$ we deduce that $\textup{Im}(\iota)$ also has infinite dimension over $\mathbb{F}_2$. By Proposition \ref{Proposition sufficient condition for dividing by 2 in Gamma} we deduce that the group $\langle \lbrace p \otimes q + q \otimes p ~[\textup{mod}~2] \mid p,q \in \tilde{\SGIET} \rbrace \rangle$ is equal to the group $\langle \lbrace a \otimes a \modulo{2} \mid a \in \tilde{\SGIET} \rbrace  \rangle$ if and only if $\tilde{\SGIET}=2\tilde{\SGIET}$.
Then $\iota$ is surjective if and only if $\tilde{\SGIET}=2\tilde{\SGIET}$.
\end{Rem}

\begin{Prop}
The group homomorphism $\iota$ is injective.
\end{Prop}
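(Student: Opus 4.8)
The plan is to exhibit an explicit left inverse (or a retraction-type argument) for $\iota$ using the two homomorphisms already constructed, $\eps_{\SGIET}^{\bowtie}$ and $\psi_{\SGIET}$. Recall that $\IET(\SGIET)_{\mathrm{ab}}$ is identified with $\Wedgealt \tilde{\SGIET}$ via $\eps_{\SGIET}$ (Theorem \ref{Theorem D(IET(Gamma)) is equal to the Kernel of Eps Gamma}), so $\IET(\SGIET)_{\mathrm{ab}}/2(\IET(\SGIET)_{\mathrm{ab}})$ is identified with $\Wedgealt \tilde{\SGIET}/(2\Wedgealt \tilde{\SGIET})$; a class there is represented by $\eps_{\SGIET}(f)\modulo 2$ for $f\in\IET(\SGIET)$, and its image under $\iota$ is the class of $f$ in $\IET^{\bowtie}(\SGIET)_{\mathrm{ab}}$. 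So I must show: if $f\in\IET(\SGIET)$ has $\eps_{\SGIET}(f)\modulo 2\neq 0$, then the class of $f$ in $\IET^{\bowtie}(\SGIET)_{\mathrm{ab}}$ is nontrivial, i.e.\ $f\notin D(\IET^{\bowtie}(\SGIET))$.

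First I would observe that for $f\in\IET(\SGIET)$, a $\SGIET$-partition associated to $f$ in the $\IET$ sense is also one in the $\IET^{\bowtie}$ sense, on which $f$ is order-preserving everywhere, so the positive substitute $f^+_{\cP}$ equals $f$ for every such $\cP$; hence by Lemma \ref{Proposition value of psi_Gamma for elements in IET(Gamma)} we get $\psi_{\SGIET}(f)=[(\eps_{\SGIET}(f)\modulo 2)_{n}]$. Next, by Theorem \ref{Theorem D(IET(Gamma)) is equal to the Kernel of Eps Gamma} we have $D(\IET^{\bowtie}(\SGIET))=\Ker(\eps_{\SGIET}^{\bowtie})\cap\Ker(\psi_{\SGIET})$, so it suffices to prove that $\psi_{\SGIET}(f)\neq 0$ whenever $\eps_{\SGIET}(f)\modulo 2\neq 0$. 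Since $\psi_{\SGIET}(f)$ is the class of the eventually-$\eps_{\SGIET}(f)\modulo 2$ constant sequence modulo eventually-zero sequences, this class vanishes precisely when $\eps_{\SGIET}(f)\modulo 2=0$ in $\Wedgealt\tilde{\SGIET}/(2\Wedgealt\tilde{\SGIET})$; this is exactly the argument already used at the end of the proof of Proposition \ref{Proposition Value of psi_Gamma on a reflection of type 2l with l notin 2Gamma}. Therefore $\psi_{\SGIET}(f)\neq 0$, so $f\notin\Ker(\psi_{\SGIET})\supseteq D(\IET^{\bowtie}(\SGIET))$, which proves $\iota$ is injective.

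The only real subtlety — and the step I expect to need the most care — is checking that $\psi_{\SGIET}$ restricted to (the image of) $\IET(\SGIET)$ really does factor through $\IET(\SGIET)_{\mathrm{ab}}/2(\IET(\SGIET)_{\mathrm{ab}})$ compatibly with $\iota$, i.e.\ that the composite $\psi_{\SGIET}\circ\iota^{-1}$ on $\mathrm{Im}(\iota)$ is well-defined and coincides with the natural inclusion $\Wedgealt\tilde{\SGIET}/(2\Wedgealt\tilde{\SGIET})\hookrightarrow(\Wedgealt\tilde{\SGIET}/(2\Wedgealt\tilde{\SGIET}))^{\NN}/(\text{eventually }0)$. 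This is where Lemma \ref{Proposition value of psi_Gamma for elements in IET(Gamma)} does the heavy lifting; once it is in place the injectivity of the resulting map on $\Wedgealt\tilde{\SGIET}/(2\Wedgealt\tilde{\SGIET})$ is formal. An alternative, perhaps cleaner, phrasing avoiding $\iota^{-1}$: take $f\in\IET(\SGIET)$ with $f\in D(\IET^{\bowtie}(\SGIET))$; then $\psi_{\SGIET}(f)=0$, hence $\eps_{\SGIET}(f)\modulo 2=0$, hence $\eps_{\SGIET}(f)\in 2\Wedgealt\tilde{\SGIET}=\varphi_{\SGIET}(\IET(\SGIET))/(-2)$... more directly $\eps_{\SGIET}(f)\in 2\Wedgealt\tilde{\SGIET}$ means the class of $f$ in $\IET(\SGIET)_{\mathrm{ab}}/2(\IET(\SGIET)_{\mathrm{ab}})\cong\Wedgealt\tilde{\SGIET}/(2\Wedgealt\tilde{\SGIET})$ is trivial, which is precisely the statement that $\Ker(\iota)=0$.
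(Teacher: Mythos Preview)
Your argument is correct, and it is genuinely different from (and shorter than) the paper's proof. You use only $\psi_{\SGIET}$: for $f\in\IET(\SGIET)$ lying in $D(\IET^{\bowtie}(\SGIET))$ you get $\psi_{\SGIET}(f)=0$, and then Lemma \ref{Proposition value of psi_Gamma for elements in IET(Gamma)} tells you that $\psi_{\SGIET}(f)$ is the class of the eventually constant sequence with value $\eps_{\SGIET}(f)\modulo 2$, which vanishes in the quotient by eventually-zero sequences exactly when $\eps_{\SGIET}(f)\modulo 2=0$; this is the injectivity of $\iota$. The paper instead chooses (via Corollary \ref{Corollary ultrasimplicially ordered group}) a $\ZZ$-independent family $\{l_1,\dots,l_n\}$ adapted to $f$, writes $\omega_{\SGIET}(\cE_{f,1})=\sum_{i,j} n_{i,j}\,l_i\otimes l_j$ in these coordinates, and then extracts from the vanishing of both components of the class of $f$ in $\IET^{\bowtie}(\SGIET)_{\mathrm{ab}}$ the parity constraints $n_{i,i}=0$ and $2\mid (n_{i,j}+n_{j,i})$, from which it computes that $\eps_{\SGIET}(f)\in 2\Wedgealt\tilde{\SGIET}$. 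Your route avoids the coordinate computation entirely by exploiting that $\psi_{\SGIET}$ restricted to $\IET(\SGIET)$ already equals $\eps_{\SGIET}\modulo 2$ (embedded as constant sequences); the paper's route is more hands-on but makes explicit how the two pieces of the abelianization of $\IET^{\bowtie}(\SGIET)$ separately constrain $f$. One small correction: the identity $D(\IET^{\bowtie}(\SGIET))=\Ker(\eps_{\SGIET}^{\bowtie})\cap\Ker(\psi_{\SGIET})$ is not Theorem \ref{Theorem D(IET(Gamma)) is equal to the Kernel of Eps Gamma} but the later theorem in Section \ref{Section Abelianization of IET ^ bowtie (Gamma)}; in fact your final clean argument does not even need the full equality, only the trivial inclusion $D(\IET^{\bowtie}(\SGIET))\subset\Ker(\psi_{\SGIET})$.
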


\begin{proof}
For every $f \in \IET(\SGIET)$ we denote by $[f]$ its image in $\IET(\SGIET)_{\mathrm{ab}}$.
Thanks to Theorem \ref{Theorem D(IET(Gamma)) is equal to the Kernel of Eps Gamma} we know that $[f] \in 2\IET(\SGIET)_{\mathrm{ab}}$ if and only if $\eps_{\SGIET}(f) \in 2 \Wedgealt \SGIET$. Hence to prove the statement it is enough to prove that for every $f \in \IET(\SGIET)$ such that $\eps_{\SGIET}^{\bowtie}(f)=0$ and proj$_{\Ker(\eps_{\SGIET}^{\bowtie})}(f)=0$ we have $\eps_{\SGIET}(f) \in 2 \Wedgealt \SGIET$. We use notations of inversions defined in Definition \ref{Definition inversions in IET bowtie}. By Corollary \ref{Corollary ultrasimplicially ordered group} there exists $n \in \NN$ and a $\ZZ$-linearly independent family $\lbrace l_1,l_2,\ldots , l_n \rbrace$ of $\SGIET_{+}$ and $n_{i,j} \in \ZZ$ such that $\cE_{f,1}=\sum\limits_{i,j} n_{i,j} l_i \otimes l_j$. The equality proj$_{\Ker(\eps_{\SGIET}^{\bowtie})}(f)=0$ gives us that $n_{i,i}=0$. We have $\eps_{\SGIET}^{\bowtie}(f)=\sum\limits_{i \neq j} (n_{i,j}+n_{j,i}) l_i \otimes l_j ~[\textup{mod}~2]=0$. We deduce that $2$ divides $(n_{i,j}+n_{j,i})$ for every $1 \leq i \neq j \leq n$. We obtain that :
\begin{align*}
\eps_{\SGIET}(f)&=\sum\limits_{i \neq j} n_{i,j} l_i \wedge l_j \\
&=\sum\limits_{i <j} (n_{i,j}-n_{j,i})l_i \wedge l_j \\
&=\sum\limits_{i <j} (n_{i,j}+n_{j,i})l_i \wedge l_j - 2 \sum\limits_{i <j} n_{j,i}l_i \wedge l_j
\end{align*}

We deduce that $2$ divides $\eps_{\SGIET}(f)$ and this gives the result.

\end{proof}

\bibliographystyle{abbrv}
\bibliography{biblio}
\end{document}